 \newcommand{\textcyr}[1]{%
    {\fontencoding{OT2}\fontfamily{wncyr}\fontseries{m}\fontshape{n}%
     \selectfont #1}}
 \newcommand{\Sha}{{\mbox{\textcyr{Sh}}}}
\newcommand{\dotcup}{\ensuremath{\mathaccent\cdot\cup}}
\newtheorem{thm}{Theorem}
\newtheorem{prop}{Proposition}
\newtheorem{cor}{Corollary}
\newtheorem{lem}{Lemma}
\DeclareMathOperator{\tr}{tr}
\DeclareMathOperator{\Gal}{Gal}
\DeclareMathOperator{\Hom}{Hom}
\DeclareMathOperator{\End}{End}
\DeclareMathOperator{\Ker}{Ker}
\DeclareMathOperator{\Aut}{Aut}
\DeclareMathOperator{\Frob}{Frob}
\DeclareMathOperator{\Image}{Im}
\DeclareMathOperator{\Homol}{H}
\DeclareMathOperator{\val}{val}
\DeclareMathOperator{\rank}{rank}
\DeclareMathOperator{\ord}{ord}
\DeclareMathOperator{\cha}{char}
\DeclareMathOperator{\lc}{lc}
\author{Fran\c cois Destrempes \\ \texttt{fdestrempes@bell.net}
\and Dmitry Malinin \\ \texttt{dmalinin@gmail.com}}
\title{On the vanishing of almost all primary components of the Shafarevich-Tate group of elliptic curves over the rationals}
\begin{document}

\maketitle

\begin{abstract}
The Shafarevich-Tate and Selmer groups arise in the context of Kummer theory for elliptic curves. The finiteness of the Shafarevich-Tate group of an elliptic curve $E$ over the field of rational numbers is included in the Birch and Swinnerton-Dyer conjectures, and is still an open question.

We present an overview of the Shafarevich-Tate and Selmer groups of an elliptic curve in the framework of Galois cohomology. Known results on the finiteness of the Shafarevich-Tate group are mentioned, including results of Coates and Wiles, Rubin, Gross and Zagier, and Kolyvagin. 

We then prove the vanishing of the $\ell$-primary component of the Shafarevich-Tate group for almost all primes $\ell$, 
for any elliptic curve $E$ over the rationals without complex multiplication.   
\end{abstract}


\section{Introduction}
\label{section:introduction}


\subsection{Statement of the problem}
\label{subsection:statement} 

We recall Kummer theory for elliptic curves \cite[pp. 331--332]{silverman2009} to introduce the Shafarevich-Tate groups and the related Selmer groups.

Let $K$ be an algebraic number field with absolute Galois group $\mathcal{G}=\Gal(\overline{K}/K)$, where $\overline{K}$ denotes the algebraic closure of $K$. 
Given an elliptic curve $E$ over $K$ and a prime number $\ell$, one has the short exact sequence (isogeny property) of $\mathcal{G}$-modules:
\begin{equation}
\label{eq:eq1final6}
0 \rightarrow E[\ell] \rightarrow E(\overline{K}) \xrightarrow{[\ell]} E(\overline{K}) \rightarrow 0,
\end{equation}
where $E[\ell]$ denotes the group of $\ell$-torsion points in $E(\overline{K})$. 
This yields, from the long exact sequence of Galois cohomology, the following exact sequence of Abelian groups:
\begin{equation}
\label{eq:eq2final6}
0 \rightarrow E(K)/[\ell](E(K)) \xrightarrow{\partial} \Homol^{1}(\mathcal{G},E[\ell]) 
\rightarrow \Homol^{1}(\mathcal{G},E(\overline{K}))[\ell] \rightarrow 0.
\end{equation}
Recall that Galois cohomology (see \cite{gruenberg1967} and \cite[pp. 333-335]{silverman2009}) is based on the profinite group structure of Galois groups  
(see \cite{gruenberg1967} and \cite[pp. 4-6]{neukirch1986}). In particular, $1$-cocycles are continuous maps with respect to the Krull topology on Galois groups 
\cite[p. 2]{neukirch1986} and the discrete topology on Galois modules. When the Galois group is finite, Galois cohomology coincides with group cohomology. 

Given a place $v$ of $K$, let $K_v$ be the completion of $K$ at $v$, and denote $\mathcal{G}_v$ the absolute Galois group of $K_v$. Then, $\mathcal{G}_v$ may be viewed as a subgroup of $\mathcal{G}$ upon considering an embedding $\xi$ of $\overline{K}$ into 
$\overline{K}_v$. Moreover, the resulting embedding is continuous with respect to the Krull topology on Galois groups.  
One then has a commutative diagram:
\begin{equation}
\label{eq:eq3final6}
\begin{CD}
0 @>>> E(K)/[\ell](E(K)) @>{\partial}>> \Homol^1(\mathcal{G},E[\ell]) 
@>>> \Homol^1(\mathcal{G},E(\overline{K}))[\ell] @>>> 0\\
@. @VVV  @VV{Res^{\mathcal{G}}_{\mathcal{G}_v}}V @VV{Res^{\mathcal{G}}_{\mathcal{G}_v}}V \\
0 @>>> E(K_v)/[\ell](E(K_v)) @>{\partial}>> \Homol^1(\mathcal{G}_v,E[\ell]) 
@>>> \Homol^1(\mathcal{G}_v,E(\overline{K}_v))[\ell] @>>> 0,
\end{CD}
\end{equation}
where $Res^{\mathcal{G}}_{\mathcal{G}_v}$ denotes the restriction map of Galois cohomology \cite[pp. 331--332]{silverman2009}.
At this point, recall that the Shafarevich-Tate group of $E$ over $K$ is defined as \cite[p. 332]{silverman2009}: 
\begin{equation}
\label{eq:eq4final6} 
\Sha(E/K) = \Ker \Bigl \{ \Homol^1(\mathcal{G},E(\overline{K})) \rightarrow
\oplus_v  \Homol^1(\mathcal{G}_{v},E(\overline{K}_v)) \Bigr \}.
\end{equation}
Also, the $[\ell]$-Selmer group is defined as  
\cite[pp. 331--334]{silverman2009}:
\begin{equation}
\label{eq:eq5final6}
S^{[\ell]}(E/K) =\Ker \Bigl \{ \Homol^1(\mathcal{G},E[\ell]) \rightarrow \oplus_{v} 
\Homol^1(\mathcal{G}_v,E(\overline{K}_v)) \Bigr \}.
\end{equation}
In both equations (\ref{eq:eq4final6}) and (\ref{eq:eq5final6}), $v$ covers the set at all places of $K$. 
Note that each homomorphism $\Homol^1(\mathcal{G},E[\ell]) \rightarrow \Homol^1(\mathcal{G}_v,E(\overline{K}_v))$ 
maps into the $\ell$-torsion group $\Homol^1(\mathcal{G}_v,E(\overline{K}_v))[\ell]$. 
The above commutative diagram then yields a short exact sequence of the form \cite[p. 333]{silverman2009}:
\begin{equation} 
0 \rightarrow E(K)/[\ell](E(K)) \xrightarrow{\partial} S^{[\ell]}(E/K)
\rightarrow \Sha(E/K)[\ell] \rightarrow 0.
\end{equation}

One can show that the Selmer group $S^{[\ell]}(E/K)$ is finite \cite[pp. 333--334]{silverman2009}, from which follows 
the finiteness of $E(K)/[\ell](E(K))$ (Weak Mordell-Weil Theorem). A descent argument based on the notion of height function then shows the following result \cite[Theorem 6.7, p. 239]{silverman2009}.


\begin{thm}[Mordell-Weil Theorem]
\label{thm:Theorem1}
For any elliptic curve over an algebraic number field $K$, the group $E(K)$ is finitely generated.
\end{thm}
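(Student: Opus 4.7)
The plan is to combine the Weak Mordell-Weil Theorem (recorded just above the statement as a consequence of the finiteness of the Selmer group) with a classical descent argument using a height function on $E(K)$. Fix a prime $\ell$, for instance $\ell=2$, so that $E(K)/[\ell]E(K)$ is finite, say with coset representatives $Q_1,\dots,Q_r$.

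First, I would introduce a height function $h\colon E(K)\to \mathbb{R}_{\geq 0}$ obtained by pulling back the standard logarithmic Weil height on $\mathbb{P}^1(\overline{K})$ along the $x$-coordinate of a chosen Weierstrass model of $E$. Two properties must then be verified. \textbf{(i)} (Northcott finiteness) for every $B>0$, the set $\{P\in E(K) : h(P)\leq B\}$ is finite; \textbf{(ii)} (quasi-quadratic behaviour) there is a constant $C$ depending only on $E$, $K$ and $\ell$ such that $h([\ell]P)\geq \ell^{2}h(P)-C$, or equivalently one may replace $h$ by the Néron--Tate canonical height $\hat h$, which is a quadratic form satisfying $\hat h([\ell]P)=\ell^{2}\hat h(P)$ and differs from $h$ by a bounded amount.

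Second, I would perform the descent itself. Given any $P\in E(K)$, iteratively write $P_{0}=P$ and $P_{j}=[\ell]P_{j+1}+Q_{i_{j+1}}$ for some $Q_{i_{j+1}}$ in our finite list of coset representatives. Combining \textbf{(ii)} with the bound $h(P+Q)\leq 2h(P)+C'$ valid uniformly for $Q\in\{Q_{1},\dots,Q_{r}\}$ (which follows from the parallelogram-law inequality for $h$), one obtains a recursive inequality of the form $h(P_{j+1})\leq \frac{1}{\ell^{2}}h(P_{j})+C''$. Iterating, the sequence $h(P_{j})$ eventually enters an explicit bounded interval $[0,B]$, so by \textbf{(i)} the tail of the sequence lies in a fixed finite set $\mathcal{F}\subset E(K)$. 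Unwinding the recursion expresses $P$ as a $\mathbb{Z}$-linear combination of elements of the finite set $\{Q_{1},\dots,Q_{r}\}\cup\mathcal{F}$, proving that $E(K)$ is finitely generated.

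The main technical obstacle is the construction and analysis of the height function, specifically establishing \textbf{(ii)}. Property \textbf{(i)} reduces to the basic fact that a number field has only finitely many elements of bounded Weil height, which follows from finiteness of the class number and Dirichlet's unit theorem. Property \textbf{(ii)}, on the other hand, is the arithmetic heart of the matter: it requires controlling the $x$-coordinate of $[\ell]P$ via division polynomials and an explicit analysis of cancellation at each place of $K$. These classical estimates, due to Néron and Tate, are standard and may be invoked from \cite[Chapter VIII]{silverman2009}, where the full descent argument is carried out.
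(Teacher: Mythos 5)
Your proposal is correct and coincides with the route the paper itself takes: the paper does not reprove Mordell--Weil but invokes exactly this argument, namely finiteness of the Selmer group giving the Weak Mordell--Weil Theorem, followed by the classical height-function descent cited from \cite[Theorem 6.7, p.~239]{silverman2009}. Your outline of the descent (Northcott finiteness plus the quasi-quadratic height inequality, or equivalently the N\'eron--Tate canonical height) is the standard and intended proof, so nothing further is needed.
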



From the Mordell-Weil Theorem, one can define the rank of an elliptic curve:
\begin{equation}
\rank(E/K) = \rank_{\mathbb{Z}} E(K).
\end{equation} 

Given a prime number $\ell$, the Mordell-Weil Theorem implies that:
\begin{equation}
\label{eq:eq8final6}
\dim_{\mathbb{F}_\ell} E(K)/[\ell](E(K)) = \rank(E/K) + \dim_{\mathbb{F}_\ell} E[\ell](K),
\end{equation}
where $E[\ell](K)$ is the group of $\ell$-torsion points of $E(K)$. 
Considering the special case where $K=\mathbb{Q}$, 
a Theorem of Mazur implies that $\# E_{tor}(\mathbb{Q})\leq 16$ \cite[Theorem 7.5, p. 242]{silverman2009}, so that 
$\dim_{\mathbb{F}_\ell} E[\ell](\mathbb{Q})=0$ for almost all prime numbers $\ell$. 
Then, $\rank(E/\mathbb{Q}) = \dim_{\mathbb{F}_\ell} E(\mathbb{Q})/[\ell](E(\mathbb{Q}))$.

Next, we recall the following results on the $L$-series attached to elliptic curves. 
Given an elliptic curve $E$ over a global field $K$, 
one defines the auxiliary function \cite[p. 450]{silverman2009}:
\begin{equation}
\Lambda(E,s) = N_E^{s/2} (2 \pi)^{-s} \Gamma(s) L(E,s),
\end{equation} 
where $N_E$ denotes the conductor of $E/K$,  $\Gamma$ denotes the Euler gamma function, and $L(E,s)$ is the $L$-series attached to $E$
\cite[pp. 449--452]{silverman2009}.

A construction of Eichler and Shimura associates to special functions, called modular forms \cite[Section C.12]{silverman2009}, a type of elliptic curves, called modular elliptic curves \cite[Section C.13]{silverman2009}, for which analytic continuation of the auxiliary function $\Lambda(E,s)$ to the entire complex plane can be demonstrated \cite[Section C.16]{silverman2009}. See \cite{murty1995} for a brief introduction.

Now, the Taniyama-Shimura-Weil conjecture states that any elliptic curve over $\mathbb{Q}$ is modular. This conjecture was proved in 1995 for semi-stable elliptic curves over $\mathbb{Q}$ \cite{wiles1995,taylor1995} and then, the proof was extended to cover all elliptic curves over $\mathbb{Q}$ \cite{breuil2001}. From there, one can conclude that $L(E,s)$ has an analytic continuation on the entire complex plane. 


\begin{thm}[Wiles 1995, Taylor and Wiles 1995, Breuil et al. 2001]
\label{thm:Theorem2}
Modularity Theorem: Any elliptic curve $E$ over $\mathbb{Q}$ is modular.
\end{thm}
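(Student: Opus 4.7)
The plan is to follow the strategy introduced by Wiles and completed by Taylor-Wiles, then extended to the non-semistable case by Breuil-Conrad-Diamond-Taylor, which reduces the modularity of $E/\mathbb{Q}$ to a statement about the Galois representations attached to $E$. Concretely, I would attach to $E$ the compatible system of $\ell$-adic representations $\rho_{E,\ell}\colon \mathcal{G} \to GL_2(\mathbb{Z}_\ell)$ coming from the action on the Tate module $T_\ell(E)$, and aim to show that for a carefully chosen $\ell$ the representation $\rho_{E,\ell}$ arises from a weight-$2$ cuspidal newform; comparing Euler factors at primes of good reduction then identifies $L(E,s)$ with the $L$-function of that newform and so certifies $E$ as modular.

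The first main step is to prove that the residual representation $\bar{\rho}_{E,3}\colon \mathcal{G} \to GL_2(\mathbb{F}_3)$ on $E[3]$ is modular. Here one exploits the fact that $PGL_2(\mathbb{F}_3)$ is solvable: lifting $\bar{\rho}_{E,3}$ to a complex Artin representation and invoking the Langlands-Tunnell theorem on base change for solvable extensions produces the required modular form. In the exceptional case where $\bar{\rho}_{E,3}$ is reducible, I would apply the \emph{3-5 switch}: construct an auxiliary semistable elliptic curve $E'/\mathbb{Q}$ with $E'[5] \cong E[5]$ as $\mathcal{G}$-modules but with $\bar{\rho}_{E',3}$ irreducible, prove modularity of $E'$ by the previous argument, and transfer modularity back to $\bar{\rho}_{E,5}$ and hence to $E$.

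The technical heart of the argument, and the step I expect to be the main obstacle, is the \emph{modularity lifting theorem} that bridges residual modularity and $\ell$-adic modularity. The plan is to set up an isomorphism $R \cong T$, where $R$ is the universal deformation ring parametrising liftings of $\bar{\rho}_{E,\ell}$ subject to prescribed local conditions at $\ell$ (flat or ordinary as appropriate) and to minimal ramification outside $\ell$, and $T$ is the corresponding localised Hecke algebra acting on a space of modular forms with matching local properties. I would construct Taylor-Wiles systems by allowing controlled auxiliary ramification at sets of primes $Q_n$ for which the relevant dual Selmer group vanishes, patch together the resulting deformation rings and Hecke modules, and apply the Wiles-Lenstra numerical criterion (or its later reformulations) to deduce simultaneously that $R = T$ and that $T$ is a complete intersection. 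Applied to $\rho_{E,\ell}$ itself, this forces $\rho_{E,\ell}$ to be modular.

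The remaining work is to remove the semistability hypothesis. For general $E/\mathbb{Q}$ one must establish analogous $R=T$ theorems for the potentially Barsotti-Tate deformation problems at $3$, which relies on Breuil's classification of finite flat group schemes over ramified bases to describe the local deformation rings. Combined with a finite case-by-case analysis of the possible images of $\bar{\rho}_{E,3}$ and, when necessary, a further $3$-$5$ type switch, the Taylor-Wiles patching machinery then yields modularity of every elliptic curve over $\mathbb{Q}$, which is the content of Theorem \ref{thm:Theorem2}.
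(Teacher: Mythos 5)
The paper does not actually prove Theorem \ref{thm:Theorem2}; it imports the Modularity Theorem from the cited works of Wiles, Taylor--Wiles, and Breuil--Conrad--Diamond--Taylor, which is precisely the body of work your sketch outlines (Langlands--Tunnell for residual modularity of $\bar\rho_{E,3}$, the $3$--$5$ switch when $E[3]$ is reducible, the $R=T$ modularity lifting theorem proved by Taylor--Wiles patching and the numerical criterion, and the potentially Barsotti--Tate deformation analysis via Breuil's classification to remove semistability). So your proposal follows essentially the same route the paper relies on; just note that at this level it is a faithful roadmap of the cited proofs rather than a self-contained argument, since each step (Langlands--Tunnell, the lifting theorem, the BCDT case analysis) is itself a major theorem.
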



Based on Theorem \ref{thm:Theorem2}, one can define the analytic rank of an elliptic curve over $\mathbb{Q}$:
\begin{equation}
\rank_{an}(E/\mathbb{Q}) = \ord_{s=1} L(E,s).
\end{equation} 

The Birch and Swinnerton-Dyer conjectures include the following statement \cite[Conjecture 16.5, part a), p. 452]{silverman2009}:
\begin{itemize}
\item[] BSD-1: $\rank(E/\mathbb{Q}) = \rank_{an}(E/\mathbb{Q})$, for any elliptic curve $E$ over $\mathbb{Q}$.
\end{itemize}

The Birch and Swinnerton-Dyer conjectures assume also the following conjecture \cite[p. 341]{silverman2009} formulated  
independently by Shafarevich and Tate \cite{rubin2002}:
\begin{itemize}
\item[] S-T: the Shafarevich-Tate group $\Sha(E/\mathbb{Q})$ is finite, for any elliptic curve $E$ over $\mathbb{Q}$.
\end{itemize}
From S-T, one would have:
\begin{equation} 
\rank(E/\mathbb{Q}) = \dim_{\mathbb{F}_\ell} E(\mathbb{Q})/[\ell](E(\mathbb{Q})) = \dim_{\mathbb{F}_\ell} S^{[\ell]}(E/\mathbb{Q}),
\end{equation}
for all but finitely many prime numbers $\ell$, based on Mazur's Theorem on torsion points.

Conjecture S-T is assumed in the second part of the BSD conjectures (BSD-2) on the value of the leading coefficient in the Taylor expansion of $L(E,s)$ at $s=1$. See \cite[Conjecture 16.5, part b), p. 452]{silverman2009}.

A result of Coates and Wiles \cite{coates1977} states that BSD-1 holds in the case of an elliptic curve over $\mathbb{Q}$ with complex multiplication (CM), if the analytic rank is equal to $0$. Rubin \cite{rubin1987} proved that conjecture S-T holds under the same conditions. 


\begin{thm}[Coates and Wiles 1977, Rubin 1987]
\label{thm:Theorem3} 
Let $E$ be an elliptic curve over $\mathbb{Q}$ with CM. Assume that $\rank_{an}(E/\mathbb{Q})= 0$. Then, conjectures BSD-1 and S-T hold.
\end{thm}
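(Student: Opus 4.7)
The plan is to reduce the problem to the arithmetic of an imaginary quadratic field and to exploit the Euler system of elliptic units. Since $j(E) \in \mathbb{Q}$ and $E$ has CM, the CM field $K$ is one of the nine imaginary quadratic fields of class number one, and the action of $\mathcal{G}_K = \Gal(\overline{K}/K)$ on the $\mathfrak{p}$-adic Tate module $T_{\mathfrak{p}}E$ is free of rank one over $\mathcal{O}_{K_{\mathfrak{p}}}$, hence abelian and given by a Gr\"ossencharacter $\psi$ with $L(E,s) = L(\psi,s)\, L(\overline{\psi},s)$. The hypothesis $L(E,1) \neq 0$ then translates into $L(\psi,1) \neq 0$.

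For the rank statement (Coates--Wiles), fix a prime $\mathfrak{p}$ of $K$ of good reduction above a rational prime $p$. Let $F_\infty = K(E[\mathfrak{p}^\infty])$, an abelian $\mathbb{Z}_p$-extension of $F_0 = K(E[\mathfrak{p}])$ ramified only at $\mathfrak{p}$. Form the inverse limit $U_\infty$ of principal local units at $\mathfrak{p}$ along the tower, and the subgroup $C_\infty \subset U_\infty$ of elliptic units. The Coates--Wiles logarithmic derivative
\[
\delta \colon U_\infty \longrightarrow \mathbb{Z}_p,
\]
built from the formal logarithm of the Lubin--Tate group attached to $E$, sends elliptic units to the algebraic parts of $L(\overline{\psi}^k,k)$ for integers $k \geq 1$; the nonvanishing of $L(\psi,1)$ then forces a certain Iwasawa-theoretic quotient to be finite at the trivial character. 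A Mazur-style descent shows that $S^{[\mathfrak{p}]}(E/K)$ has $\mathbb{F}_{\mathfrak{p}}$-dimension equal to $\dim_{\mathbb{F}_\mathfrak{p}} E[\mathfrak{p}](K)$, so $E(K)/\mathfrak{p}E(K)$ is finite. By the Mordell--Weil theorem we conclude $\rank(E/\mathbb{Q}) = \rank(E/K) = 0 = \rank_{an}(E/\mathbb{Q})$, which is BSD-1.

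For the finiteness of $\Sha$ (Rubin's refinement), I would promote the elliptic units to a full Euler system indexed by squarefree conductors $\mathfrak{f}$ prime to $\mathfrak{p}$: over the ray class field $K(\mathfrak{f})$, the elliptic units satisfy distribution relations that introduce exactly the Euler factor at each $\mathfrak{q} \mid \mathfrak{f}$ under norm down to $K(\mathfrak{f}/\mathfrak{q})$. Kolyvagin's derivative operators then extract cohomology classes
\[
\kappa_{\mathfrak{f}} \in \Homol^1(\mathcal{G}_K, E[\mathfrak{p}^n])
\]
with prescribed local ramification, and a Chebotarev-type argument shows that they annihilate the dual Selmer group. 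Combined with Rubin's proof of the Iwasawa Main Conjecture for $K$, which identifies the characteristic ideal of the appropriate Iwasawa module with the Katz $p$-adic $L$-function, this bounds $\#\Sha(E/K)[\mathfrak{p}^\infty]$ in terms of the nonzero $L$-value, forcing vanishing at almost every $\mathfrak{p}$ and finiteness at the remaining primes; hence $\Sha(E/K)$ is finite. Transfer from $K$ to $\mathbb{Q}$ via inflation--restriction has kernel and cokernel of order dividing a power of $2$, which is harmless.

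The hard part is the explicit reciprocity law linking local pairings of elliptic units at $\mathfrak{p}$ to special values of $L$-functions, together with the distribution relations underlying the Euler system; both rest on delicate identities for Weierstrass sigma functions and for the formal-group logarithm attached to $E$. Once these analytic inputs are in hand, the Coates--Wiles descent and the Kolyvagin derivative machinery proceed in a largely structural way.
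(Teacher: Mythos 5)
The paper does not prove Theorem~\ref{thm:Theorem3} at all: it is quoted as a known result, with the proof residing entirely in the cited works of Coates--Wiles \cite{coates1977} and Rubin \cite{rubin1987}, so there is no in-paper argument to compare yours against. What you have written is essentially a faithful roadmap of those original proofs (CM theory and the Gr\"ossencharacter, the tower $K(E[\mathfrak{p}^\infty])$, local units versus elliptic units, the Coates--Wiles logarithmic derivative and descent for BSD-1; elliptic units as an Euler system, Kolyvagin-style derivative classes, and the Main Conjecture over the imaginary quadratic field for the finiteness of $\Sha$), and as such it is the right architecture rather than a new route. Two points of precision: the factorization $L(\psi,s)L(\overline{\psi},s)$ is the $L$-function of $E$ over $K$ (or of the base change), whereas over $\mathbb{Q}$ one has $L(E/\mathbb{Q},s)=L(\psi,s)$ -- the translation of the hypothesis to $L(\psi,1)\neq 0$ is still correct, but the displayed identity as stated conflates the two; and historically Rubin's 1987 finiteness theorem rests on Thaine--Rubin style arguments with elliptic units and the Iwasawa-theoretic input, the full ``Euler system with Kolyvagin derivatives'' formalization you describe being the later (1991) repackaging, so your narrative is anachronistic though mathematically in the right spirit.

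Be aware that your text is a proof outline, not a proof: the explicit reciprocity law, the distribution relations for elliptic units, the Chebotarev application bounding the dual Selmer group, and the Main Conjecture itself are exactly the content of the cited papers, and you invoke them rather than establish them. For the purpose of this survey-style theorem that is acceptable -- the paper does the same by citation -- but none of the deferred steps could be filled in at the level of detail given here, so the proposal should be read as a correct attribution and summary of the Coates--Wiles and Rubin arguments rather than an independent verification.
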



Kolyvagin extended Rubin's result to any elliptic curve of analytic rank at most $1$ 
\cite{kolyvagin1988,kolyvagin1988b,kolyvagin1991}, building on a result of Gross and Zagier 
\cite{gross1986}; see also \cite[Theorem 1, p. 430]{kolyvagin1990}.


\begin{thm}[Gross and Zagier 1986, Kolyvagin 1988--1991]
\label{thm:Theorem4}
Let $E$ be an elliptic curve over $\mathbb{Q}$. Assume that $\rank_{an}(E/\mathbb{Q})\leq 1$. Then, conjectures BSD-1 and S-T hold.
\end{thm}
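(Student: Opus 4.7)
The plan is to handle the two cases $\rank_{an}(E/\mathbb{Q})=0$ and $\rank_{an}(E/\mathbb{Q})=1$ uniformly via Heegner points and Kolyvagin's Euler system, taking as input the modular parametrization $\varphi: X_0(N_E) \to E$ provided by the Modularity Theorem (Theorem \ref{thm:Theorem2}). First, I would choose an auxiliary imaginary quadratic field $K$ satisfying the Heegner hypothesis for the conductor $N_E$: every prime dividing $N_E$ splits in $K$. This forces the root number of $E/K$ to be $-1$, so $L(E/K,s) = L(E/\mathbb{Q},s)\cdot L(E^K/\mathbb{Q},s)$ vanishes to odd order at $s=1$. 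By analytic nonvanishing results for quadratic twists (Waldspurger, Bump--Friedberg--Hoffstein, Murty--Murty), one can arrange in addition that $L(E^K/\mathbb{Q},1)\neq 0$, so that $\ord_{s=1} L(E/K,s) = 1$ in both cases under consideration.

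Second, I would attach to $K$ the basic Heegner point $y_K \in E(K)$ coming via $\varphi$ from a CM point on $X_0(N_E)$ of discriminant equal to $\mathrm{disc}(K)$. By the Gross--Zagier formula \cite{gross1986}, the N\'eron--Tate canonical height $\hat{h}(y_K)$ is a nonzero rational multiple of $L'(E/K,1)$; combined with Step 1 this forces $y_K$ to have infinite order in $E(K)$. The action of complex conjugation splits $E(K)\otimes\mathbb{Q}$ as $(E(\mathbb{Q})\otimes\mathbb{Q}) \oplus (E^K(\mathbb{Q})\otimes\mathbb{Q})$, and tracking the sign of the Atkin--Lehner involution of the newform attached to $E$ shows that $y_K$ lies in the $E(\mathbb{Q})$-factor when $\rank_{an}(E/\mathbb{Q})=1$, and in the $E^K(\mathbb{Q})$-factor when $\rank_{an}(E/\mathbb{Q})=0$. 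Already this yields the inequality $\rank(E/\mathbb{Q}) \geq \rank_{an}(E/\mathbb{Q})$.

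Third, I would apply Kolyvagin's Euler system of Heegner points \cite{kolyvagin1988,kolyvagin1990,kolyvagin1991}: over ring class fields $K_n$ for squarefree products $n$ of Kolyvagin primes (primes inert in $K$ and satisfying appropriate congruences relative to $E$ and a fixed $\ell$), one obtains points $y_n \in E(K_n)$ with explicit norm-compatibilities. Out of these one constructs derived cohomology classes $c_n \in \Homol^1(\Gal(\overline{K}/K), E[\ell])$ whose local conditions at every place place them in $S^{[\ell]}(E/K)$, and a Chebotarev density argument on these classes produces enough relations in $\Sha(E/K)[\ell]$ to bound it for every prime $\ell$. The same argument forces $\rank(E/K)=1$, which combined with the decomposition above gives $\rank(E/\mathbb{Q}) = \rank_{an}(E/\mathbb{Q})$, i.e.\ BSD-1. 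Finiteness of $\Sha(E/\mathbb{Q})$ then follows from that of $\Sha(E/K)$ via the restriction map $\Sha(E/\mathbb{Q}) \to \Sha(E/K)^{\Gal(K/\mathbb{Q})}$, whose kernel is controlled by an inflation-restriction sequence up to a $2$-primary nuisance.

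The main obstacle is Step 3: constructing the derived classes, verifying that they satisfy the required local Selmer conditions (a delicate computation at primes dividing $n$ and at primes of bad reduction for $E$), and running Kolyvagin's descent to convert Euler-system global triviality into a bound on Selmer cohomology. The Gross--Zagier formula used in Step 2 is itself an enormous technical input, resting on a Rankin--Selberg integral representation and arithmetic intersection theory on integral models of modular curves, but can be quoted as a black box; and the existence of a suitable $K$ in Step 1 is, by 1991, a quotable analytic theorem. Historically the proof proceeds exactly in this order: Gross--Zagier supplies the nontorsion Heegner point in analytic rank $1$, and Kolyvagin uses it as the seed of his Euler system to derive both BSD-1 and S-T in analytic rank $\leq 1$.
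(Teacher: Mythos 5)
The paper does not prove this theorem at all: it is quoted as a known result, with the proof delegated to the cited works of Gross--Zagier and Kolyvagin, so there is no internal argument to compare against. Your outline is a faithful summary of exactly that cited proof (Heegner hypothesis, Gross--Zagier nontriviality of $y_K$, Kolyvagin's derived classes and descent over $K$, then transfer to $\mathbb{Q}$ via inflation--restriction up to $2$-torsion), with the major inputs correctly identified as black boxes.

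One point in Step 1 is misstated, though the fix is standard. Under the Heegner hypothesis the sign of the functional equation of $L(E/K,s)$ is $-1$, so $\ord_{s=1}L(E/K,s)$ is odd; hence when $\rank_{an}(E/\mathbb{Q})=0$ you \emph{cannot} arrange $L(E^K,1)\neq 0$, since that would give total order $0$. In that case parity forces $L(E^K,1)=0$ and what the nonvanishing theorems you cite (Bump--Friedberg--Hoffstein, Murty--Murty) actually supply is a twist with $L'(E^K,1)\neq 0$, i.e.\ analytic rank exactly $1$ for the twist; only when $\rank_{an}(E/\mathbb{Q})=1$ does one choose $K$ with $L(E^K,1)\neq 0$. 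With that correction the conclusion $\ord_{s=1}L(E/K,s)=1$ holds in both cases, and the rest of your sketch (eigenspace location of $y_K$ under complex conjugation, Kolyvagin's bound on $S^{[\ell]}(E/K)$ and on $\Sha(E/K)$, and descent of finiteness to $\Sha(E/\mathbb{Q})$) is the standard argument of the references the paper cites.
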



The reader may consult \cite{rubin1999} and \cite{rubin2002} for further reading on the Birch and Swinnerton-Dyer conjectures and the rank of elliptic curves. In particular, the notions of Heegner points and Euler systems are explained. In this work, these notions
 do not intervene.

Results on the finiteness of the Shafarevich-Tate group have been obtained by Kolyvagin and Logachev in the case of Abelian varieties
\cite{logachev1989,logachev1991}. 

The finiteness of $\Sha(E/\mathbb{Q})$ implies that its order is a perfect square, based on Cassels' pairing for elliptic curves 
\cite[p. 341]{silverman2009}. Poonen and Stoll have studied the Cassels-Tate pairing in the case of Abelian varieties 
\cite{poonen1999}.


\subsection{Main theorem of this work and consequences}
\label{subsection:MainTheorems}

Firstly, as mentioned in \cite[Section 3]{poonen1999b}, the Shafarevich-Tate group is a torsion group, for the simple reason that it is a subgroup of the torsion group $\Homol^1(\mathcal{G},E(\overline{K}))$, having considered Galois cohomology. 

The following proposition states equivalent formulations.


\begin{prop}
\label{thm:Proposition1}
Let $E$ be an elliptic curve over an algebraic number field $K$. Let $\ell$ be a prime number. Then, the following conditions are
equivalent:

a) \begin{equation}
\Sha(E/K)_{\ell} = 0,
\end{equation}
where $\Sha(E/K)_{\ell}$ denotes the $\ell$-primary component of $\Sha(E/K)$.

b) \begin{equation}
\Sha(E/K)[\ell] = 0.
\end{equation}

c) \begin{equation}
\partial: E(K)/[\ell](E(K)) \xrightarrow{\approx} S^{[\ell]}(E/K).
\end{equation}

d) \begin{equation}
S^{[\ell]}(E/K) = \Ker\{ \Homol^{1}(\mathcal{G},E[\ell]) \rightarrow 
 \Homol^{1}(\mathcal{G},E(\overline{K})) \}.
\end{equation}
\end{prop}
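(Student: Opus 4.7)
The plan is to chain three equivalences, (a)$\Leftrightarrow$(b), (b)$\Leftrightarrow$(c), and (c)$\Leftrightarrow$(d), each of which follows from a short exact sequence already displayed in Section~\ref{subsection:statement}, together with the observation that $\Sha(E/K)$ is a torsion group.

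For (a)$\Leftrightarrow$(b) I note that $\Sha(E/K)[\ell] \subseteq \Sha(E/K)_{\ell}$, so (a) trivially implies (b); conversely, if there were a nonzero $x \in \Sha(E/K)_{\ell}$ of order $\ell^{n}$ with $n \geq 1$, then $\ell^{n-1}x$ would be a nonzero element of $\Sha(E/K)[\ell]$, contradicting (b). For (b)$\Leftrightarrow$(c) I invoke the short exact sequence
\begin{equation*}
0 \to E(K)/[\ell]E(K) \xrightarrow{\partial} S^{[\ell]}(E/K) \to \Sha(E/K)[\ell] \to 0
\end{equation*}
already derived in the excerpt from the diagram~(\ref{eq:eq3final6}); since $\partial$ is injective, it is an isomorphism if and only if its cokernel $\Sha(E/K)[\ell]$ vanishes.

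For (c)$\Leftrightarrow$(d), write $K_{\ell}$ for the kernel appearing in (d). Exactness of~(\ref{eq:eq2final6}) identifies $\Image(\partial)$ with the kernel of the map $\Homol^{1}(\mathcal{G},E[\ell])\to \Homol^{1}(\mathcal{G},E(\overline{K}))[\ell]$; but because $E[\ell]$ is annihilated by $\ell$, so is $\Homol^{1}(\mathcal{G},E[\ell])$, and consequently this kernel agrees with $K_{\ell}$. Hence $\Image(\partial) = K_{\ell}$, and from diagram~(\ref{eq:eq3final6}) one has $K_{\ell} \subseteq S^{[\ell]}(E/K)$. Therefore assertion (c), which says $\Image(\partial) = S^{[\ell]}(E/K)$, is equivalent to assertion (d), which says $K_{\ell} = S^{[\ell]}(E/K)$.

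The only real subtlety, and the step one must not gloss over, is the identification $\Ker\{\Homol^{1}(\mathcal{G},E[\ell])\to \Homol^{1}(\mathcal{G},E(\overline{K}))\} = \Ker\{\Homol^{1}(\mathcal{G},E[\ell])\to \Homol^{1}(\mathcal{G},E(\overline{K}))[\ell]\}$, i.e.\ recognizing that restricting the codomain to its $\ell$-torsion subgroup does not affect the kernel when the source is killed by $\ell$. Once that observation is in hand, the proposition unwinds into elementary bookkeeping with the exact sequences~(\ref{eq:eq2final6}) and~(\ref{eq:eq3final6}) already on the page.
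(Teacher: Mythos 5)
Your proposal is correct and follows essentially the same route as the paper: (a)$\Leftrightarrow$(b) by an elementary torsion argument, (b)$\Leftrightarrow$(c) from the short exact sequence (\ref{eq:eq16final6}), and (c)$\Leftrightarrow$(d) by identifying $\Image(\partial)$ with the kernel in (d) via (\ref{eq:eq2final6}) and noting that this kernel sits inside $S^{[\ell]}(E/K)$ because the local maps factor through $\Homol^{1}(\mathcal{G},E(\overline{K}))$. Your (b)$\Rightarrow$(a) step (taking $\ell^{n-1}x$ of a putative element of order $\ell^{n}$) is a slightly more direct version of the paper's finite-subgroup argument, but the substance is the same.
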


\begin{proof}
$a) \Rightarrow b)$. This is clear since $\Sha(E/K)[\ell] \subseteq \Sha(E/K)_{\ell}$.

$b) \Rightarrow a)$. Assume that $\Sha(E/K)[\ell]=0$. 
Let $A$ be any finite subgroup of $\Sha(E/K)_{\ell}$. Then, $A$ is of the form
$\oplus_{i=1}^{n} \mathbb{Z}/\ell^{m_i}\mathbb{Z}$, where $m_i\in \mathbb{N}$. Now, one must have 
$n \leq \dim_{\mathbb{F}_\ell} \Sha(E/K)[\ell]$. Thus, $n=0$ and $\Sha(E/K)_{\ell}=0$.

$b) \Leftrightarrow c)$. This follows from the short exact sequence:
\begin{equation}
\label{eq:eq16final6}
0 \rightarrow E(K)/[\ell](E(K)) \xrightarrow{\partial} S^{[\ell]}(E/K) 
\rightarrow \Sha(E/K)[\ell] \rightarrow 0.
\end{equation}

$c) \Leftrightarrow d)$. From the short exact sequence (\ref{eq:eq2final6}), one 
has an isomorphism:
\begin{equation}
\partial: E(K)/[\ell](E(K)) \xrightarrow{\approx} 
\Ker \Bigl \{ \Homol^{1}(\mathcal{G},E[\ell]) 
\rightarrow \Homol^{1}(\mathcal{G},E(\overline{K})) \Bigr \}.
\end{equation}
From the compositum of homomorphisms at any place $v$ of $K$:
\begin{equation}  
\Homol^{1}(\mathcal{G},E[\ell]) \rightarrow \Homol^{1}(\mathcal{G},E(\overline{K}))
\xrightarrow{Res^{\mathcal{G}}_{\mathcal{G}_v}} \Homol^{1}(\mathcal{G}_v,E(\overline{K}_v)),
\end{equation}
one obtains
\begin{equation}  
\partial: E(K)/[\ell](E(K)) \xrightarrow{\approx}
\Ker \Bigl \{ \Homol^{1}(\mathcal{G},E[\ell]) 
\rightarrow \Homol^{1}(\mathcal{G},E(\overline{K})) \Bigr \} \subseteq S^{[\ell]}(E/K).
\end{equation}
The equivalence is now clear.
\end{proof}


{\bf Remark 1.}
\label{remark1}
Part d) of the proposition expresses a local-global principle: $f \in\Homol^{1}(\mathcal{G},E[\ell])$ 
splits in $E(\overline{K}_v)$ for all places $v$ if and only if it does in $E(\overline{K})$. 
The obstruction to this principle is thus $\Sha(E/K)_{\ell}$ in view of part a) of the proposition.\\


We now state the main results of this paper. 
As in \cite[IV-2.1]{serre1968}, we say that an elliptic curve has complex multiplication (CM), if it does over some finite extension 
$F_{\textrm{\tiny CM}}/\mathbb{Q}$; {\em i.e.}, $\End_{F_{\textrm{\tiny CM}}}(E)$ is an order in an imaginary quadratic field 
$K_{\textrm{\tiny CM}}$ \cite[Section 5]{rubin1999}. 

Given a prime number $\ell$, one has a Galois representation $\rho_\ell: \mathcal{G} \rightarrow {\bf GL}_2(T_\ell)$, 
obtained by Galois action on the Tate module $T_\ell$ of $E$. This representation identifies 
$\Gal(L_{\infty}/\mathbb{Q})$ with 
$\rho_\ell(\mathcal{G})$, where $L_\infty$ denotes the field obtained by adjoining to $\mathbb{Q}$ the affine coordinates of all $\ell^n$-torsion points of $E$, with $n\geq 1$. 


\begin{thm}
\label{thm:Theorem5}
Let $E$ be an elliptic curve over $\mathbb{Q}$ without CM, and consider a Weierstrass equation of the form $y^2=x^3+Ax+B$,
 with $A,B\in \mathbb{Z}$. 

Let $\ell \not = 2,3,5,7,13$ be a prime number. Assume that: i) $\rho_\ell(\mathcal{G})$ 
is the full linear group ${\bf GL}_2(\mathbb{Z}_\ell)$; and ii) $\ell \nmid \Delta^\prime := 4A^3+27 B^2$. 

Then, one has:
\begin{equation}
S^{[\ell]}(E/\mathbb{Q}) = \Ker \Bigl \{ \Homol^{1}(\mathcal{G},E[\ell]) \rightarrow 
 \Homol^{1}(\mathcal{G},E(\overline{\mathbb{Q}})) \Bigr \}.
\end{equation} 
\end{thm}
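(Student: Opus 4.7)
The plan is to verify condition (d) of Proposition~\ref{thm:Proposition1}, which by the equivalences established there amounts to proving $\Sha(E/\mathbb{Q})[\ell]=0$, and hence $\Sha(E/\mathbb{Q})_{\ell}=0$. The central device will be the inflation--restriction sequence with respect to the splitting field $L=\mathbb{Q}(E[\ell])$ of the mod-$\ell$ representation. Under hypothesis (i), reducing $\rho_\ell$ modulo $\ell$ identifies $G:=\Gal(L/\mathbb{Q})$ with $\mathbf{GL}_2(\mathbb{F}_\ell)$ acting on $E[\ell]\cong \mathbb{F}_\ell^{2}$ via the standard representation.

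First I would invoke Sah's lemma to kill $H^{i}(G,E[\ell])$ in every degree $i\geq 0$: since $\ell\geq 3$, the center of $G$ contains a scalar $c\cdot I$ with $c\in\mathbb{F}_\ell^{\times}\setminus\{1\}$, and multiplication by $c-1$ is invertible on $E[\ell]$. The inflation--restriction sequence then collapses to an isomorphism
\[
H^{1}(\mathcal{G},E[\ell]) \;\xrightarrow{\sim}\; H^{1}(\mathcal{G}_{L},E[\ell])^{G}\;=\; \Hom_{G}\bigl(\mathcal{G}_{L}^{\mathrm{ab}},\,E[\ell]\bigr),
\]
so every Selmer class $s\in S^{[\ell]}(E/\mathbb{Q})$ is represented by a $G$-equivariant continuous homomorphism $\phi_{s}:\mathcal{G}_{L}\to E[\ell]$. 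The theorem reduces to showing that each such $\phi_{s}$ already lies in $\Ker\{H^{1}(\mathcal{G},E[\ell])\to H^{1}(\mathcal{G},E(\overline{\mathbb{Q}}))\}$, i.e., that the Selmer condition alone forces $\phi_{s}$ into the coboundary image of $E(\mathbb{Q})/[\ell]E(\mathbb{Q})$.

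The next step is to translate the Selmer condition into local constraints on $\phi_{s}$. At every prime $v\nmid\ell$ of good reduction, the local image $\partial\bigl(E(\mathbb{Q}_{v})/[\ell]E(\mathbb{Q}_{v})\bigr)$ is precisely the unramified subgroup of $H^{1}(\mathcal{G}_{v},E[\ell])$, so $\phi_{s}$ is unramified at all places of $L$ above $v$; by hypothesis (ii), this covers all $v\notin\{\ell\}\cup\{v\mid\Delta'\}$. At $v=\ell$, good reduction and Kummer theory on the formal group $\hat{E}/\mathbb{Z}_{\ell}$ cut out a tightly controlled finite/flat subspace. At the finitely many $v\mid\Delta'$, the local image is pinned down by Tate's uniformization or the Kodaira--N\'eron classification. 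Taken together, these constraints force $\phi_{s}$ to factor through a quotient of $\mathcal{G}_{L}^{\mathrm{ab}}$ unramified outside the small set of primes above $\ell\Delta'$.

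The main obstacle is the concluding step: combining $G$-equivariance with the ramification constraints to show $\phi_{s}$ lies in the image of $E(\mathbb{Q})/[\ell]E(\mathbb{Q})$. This is where the exclusion $\ell\notin\{2,3,5,7,13\}$ becomes essential --- these are precisely the primes for which $X_{0}(\ell)$ has genus zero and the mod-$\ell$ representation can carry exceptional isogeny/congruence phenomena that would obstruct the argument. For the remaining primes, the irreducibility and bigness of $E[\ell]$ as a $G$-module, together with a class-field-theoretic analysis of the ray class group of $L$ with conductor supported on $\ell\Delta'$, should bound the space of admissible $\phi_{s}$ by exactly the rank-plus-torsion contribution coming from $E(\mathbb{Q})/[\ell]E(\mathbb{Q})$, yielding the desired equality of kernels.
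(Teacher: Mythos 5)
Your opening reductions are sound and in fact parallel the paper: the inflation--restriction collapse via Sah's lemma, and the translation of the Selmer condition into ramification/conductor constraints at places away from $\ell\Delta'$, at $\ell$, and at the bad places, correspond to the paper's Propositions~\ref{thm:Proposition7}--\ref{thm:Proposition9} and Lemma~\ref{thm:Lemma7}. The problem is the last step, which is the entire content of the theorem and which you only assert: you give no mechanism by which ``$G$-equivariance plus a ray-class analysis of $L$ with conductor supported on $\ell\Delta'$'' bounds the admissible $\phi_s$ by exactly the image of $E(\mathbb{Q})/[\ell]E(\mathbb{Q})$. Worse, the route as described cannot be completed, because your only use of the exclusion $\ell\notin\{2,3,5,7,13\}$ is to avoid ``exceptional isogeny/congruence phenomena'' --- but those are already excluded by hypothesis~(i), so your sketch would apply verbatim at $\ell=5$ to the conductor-$1058$ curve of Example~3: there $\widetilde\rho_5$ is surjective, $5\nmid\Delta_1'$, $E(\mathbb{Q})=0$, and yet $\#\Sha(E/\mathbb{Q})=25$, so $\Sha(E/\mathbb{Q})[5]\neq 0$ and the equality of kernels fails. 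Hence any argument in which the excluded primes enter only through the image of $\rho_\ell$ proves a false statement. In the paper the exclusion is used quite differently: it is exactly the condition $(\ell-1)\nmid 12$, which guarantees (Lemma~\ref{thm:Lemma9}) a diagonal element $\sigma_0$ of prime order $d_0\mid(\ell-1)/\gcd(\ell-1,12)$ and a nontrivial central scalar $\beta$ inside $\Gal(L/(L\cap K^\prime))$, where $K^\prime$ is an auxiliary number field of degree $2^s3^t$; Sah's theorem must be applied over $K^\prime$, not over $\mathbb{Q}$, and this is where the restriction bites.

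The missing idea is the paper's passage from local points to global points over the fixed field $K^\prime$. Theorem~\ref{thm:Theorem6} (resting on the trace estimates of Proposition~\ref{thm:Proposition2}, which is where hypotheses~(i) and $\ell\nmid\Delta^\prime$ do real work) shows that every $\ell^n$-torsion point of every reduced curve $\widetilde E(\mathbb{F}_p)$ lifts to a point whose $y$-coordinate is $(a/b)^{1/2}$ with $a,b$ bounded independently of $n$ and $p$; consequently every $P_0\in E(\mathbb{Q}_p)$ decomposes as $\xi(P)+[\ell]Q^\prime$ with $P\in E(K^\prime)$ (Proposition~\ref{thm:Proposition10}). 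This is combined with a Chebotarev choice of places whose Frobenius generates prescribed cyclic subgroups of $\Gal(\widetilde L^\prime/L^\prime)$ (Lemma~\ref{thm:Lemma8}), a character decomposition under $\langle\sigma_0^\prime\rangle$ together with a Pontryagin-duality comparison of the two Selmer-type images (Proposition~\ref{thm:Proposition12}), and finally $\Homol^1(G^\prime,E[\ell])=0$ by Sah (via $\beta$) plus the fact that $\Homol^{1}(\Gal(K^\prime/\mathbb{Q}),E(K^\prime))$ is annihilated by $[K^\prime:\mathbb{Q}]$, which is prime to $\ell$, to conclude that every Selmer class splits in $E(\overline{\mathbb{Q}})$. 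No counterpart of this machinery --- nor any substitute that would block the $\ell=5$ counterexample --- appears in your proposal, so there is a genuine gap at precisely the step that constitutes the theorem.
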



Note that from Serre's Theorems \cite[Th\'eor\`eme 2, p. 294]{serre1972} and \cite[Th\'eor\`eme 4$^\prime$, p. 300]{serre1972}, 
it follows that 
the Galois group $\rho_\ell(\mathcal{G})$ is the full linear group for almost all primes, whenever $E$ has no CM. 
Furthermore, in the case $E$ is semi-stable ({\em i.e.}, with no additive reduction) without CM, 
Mazur's Theorem \cite[Theorem 4, p. 131]{mazur1978} implies that $\rho_\ell(\mathcal{G})$ is the full linear group for $\ell\geq 11$.
It follows that $\Sha(E/\mathbb{Q})_{\ell}=0$, for any $\ell \geq 17$ not dividing $\Delta^\prime$, whenever $E$ is semi-stable. 
  
Proposition \ref{thm:Proposition1} states that Theorem \ref{thm:Theorem5} implies the following consequences.


\begin{cor}
\label{thm:Corollary1}
Let $E$ be any elliptic curve over $\mathbb{Q}$ without CM. 
Then, for almost all prime numbers $\ell$, one has:

a) \begin{equation}
\Sha(E/\mathbb{Q})_{\ell} = 0,
\end{equation}
where $\Sha(E/\mathbb{Q})_{\ell}$ denotes the $\ell$-primary component of $\Sha(E/\mathbb{Q})$;

b) 
\begin{equation}
\Sha(E/\mathbb{Q})[\ell] = 0;
\end{equation}

c) 
\begin{equation}
\partial: E(\mathbb{Q})/[\ell](E(\mathbb{Q})) \xrightarrow{\approx} S^{[\ell]}(E/\mathbb{Q}).
\end{equation}
\end{cor}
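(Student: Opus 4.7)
The plan is to deduce Corollary \ref{thm:Corollary1} as an immediate combination of Theorem \ref{thm:Theorem5} and Proposition \ref{thm:Proposition1}, once one checks that the hypotheses of Theorem \ref{thm:Theorem5} are satisfied for all but finitely many primes $\ell$. Parts a), b) of the corollary are precisely the specialization to $K=\mathbb{Q}$ of the equivalent conditions a), b) of Proposition \ref{thm:Proposition1}, and part c) of the corollary is part c) of the proposition; meanwhile, Theorem \ref{thm:Theorem5} provides exactly condition d) of the proposition. So the entire corollary will follow once we exhibit a finite set $S$ of primes outside of which all three hypotheses of Theorem \ref{thm:Theorem5} simultaneously hold.

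First, I would fix once and for all a Weierstrass model $y^2 = x^3 + Ax + B$ with $A,B\in\mathbb{Z}$ for $E$; such a model exists for every elliptic curve over $\mathbb{Q}$. With this choice, $\Delta' = 4A^3 + 27 B^2$ is a fixed nonzero integer, so its set of prime divisors is finite, and the explicit exceptional list $\{2,3,5,7,13\}$ appearing in Theorem \ref{thm:Theorem5} is also finite. These two conditions together exclude only finitely many primes.

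The one nontrivial input is the surjectivity hypothesis $\rho_\ell(\mathcal{G}) = \mathbf{GL}_2(\mathbb{Z}_\ell)$. For this I would invoke Serre's open image theorem for non-CM elliptic curves over $\mathbb{Q}$, as already recalled in the paragraph immediately following Theorem \ref{thm:Theorem5}: by \cite{serre1972}, for any elliptic curve $E/\mathbb{Q}$ without CM, the image $\rho_\ell(\mathcal{G})$ is the full group $\mathbf{GL}_2(\mathbb{Z}_\ell)$ for all but finitely many primes $\ell$. I expect this to be the only step that requires more than bookkeeping; the Serre input is what makes the argument depend essentially on the absence of CM.

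Combining the three finite exceptional sets into a single finite set $S$ of primes, Theorem \ref{thm:Theorem5} applies to every $\ell\notin S$ and delivers condition d) of Proposition \ref{thm:Proposition1}. Traversing the chain of equivalences $d)\Leftrightarrow c)\Leftrightarrow b)\Leftrightarrow a)$ established in Proposition \ref{thm:Proposition1} then yields parts a), b), c) of the corollary for every $\ell\notin S$, which is exactly the assertion that they hold for almost all $\ell$. I do not foresee any obstacle beyond the correct invocation of Serre's theorem and the observation that each of the three exceptional sets is finite.
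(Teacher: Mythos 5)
Your proposal is correct and follows essentially the same route as the paper: the paper likewise obtains Corollary \ref{thm:Corollary1} by combining Theorem \ref{thm:Theorem5} with the equivalences of Proposition \ref{thm:Proposition1}, using Serre's theorems (as recalled right after Theorem \ref{thm:Theorem5}, via \cite{serre1972} and the implication (\ref{eq:eq41final7})) to guarantee $\rho_\ell(\mathcal{G})={\bf GL}_2(\mathbb{Z}_\ell)$ for almost all $\ell$, and noting that the conditions $\ell\notin\{2,3,5,7,13\}$ and $\ell\nmid\Delta^\prime$ exclude only finitely many primes.
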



Mazur's Theorem on torsion points \cite[Theorem 7.5, p. 242]{silverman2009} then implies the following result.


\begin{cor}
\label{thm:Corollary2}
Let $E$ be any elliptic curve over $\mathbb{Q}$ without CM. 
Then, for almost all prime numbers $\ell$, one has:
\begin{equation}
\rank(E/\mathbb{Q}) = \dim_{\mathbb{F}_\ell} S^{[\ell]}(E/\mathbb{Q}).
\end{equation}
\end{cor}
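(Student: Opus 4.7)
The plan is to combine Corollary \ref{thm:Corollary1} c) with the Mordell-Weil dimension formula (\ref{eq:eq8final6}) and Mazur's torsion theorem. Since this corollary is purely formal consequence of what has already been established, the work should be essentially an exercise in chaining three facts together rather than proving anything new.

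First, I would invoke Corollary \ref{thm:Corollary1} c): for almost all primes $\ell$, the connecting map $\partial$ gives an isomorphism $E(\mathbb{Q})/[\ell](E(\mathbb{Q})) \xrightarrow{\approx} S^{[\ell]}(E/\mathbb{Q})$. Taking $\mathbb{F}_\ell$-dimensions, this yields
\begin{equation}
\dim_{\mathbb{F}_\ell} E(\mathbb{Q})/[\ell](E(\mathbb{Q})) = \dim_{\mathbb{F}_\ell} S^{[\ell]}(E/\mathbb{Q})
\end{equation}
for all but finitely many $\ell$. Next, I would apply the Mordell-Weil identity (\ref{eq:eq8final6}) specialized to $K=\mathbb{Q}$, namely
\begin{equation}
\dim_{\mathbb{F}_\ell} E(\mathbb{Q})/[\ell](E(\mathbb{Q})) = \rank(E/\mathbb{Q}) + \dim_{\mathbb{F}_\ell} E[\ell](\mathbb{Q}).
\end{equation}

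Finally, I would invoke Mazur's theorem that $\#E_{tor}(\mathbb{Q}) \leq 16$, already cited in the introduction. This bound implies that the prime divisors of $\#E_{tor}(\mathbb{Q})$ form a finite set (in fact contained in $\{2,3,5,7\}$), so $E[\ell](\mathbb{Q})=0$, and hence $\dim_{\mathbb{F}_\ell} E[\ell](\mathbb{Q})=0$, for all but finitely many primes $\ell$. Combining the three displayed equalities, which are valid simultaneously outside a finite exceptional set of primes, yields the desired identity $\rank(E/\mathbb{Q}) = \dim_{\mathbb{F}_\ell} S^{[\ell]}(E/\mathbb{Q})$.

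There is no real obstacle here; the only care required is bookkeeping of the finite set of excluded primes, which is the union of the exceptional set from Theorem \ref{thm:Theorem5} (the primes $2,3,5,7,13$, the primes dividing $\Delta^\prime$, and the primes where $\rho_\ell(\mathcal{G})$ fails to be surjective) together with $\{2,3,5,7\}$ from Mazur's torsion bound. Since both sets are finite, their union is finite, proving the claim.
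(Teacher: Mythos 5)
Your proof is correct and follows the same route the paper intends: Corollary \ref{thm:Corollary1} c), the dimension formula (\ref{eq:eq8final6}) for $K=\mathbb{Q}$, and Mazur's torsion theorem, with the exceptional primes being the finite union of the two exceptional sets. (The parenthetical claim that the torsion primes lie in $\{2,3,5,7\}$ needs Mazur's full classification rather than just the bound $\#E_{tor}(\mathbb{Q})\leq 16$, but only finiteness of that set is used, so the argument stands.)
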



Since $\Sha(E/\mathbb{Q})[\ell]$ is finite for any prime $\ell$, 
as it is a quotient group of the finite group $S^{[\ell]}(E/\mathbb{Q})$, it follows that 
the $\ell$-primary component of $\Sha(E/\mathbb{Q})$ is of the form:
\begin{equation}
\Sha(E/\mathbb{Q})_{\ell} = \left ( \mathbb{Q}_\ell/\mathbb{Z}_\ell \right )^{n_\ell} \oplus T_\ell,
\end{equation}
where $n_\ell\geq 0$ and $T_\ell$ is a finite $\ell$-group \cite[Section 12]{poonen1999b}. 

Thus, based on Theorem \ref{thm:Theorem5},  
the only missing piece to proving that $\Sha(E/\mathbb{Q})$ is finite in the non-CM case, 
is a proof that $\Sha(E/\mathbb{Q})$ has no infinitely divisible element. 
See also \cite[p. 341]{silverman2009} on this issue.

Examples of CM elliptic curves of rank $2$ or $3$ with endomorphism ring $\mathbb{Z}[i]$ are studied in \cite{coates2010}.  
The statement of \cite[Theorems 1.2]{coates2010} assumes the condition $\ell \equiv 1 \mod 4$, and 
the very strong condition $\ell<30,000$ (and $\ell \not = 41$), in the case of a specific curve. 
In the case of \cite[Theorems 1.3]{coates2010}, the condition $\ell \equiv 1 \mod 4$ is also assumed, and the extra restriction that 
$\ell<30,000$ (except for finitely many exceptions), and the statement is valid for $5$ specific elliptic curves. 
In contrast, Theorem \ref{thm:Theorem5} is valid for any elliptic curve without CM, and all primes $\ell$, but finitely many. 
However, we have not succeeded in carrying out the strategy of our proof to the CM case, as of now.

In Section \ref{section:examples}, an example from \cite{penney1975} of an elliptic curve $E$ over the rationals without CM of rank at least $7$ is mentioned. 
Furthermore, we show that, in this example, $\ell=41$ is the smallest prime 
({\em i.e.}, based on the conditions of Theorem \ref{thm:Theorem5}) for which Corollary \ref{thm:Corollary2} applies. 
Therefore, one can in principle find out the exact rank of $E/\mathbb{Q}$ from a computation of 
$\dim_{\mathbb{F}_{41}} S^{[41]}(E/\mathbb{Q})$.

It can be noticed that this example solves the open problem mentioned in 
\cite[Problem 2.16, p. 27]{stein2007} in the non-CM case. 

An example that was communicated to us by Professor C. Wuthrich is also mentioned in Section \ref{section:examples}. 
This example shows that the condition $\rho_\ell(\mathcal{G})={\bf GL}_2(\mathbb{Z}_\ell)$ is not sufficient to conclude 
that $\Sha(E/\mathbb{Q})_\ell=0$, if ever $\ell$ is one of the exceptional ones ({\em i.e.}, $2$, $3$, $5$, $7$, or $13$). 
This issue is crucial, in view of BSD-2.

We end this paper with a complement to Proposition \ref{thm:Proposition2} that clarifies its proof, but that is not needed as such for 
the proof of Theorem \ref{thm:Theorem5} that is presented here.


\section{Background on elliptic curves}
\label{sec:background}


\subsection{Basic notions}
\label{subsection:basicElliptic}

Let $K$ be a field and $E$ be an elliptic curve over $K$; {\em i.e.}, a smooth projective curve of genus $1$, together 
with a base point $O$. The elliptic curve admits a Weierstrass equation \cite[p. 42]{silverman2009}: 
\begin{equation}
\label{eq:eq26final6}
y^2+a_1xy+a_3y=x^3+a_2x^2+a_4x+a_6,
\end{equation}
with coefficients $a_i\in K$, $i=1,2,3,4,6$.   
One defines the quantities:
\begin{equation} 
\begin{cases}
b_2=a_1^2+4a_2;\\
b_4=2a_4+a_1a_3;\\
b_6=a_3^2+4a_6;\\
b_8=a_1^2a_6+4a_2a_6-a_1a_3a_4+a_2a_3^2-a_4^2.
\end{cases}
\end{equation}
We also set:
\begin{equation} 
\begin{cases}
c_4=b_2^2-24b_4;\\
c_6=-b_2^3+36b_2b_4-216 b_6.
\end{cases}
\end{equation}
Then, the discriminant $\Delta$ of $E$ corresponding to a given Weierstrass equation is equal to:
\begin{equation} 
\Delta(E):=-b_2^2b_8-8b_4^3-27b_6^2+9b_2b_4b_6,
\end{equation}
and its $j$-invariant (independent of the Weierstrass equation) is equal to:
\begin{equation} 
j(E):=c_4^3/\Delta.
\end{equation}

Given the cubic curve defined by a Weierstrass equation (\ref{eq:eq26final6}), there are three cases \cite[p. 45]{silverman2009}:

(1) The curve is non-singular if $\Delta\not =0$.

(2) The curve has a node if $\Delta=0$ and $c_4\not =0$.

(3) The curve has a cusp if $\Delta=0$ and $c_4=0$.

In cases (2) and (3), there is only one singular point. In case (1), the curve is an elliptic curve with base point $O=[0,1,0]$. 

The elliptic curve has also Weierstrass equation $y^2=x^2-27 c_4 x -54 c_6$, if the characteristic of $K$ is different from $2$ and $3$ \cite[p. 43]{silverman2009}. Thus, it is of the form $y^2=x^3+Ax+B$. Two elliptic curves are isomorphic over $\overline{K}$ if and only if they have the same $j$-invariant \cite[p. 45]{silverman2009}. If $K$ has characteristic different from $2$ and $3$, the proof of that result \cite[pp. 46--47]{silverman2009} shows that an isomorphism holds over a base extension obtained by adjoining $(A/A^\prime)^{1/4}$ (case $j=1728$) or $(B/B^\prime)^{1/6}$ (case $j=0$) or $(A/A^\prime)^{1/4}=(B/B^\prime)^{1/6}$ (other cases) to $K$, where the two curves have equations $y^2=x^3+Ax+B$ and $y^2=x^3+A^\prime x+B^\prime$ over $K$, respectively. So, unless $j=0$ or $1728$, the base field extension has degree dividing $2$ (the g.c.d. of $4$ and $6$). 

There is a group law defined on $E(K)$ that is a consequence of a special case of Bezout's Theorem, but that can also be defined 
explicitly. See \cite[Chapter II, Section \S 2]{silverman2009}. 

Now, let $\ell$ be a prime number. If the characteristic of $K$ is different from $\ell$, then the group $E[\ell]=E[\ell](\overline{K})$ of $\ell$-torsion points of $E$ is isomorphic to $\mathbb{Z}/\ell\mathbb{Z} \oplus Z/\ell \mathbb{Z}$.
 If $K$ has characteristic $\ell$, then $E[\ell]$ is isomorphic to $0$ or $\mathbb{Z}/\ell\mathbb{Z}$. See \cite[p. 86]{silverman2009}. 

If $m$ is a positive integer coprime with the characteristic of $K$, then there is the Weil pairing $e_m: E[m] \times E[m] \rightarrow \mu_m$, 
which is bilinear, alternating, non-degenerate, Galois invariant, and compatible  \cite[Proposition 8.1, p. 94]{silverman2009}. As a consequence, one deduces that $\mu_m \subset K$, if $E[m] \subset E(K)$, under the condition $\cha (K) \nmid m$ \cite[Corollary 8.1.1, p. 96]{silverman2009}. 

Given an elliptic curve over a field $K$, one constructs its formal group $F$ as in \cite[pp. 115-120]{silverman2009}. 
If $K$ has characteristic $\ell$, multiplication by $\ell$ in $F$ (denoted $\ell[X]\in K[[X]]$) is either $0$ or else is of the form $g(X^{\ell^h})$, where $g^\prime(0)\not = 0$ \cite{kolyvagin1980}. In the latter case, $h$ is called the height of $F$.

Let $k$ be a finite field of characteristic $\ell$ and $\widetilde E$ be an elliptic curve over $k$. Then, either \cite[p. 144--145]{silverman2009}:

(1) The formal group of $\widetilde E$ has height $h=2$ and $\widetilde E[\ell]=0$ (the Hasse invariant is $0$, or the curve is supersingular);

or

(2) The formal group of $\widetilde E$ has height $1$ and $\widetilde E[\ell]=\mathbb{Z}/\ell\mathbb{Z}$ (the Hasse invariant is $1$, or the curve is ordinary).

The first case occurs if and only if $j(\widetilde E)\in \mathbb{F}_{\ell^2}$ and the map $[\ell]$ is purely inseparable.


\subsection{Elliptic curves over local fields}
\label{subsection:localElliptic}

Let $K$ be a finite extension of ${\mathbb{Q}}_p$ and let $\overline{K}$ be its algebraic closure. Let $v$ be the discrete valuation of $K$. Given an elliptic curve $E$ over $K$, we consider its minimal Weierstrass equation \cite[pp. 185--187]{silverman2009}. That is a Weierstrass equation with coefficients in the integer ring $\mathcal{O}_v$ of $K$ with minimal value of $v(\Delta)$ among all such equations. Therefore, one can look at its reduction $\widetilde E$ modulo a uniformizer $\pi_v$ of $K$ \cite[p. 187--188]{silverman2009}, defined over the residue field $k_v$ of $K$.  One says \cite[pp. 196--197]{silverman2009}:

1) $E$ has good reduction if $\widetilde E$ is non-singular ($v(\Delta)=0$).

2) $E$ has multiplicative reduction if $\widetilde E$ has a node ($v(\Delta)>0$ and $v(c_4)=0$).

3) $E$ has additive reduction if $\widetilde E$ has a cusp ($v(\Delta), v(c_4) >0$).

The set of non-singular points $\widetilde E_{ns}(\overline{k}_v)$ of the reduced curve forms a group \cite[p. 56]{silverman2009}. 
In the case of good reduction $\widetilde E_{ns}(\overline{k}_v)=\widetilde E(\overline{k}_v)$ is an elliptic curve 
defined over $k_v$. In the case of multiplicative reduction, $\widetilde E_{ns}(\overline{k}_v)\approx \overline{k}_v^*$. In the case of additive reduction, 
$\widetilde E_{ns}(\overline{k}_v)\approx \overline{k}_v^+$. See also \cite[Exercise 3.5, p. 105]{silverman2009}.

A sufficient condition for a Weierstrass equation to be minimal is that $v(\Delta)<12$ or that $v(c_4)<4$ \cite[Remark 1.1, p. 186]{silverman2009}. Therefore, in the case of good reduction ($v(\Delta)=0$) 
or multiplicative reduction ($v(c_4)=0$), a minimal Weierstrass equation remains minimal after base field extension 
\cite[Proposition 5.4.(b), p. 197]{silverman2009}. 
In the case of additive reduction, after a suitable finite base field extension (see below), the reduction turns either good  or multiplicative. For an example of the former case, see \cite[Example 5.2, p. 196--197]{silverman2009}. For an example of the latter case, let $p$ be a prime number greater than $3$ and consider $E\::\: y^2=x^3+\sqrt[3]{p}x^2+p^2$ over $K=\mathbb{Q}_p(\sqrt[3]{p})$; then, over $K(\sqrt[2]{p})$, $E$ has Weierstrass equation $y^2=x^3+x^2+p$, as can be seen with the change of variable $y=\sqrt[2]{p}y^\prime$ and $x=\sqrt[3]{p}x^\prime$.

Next, recall that $E$ has good reduction after a base extension (potential good reduction) if and only if its $j$-invariant is an integer of $K$ \cite[p. 197]{silverman2009}. The proof of this result in the case $\cha(k_v)\not = 2$ \cite[p. 199]{silverman2009} relies on a Weierstrass equation in Legendre form $y^2=x(x-1)(x-\lambda)$, $\lambda\not = 0,1$, \cite[p. 49]{silverman2009}. Such an equation can be obtained after adjoining the roots of the cubic polynomial $x^3+(b_2/4) x^2+(b_4/2) x+b_6/4=(x-e_1)(x-e_2)(x-e_3)$ and then adjoining the square root of $e_2-e_1$. Thus, the base field extension $K^\prime/K$ can be taken of degree dividing $12$. If $\cha(k_v) =2$, the proof relies on a Weierstrass equation in Deuring normal form $y^2+\alpha x y + y=x^3$, $\alpha^3\not = 27$. Such an equation
 is obtained after adjoining a root $\alpha$ of the polynomial $x^3(x^3-24)^3-(x^3-27)j(E)$, yielding a base field extension of degree dividing $3d^\prime$ with $1 \leq d^\prime \leq 4$, and then over an extra base field extension of degree $2$, $4$ or $6$ to obtain an isomorphism with the initial elliptic curve \cite[Proposition 1.3, p. 412, and p. 47]{silverman2009}. 
In all cases, the base field extension $K^\prime/K$ has degree $d$ divisible only by powers of $2$ and $3$. 

If $\cha(k_v)\not =2$, consideration of a Weierstrass equation in Legendre form over a field extension $K^\prime$ of degree dividing 
$12$ shows that $E$ has either good or multiplicative reduction over $K^\prime$ \cite[p. 198]{silverman2009}. 
If $\cha(k_v) =2$, one considers a Weierstrass equation in Deuring normal form over a field extension $K^\prime/K$ of degree $d$ 
with only $2$ or $3$ as prime factors \cite[p. 413]{silverman2009}.  

There is a well-defined reduction map $E(K)\rightarrow \widetilde E(k_v)$ \cite[p. 188]{silverman2009}. 
Let $E_0(K)$ denote the pre-image of $\widetilde E_{ns}(k_v)$ under the reduction map. 
 Then, there is an exact sequence 
\begin{equation}
0 \rightarrow E_1(K) \rightarrow E_0(K) \rightarrow \widetilde E_{ns}(k_v) \rightarrow 0,
\end{equation}
 where the second map is the reduction map, the first map is inclusion, and $E_1(K)$ consists of all points that reduce to the point $\widetilde O$ of $\widetilde E(k_v)$ \cite[pp. 187--188]{silverman2009}. From the above remark, in the case of good reduction or multiplicative reduction, the above sequence extends to a short exact sequence of Galois modules:
\begin{equation}
0 \rightarrow E_1(\overline{K}) \rightarrow E_0(\overline{K}) \rightarrow \widetilde E_{ns}(\overline{k}_v) \rightarrow 0,
\end{equation}
where $E_{ns}(\overline{k}_v)\approx \overline{k}_v^*$ in the case of multiplicative reduction.

Moreover, there is an isomorphism 
\begin{equation}
F_v({\mathcal M}_v)\approx E_1(K),
\end{equation} 
where $F_v$ is the formal group of $E$ over $\mathcal{O}_v$ and ${\mathcal M}_v$ is the maximal ideal of 
$\mathcal{O}_{v}$ \cite[p. 191]{silverman2009}. 

Now, let $\ell$ be a prime number (possibly different from $p$) and consider a finite extension $L/K$ with
valuation $w$.
Based on the above facts, we obtain an exact sequence of Abelian groups: 
\begin{equation}
0 \rightarrow W_1 \approx E_1[\ell] \rightarrow E_{0}[\ell] \rightarrow \widetilde E_{ns}[\ell].
\end{equation} 
Here, $W_1$ is the group of $\ell$-torsion points of $F_w({\mathcal M}_w)$, and $E_{1}[\ell]$, $E_{0}[\ell]$ 
and $\widetilde E_{ns}[\ell]$ are the groups of $\ell$ torsion points of $E_1(L)$, $E_0(L)$ and 
$\widetilde E_{ns}(k_w)$, respectively. Also, reduction is with respect to a minimal Weierstrass equation for $E$ over $L$ (not necessarily the same one as over $K$ in the case of additive reduction over $K$). 

We also have an exact sequence of Abelian groups:
\begin{equation}
0 \rightarrow E_{0}[\ell] \rightarrow E[\ell] \approx \mathbb{Z}/\ell\mathbb{Z} \times \mathbb{Z}/\ell\mathbb{Z}  
\rightarrow E[\ell]/E_{0}[\ell] \rightarrow 0.
\end{equation}
The Kodaira-N\'eron Theorem \cite[Theorem 6.1, p. 200]{silverman2009} states that the Abelian group $E[\ell]/E_{0}[\ell]$ has order at most $4$ except possibly in the case of split multiplicative reduction, in which case $E[\ell]/E_{0}[\ell]$ is cyclic of order $v(\Delta)=-v(j)$.


 \begin{lem}
\label{thm:Lemma1}
Let $E$ be an elliptic curve defined over a local field $K$, with bad reduction. Let $\ell>3$ be a prime number different from the characteristic $p$ of the residue field $k_v$ of $K$. If $E$ has potential good reduction, then $E$ has good reduction over $L=K(E[\ell])$. If $E$ has potential multiplicative reduction, then $E$ has multiplicative reduction over $L$. 
\end{lem}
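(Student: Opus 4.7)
The plan is to apply the N\'eron-Ogg-Shafarevich criterion, which characterizes good (resp.\ multiplicative) reduction over a local field by the action of the inertia subgroup on the Tate module $T_\ell(E)$ at any prime $\ell$ different from the residue characteristic. Throughout, let $\rho_\ell:\mathcal G\to{\bf GL}_2(T_\ell(E))$ be the $\ell$-adic representation, $\bar\rho_\ell$ its reduction modulo $\ell$, and $I_K\subset\mathcal G$ the inertia subgroup. By construction $\Gal(\overline K/L)=\Ker\bar\rho_\ell$, and by the Weil pairing $\mu_\ell\subset L$, so the cyclotomic character satisfies $\bar\chi_{\mathrm{cyc}}|_{\Gal(\overline K/L)}\equiv 1\pmod{\ell}$.

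For the potential good reduction case, I would invoke the Serre-Tate theorem to conclude that $\rho_\ell(I_K)$ is a \emph{finite} subgroup of ${\bf GL}_2(\mathbb Z_\ell)$. The inertia $I_L$ of $L$ equals $I_K\cap \Gal(\overline K/L)$, and in particular $\rho_\ell(I_L)$ lies inside the principal congruence subgroup $\Gamma_\ell=\Ker\bigl({\bf GL}_2(\mathbb Z_\ell)\to{\bf GL}_2(\mathbb F_\ell)\bigr)$ since $\Gal(\overline K/L)$ acts trivially on $E[\ell]$. For $\ell\geq 3$, $\Gamma_\ell$ is a torsion-free pro-$\ell$ group (standard via the $\ell$-adic logarithm), so the finite subgroup $\rho_\ell(I_L)\subseteq \rho_\ell(I_K)\cap\Gamma_\ell$ is trivial. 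N\'eron-Ogg-Shafarevich then yields that $E$ has good reduction over $L$.

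For the potential multiplicative reduction case, if $E$ already has multiplicative reduction over $K$ then $v_L(j_E)<0$ is inherited and multiplicative reduction persists under base extension, so the claim is immediate. Otherwise $E$ has additive reduction of Kodaira type $I_n^\ast$, and there is a unique quadratic character $\chi:\mathcal G\to\{\pm 1\}$, necessarily ramified, whose twist $E^{(\chi)}$ has (possibly non-split) multiplicative reduction; it then suffices to show $\chi|_{I_L}=1$, since that renders the twist unramified over $L$ and $E$ inherits multiplicative reduction from $E^{(\chi)}$ over $L$. By the Tate uniformization of $E^{(\chi)}$, the representation $\bar\rho_\ell^{E^{(\chi)}}$ is upper triangular with diagonal $(\bar\chi_{\mathrm{cyc}}\eta,\eta)$, where $\eta$ is an unramified quadratic character (trivial in the split case). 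Using $\bar\chi_{\mathrm{cyc}}|_{\Gal(\overline K/L)}\equiv 1$ together with the identity $\bar\rho_\ell^E=\chi\cdot\bar\rho_\ell^{E^{(\chi)}}$ and the triviality of $\bar\rho_\ell^E|_{\Gal(\overline K/L)}$, I deduce that the scalar $(\chi\eta)|_{\Gal(\overline K/L)}$ lies both in $\{\pm I\}$ and in the unipotent upper-triangular subgroup $U\subset{\bf GL}_2(\mathbb F_\ell)$; since $\ell>2$, the intersection $\{\pm I\}\cap U$ equals $\{I\}$, so $(\chi\eta)|_{\Gal(\overline K/L)}=1$. As $\eta$ is unramified, this forces $\chi|_{I_L}=1$, as required.

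The main technical obstacle will be the bookkeeping in the multiplicative case: writing $\bar\rho_\ell^{E^{(\chi)}}$ correctly via the Tate curve, isolating the unramified part $\eta$ from the ramified twist $\chi$, and exploiting the parity obstruction ``$-I\notin U$'' that needs $\ell>2$. The good reduction case, in contrast, is a short application of N\'eron-Ogg-Shafarevich together with the torsion-freeness of $\Gamma_\ell$, which is where the hypothesis $\ell\geq 3$ enters.
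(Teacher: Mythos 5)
Your proof is correct, but it takes a genuinely different route from the paper's. You treat the two cases by separate, representation-theoretic arguments: for potential good reduction you combine the N\'eron-Ogg-Shafarevich criterion with the Serre-Tate finiteness of $\rho_\ell(I_K)$ and the torsion-freeness of $\Ker\bigl({\bf GL}_2(\mathbb{Z}_\ell)\to{\bf GL}_2(\mathbb{F}_\ell)\bigr)$ for $\ell\geq 3$; for potential multiplicative reduction you use Tate uniformization to write $E$ as a quadratic twist of a curve with multiplicative reduction and then show the twisting character is unramified over $L$, using $\mu_\ell\subset L$ and the fact that $-I$ is not unipotent when $\ell>2$. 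The paper instead gives a single, more elementary argument by contradiction: if $E$ had additive reduction over $L$, then from the filtration $E_1(L)\subset E_0(L)\subset E(L)$ one gets $E_0[\ell]=0$ (since $\ell\neq p$ kills both $E_1[\ell]$ and $k_w^+[\ell]$), while the Kodaira-N\'eron theorem bounds the order of $E(L)/E_0(L)$ by $4$ in the additive case, forcing $E[\ell]\subseteq E_0[\ell]=0$ and contradicting $E[\ell]\approx \mathbb{Z}/\ell\mathbb{Z}\oplus\mathbb{Z}/\ell\mathbb{Z}\subset E(L)$; the good/multiplicative dichotomy over $L$ is then decided by the sign of $v(j)$. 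What each buys: the paper's proof stays within the basic theory of reduction (formal groups, component groups) and uses $\ell>3$ only to make $\ell$ coprime to the component-group order, whereas yours imports heavier machinery (Serre-Tate, Tate curves, twisting) but in exchange its good-reduction half already works for $\ell=3$ and its multiplicative half for any $\ell>2$, and it isolates exactly where the inertia action enters. One cosmetic caveat: your labelling of the additive, potentially multiplicative case as Kodaira type $I_n^\ast$ is not needed (and is the only point requiring care in residue characteristic $2$ or $3$); all your argument actually uses is the existence of a quadratic character $\chi$ with $E^{(\chi)}$ of multiplicative reduction, which Tate's theory supplies in every residue characteristic.
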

\begin{proof}
By way of contradiction, assume that $E$ has additive reduction over $L$. Then, there is a short exact sequence of Abelian groups:
\begin{equation}
0 \rightarrow E_1(L) \rightarrow E_0(L) \rightarrow \widetilde E_{ns}(k_w)\approx k_w^+ \rightarrow 0,
\end{equation}
where $k_w$ denotes the residue field of $L$. This yields a short exact sequence:
\begin{equation}
0 \rightarrow E_1[\ell] \rightarrow E_0[\ell] \rightarrow k_w^+[\ell],
\end{equation}
since $E_1[\ell],E_0[\ell] \subseteq E[\ell]$, as $L=K(E[\ell])$. 
But since $\ell\not = p$, it follows that $E_1[\ell]=0$ and that $k_w^+[\ell]=0$. Therefore, one obtains that $E_0[\ell]=0$. 
Now, let $o$ be the order of $E(L)/E_0(L)$. Then, an element $P$ of $E[\ell]$ satisfies both conditions $[\ell] P=O\in E_0(L)$ 
and $[o] P\in E_0(L)$. Therefore, since $(o,\ell)=1$, as $\ell>3$, one concludes that $P\in E_0[\ell]$. This means that $E[\ell]\subseteq E_0[\ell]=0$. However, this conclusion contradicts the fact that 
$E[\ell]\approx \mathbb{Z}/\ell \mathbb{Z} \oplus \mathbb{Z}/\ell \mathbb{Z}$,
 as $L=K(E[\ell])$. 

So, if $E$ has bad potentially good reduction ({\em i.e.}, potential good reduction), then $E$ must have good reduction over $L$. 
If $E$ has bad and no potential good reduction ({\em i.e.}, potential multiplicative reduction), then $E$ must have multiplicative reduction over $L$.
\end{proof}



 \begin{lem}
\label{thm:Lemma2}
Let $E$ be an elliptic curve defined over a local field $K$, with good reduction. Let $m$ be a positive integer coprime with
 the characteristic $p$ of the residue field $k_v$ of $K$. Then, one has an isomorphism:
\begin{equation}
E[m](K) \xrightarrow{\approx} \widetilde E[m](k_v). 
\end{equation}
\end{lem}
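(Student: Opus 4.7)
The approach is the standard argument for injectivity of reduction on prime-to-$p$ torsion (as in Silverman, \emph{Arithmetic of Elliptic Curves}, VII.3). The plan is to use the short exact sequence relating the formal group, the group of $K$-points, and the reduction, then take $m$-torsion via the snake lemma, exploiting the fact that $[m]$ is invertible on the formal group when $(m,p)=1$.

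First I would observe that, in the case of good reduction over $K$, every $K$-point reduces to a non-singular point, so $E_0(K)=E(K)$ and $\widetilde{E}_{ns}(k_v)=\widetilde{E}(k_v)$. Combined with the surjectivity of the reduction map on $E_0(K)$ recorded in the excerpt, this gives a short exact sequence of Abelian groups
\[
0 \to E_1(K) \to E(K) \to \widetilde{E}(k_v) \to 0.
\]
Next I would invoke the isomorphism $E_1(K)\approx F_v(\mathcal{M}_v)$ already cited in the excerpt, where $F_v$ is the formal group attached to a minimal Weierstrass equation for $E$ and $\mathcal{M}_v$ is the maximal ideal of $\mathcal{O}_v$.

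The key step is to show that multiplication by $m$ is a bijection on $F_v(\mathcal{M}_v)$. In the formal group, $[m]$ is represented by a power series $[m](X) = mX + O(X^2) \in \mathcal{O}_v[[X]]$; because $(m,p)=1$, the leading coefficient $m$ is a unit in $\mathcal{O}_v$, so $[m](X)$ admits a compositional inverse in $\mathcal{O}_v[[X]]$. Since elements of $\mathcal{M}_v$ are topologically nilpotent, substitution converges, and hence $[m]$ acts bijectively on $F_v(\mathcal{M}_v)\approx E_1(K)$. In particular,
\[
E_1(K)[m] = 0 \quad\text{and}\quad E_1(K)/[m]E_1(K) = 0.
\]

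Finally I would apply the snake lemma to the diagram obtained by multiplication by $m$ on the short exact sequence above. The resulting six-term exact sequence begins
\[
0 \to E_1(K)[m] \to E(K)[m] \to \widetilde{E}(k_v)[m] \to E_1(K)/[m]E_1(K),
\]
and the vanishing of the outer terms yields the desired isomorphism $E[m](K) \xrightarrow{\approx} \widetilde{E}[m](k_v)$. There is no serious obstacle here: once the formal-group identification is in hand, the argument is purely formal, and the only mild point to record is that reduction is functorial in the base field, so that $K$-rational $m$-torsion maps into $k_v$-rational $m$-torsion.
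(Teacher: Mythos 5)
Your proof is correct, but it reaches the conclusion by a genuinely different route than the paper for the surjectivity half. Both arguments use the same fact for injectivity: you prove directly that $[m]$ is invertible on the formal group, so $E_1(K)[m]=0$, while the paper cites \cite[Proposition 3.1, p. 192]{silverman2009}, whose proof is exactly that formal-group fact. Where you diverge is surjectivity: the paper invokes the Criterion of N\'eron--Ogg--Shafarevich to see that $K(E[m])/K$ is unramified, so that a torsion point of $\widetilde E[m](k_v)$ lifts to a $K$-rational point of $E[m]$; you instead observe that $E_1(K)/[m]E_1(K)=0$ (again because $[m]$ is a bijection on $F_v(\mathcal{M}_v)$) and run the snake lemma on the exact sequence $0 \rightarrow E_1(K) \rightarrow E(K) \rightarrow \widetilde E(k_v) \rightarrow 0$, which requires the surjectivity of the reduction map $E(K)\rightarrow \widetilde E(k_v)$ -- valid here by Hensel's lemma since $K$ is a finite extension of $\mathbb{Q}_p$, hence complete, a fact the paper records in Section \ref{subsection:localElliptic}. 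Your route is more self-contained (no appeal to N\'eron--Ogg--Shafarevich) and gives as a by-product the isomorphism $E(K)/[m]E(K)\approx \widetilde E(k_v)/[m]\widetilde E(k_v)$; the paper's route produces the lift directly as an $m$-torsion point and makes the Galois-module picture explicit (inertia acts trivially on $E[m]$), which is the form in which the lemma is reused later in the paper. Either argument establishes the lemma.
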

\begin{proof}
Firstly, there is a well-defined map $E[m](K) \rightarrow \widetilde E[m](k_v)$ obtained by restriction of the 
reduction map $E(K) \rightarrow \widetilde E(k_v)$. Next, this map is one-to-one, having assumed that $(m,p)=1$ and 
that $\widetilde E$ is non-singular \cite[Proposition 3.1, p. 192]{silverman2009}. 
Lastly, the Criterion of N\'eron-Ogg-Shafarevich implies that the extension 
$K(E[m])/K$ is unramified, having assumed good reduction and $(m,p)=1$ \cite[Theorem 7.1, p. 201]{silverman2009}. 
It follows that, for any $Q\in E[m](\overline{K})$, 
the degree of the extension $K(Q)/K$ is equal to its residue degree. Thus, a torsion point 
in $\widetilde E[m](k_v)$ can be lifted to a torsion point in $E[m](K)$. 
\end{proof} 



\subsection{Elliptic curves over an algebraic number field}
\label{subsection:globalEllipticK}

Let $E$ be an elliptic curve over an algebraic number field $K$. Then, $E$ admits a Weierstrass equation of the form 
$y^2=x^3+Ax+B$, with $A,B\in \mathcal{O}_K$, where $\mathcal{O}_K$ denotes the integer ring of $K$. 
Indeed, $E$ has a Weierstrass equation over $K$ of the form $y^2 = x^3 - 27 c_4 x - 54 c_6$, with $c_4,c_6\in K$ 
\cite[pp. 42--43]{silverman2009}. 
Writing $c_4=C_4/d$ and $c_6=C_6/d$, with $C_4,C_6,d\in \mathcal{O}_K$, one obtains the Weierstrass equation 
$y^2=x^3 - 27  C_4 d^3 x - 54 C_6 d^5$, upon replacing $(x,y)$ by $(x/d^2,y/d^3)$. 
Thus, $E$ has a Weierstrass equation 
of the form $y^2=x^3+Ax+B$, upon taking $A=- 27  C_4 d^3,B=- 54 C_6 d^5\in \mathcal{O}_K$. 

The elliptic curve $E$ admits a global minimal Weierstrass equation with coefficients in the integer ring of the Hilbert class field of $K$ \cite[Corollary 8.3, p. 245]{silverman2009}. 

We denote $\Sigma_E$ the set of places at which $E$ has bad reduction. 
The set $\Sigma_E$ is finite \cite[Remark 1.3, p. 211]{silverman2009}. 
We let $\Sigma_{E,add}$ ($\Sigma_{E,mult}$) denote the (finite) sets of places $v$ such that $E$ has additive 
(respectively, multiplicative) reduction at $v$. We denote $\Sigma_{E,p.g.}$ the set of places at which $E$ has potential good reduction and $\Sigma_{E,p.m.}$ the set of primes at which $E$ has potential multiplicative reduction. 
Thus, there is a decomposition of $\Sigma_E$ into a disjoint union $\Sigma_{E,p.g.} \dotcup \; \Sigma_{E,p.m.}$, with  
$\Sigma_{E,p.g.} \subseteq \Sigma_{E,add}$ and $\Sigma_{E,mult} \subseteq \Sigma_{E,p.m.}$ (both inclusions are a consequence of the other one). 
A place $v$ of $\Sigma_{E,add}$ is in $\Sigma_{E,p.g.}$ if and only if $v(j(E))\geq 0$.
Here, the place $v$ is identified with the discrete valuation on the completion of $K$ at $v$, $K_{v}$, that maps $K_{v}^*$ onto $\mathbb{Z}$. 

The following cases will be considered in Section \ref{subsection:SelmerL}:

Case A: $v \mid \ell$.

Case B: $v \nmid \ell$ and $v \not \in \Sigma_E$; $E$ has good reduction at $v$ and 
$\ell$  is not equal to the characteristic of the residue field of $K_{v}$.

Case C: $v \nmid \ell$ and $v \in \Sigma_E$, with potential good reduction of $E$ at $v$; {\em i.e.}, 
$v_0\in \Sigma_{E,p.g.}$. 
Then, $v \in \Sigma_{E,add}$ and $v(j(E))\geq 0$.

Case D: $v \nmid \ell$ and $v \in \Sigma_E$, with no potential good reduction of $E$ at $v$; {\em i.e.}, 
$v\in \Sigma_{E,p.m.}$. 
Then, $v \in \Sigma_{E,add} \cup \Sigma_{E,mult}$ and $v(j(E))< 0$.

In case A, $E$ has good reduction at $v\mid \ell$ for all but finitely many primes $\ell$. Then, $E$ has either supersingular or ordinary good reduction at $v \mid \ell$, according to whether the reduced elliptic curve $\widetilde E$ is supersingular or ordinary. 


\subsection{Elliptic curves over $\mathbb{Q}$}
\label{subsection:globalEllipticQ}

Let $E$ be an elliptic curve over $\mathbb{Q}$. Then, $E$ admits a global minimal Weierstrass equation (with coefficients in $\mathbb{Z}$) \cite[Corollary 8.3, p. 245]{silverman2009}. It is also convenient to consider a Weierstrass equation of the form 
$y^2=x^3+Ax+B$, with $A,B\in \mathbb{Z}$; for instance, see \cite[Corollary 7.2, p. 240]{silverman2009}.

One says \cite[IV-2.1]{serre1968} that $E/K$ has CM if for some finite extension $F_{\textrm{\tiny CM}}/K$, 
the endomorphism ring $\End_{F_{\textrm{\tiny CM}}}(E)$ is an order $\mathcal{O}$ of an imaginary quadratic extension 
$K_{\textrm{\tiny CM}}/\mathbb{Q}$ \cite[Section 5]{rubin1999}. 
One may assume that $F_{\textrm{\tiny CM}} \supseteq K_{\textrm{\tiny CM}}$. 
Indeed, if $F^\prime$ is a subfield of $F$, then $\End_{F^\prime}(E) \subseteq \End_{F}(E)$, 
so that one may replace $F_{\textrm{\tiny CM}}$ with the compositum $K_{\textrm{\tiny CM}} F_{\textrm{\tiny CM}}$, if necessary.  
In the case of a curve without CM, the endomorphism ring $\End_{\mathbb{C}}(E)$ is minimal; 
{\em i.e.}, it is isomorphic to $\mathbb{Z}$ \cite[Corollary 9.4, p. 102]{silverman2009}. 

Let $E$ be an elliptic curve over $\mathbb{Q}$ with CM. 
Then, $\End_{F_{\textrm{\tiny CM}}}(E)$ is of the form $\mathcal{O} = \mathbb{Z} + c\, \mathcal{O}_{\textrm{\tiny CM}}$ 
over some finite base field extension $F_{\textrm{\tiny CM}}/K_{\textrm{\tiny CM}}$,  
where $\mathcal{O}_{\textrm{\tiny CM}}$ is the integer ring of the imaginary quadratic field $K_{\textrm{\tiny CM}}$, 
and $c = 1$, $2$, or $3$ \cite{serre1967b}. 

Let $\ell$ be a prime number, and set $L=\mathbb{Q}(E[\ell])$. 
Based on \cite[Corollary 5.13]{rubin1999}, there exists an elliptic curve $E^\prime$ defined over $K_{\textrm{\tiny CM}}$, 
such that $\End_{K_{\textrm{\tiny CM}}}(E^\prime) = \mathcal{O}_{\textrm{\tiny CM}}$.
From \cite[Proposition 5.3]{rubin1999}, one has 
$\End_{F_{\textrm{\tiny CM}}}(E^\prime)=\mathcal{O}_{\textrm{\tiny CM}}$ and $E[\ell]\xrightarrow{\approx} E^\prime[\ell]$ 
{\em as Galois modules} for any prime $\ell$ coprime with $c$, a condition satisfied if $\ell>3$. 
This relation is obtained from a short exact sequence based on an isogeny:
\begin{equation}
0 \rightarrow E[c] \rightarrow E(\overline{K}_{\textrm{\tiny CM}}) \rightarrow E^\prime(\overline{K}_{\textrm{\tiny CM}}) \rightarrow 0,
\end{equation}
where $E[c]$ denotes the group of $c$-torsion points of $E$.
In particular, one deduces the identities 
$L F_{\textrm{\tiny CM}} =F_{\textrm{\tiny CM}}(E[\ell]) = F_{\textrm{\tiny CM}}(E^\prime[\ell])$.
Then, using \cite[Corollary 5.5]{rubin1999}, one obtains an embedding of groups:
\begin{equation}
\label{eq:eq40final6}
\varphi: \Gal(L F_{\textrm{\tiny CM}}/F_{\textrm{\tiny CM}})  = 
\Gal(F_{\textrm{\tiny CM}}(E^\prime[\ell])/F_{\textrm{\tiny CM}}) 
\hookrightarrow \left ( \mathcal{O}_{\textrm{\tiny CM}}/(\ell) \right )^*,
\end{equation}

From (\ref{eq:eq40final6}), it follows that $\ell \nmid \vert \Gal(\mathbb{Q}(E[\ell])/\mathbb{Q}) \vert$ in the CM case, 
unless possibly if $\ell$ ramifies in $K_{\textrm{\tiny CM}}$ or $\ell$ divides $[F_{\textrm{\tiny CM}}:\mathbb{Q}]$. 
In particular, $\Gal(\mathbb{Q}(E[\ell])/\mathbb{Q})$ is not the full 
linear group for almost all primes $\ell$, as $\vert {\bf GL}_2(\mathbb{F}_\ell) \vert$ is divisible by $\ell$.

In contrast, in the non-CM case, the Galois group $\Gal(\mathbb{Q}(E[\ell])/\mathbb{Q})$ is the full linear group for almost all primes $\ell$ \cite[Th\'eor\`eme 2, p. 294]{serre1972}; 
{\em i.e.}, the representation $\widetilde \rho_\ell:\mathcal{G}\rightarrow {\bf GL}_2(\mathbb{F}_\ell)$ obtained by Galois action on $\ell$-torsion points is surjective for almost all primes $\ell$.
Moreover, sufficient conditions for the isomorphism $\widetilde \rho_\ell:\Gal(\mathbb{Q}(E[\ell])/\mathbb{Q}) \xrightarrow{\approx} {\bf GL}_2(\mathbb{F}_\ell)$ to hold  
at a specific prime $\ell$ are presented in Serre's work, together with several examples, in the case
of semi-stable curves \cite[\S 5.4 and 5.5, p. 305--311]{serre1972}, as well as  
non semi-stable curves \cite[\S 5.6 to 5.10, p. 311--323]{serre1972}.  
See also \cite{mazur1978} for further results. 

Note that from \cite[Proposition, IV-19]{serre1968} and \cite[Th\'eor\`eme 2, p. 294]{serre1972}, one has for
almost all primes $\ell$, an isomorphism $\Gal(\mathbb{Q}(E[\ell^n])/\mathbb{Q}) \xrightarrow{\approx} {\bf GL}_2(\mathbb{Z}/\ell^n\mathbb{Z})$ induced by $\rho_\ell$, for any $n \geq 1$. 
Indeed, from \cite[p. IV-18]{serre1968}, the representation $\rho_\ell:\mathcal{G}\rightarrow {\bf GL}_2(\mathbb{Z}_\ell)$, obtained 
by Galois action on the Tate module $T_\ell$,  
composed with the determinant map yields the cyclotomic character $\psi_\ell$, whose image is $\mathbb{Z}_\ell^*$ 
(since the base field is $\mathbb{Q}$). Setting $X={\bf SL}_2(\mathbb{Z}_\ell) \cap \Image(\rho_\ell)$, one obtains a closed subgroup of 
${\bf SL}_2(\mathbb{Z}_\ell)$. 
Then, assuming that the image of $X$ into ${\bf SL}_2(\mathbb{F}_\ell)$ is equal to ${\bf SL}_2(\mathbb{F}_\ell)$, one concludes that 
$\Image(\rho_\ell)={\bf SL}_2(\mathbb{Z}_\ell)$ whenever $\ell\geq 5$ \cite[Lemma 3, p. IV-23]{serre1968}.  
Altogether, one has:
\begin{equation}
\label{eq:eq41final7}
\ell\geq 5 \textrm{ and } \Image(\widetilde \rho_\ell)={\bf GL}_2(\mathbb{F}_\ell) \Longrightarrow
\Image(\rho_\ell)={\bf GL}_2(\mathbb{Z}_\ell).
\end{equation}
See \cite[pp. 299--301]{serre1972}. 

We denote $\Sigma_E$ the set of primes at which $E$ has bad reduction. 
The set $\Sigma_E$ is finite \cite[Remark 1.3, p. 211]{silverman2009} and is non-empty \cite[Exerc. 8.15, p. 264]{silverman2009}.
In the case of curves without CM, Serre  proved that the set of primes $\ell$ at which $E$ has ordinary good reduction has density $1$ 
(c.f. \cite[Corollaire 1, p. 189]{serre1981}, using \cite[Exerc. 5.10, p. 154]{silverman2009}). 
See also \cite[Exerc. 5.11, p. 154]{silverman2009} for a weaker statement in the case of an arbitrary elliptic curve over $\mathbb{Q}$. 
On the other hand, Elkies proved that any elliptic curve over $\mathbb{Q}$ has infinitely many primes $\ell$ at which $E$ has supersingular good reduction \cite{elkies1987}. 

If $p$ is a prime of $\mathbb{Z}$, $\ord_p$ denotes the valuation on $\mathbb{Q}_p$ such that $\ord_p(p)=1$. 


\section{Liftings of points on reduced elliptic curves to points with coordinates in specific algebraic number fields}
\label{section:liftingsPoints}

We collect in this section results on torsion points of elliptic curves that will be useful in the sequel.


\subsection{Torsion points over algebraic number fields}
\label{subsection:torsionPointsK}

First of all, the following result on torsion points of elliptic curves over algebraic number fields 
expresses \cite[Theorem 7.1, p. 240]{silverman2009} in a context relevant to this work. 
Equation (\ref{eq:eq42final7}) follows from a result of Cassels; see \cite[Theorem 3.4, p. 193]{silverman2009}. 
Part b) is due to Lutz and Nagell independently 
in the case where $K=\mathbb{Q}$; see \cite[Corollary 7.2, p. 240]{silverman2009}.


\begin{lem}
\label{thm:Lemma3}
Let $E$ be an elliptic over an algebraic number field $K$, with Weierstrass equation of the form $y^2=x^3+Ax+B$, 
where $A,B\in \mathcal{O}_K$. 
Let $\ell$ be a prime number. 

a) Assume that $\ell >3$. Then, any non-trivial $\ell^n$-torsion point $P$ of $E$ over $\overline{\mathbb{Q}}$, where $n\geq 1$, satisfies 
the conditions:
\begin{eqnarray}
\label{eq:eq42final7}
&& x(P), y(P) \in (\ell)^{-1};\\
&&\bigl ( \ell y(P) \bigr )^2 \mid \Delta^\prime \ell^5,
\end{eqnarray} 
where $(x(P),y(P))$ are the affine coordinates of $P$, $\Delta^\prime=4 A^3+27 B^2$, so that $\Delta=-16 \Delta^\prime$ is the discriminant of the Weierstrass equation, 
and the divisibility condition holds in the integer ring $\mathcal{O}_n$ of $L_n:=K(E[\ell^n])$. 

b) If $\ell$ is unramified in $K$, any non-trivial $\ell^n$-torsion point $P$ of $E$ over $K$, where $n\geq 1$, satisfies the conditions:
\begin{eqnarray}
&& x(P), y(P) \in \mathcal{O}_K;\\
&&y(P)^2 \mid \Delta^\prime,
\end{eqnarray}
where the divisibility condition holds in the integer ring $\mathcal{O}_K$ of $K$. 
\end{lem}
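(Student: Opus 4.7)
The plan is to verify both parts prime-by-prime over $\mathcal{O}_n$ (respectively $\mathcal{O}_K$), combining two complementary ingredients: the formal group of $E$ associated to the Weierstrass equation, which controls the kernel of reduction and hence the denominators of torsion coordinates, and the Nagell-Lutz resultant identity $p(X) f(X) + q(X) f^\prime(X) = \Delta^\prime$ for $f(X) = X^3 + AX + B$ (arising from $\mathrm{Res}(f, f^\prime) = \Delta^\prime$), which extracts the divisibility by $\Delta^\prime$. The inclusion $x(P), y(P) \in (\ell)^{-1}$ in part a) is the result of Cassels cited in the preamble, and I would reproduce its short proof here because the same argument underlies part b).

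For part a), fix a prime $v$ of $\mathcal{O}_n$. If $v(x(P)) < 0$, the Weierstrass equation with $A, B$ integral forces $v(x(P)) = -2r$ and $v(y(P)) = -3r$ for some $r \geq 1$, placing $P$ in the kernel of reduction $E_1(L_{n,v})$; under the formal-group isomorphism $P \mapsto t(P) := -x(P)/y(P)$, the point $P$ corresponds to a nonzero $\ell^n$-torsion element of $\hat{E}(\mathfrak{m}_v)$ with $v(t(P)) = r$, and the Newton polygon of $[\ell](X) = \ell X + O(X^2) \in \mathcal{O}_v[[X]]$ gives the standard bound $v(t(P)) \leq v(\ell)/(\ell - 1)$. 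At $v \nmid \ell$ this contradicts $r \geq 1$, so $v(x(P)), v(y(P)) \geq 0$; at $v \mid \ell$ with $e_v = v(\ell)$, the hypothesis $\ell > 3$ makes $2/(\ell - 1), 3/(\ell - 1) \leq 1$, so $v(x(P)), v(y(P)) \geq -e_v$, confirming $(\ell)^{-1}$-integrality. For the divisibility $(\ell y(P))^2 \mid \Delta^\prime \ell^5$, equivalent to $2 v(y(P)) \leq v(\Delta^\prime) + 3 v(\ell)$ at every $v$: if $v(y(P)) < 0$, the bound $v(y(P)) \geq -e_v$ makes the inequality immediate; if $v(y(P)) \geq 0$ and $v \nmid \ell$, the Nagell-Lutz mechanism combining the resultant identity with the duplication formula $(2 y(P))^2 (x(2P) + 2 x(P)) = f^\prime(x(P))^2$ and the integrality of $2P$ at $v$ (itself an $\ell^n$-torsion point) yields $2 v(y(P)) \leq v(\Delta^\prime)$, the factors of $2$ being harmless since $\ell > 2$; at $v \mid \ell$ with $v(y(P)) \geq 0$ the extra $3 v(\ell)$ slack absorbs any denominators of $x(2P), y(2P)$ controlled by $e_v$ via the same formal-group estimate.

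Part b) is an immediate sharpening. With $\ell$ unramified in $K$, at $v \mid \ell$ in $\mathcal{O}_K$ one has $v(\ell) = 1$, so the formal-group estimate becomes $v(t(P)) \leq 1/(\ell - 1) < 1$; since $v$ is integer-valued, $v(t(P)) = 0$, contradicting $t(P) \in \mathfrak{m}_v$. Hence $P \notin E_1(K_v)$ and $x(P), y(P)$ are $v$-integral; together with integrality at $v \nmid \ell$ from the argument above, this yields $x(P), y(P) \in \mathcal{O}_K$, and the classical Nagell-Lutz theorem \cite[Corollary 7.2, p. 240]{silverman2009}, whose proof is algebraic and extends verbatim from $\mathbb{Z}$ to any Dedekind domain once integrality is secured, then delivers $y(P)^2 \mid \Delta^\prime$. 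The main technical obstacle I anticipate is the rigorous Newton-polygon bound $v(t(P)) \leq v(\ell)/(\ell - 1)$ at primes $v \mid \ell$ of bad reduction, where the reduced formal group over the residue field may not have finite height; bypassing this requires either a direct analysis of $[\ell](X) \in \mathcal{O}_v[[X]]$ exploiting that its leading term is $\ell X$ regardless of reduction type, or passage to a finite extension in which $E$ acquires good or multiplicative reduction -- with care since Lemma \ref{thm:Lemma1} requires $\ell \neq p$ and so does not apply directly at $v \mid \ell$ -- followed by descent with careful tracking of ramification indices.
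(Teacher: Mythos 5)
Your proposal is correct and takes essentially the same route as the paper: the Cassels/formal-group valuation bounds $v(x(P))\ge -2v(\ell)/(\ell-1)$, $v(y(P))\ge -3v(\ell)/(\ell-1)$ (valid for the formal group over $\mathcal{O}_v$ regardless of reduction type, so your anticipated obstacle at bad-reduction places above $\ell$ evaporates) give the $(\ell)^{-1}$-integrality, and the strong Nagell--Lutz mechanism --- duplication plus integrality (up to $\ell$-denominators) of $x([2]P)$ fed into the resultant identity --- gives the divisibility, with the factor $\ell^5$ (your $3v(\ell)$ slack) absorbing the denominators at $v\mid\ell$, exactly as in the paper's proof. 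One precision: to obtain $2v(y(P))\le v(\Delta^\prime)$ rather than merely $v(y(P))\le v(\Delta^\prime)$, the identity must be used in the quadratic form $(3X^2+4A)\bigl((f^\prime)^2-8Xf\bigr)-(3X^3-5AX-27B)f=\Delta^\prime$ (the paper's Sublemma-4.3 identity, i.e.\ $f\phi-g\psi=\Delta^\prime$), since the linear B\'ezout form $pf+qf^\prime=\Delta^\prime$ together with $y\mid f^\prime(x)$ only yields the weak bound.
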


\begin{proof} 
Part a). The proof follows closely \cite[pp. 240-241]{silverman2009}, but with some modifications. 

From \cite[Theorem 7.1, p. 240]{silverman2009}, $x(P)$ and $y(P)$ are ${v}$-integral for any place $v \nmid \ell$ of $L_n$. 
Moreover, from that result, if $v \mid \ell$, then one has $v(y(P))\geq -3 v(\ell)/(\ell-1) > - v(\ell)$ and $v(x(P))\geq -2 v(\ell)/(\ell-1) > - v(\ell)$, having assumed that $\ell \geq 5$. 
Therefore, $x(P),y(P)\in (\ell)^{-1}$.

Next, from \cite[Sublemma 4.3, p. 222]{silverman2009}, one deduces the identity:
\begin{equation}
f(x(P)) \phi(x(P)) - g(x(P)) \psi(x(P)) = \Delta^\prime,
\end{equation} 
where
\begin{equation}
\begin{cases}
f(X) = 3 X^2 + 4 A;\\
\phi(X) = X^4 - 2 A X^2 - 8 B X + A^2;\\
g(X) = 3 X^3 - 5 AX - 27 B;\\
\psi(X) = X^3 + AX + B;\\
\Delta^\prime = 4 A^3 + 27 B^2.
\end{cases}
\end{equation}
Note here that $-16 \Delta^\prime$ is the discriminant of the Weierstrass equation $y^2=x^3+Ax+B$ \cite[p. 45]{silverman2009}. 
But, one has the duplication identity, which can be deduced from \cite[p. 54]{silverman2009}, as $\ell\not=2$:
\begin{equation}
\label{eq:eq48final7}
x([2]P) = \frac{\phi(x(P))}{4\psi(x(P))}.
\end{equation}
It follows that:
\begin{equation}
\label{eq:eq49final7}
y(P)^2 \bigl [ 4 f(x(P)) x([2]P) - g(x(P)) \bigr ] = \Delta^\prime,
\end{equation} 
since $y(P)^2=\psi(x(P))$. 
Now, multiplying by $\ell^5$, one obtains:
 \begin{equation}
\label{eq:eq50final7}
\bigl ( \ell y(P) \bigr )^2 \ell^3 \bigl [ 4 f(x(P)) x([2]P) - g(x(P)) \bigr ] = \Delta^\prime \ell^5.
\end{equation} 
But, $\ell^3 \bigl [ 4 f(x(P)) x([2]P) - g(x(P)) \bigr ]$ is an integral element of $L_n$ because both 
$\ell x(P)$ and $\ell x([2]P)$ are integral elements. This proves part a). 

Part b). If $\ell$ is unramified in $K$, then $v(\ell)=1$. Thus, the above conditions 
$v(y(P))\geq -3 v(\ell)/(\ell-1) > - v(\ell)$ and $v(x(P))\geq -2 v(\ell)/(\ell-1) > - v(\ell)$ mean that 
$x(P),y(P)\in \mathcal{O}_K$. This yields:
 \begin{equation}
\bigl ( y(P) \bigr )^2 \bigl [ 4 f(x(P)) x([2]P) - g(x(P)) \bigr ] = \Delta^\prime,
\end{equation}
where $\bigl [ 4 f(x(P)) x([2]P) - g(x(P)) \bigr ] \in \mathcal{O}_K$. 
This proves part b).
\end{proof}



\begin{lem}
\label{thm:Lemma4}
Let $E$ be an elliptic curve over an algebraic number field $K$. 
Given a prime number $\ell$ and a positive integer $n$, let $L_n$ denote the field extension over $K$ obtained 
by adjoining the affine coordinates of all $\ell^n$-torsion points of $E$. 

a) Then, there is a group embedding:
\begin{equation}
0 \rightarrow \Gal (L_n/K) \rightarrow {\bf GL}_2 \left ( \mathbb{Z}/\ell^n\mathbb{Z}\right).
\end{equation}

b) The latter group has order dividing $\ell(\ell-1)^2(\ell+1)\ell^{4(n-1)}$. In particular, $[L_n:K]$ divides
$\ell(\ell-1)^2(\ell+1)\ell^{4(n-1)}$.

c) Let $\ell_i$, $i=1,...,\nu$, be distinct prime numbers, and let $n_i$ be positive integers, $i=1,...,\nu$. Set 
$m=\prod_{i=1}^\nu \ell_i^{n_i}$. 
Let $K(E[m])$ be the field obtained by adjoining over $K$ the affine coordinates of all 
$m$-torsion points of $E$ (so, $L_n$ means $K(E[\ell^n])$). Then, there is a group embedding:
\begin{equation}
0 \rightarrow \Gal (K(E[m])/K) \rightarrow \prod_{i=1}^{\nu} {\bf GL}_2 \left ( \mathbb{Z}/\ell_i^{n_i}\mathbb{Z}\right).
\end{equation}
In particular, $[K(E[m]):K]$ divides the integer:
\begin{equation}
\prod_{i=1}^{\nu} \ell_i(\ell_i-1)^2(\ell_i+1)\ell_i^{4(n_i-1)}.
\end{equation}
\end{lem}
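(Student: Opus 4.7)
The plan is to assemble parts a), b), c) as three routine steps relying on (i) the Galois action on torsion being $\mathbb{Z}/\ell^n\mathbb{Z}$-linear, (ii) a counting argument for $\mathbf{GL}_2(\mathbb{Z}/\ell^n\mathbb{Z})$, and (iii) the Chinese Remainder decomposition of $E[m]$.

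For part a), I would first recall that since $K$ has characteristic $0$, Section \ref{subsection:basicElliptic} tells us that $E[\ell^n]$ is a free $\mathbb{Z}/\ell^n\mathbb{Z}$-module of rank $2$. The absolute Galois group $\mathcal{G}=\Gal(\overline{K}/K)$ acts on $E[\ell^n]$, and this action commutes with the endomorphism $[\ell^n]$ (in fact with all integer multiplications), so it is $\mathbb{Z}/\ell^n\mathbb{Z}$-linear. Fixing a $\mathbb{Z}/\ell^n\mathbb{Z}$-basis of $E[\ell^n]$ then produces a homomorphism $\mathcal{G}\to \mathbf{GL}_2(\mathbb{Z}/\ell^n\mathbb{Z})$ whose kernel is exactly the set of $\sigma\in\mathcal{G}$ fixing every $\ell^n$-torsion point, which is $\Gal(\overline{K}/L_n)$. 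Passing to the quotient gives the desired embedding $\Gal(L_n/K)\hookrightarrow \mathbf{GL}_2(\mathbb{Z}/\ell^n\mathbb{Z})$.

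For part b), the key step is to compute $|\mathbf{GL}_2(\mathbb{Z}/\ell^n\mathbb{Z})|$. The reduction-mod-$\ell$ homomorphism $\mathbf{GL}_2(\mathbb{Z}/\ell^n\mathbb{Z}) \to \mathbf{GL}_2(\mathbb{F}_\ell)$ is surjective (any lift of an invertible matrix remains invertible, since its determinant is a unit modulo $\ell$, hence a unit modulo $\ell^n$). Its kernel consists of matrices of the form $I + \ell M$ with $M \in M_2(\mathbb{Z}/\ell^{n-1}\mathbb{Z})$, giving $\ell^{4(n-1)}$ elements. The standard count $|\mathbf{GL}_2(\mathbb{F}_\ell)| = (\ell^2-1)(\ell^2-\ell) = \ell(\ell-1)^2(\ell+1)$ then yields the stated order, and the divisibility statement for $[L_n:K]$ follows immediately from part a) together with Lagrange's theorem.

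For part c), I would use that since the $\ell_i^{n_i}$ are pairwise coprime, one has the Galois-equivariant isomorphism $E[m] \cong \bigoplus_{i=1}^{\nu} E[\ell_i^{n_i}]$, so $K(E[m])$ is the compositum of the fields $K(E[\ell_i^{n_i}])$. Restricting each $\sigma\in\Gal(K(E[m])/K)$ to each subfield defines an injective homomorphism $\Gal(K(E[m])/K) \hookrightarrow \prod_{i} \Gal(K(E[\ell_i^{n_i}])/K)$ (injectivity holds because the $K(E[\ell_i^{n_i}])$ generate $K(E[m])$ as a compositum). Composing with the embeddings from part a) for each prime $\ell_i$ produces the embedding into $\prod_i \mathbf{GL}_2(\mathbb{Z}/\ell_i^{n_i}\mathbb{Z})$, and the final divisibility bound on $[K(E[m]):K]$ is an immediate consequence of part b) applied to each factor.

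No step here is a genuine obstacle; the only place one must be mildly careful is ensuring surjectivity of the reduction map in part b) and injectivity of the restriction map in part c), both of which are standard. The result is essentially a packaging of well-known facts, assembled so that the numerical bound $\ell(\ell-1)^2(\ell+1)\ell^{4(n-1)}$ is available for later use in Theorem \ref{thm:Theorem5}.
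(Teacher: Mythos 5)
Your proof is correct and follows essentially the same route as the paper: for a) the faithful Galois action on $E[\ell^n]\cong \mathbb{Z}/\ell^n\mathbb{Z}\oplus\mathbb{Z}/\ell^n\mathbb{Z}$ gives the embedding into $\mathbf{GL}_2(\mathbb{Z}/\ell^n\mathbb{Z})$, and for b) the reduction-mod-$\ell$ sequence with kernel $I+\ell\,\mathbf{Mat}_2(\mathbb{Z}/\ell^n\mathbb{Z})$ of size $\ell^{4(n-1)}$ together with $|\mathbf{GL}_2(\mathbb{F}_\ell)|=\ell(\ell-1)^2(\ell+1)$ gives the stated divisibility. The only cosmetic difference is in c), where you decompose $E[m]$ and realize $K(E[m])$ as the compositum of the $K(E[\ell_i^{n_i}])$ with an injective product-of-restrictions map, whereas the paper embeds $\Gal(K(E[m])/K)$ directly into $\mathbf{GL}_2(\mathbb{Z}/m\mathbb{Z})$ and then applies the Chinese Remainder Theorem to the ring; both arguments rest on the same CRT fact and yield the identical bound.
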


\begin{proof}
Part a). Since $\Gal (L_n/K)$ acts faithfully on the finite group of $\ell^n$-torsion points $E[\ell^n]$, one has a group embedding:
\begin{equation}
0 \rightarrow \Gal (L_n/K) \rightarrow {\bf Aut} \left ( E[\ell^n] \right).
\end{equation}
Since $E[\ell^n]$ is isomorphic to $\mathbb{Z}/\ell^n\mathbb{Z} \oplus \mathbb{Z}/\ell^n\mathbb{Z}$ 
\cite[Corollary 6.4, p. 86]{silverman2009}, it follows that
\begin{equation}
{\bf Aut} \left ( E[\ell^n] \right) \xrightarrow{\approx} {\bf GL}_2 \left ( \mathbb{Z}/\ell^n\mathbb{Z}\right).
\end{equation}

Part b). One has an exact sequence of groups:
\begin{equation}
0 \rightarrow I + \ell {\bf Mat}_2 \left ( \mathbb{Z}/\ell^n\mathbb{Z}\right) \rightarrow   
{\bf GL}_2 \left ( \mathbb{Z}/\ell^n\mathbb{Z}\right) \xrightarrow{\pi_*} {\bf GL}_2 \left ( \mathbb{F}_\ell\right),
\end{equation}
where $I$ denotes the $2 \times 2$ identity matrix over $\mathbb{Z}/\ell^n\mathbb{Z}$, and the map $\pi_*$
is induced by the projection of rings $\pi: \mathbb{Z}/\ell^n\mathbb{Z} \rightarrow \mathbb{Z}/\ell\mathbb{Z}\approx \mathbb{F}_\ell$. 
But the rightmost factor ${\bf GL}_2 \left ( \mathbb{F}_\ell\right)$ has order $(\ell^2-1)(\ell^2-\ell)=\ell(\ell-1)^2(\ell+1)$, 
whereas the left most factor $I + \ell {\bf Mat}_2 \left ( \mathbb{Z}/\ell^n\mathbb{Z}\right)$ has order $\left ( \ell^{n-1} \right )^4$. 
Thus, the order of ${\bf GL}_2 \left ( \mathbb{Z}/\ell^n\mathbb{Z}\right)$ divides $\ell(\ell-1)^2(\ell+1)\ell^{4(n-1)}$. 
Lastly, part a) implies that $[L_n:K]$ divides the order of ${\bf GL}_2 \left ( \mathbb{Z}/\ell^n\mathbb{Z}\right)$.

Part c). Generalizing the proof of part a), one has a group embedding:
\begin{equation}
0 \rightarrow \Gal (K(E[m])/K) \rightarrow {\bf Aut} \left ( E[m] \right) \xrightarrow{\approx} 
{\bf GL}_2 \left ( \mathbb{Z}/m\mathbb{Z}\right),
\end{equation}
since $\Gal (K(E[m])/K)$ acts faithfully on $E[m]$, 
which is isomorphic to $\mathbb{Z}/m\mathbb{Z} \oplus \mathbb{Z}/m\mathbb{Z}$ 
\cite[Corollary 6.4, p. 86]{silverman2009}. But then, the isomorphism of rings 
$\mathbb{Z}/m\mathbb{Z} \xrightarrow{\approx} \prod_{i=1}^{\nu} \mathbb{Z}/\ell_i^{n_i}\mathbb{Z}$ 
(from the Chinese Remainder Theorem) yields an isomorphism:
\begin{equation}
{\bf GL}_2 \left ( \mathbb{Z}/m\mathbb{Z}\right) \xrightarrow{\approx} 
\prod_{i=1}^{\nu} {\bf GL}_2 \left ( \mathbb{Z}/\ell_i^{n_i}\mathbb{Z}\right).
\end{equation}
Now, use part b) on each factor of the right-hand side of this equation. 
\end{proof}

\noindent {\bf Remark 2.}
\label{remark2}
Recall from \cite[\S 4.2, pp. 151--152]{serre1981} that the Galois group of the infinite extension $L_\infty/\mathbb{Q}$ 
obtained by adjoining over $\mathbb{Q}$ the affine coordinates of all $\ell^n$-torsion points of $E$, where $n\geq 1$, is an $\ell$-adic Lie group.  
Indeed, one has an embedding $\rho_\ell:\Gal(L_\infty/\mathbb{Q}) \hookrightarrow {\bf GL}_2(\mathbb{Z}_\ell)$ obtained by Galois action on the Tate module $\varprojlim E[\ell^n]$. 
Its dimension $N$ is at least $2$ and at most $4$, 
since $E[\ell^n] \approx \mathbb{Z}/\ell^n \mathbb{Z} \oplus \mathbb{Z}/\ell^n$ for all $n\geq 1$. 
Then, Lemma \ref{thm:Lemma4} shows that $[L_n:\mathbb{Q}]$ is of the form $b \ell^{nN}$, for some integer 
$b$ dividing $(\ell-1)^2(\ell+1)\ell^{\beta}$, with $\beta\geq 0$. 
See Appendix \ref{section:appendixA} for an expression of the different of the extension 
$L_n/\mathbb{Q}$ based on a theorem of Sen \cite{sen1972} that was conjectured by Serre \cite{serre1967c}. 
\\ 

\subsection{Multiplication by positive integers in elliptic curves}
\label{subsection:Multiplication}

We consider an elliptic curve $E$ over a field $K$, with Weierstrass equation $y^2=x^3+Ax+B$. 

We consider multiplication by a positive integer $n$ in $E(K)$, where $K$ is a field.
For this purpose, we recall from \cite{lang1978,schoof1985} the polynomials over $\mathbb{Z}[A,B]$ 
(note that in \cite{schoof1985}, these polynomials are considered over a finite field):
\begin{equation}
\begin{cases}
\Psi_{-1}(X,Y) = -1;\\
\Psi_{0}(X,Y) = 0;\\
\Psi_{1}(X,Y) = 1;\\
\Psi_{2}(X,Y) = 2Y;\\
\Psi_{3}(X,Y) = 3X^4+6 AX^2+12 B X - A^2;\\
\Psi_{4}(X,Y) = 4Y(X^6 + 5AX^4 + 20BX^3 - 5A^2 X^2 - 4AB X - 8 B^2 -A^3).
\end{cases}
\end{equation}
Then, one has the recursion formulae for $n\geq 1$:
\begin{equation}
\begin{cases}
\Psi_{2n}(X,Y) = \Psi_n(X,Y) \left ( \Psi_{n+2}(X,Y)\Psi_{n-1}^2(X,Y) - \Psi_{n-2}(X,Y)\Psi_{n+1}^2(X,Y) \right)/2Y;\\
\Psi_{2n+1}(X,Y) = \Psi_{n+2}(X,Y)\Psi_n^3(X,Y) - \Psi_{n+1}^3(X,Y)\Psi_{n-1}(X,Y).                                                  
\end{cases}
\end{equation}

As in \cite{schoof1985}, we denote $\Psi^\prime_n(X,Y)$ the polynomial obtained from $\Psi_n(X,Y)$ by replacing $Y^2$ with $X^3+AX+B$. 
Then, it turns out that:
\begin{equation}
\begin{cases}
f_{n}(X) = \Psi^\prime_n(X,Y), \quad n \textrm { odd};\\
f_{n}(X) = \Psi^\prime_n(X,Y)/Y, \quad n \textrm { even},                                                  
\end{cases}
\end{equation}
are polynomials in $X$. From \cite[pp. 37--38]{lang1978}, $f_n(X) \in 2\mathbb{Z}[X]$ for $n$ even. 
Moreover, from \cite[p. 486]{schoof1985}, it follows that for any $n\geq 1$, one has:
\begin{equation} 
\label{eq:eq63final7}
\begin{cases}
f_{n}(X) = c X^{(n^2-1)/2} + \cdot \cdot \cdot , \quad n \textrm { odd};\\
f_{n}(X) = c X^{(n^2-4)/2} + \cdot \cdot \cdot , \quad n \textrm { even},                                                  
\end{cases}
\end{equation}
for some element $c\not=0$ in $K$. 

One can also show, with $P=(x,y)$, that:
\begin{equation}
[n] P = O \Longleftrightarrow f_n(x)=0,
\end{equation}
assuming that $[2]P\not = O$ \cite[Proposition (2.1)]{schoof1985}. One also has \cite[Chapter II]{lang1978}:
\begin{equation}
[n] P = O \Longleftrightarrow (\Psi^\prime_n(x,y))^2=0,
\end{equation}
for $P\not = O$.

Then, one has:
\begin{equation}
\label{eq:eq66final7}
x([n]P) = x  - \frac{\Psi^\prime_{n-1}(x,y)\Psi^\prime_{n+1}(x,y)}{\left (\Psi^\prime_{n}(x,y)\right)^2};\\
\end{equation}
and
\begin{equation}
\label{eq:eq67final7}
y([n]P) = \frac{\Psi^\prime_{n+2}(x,y)\left ( \Psi^\prime_{n-1}(x,y) \right)^2 - \Psi^\prime_{n-2}(x,y)\left (\Psi^\prime_{n+1}(x,y)\right)^2}{4y\left ( \Psi^\prime_{n}(x,y) \right)^3},                                                  
\end{equation}
where $P=(x,y)$, assuming that $[n]P\not = O$; see \cite[Theorem 2.1, p. 38]{lang1978} and \cite[Proposition (2.2)]{schoof1985}. 

The following result refines Eq. (\ref{eq:eq63final7}) ({\em i.e.}, \cite[p. 486]{schoof1985}) and \cite[Theorem 2.1, p. 38]{lang1978}.

\begin{lem}
\label{thm:Lemma5}
For any $n\geq 1$, one has:
\begin{equation}
\begin{cases}
f_{n}(X) = n X^{(n^2-1)/2} + c X^{(n^2-1)/2-2} + \cdot \cdot \cdot , \quad n \textrm { odd};\\
f_{n}(X) = n X^{(n^2-4)/2}  + c X^{(n^2-4)/2-2}+ \cdot \cdot \cdot , \quad n \textrm { even}.                                                  
\end{cases}
\end{equation}
for some $c\in \mathbb{Z}[A,B]$ depending on $n$.
\end{lem}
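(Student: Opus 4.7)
The plan is to proceed by strong induction on $n$, with base cases $n=1,2,3,4$ verified directly from the explicit formulas for $\Psi_1,\ldots,\Psi_4$ stated at the start of Section \ref{subsection:Multiplication}. For instance, $f_3(X)=3X^4+6AX^2+12BX-A^2$ has leading coefficient $3$ and vanishing $X^3$-coefficient, while $f_4(X)=4(X^6+5AX^4+\cdots)$ has leading coefficient $4$ and vanishing $X^5$-coefficient.

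The structural observation I would exploit is that the property ``leading term $cX^d$ with vanishing coefficient on $X^{d-1}$'' is preserved under polynomial multiplication and under multiplication by any power of $Y^2=X^3+AX+B$. The second preservation holds because $Y^2$ itself has no $X^2$ term (this is precisely the reason for insisting on a short Weierstrass equation $y^2=x^3+Ax+B$), so $Y^{2k}=(X^3+AX+B)^k$ has no $X^{3k-1}$ term for every $k\geq 0$; combined with the elementary fact that a product of two polynomials with vanishing next-to-leading coefficient inherits the same property, stability holds under all ring operations arising in the recursion.

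For the inductive step with $n=2k+1$ odd, I would apply the recursion $\Psi_{2k+1}=\Psi_{k+2}\Psi_k^3-\Psi_{k+1}^3\Psi_{k-1}$, expanding each factor as either $f_m(X)$ or $Yf_m(X)$ according to the parity of $m$ and collecting the resulting even total powers of $Y$ (the $Y$-count in each of the two products on the right is always $4$ or $0$, matching the $Y$-count on the left). By the inductive hypothesis and the stability observation above, each product becomes a polynomial in $X$ of common degree $2k^2+2k=((2k+1)^2-1)/2$ with no next-to-leading term. The two leading coefficients $(k+2)k^3$ and $(k+1)^3(k-1)$ subtract to the nonzero quantity $2k+1=n$, so the leading degree is preserved and the vanishing of the $X^{d-1}$-coefficient transfers to the difference. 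For $n=2k$ even, I would use the recursion $2Y\Psi_{2k}=\Psi_k\bigl(\Psi_{k+2}\Psi_{k-1}^2-\Psi_{k-2}\Psi_{k+1}^2\bigr)$, which allows factoring out a common $Y^2$ from both sides, yielding the polynomial identity $2f_{2k}=f_k\bigl(f_{k+2}f_{k-1}^2-f_{k-2}f_{k+1}^2\bigr)$ in $\mathbb{Z}[A,B,X]$; divisibility by $2$ on the right is automatic because $f_m\in 2\mathbb{Z}[X]$ whenever $m$ is even. The leading coefficient of the right-hand side computes to $k\bigl[(k+2)(k-1)^2-(k-2)(k+1)^2\bigr]=4k$, whence $f_{2k}$ has leading coefficient $2k=n$, and the no-next-to-leading-term property is again inherited.

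The main obstacle I anticipate is the parity-dependent bookkeeping of $Y$-factors across the several sub-cases, together with verifying that dividing by $2$ in the even-$n$ case produces an identity in $\mathbb{Z}[A,B,X]$. Beyond this, the only nontrivial algebraic manipulations required are the two elementary identities $(k+2)k^3-(k+1)^3(k-1)=2k+1$ and $(k+2)(k-1)^2-(k-2)(k+1)^2=4$, both verifiable by direct expansion.
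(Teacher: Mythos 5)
Your proof is correct and follows essentially the same route as the paper: induction from the base cases $n=1,2,3,4$ via the division-polynomial recursions, tracking degrees, leading coefficients, and the vanishing next-to-leading coefficient, and resting on the same two identities $(k+2)k^3-(k+1)^3(k-1)=2k+1$ and $(k+2)(k-1)^2-(k-2)(k+1)^2=4$. The only difference is organizational: you state explicitly the stability of the ``no next-to-leading term'' property under products and under multiplication by powers of $X^3+AX+B$ (which the paper uses implicitly when asserting $\lc_{-}(f_n)=0$), and this lets you merge the paper's four parity cases into the two cases $n$ odd and $n$ even.
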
 

\begin{proof}
The proof is by induction on $n\geq 1$. 

The result is obviously true for $n=1,2$, since $f_1(X)=1$ and $f_2(X)$ is equal to 
$2$. It is also true for $n=3$, since $f_3(X)=3X^4+6 AX^2+12 B X - A^2$. For $n=4$, one has: 
\begin{equation}
f_4(X) = 4 ( X^6 + 5AX^4 + 20BX^3 - 5A^2 X^2 - 4AB X - 8 B^2 -A^3),
\end{equation}
so that the result is true.

Assume by induction hypothesis that the result is true for any $1\leq n^\prime<n$, for some integer $n\geq 5$. 

{\em Case 1}: $n=2m$, with $m\geq 4$ even. Then, one computes:
\begin{eqnarray}
f_n(X) &=& \Psi^\prime_{2m}(X,Y)/Y\nonumber\\
&=&  \Psi^\prime_m(X,Y) \left ( \Psi^\prime_{m+2}(X,Y)(\Psi^\prime_{m-1}(X,Y))^2 - \Psi^\prime_{m-2}(X,Y)(\Psi^\prime_{m+1}(X,Y))^2 \right)/2Y^2\nonumber\\
&=& f_m(X)Y \left ( f_{m+2}(X)Yf_{m-1}^2(X) - f_{m-2}(X)Yf_{m+1}^2(X) \right)/2Y^2\nonumber\\
&=& f_m(X)\left ( f_{m+2}(X)f_{m-1}^2(X) - f_{m-2}(X)f_{m+1}^2(X) \right)/2.
\end{eqnarray}
We have:
\begin{equation}
\begin{cases}
\deg (f_{m+2}(X)f_{m-1}^2(X)) = \frac{1}{2}((m+2)^2-4) + ((m-1)^2-1) = \frac{3}{2} m^2;\\
\lc (f_{m+2}(X)f_{m-1}^2(X)) = (m+2)(m-1)^2 = m^3 - 3m +2,
\end{cases} 
\end{equation}
and
\begin{equation}
\begin{cases}
\deg (f_{m-2}(X)f_{m+1}^2(X)) = \frac{1}{2}((m-2)^2-4) + ((m+1)^2-1) = \frac{3}{2} m^2;\\
\lc (f_{m-2}(X)f_{m+1}^2(X)) = (m-2)(m+1)^2 = m^3 - 3m - 2,
\end{cases} 
\end{equation}
where $\lc(f(X))$ denotes here the leading coefficient of polynomial $f(X)$. We also let $\lc_{-}(f(X))$ denote the next coefficient. 
So, if $\deg(f(X))=d$, one has $f(X)=\lc(f(X))X^d + \lc_{-}(f(X)) X^{d-1}+ \cdot \cdot \cdot$.
Thus, one has $\lc(f_{n}(X))=2m=n$, $\lc_{-}(f_n(X))=0$, and $\deg(f_n(X))=(n^2-4)/2$. 

{\em Case 2}: $n=2m$, with $m\geq 3$ odd. Then, one computes:
\begin{eqnarray}
f_n(X) &=& \Psi^\prime_{2m}(X,Y)/Y\nonumber\\
&=&  \Psi^\prime_m(X,Y) \left ( \Psi^\prime_{m+2}(X,Y)(\Psi^\prime_{m-1}(X,Y))^2 - \Psi^\prime_{m-2}(X,Y)(\Psi^\prime_{m+1}(X,Y))^2 \right)/2Y^2\nonumber\\
&=& f_m(X) \left ( f_{m+2}(X)f_{m-1}^2(X)Y^2 - f_{m-2}(X)f_{m+1}^2(X)Y^2 \right)/2Y^2\nonumber\\
&=& f_m(X)\left ( f_{m+2}(X)f_{m-1}^2(X) - f_{m-2}(X)f_{m+1}^2(X) \right)/2.
\end{eqnarray}
We have:
\begin{equation}
\deg (f_{m+2}(X)f_{m-1}^2(X)) = \frac{1}{2}((m+2)^2-1) + ((m-1)^2-4) = \frac{3}{2} (m^2 - 1),
\end{equation}
and
\begin{equation}
\deg (f_{m-2}(X)f_{m+1}^2(X)) = \frac{1}{2}((m-2)^2-1) + ((m+1)^2-4) = \frac{3}{2} (m^2-1),
\end{equation}
with leading coefficients as in Case 1.
Thus, $\lc(f_{n}(X))=2m=n$, $\lc_{-}(f_n(X))=0$, and $\deg(f_n(X))=(n^2-4)/2$. 

{\em Case 3}: $n=2m+1$, with $m\geq 2$ even. Then, one computes:
\begin{eqnarray}
f_n(X) &=& \Psi^\prime_{2m+1}(X,Y)\nonumber\\
&=&  \Psi_{m+2}^\prime(X,Y)(\Psi^\prime_m(X,Y))^3 - (\Psi^\prime_{m+1}(X,Y))^3\Psi^\prime_{m-1}(X,Y)\nonumber\\
&=& f_{m+2}(X)Y(f_m(X)Y)^3 - f_{m+1}^3(X)f_{m-1}(X)\nonumber\\
&=& f_{m+2}(X)f^3_m(X)(X^3+AX+B)^2 - f_{m+1}^3(X)f_{m-1}(X).
\end{eqnarray}
We have:
\begin{equation}
\begin{cases}
\deg (f_{m+2}(X)f^3_m(X)(X^3+AX+B)^2) = \frac{1}{2}((m+2)^2-4) + \frac{3}{2}(m^2-4) + 6\\
\qquad \qquad \qquad \qquad \qquad \qquad \qquad \qquad = 2m^2+2m;\\
\lc (f_{m+2}(X)f^3_m(X)(X^3+AX+B)^2) = (m+2)m^3 = m^4+2m^3,
\end{cases} 
\end{equation}
and
\begin{equation}
\begin{cases}
\deg (f_{m+1}^3(X)f_{m-1}(X)) = \frac{3}{2}((m+1)^2-1) + \frac{1}{2}((m-1)^2-1) = 2m^2+2m;\\
\lc (f_{m+1}^3(X)f_{m-1}(X)) = (m+1)^3(m-1) = m^4 + 2m^3 -2m - 1.
\end{cases} 
\end{equation}
So, one has $\lc(f_{n}(X))=2m+1=n$, $\lc_{-}(f_n(X))=0$, and $\deg(f_n(X))=(n^2-1)/2$. 

{\em Case 4}: $n=2m+1$, with $m\geq 3$ odd. Then, one computes:
\begin{eqnarray}
f_n(X) &=& \Psi^\prime_{2m+1}(X,Y)\nonumber\\
&=&  \Psi_{m+2}^\prime(X,Y)(\Psi^\prime_m(X,Y))^3 - (\Psi^\prime_{m+1}(X,Y))^3\Psi^\prime_{m-1}(X,Y)\nonumber\\
&=& f_{m+2}(X)f^3_m(X) - (f_{m+1}(X) Y)^3(X)f_{m-1}(X)Y\nonumber\\
&=& f_{m+2}(X)f^3_m(X) - f_{m+1}^3(X)f_{m-1}(X)(X^3+AX+B)^2.
\end{eqnarray}
We have:
\begin{equation}
\deg (f_{m+2}(X)f^3_m(X) = \frac{1}{2}((m+2)^2-1) + \frac{3}{2}(m^2-1) = 2m^2+2m,
\end{equation}
and
\begin{eqnarray}
&&\deg (f_{m+1}^3(X)f_{m-1}(X)(X^3+AX+B)^2)\nonumber\\
 && = \frac{3}{2}((m+1)^2-4) + \frac{1}{2}((m-1)^2-4) + 6 = 2m^2+2m,
\end{eqnarray}
with same leading coefficients as in Case 3. 
So, one has $\lc(f_{n}(X))=2m+1=n$, $\lc_{-}(f_n(X))=0$, and $\deg(f_n(X))=(n^2-1)/2$. 
\end{proof}


We obtain the following refinement of \cite[Theorem 2.1, ii, p. 38]{lang1978}.

\begin{cor}
\label{thm:Corollary3}
Let $E$ be an elliptic curve with Weierstrass equation of the form $y^2=x^3+Ax+B$. 
For any $n\geq 1$, one has in $\mathbb{Z}[X,A,B]$:
\begin{equation}
(\Psi^\prime_{n}(X,Y))^2 = n^2 X^{n^2-1} + c X^{n^2-3} + \cdot \cdot \cdot.                                                 
\end{equation}
for some $c\in \mathbb{Z}[A,B]$ depending on $n$.
\end{cor}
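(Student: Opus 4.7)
The plan is to reduce the claim directly to Lemma~\ref{thm:Lemma5} by splitting into the parity cases that appear in the definition of $f_n$.

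First I would recall the definition: $\Psi^\prime_n(X,Y)=f_n(X)$ when $n$ is odd, while $\Psi^\prime_n(X,Y)=Y\,f_n(X)$ when $n$ is even. Squaring and substituting $Y^2=X^3+AX+B$ gives the two identities
\begin{equation*}
(\Psi^\prime_n(X,Y))^2=f_n(X)^2\quad (n\text{ odd}),\qquad
(\Psi^\prime_n(X,Y))^2=f_n(X)^2\,(X^3+AX+B)\quad (n\text{ even}),
\end{equation*}
both of which lie in $\mathbb{Z}[X,A,B]$. So the corollary becomes an assertion about the two top non-trivial coefficients of these explicit polynomial expressions.

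Next I would feed in the precise shape of $f_n(X)$ given by Lemma~\ref{thm:Lemma5}. The crucial input there is not only the leading coefficient $n$, but also the vanishing of the subleading coefficient $\lc_-(f_n)=0$: the polynomial $f_n(X)$ has no term of degree one below its top degree, only of degree two below. In the odd case, squaring
\begin{equation*}
f_n(X)=nX^{(n^2-1)/2}+c^\prime X^{(n^2-1)/2-2}+\cdots
\end{equation*}
produces a leading term $n^2X^{n^2-1}$, a vanishing $X^{n^2-2}$ coefficient (the cross term involves the absent $X^{(n^2-1)/2-1}$ coefficient), and a first nonzero lower term of degree $n^2-3$ with coefficient $2nc^\prime\in\mathbb{Z}[A,B]$. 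In the even case, the same squaring gives $f_n(X)^2=n^2X^{n^2-4}+0\cdot X^{n^2-5}+2nc^\prime X^{n^2-6}+\cdots$; multiplication by $X^3+AX+B$ then supplies the shift by three and a contribution $An^2$ coming from $n^2X^{n^2-4}\cdot AX$, so that the $X^{n^2-1}$ term is $n^2$, the $X^{n^2-2}$ term is $0$, and the $X^{n^2-3}$ coefficient is $An^2+2nc^\prime\in\mathbb{Z}[A,B]$.

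The two cases combine to give precisely $(\Psi^\prime_n(X,Y))^2=n^2X^{n^2-1}+cX^{n^2-3}+\cdots$ with $c\in\mathbb{Z}[A,B]$, as claimed. There is no serious obstacle: the only point requiring care is the vanishing of the $X^{n^2-2}$ coefficient, and this is precisely what the stronger form of Lemma~\ref{thm:Lemma5} (namely $\lc_-(f_n)=0$) was designed to secure. Everything else is routine book-keeping of leading and sub-leading terms, with the degree computations $2\cdot(n^2-1)/2=n^2-1$ and $2\cdot(n^2-4)/2+3=n^2-1$ confirming that the correct top degree is attained in both parities.
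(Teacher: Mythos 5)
Your proof is correct and follows essentially the same route as the paper: split by parity, write $(\Psi^\prime_n)^2$ as $f_n^2$ or $f_n^2\,(X^3+AX+B)$, and read off the top coefficients from Lemma~\ref{thm:Lemma5}, with the $n^2A$ contribution in the even case matching the paper's $c=c^\prime+n^2A$. Your explicit check that the $X^{n^2-2}$ coefficient vanishes (via $\lc_-(f_n)=0$) is exactly the point the paper leaves implicit in its ``$\cdots$'' notation, so there is nothing to add.
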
 

\begin{proof}
For $n$ odd, one has directly from Lemma \ref{thm:Lemma5}:
\begin{equation}
(\Psi^\prime_{n}(X,Y))^2 = f_n^2(X) = n^2 X^{n^2-1} + c X^{n^2-3} + \cdot \cdot \cdot,                                                 
\end{equation}
For $n$ even, one has:
\begin{eqnarray}
(\Psi^\prime_{n}(X,Y))^2 &=& f_n^2(X) Y^2 = (n^2 X^{n^2-4} + c^\prime X^{n^2-6} + \cdot \cdot \cdot)(X^3+AX+B)\nonumber\\
&=& n^2 X^{n^2-1} + (c^\prime+n^2A) X^{n^2-3} + \cdot \cdot \cdot,                                                 
\end{eqnarray}
as was to be shown, taking $c=c^\prime + n^2 A$.
\end{proof}

\begin{cor}
\label{thm:Corollary4}
For any prime $\ell>2$ and integer $n >1$, one has in $\mathbb{Z}[X,A,B]$:
\begin{equation}
\frac{(\Psi^\prime_{\ell^n}(X,Y))^2}{(\Psi^\prime_{\ell^{n-1}}(X,Y))^2} = \ell^2 X^{\ell^{2n-2}(\ell^2-1)} + c X^{\ell^{2n-2}(\ell^2-1)-2} + \cdot \cdot \cdot,                                                 
\end{equation}
for some $c\in \mathbb{Z}[A,B]$ depending on $\ell^n$. 
\end{cor}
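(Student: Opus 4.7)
My plan is to apply Corollary \ref{thm:Corollary3} to both numerator and denominator, then extract the leading two terms of the quotient by polynomial long division, with an auxiliary argument ensuring divisibility in $\mathbb{Z}[X,A,B]$. From Corollary \ref{thm:Corollary3},
\begin{equation*}
(\Psi^\prime_{\ell^n})^2 = \ell^{2n} X^{\ell^{2n}-1} + a\, X^{\ell^{2n}-3} + \cdots, \quad (\Psi^\prime_{\ell^{n-1}})^2 = \ell^{2n-2} X^{\ell^{2n-2}-1} + b\, X^{\ell^{2n-2}-3} + \cdots,
\end{equation*}
for some $a,b\in\mathbb{Z}[A,B]$; in particular, in each expression the coefficient at degree one below the leading degree vanishes.

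Next, I would show that $(\Psi^\prime_{\ell^{n-1}})^2$ divides $(\Psi^\prime_{\ell^n})^2$ in $\mathbb{Z}[X,A,B]$. Over the function field $\mathbb{Q}(A,B)$, the generic Weierstrass curve $y^2=x^3+Ax+B$ is non-singular, as $\Delta^\prime=4A^3+27B^2\neq 0$ in $\mathbb{Q}(A,B)$. For $\ell>2$, the polynomial $f_{\ell^k}(X)$ has exactly $(\ell^{2k}-1)/2$ roots in $\overline{\mathbb{Q}(A,B)}$, namely the $x$-coordinates of the non-trivial $\ell^k$-torsion points of the generic curve; since $\ell^k$ is odd none of these are $2$-torsion, so the roots are all simple. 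Because $E[\ell^{n-1}]\subseteq E[\ell^n]$, this gives $f_{\ell^{n-1}}\mid f_{\ell^n}$, and hence $(\Psi^\prime_{\ell^{n-1}})^2 \mid (\Psi^\prime_{\ell^n})^2$ in $\mathbb{Q}(A,B)[X]$. A Gauss-lemma argument in the UFD $\mathbb{Z}[A,B]$ then upgrades this divisibility to $\mathbb{Z}[X,A,B]$.

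With divisibility secured, I would perform the polynomial long division. The leading quotient term has degree $(\ell^{2n}-1)-(\ell^{2n-2}-1)=\ell^{2n-2}(\ell^2-1)$ and coefficient $\ell^{2n}/\ell^{2n-2}=\ell^2$. Since
\begin{equation*}
\ell^2 X^{\ell^{2n-2}(\ell^2-1)}\,(\Psi^\prime_{\ell^{n-1}})^2 = \ell^{2n} X^{\ell^{2n}-1} + \ell^2 b\, X^{\ell^{2n}-3} + \cdots,
\end{equation*}
subtracting from $(\Psi^\prime_{\ell^n})^2$ annihilates the $X^{\ell^{2n}-1}$ term and produces a remainder of degree at most $\ell^{2n}-3$. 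A second division step by $(\Psi^\prime_{\ell^{n-1}})^2$ then yields the next quotient term at degree at most $\ell^{2n}-3-(\ell^{2n-2}-1)=\ell^{2n-2}(\ell^2-1)-2$, with coefficient in $\mathbb{Z}[A,B]$ by the previous paragraph. This gives the claimed expansion.

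The main obstacle is the integrality step, namely upgrading divisibility from $\mathbb{Q}(A,B)[X]$ to $\mathbb{Z}[X,A,B]$, since neither leading coefficient is a unit in $\mathbb{Z}[A,B]$ and one must verify that the content of $f_{\ell^{n-1}}$ is compatible with that of $f_{\ell^n}$. Once that is handled, the rest is a routine computation tracking degrees and leading coefficients via Corollary \ref{thm:Corollary3}.
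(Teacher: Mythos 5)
Your overall route is the same as the paper's: establish that $(\Psi^\prime_{\ell^{n-1}})^2$ divides $(\Psi^\prime_{\ell^{n}})^2$, then read off the top two terms of the quotient from Corollary \ref{thm:Corollary3} by long division. Your root-counting argument over $\mathbb{Q}(A,B)$ (the roots of $f_{\ell^{n-1}}$ are exactly the $x$-coordinates of the non-trivial $\ell^{n-1}$-torsion points, all simple, hence divisibility in $\mathbb{Q}(A,B)[X]$) and the two-step division, which uses the vanishing of the subleading coefficients to rule out an $X^{\ell^{2n-2}(\ell^2-1)-1}$ term, are both correct. The genuine gap is precisely the step you yourself flag as ``the main obstacle'': integrality of the quotient. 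You assert that ``a Gauss-lemma argument in the UFD $\mathbb{Z}[A,B]$'' upgrades the divisibility to $\mathbb{Z}[X,A,B]$, but Gauss's lemma only yields this once you know the content of $(\Psi^\prime_{\ell^{n-1}})^2$ divides that of $(\Psi^\prime_{\ell^{n}})^2$ (for instance, that $f_{\ell^{n-1}}$ is primitive), and you never establish this; since the leading coefficients $\ell^{2n-2}$ and $\ell^{2n}$ are non-units in $\mathbb{Z}[A,B]$, nothing here is automatic. Nor is it a side issue: the clause ``in $\mathbb{Z}[X,A,B]$'' (hence $c\in\mathbb{Z}[A,B]$) is part of the statement, and without integrality your division only produces $c$ in $\mathbb{Q}(A,B)$, a priori with denominator a power of $\ell$.

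The paper disposes of exactly this point by quoting Lang's divisibility result for division polynomials (Theorem 2.2(iii), p.~39 of \cite{lang1978}), which asserts that the quotient of the two polynomials already lies in $\mathbb{Z}[X,A,B]$; citing that closes your gap immediately, and the rest of your computation then coincides with the paper's. If you prefer to keep your self-contained route, observe that the content of $f_{\ell^{n-1}}$ divides its leading coefficient $\ell^{n-1}$, hence is a power of $\ell$, so primitivity reduces to showing $f_{\ell^{n-1}}\not\equiv 0 \pmod{\ell}$; but this non-vanishing in characteristic $\ell$, where $[\ell]$ is inseparable and the characteristic-zero description of the roots no longer applies, is itself a non-trivial fact about division polynomials and still requires a proof or a reference.
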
 

\begin{proof}
Any $\ell^{n-1}$-torsion point is also an $\ell^{n}$-torsion point. Therefore, 
the polynomial $(\Psi^\prime_{\ell^{n-1}}(X,Y))^2$ divides $(\Psi^\prime_{\ell^{n}}(X,Y))^2$. 
From \cite[Theorem 2.2-iii, p. 39]{lang1978}, the quotient of these two polynomials is actually in $\mathbb{Z}[X,A,B]$. 
One then computes directly from Corollary \ref{thm:Corollary3}:
\begin{eqnarray}
\frac{(\Psi^\prime_{\ell^n}(X,Y))^2}{(\Psi^\prime_{\ell^{n-1}}(X,Y))^2} &=& \frac{\ell^{2n} X^{\ell^{2n}-1} + c_1 X^{\ell^{2n}-3} + \cdot \cdot \cdot}{\ell^{2n-2} X^{\ell^{2n-2}-1} + c_2 X^{\ell^{2n-2}-3} + \cdot \cdot \cdot}\nonumber\\
&=& \ell^2 X^{\ell^{2n-2}(\ell^2-1)} + c X^{\ell^{2n-2}(\ell^2-1)-2}  +  \cdot \cdot \cdot,         
\end{eqnarray}
as was to be shown.
\end{proof} 

\begin{cor}
\label{thm:Corollary5}
For any $m\geq 1$ and any element $\lambda$ in a field $K$ containing $A$ and $B$, one has in $K[X]$:
\begin{eqnarray}
\Phi_{m}(X,\lambda)&:=&(X-\lambda)(\Psi^\prime_{m}(X,Y))^2 - \Psi^\prime_{m-1}(X,Y)\Psi^\prime_{m+1}(X,Y)\nonumber\\
& = & X^{m^2} - \lambda m^2 X^{m^2-1} + \cdot \cdot \cdot.                                                 
\end{eqnarray}
\end{cor}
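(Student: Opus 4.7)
The plan is to expand both summands of $\Phi_m(X,\lambda)$ and extract their top two coefficients, using Corollary \ref{thm:Corollary3} and Lemma \ref{thm:Lemma5}, working modulo the relation $Y^2 = X^3+AX+B$.

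First, by Corollary \ref{thm:Corollary3} one has $(\Psi^\prime_m(X,Y))^2 = m^2 X^{m^2-1} + 0 \cdot X^{m^2-2} + c\, X^{m^2-3} + \cdots$, the subleading coefficient vanishing. Multiplying by $X-\lambda$ therefore yields $(X-\lambda)(\Psi^\prime_m(X,Y))^2 = m^2 X^{m^2} - \lambda m^2 X^{m^2-1} + \cdots$, since the $X^{m^2-1}$ coefficient of $X\cdot(\Psi^\prime_m)^2$ vanishes and the only contribution comes from the $-\lambda(\Psi^\prime_m)^2$ piece.

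Next, I would compute the top two coefficients of $\Psi^\prime_{m-1}(X,Y)\Psi^\prime_{m+1}(X,Y)$, reduced as a polynomial in $X$. Since $m-1$ and $m+1$ share parity, I split into cases. If $m$ is even, both $\Psi^\prime_{m\pm 1}$ equal $f_{m\pm 1}(X)$, and applying Lemma \ref{thm:Lemma5} (whose crucial feature is the vanishing of the subleading coefficient of each $f_n$) directly gives $\Psi^\prime_{m-1}\Psi^\prime_{m+1} = (m^2-1) X^{m^2} + 0 \cdot X^{m^2-1} + \cdots$. If $m$ is odd, both $\Psi^\prime_{m\pm 1}$ equal $Y\, f_{m\pm 1}(X)$, and the product becomes $(X^3+AX+B)\, f_{m-1}(X) f_{m+1}(X)$. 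Lemma \ref{thm:Lemma5} yields $f_{m-1}f_{m+1} = (m^2-1) X^{m^2-3} + 0\cdot X^{m^2-4} + \cdots$, and multiplying by $X^3+AX+B$---which lacks an $X^2$ term---again gives $(m^2-1) X^{m^2} + 0\cdot X^{m^2-1} + \cdots$.

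Subtracting produces $\Phi_m(X,\lambda) = (m^2-(m^2-1)) X^{m^2} + (-\lambda m^2 - 0) X^{m^2-1} + \cdots = X^{m^2} - \lambda m^2 X^{m^2-1} + \cdots$, as claimed. The main obstacle is the bookkeeping needed to confirm that \emph{no} $X^{m^2-1}$ term survives in $\Psi^\prime_{m-1}\Psi^\prime_{m+1}$. This rests on the vanishing of the subleading coefficient of $f_n$ established in Lemma \ref{thm:Lemma5}, and---in the odd-$m$ case---on the absence of an $X^2$ term in the Weierstrass polynomial $X^3+AX+B$, which prevents a spurious $X^{m^2-1}$ contribution from the $AX$ summand.
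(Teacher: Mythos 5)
Your proposal is correct and follows essentially the same route as the paper's proof: split on the parity of $m$, replace $Y^2$ by $X^3+AX+B$, and invoke Lemma \ref{thm:Lemma5} (equivalently Corollary \ref{thm:Corollary3}) to see that all subleading coefficients vanish so that the $X^{m^2}$ and $X^{m^2-1}$ terms come out as claimed. You merely carry out explicitly the coefficient bookkeeping that the paper leaves implicit, so there is nothing to correct.
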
 

\begin{proof}
For $m$ odd, one has:
\begin{eqnarray}
&&(X-\lambda)(\Psi^\prime_{m}(X,Y))^2 - \Psi^\prime_{m-1}(X,Y)\Psi^\prime_{m+1}(X,Y)\nonumber\\
&& = (X-\lambda)f_{m}^2(X) - f_{m-1}(X)Yf_{m+1}(X)Y\nonumber\\
 && = (X-\lambda)f_{m}^2(X) - f_{m-1}(X)f_{m+1}(X)(X^3+AX+B),                                                
\end{eqnarray}
which shows the result using Lemma \ref{thm:Lemma5}.
For $m$ even, one has:
\begin{eqnarray}
&&(X-\lambda)(\Psi^\prime_{m}(X,Y))^2 - \Psi^\prime_{m-1}(X,Y)\Psi^\prime_{m+1}(X,Y)\nonumber\\
&& = (X-\lambda)f_{m}^2(X)Y^2 - f_{m-1}(X)f_{m+1}(X)\nonumber\\
 && = (X-\lambda)f_{m}^2(X)(X^3+AX+B) - f_{m-1}(X)f_{m+1}(X),                                                
\end{eqnarray}
which implies the result in that case.
\end{proof}

\subsection{Torsion points over $\mathbb{Q}$ in the non-CM case}
\label{subsection:torsionPointsQ}

In this section, we consider non-CM elliptic curves $E$ over $\mathbb{Q}$, with Weierstrass equation $y^2=x^3+Ax+B$, 
where $A,B\in \mathbb{Z}$. 

Under these assumptions, the next results can be applied to almost all primes $\ell$, based on results of Serre. 
Namely, from \cite[Proposition, IV-19]{serre1968} and \cite[Th\'eor\`eme 2, p. 294]{serre1972}, 
one has for almost all prime numbers $\ell$, the isomorphism $\rho_\ell:\Gal(L_\infty/\mathbb{Q})\xrightarrow{\approx} {\bf GL}_2(\mathbb{Z}_\ell)$. 
The condition $\Gal(L_\infty/\mathbb{Q})\approx {\bf GL}_2(\mathbb{Z}_\ell)$, for a given $\ell$, is clearly equivalent to the condition $\Gal(L_n/\mathbb{Q})\approx {\bf GL}_2(\mathbb{Z}/\ell^n\mathbb{Z})$ for any $n\geq 1$. 
Moreover, the latter condition for a given $n>1$ implies the condition for all $1\leq n^\prime < n$. 

\begin{cor}
\label{thm:Corollary6}
Let $E$ be an elliptic curve over $\mathbb{Q}$ without CM. Let $y^2=x^3+Ax+B$ be a Weierstrass equation for $E$, with $A,B\in\mathbb{Z}$. 
Given a prime number $\ell > 2$ and an integer $n\geq 1$, set $L_n=\mathbb{Q}(E[\ell^n])$ and $\tr_n=\tr_{L_n/\mathbb{Q}}$, 
 the trace map of $L_n$ over $\mathbb{Q}$. 
Assume that $\rho_\ell$ induces an isomorphism $\Gal(L_n/\mathbb{Q})\xrightarrow{\approx} {\bf GL}_2(\mathbb{Z}/\ell^n\mathbb{Z})$.  

Then, one has:
\begin{equation}
\tr_{n}(x_n)=0,                                                 
\end{equation}
for any primitive $\ell^n$-torsion point $P_n$ of $E$, $x_n$ denoting its $x$-coordinate. 
\end{cor}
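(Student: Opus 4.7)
The plan is to combine two ingredients: (i) the polynomial information on primitive $\ell^n$-torsion $x$-coordinates provided by Corollary \ref{thm:Corollary4}, which forces the sum of those coordinates to vanish; and (ii) the surjectivity hypothesis $\Gal(L_n/\mathbb{Q}) \xrightarrow{\approx} {\bf GL}_2(\mathbb{Z}/\ell^n\mathbb{Z})$, which via transitivity lets me rewrite the trace as a multiple of that sum.

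First, I would observe that the $x$-coordinates of the primitive $\ell^n$-torsion points of $E$ are precisely the roots (in $\overline{\mathbb{Q}}$) of the polynomial
\[
Q(X) \;:=\; \frac{(\Psi^\prime_{\ell^n}(X,Y))^2}{(\Psi^\prime_{\ell^{n-1}}(X,Y))^2} \;\in\; \mathbb{Z}[X,A,B],
\]
each appearing with multiplicity $2$ (since $P$ and $-P$ share an $x$-coordinate, and for $\ell>2$ no such point is $2$-torsion). By Corollary \ref{thm:Corollary4}, $Q(X) = \ell^2 X^{M} + c X^{M-2} + \cdots$ with $M = \ell^{2n-2}(\ell^2-1)$; in particular the coefficient of $X^{M-1}$ is zero. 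Writing $Q(X) = \ell^2 \prod_{x}(X-x)^2$ over the set $S$ of distinct primitive $\ell^n$-torsion $x$-coordinates and expanding, the coefficient of $X^{M-1}$ equals $-2\ell^2 \sum_{x\in S} x$; hence $\sum_{x \in S} x = 0$, and therefore also $\sum_{P \text{ primitive } \ell^n\text{-torsion}} x(P) = 2 \sum_{x \in S} x = 0$.

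Next, using the assumption that $G := \Gal(L_n/\mathbb{Q}) \xrightarrow{\approx} {\bf GL}_2(\mathbb{Z}/\ell^n\mathbb{Z})$ via $\rho_\ell$, I would note that $G$ acts on $E[\ell^n] \approx (\mathbb{Z}/\ell^n\mathbb{Z})^2$ as the full $GL_2$-action. This action is transitive on the set $\mathcal{P}$ of primitive vectors, since any primitive vector of $(\mathbb{Z}/\ell^n\mathbb{Z})^2$ extends to a basis. So the $G$-orbit of $P_n$ is all of $\mathcal{P}$, of cardinality $N = \ell^{2n-2}(\ell^2-1)$, and by orbit-stabilizer each primitive point is hit exactly $|G|/N$ times as $\sigma$ ranges over $G$.

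Finally, since $L_n/\mathbb{Q}$ is Galois and $x_n \in L_n$, one has $\tr_n(x_n) = \sum_{\sigma \in G} \sigma(x_n)$. Using $\sigma(x_n) = x(\sigma(P_n))$ and the transitivity above,
\[
\tr_n(x_n) \;=\; \sum_{\sigma \in G} x(\sigma(P_n)) \;=\; \frac{|G|}{N} \sum_{P \in \mathcal{P}} x(P) \;=\; \frac{|G|}{N} \cdot 0 \;=\; 0.
\]
The main obstacle is not analytical but conceptual: one must be confident that the specific form of the leading and next-to-leading terms in Corollary \ref{thm:Corollary4} is what controls $\sum x$, and that the full surjectivity of $\rho_\ell$ (not merely of $\widetilde \rho_\ell$) is exactly what grants transitivity on primitive $\ell^n$-torsion points at level $n$. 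Both of these have already been prepared in the preceding material.
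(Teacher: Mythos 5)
Your proposal is correct and follows essentially the same route as the paper: both deduce from Corollary \ref{thm:Corollary4} that the next-to-leading coefficient of $g_{\ell^n}(X)=f_{\ell^n}/f_{\ell^{n-1}}$ vanishes, and both use the full ${\bf GL}_2(\mathbb{Z}/\ell^n\mathbb{Z})$ hypothesis to make the Galois action transitive on primitive $\ell^n$-torsion points, so that the trace is a multiple of the (vanishing) sum of all primitive $x$-coordinates. The only cosmetic difference is that you sum over the Galois group via orbit--stabilizer, whereas the paper identifies $g_{\ell^n}$ as the minimal polynomial of $x_n$ and factors the trace through its splitting field.
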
 

\begin{proof}
From the assumption on $\ell$, all primitive $\ell^n$-torsion points of $E$ are conjugates. But there are $\ell^{2n} - \ell^{2(n-1)}$ 
of them, which yields $d=(\ell^{2n} - \ell^{2(n-1)})/2$ distinct $x$-coordinates. 
Now, the polynomial appearing in Corollary \ref{thm:Corollary4} is of the form $\left ( g_{\ell^n}(X) \right)^2$, 
where $g_{\ell^n}(X):=f_{\ell^n}(X)/f_{\ell^{n-1}}(X)$ has degree $d$. 
Thus, the polynomial $g_{\ell^n}(X)$ is the irreducible polynomial of $x_n$. 
As the coefficient of $X^{d-1}$ in this polynomial is equal to $0$ (using Corollary \ref{thm:Corollary4}), 
it follows that $\tr_{K/\mathbb{Q}} (x_n) =0$, where $K$ is the splitting field of $x_n$. 
Then, one computes: 
$\tr_n(x_n) = [L_n:K] \tr_{K/\mathbb{Q}} (x_n) =0$.
\end{proof}

\begin{cor}
\label{thm:Corollary7}
Let $E$ be an elliptic curve over $\mathbb{Q}$ without CM. 
Let $y^2=x^3+Ax+B$ be a Weierstrass equation for $E$, with $A,B\in\mathbb{Z}$.  
Given a prime number $\ell>2$ and an integer $n> 1$, set $L_n=\mathbb{Q}(E[\ell^n])$ and $\tr_{n,n-1}=\tr_{L_n/L_{n-1}}$, 
the trace map of $L_n$ over $L_{n-1}$. 
Assume that $\rho_\ell$ induces an isomorphism $\Gal(L_n/\mathbb{Q})\xrightarrow{\approx} {\bf GL}_2(\mathbb{Z}/\ell^n\mathbb{Z})$.  
 
Then, one has $[L_n:L_{n-1}]=\ell^4$, and the following identity holds:
\begin{equation}
\frac{\tr_{n,n-1}(x_n)}{[L_n:L_{n-1}]}=x_{n-1},                                                 
\end{equation}
for any primitive $\ell^n$-torsion point $P_n$ of $E$, $x_n$ and $x_{n-1}$ denoting the $x$-coordinate of 
$P_n$ and $[\ell]P_n$, respectively. In particular, $x_n - x_{n-1} \in \Ker \tr_{n,n-1}$.                                                 
\end{cor}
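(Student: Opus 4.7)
The plan is to identify the minimal polynomial of $x_n$ over $L_{n-1}$ explicitly, and then read the trace off its second-leading coefficient. For the degree computation: under the hypothesized isomorphism $\Gal(L_n/\mathbb{Q}) \xrightarrow{\approx} {\bf GL}_2(\mathbb{Z}/\ell^n\mathbb{Z})$, the subgroup $\Gal(L_n/L_{n-1})$ corresponds to the kernel of the natural reduction ${\bf GL}_2(\mathbb{Z}/\ell^n\mathbb{Z}) \to {\bf GL}_2(\mathbb{Z}/\ell^{n-1}\mathbb{Z})$, namely $I + \ell^{n-1}\,{\bf Mat}_2(\mathbb{F}_\ell)$, which has order $\ell^4$; hence $[L_n:L_{n-1}] = \ell^4$.

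Next I show that $\Phi_\ell(x_n, x_{n-1}) = 0$. By formula (\ref{eq:eq66final7}) applied to $m = \ell$, one has $x_n - x_{n-1} = \Psi^\prime_{\ell-1}(x_n,y_n)\Psi^\prime_{\ell+1}(x_n,y_n)/(\Psi^\prime_\ell(x_n,y_n))^2$, which substituted into the definition of $\Phi_\ell$ from Corollary \ref{thm:Corollary5} yields zero. By that corollary, $\Phi_\ell(X, x_{n-1}) \in L_{n-1}[X]$ is monic of degree $\ell^2$ with coefficient $-\ell^2 x_{n-1}$ in front of $X^{\ell^2 - 1}$. The full set of roots is $\{x(P_n + R) : R \in E[\ell]\}$: indeed $[\ell](P_n+R) = [\ell]P_n = P_{n-1}$, so each such coordinate is a root, and these $\ell^2$ values are pairwise distinct, since $x(P_n + R_1) = x(P_n + R_2)$ would force either $R_1 = R_2$ or $[2]P_n = -R_1 - R_2 \in E[\ell]$, the latter being impossible because $[2]P_n$ has order $\ell^n \geq \ell^2$ (using $\ell$ odd and $n \geq 2$).

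Finally I verify that $\Gal(L_n/L_{n-1})$ acts transitively on this set of $\ell^2$ values. Picking a $\mathbb{Z}/\ell^n\mathbb{Z}$-basis $\{P_n, P_n^\prime\}$ of $E[\ell^n]$, an element corresponding to $I + \ell^{n-1} M$ sends $P_n$ to $P_n + \ell^{n-1}(aP_n + cP_n^\prime)$, where $(a,c)$ is the first column of $M$; as $M$ ranges over ${\bf Mat}_2(\mathbb{F}_\ell)$, the summand $\ell^{n-1}(aP_n + cP_n^\prime)$ sweeps out all of $E[\ell]$. Hence $\Phi_\ell(X, x_{n-1})$ is the minimal polynomial of $x_n$ over $L_{n-1}$, and the trace of $x_n$ from $L_{n-1}(x_n)$ to $L_{n-1}$ is the sum of its $\ell^2$ roots, namely $\ell^2 x_{n-1}$. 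Multiplying by $[L_n : L_{n-1}(x_n)] = \ell^2$ gives $\tr_{n,n-1}(x_n) = \ell^4 x_{n-1}$, which is the claim after dividing by $[L_n:L_{n-1}] = \ell^4$. The step that requires the most care is this transitivity argument, which crucially exploits the full surjectivity hypothesis on $\rho_\ell$ to guarantee that every element of $E[\ell]$ is realized as $\sigma(P_n) - P_n$ for some $\sigma \in \Gal(L_n/L_{n-1})$.
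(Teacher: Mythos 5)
Your proof is correct and follows essentially the same route as the paper: identify $\Phi_{\ell}(X,x_{n-1})$ from Corollary \ref{thm:Corollary5} as the minimal polynomial of $x_n$ over $L_{n-1}$ by showing its $\ell^2$ roots $x(P_n+R)$, $R\in E[\ell]$, are distinct and form a single Galois orbit, then read the trace off the coefficient of $X^{\ell^2-1}$ and multiply through the tower using $[L_n:L_{n-1}]=\ell^4$. The only difference is that you make explicit what the paper leaves implicit, namely the identification of $\Gal(L_n/L_{n-1})$ with $I+\ell^{n-1}{\bf Mat}_2(\mathbb{F}_\ell)$ and the resulting transitivity computation, which is a welcome clarification rather than a new method.
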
 

\begin{proof} Given $n>1$, let $P_n$ be a primitive $\ell^n$-torsion point of $E$, and set $P_{n-1}:=[\ell]P_n$ and 
$x_{n-1}=x(P_{n-1})$. 
The roots of the polynomial $\Phi_{\ell}(X,x_{n-1})$ appearing in Corollary 
\ref{thm:Corollary5}, where we take $\lambda=x_{n-1}\in L_{n-1}$ and $m=\ell$, are the $x$-coordinates of 
the solutions $P$ to the equation $[\ell]P = \pm P_{n-1}$. Since $P$ and $-P$ have the same $x$-coordinate, we may restrict 
to the solutions of $[\ell]P=P_{n-1}$.   
There are $\ell^2$ solutions to this equation; namely $P=P_n+Q$, where $Q$ is an $\ell$-torsion point. 
This yields $\ell^2$ distinct $x$-coordinates, since $P_n+Q=\pm (P_n+Q^\prime)$ yields $Q=Q^\prime$, or else $[2]P_n=-Q-Q^\prime$, 
which is excluded since $\ell\not=2$ and $n>1$.  
Now, $\ell^2$ is the degree of the polynomial $\Phi_{\ell}(X,x_{n-1})$. 
From the assumption on the Galois group, it follows that the solutions to $[\ell]P=P_{n-1}$ are conjugates, 
and hence that the roots of $\Phi_{\ell}(X,x_{n-1})$ are conjugates. 
Therefore, $\Phi_{\ell}(X,x_{n-1})$ is the minimal polynomial of $x_n$ over $L_{n-1}$.  
We conclude that $\tr_{K/L_{n-1}}(x_n)=\ell^2 x_{n-1}$, where $K$ is the splitting field of $x_n$ over $L_{n-1}$. 
Thus, one obtains $\tr_{n,n-1}(x_n)=[L_n:K] \ell^2 x_{n-1}$. But then, $[L_n:L_{n-1}]=\ell^4$ from the assumption on the Galois group, 
whereas $[K:L_{n-1}]=\ell^2$ from above. Thus, $[L_n:K]=\ell^2$ and the result is shown.
\end{proof}


In the next result, we consider a place $v$ of $\mathbb{Q}$, either $p$-adic or Archimedean, and pick the canonical 
norm $\vert \cdot \vert_v$ such that $\vert p \vert_v = p^{-1}$ if $v$ is the non-Archimedean place associated to a prime $p$, 
or $\vert \cdot \vert_v$ is the absolute value of a real number, if $v$ is the Archimedean place. See \cite[pp. 34--35]{lang1986}. 
One then has the product formula \cite[p. 99]{lang1986}:
\begin{equation}
\prod_{v} \vert x \vert_v = 1,                                                 
\end{equation}
for any rational number $x$, where the product covers all canonical places of $\mathbb{Q}$. 

If $K$ is an algebraic field, we consider for each place $w$ lying above a place $v$ of $\mathbb{Q}$, the unique norm that extends 
$\vert \cdot \vert_v$. Namely, if $w$ is non-Archimedean, one defines:
\begin{equation}
\vert x \vert_w = p^{- w(x)/e_w},                                              
\end{equation}
where $w$ is viewed as the discrete valuation associated to the place $w$, $p$ is the prime number lying below $w$,  
and $e_w$ denotes the absolute ramification index of $p$ in $K_w$. If $w$ is Archimedean, $\vert \cdot \vert_w$ denotes the absolute 
value if $K_w=\mathbb{R}$, or the complex modulus if $K_w=\mathbb{C}$. See \cite[p. 35 and p. 99]{lang1986} for the alternative norm 
$\Vert x \Vert_w=\vert x \vert_w^{[K_w:\mathbb{Q}_v]}$ and the corresponding product formula, which is not used here.

We now introduce a topology on $E(\mathbb{C})$ as follows. 
Adapting \cite[Exerc. 7.6, pp. 203--204]{silverman2009}, we consider the Euclidean topology on $\mathbb{C}$ 
defined by the complex modulus $\vert \cdot \vert$. 
Then, we consider the product topology on the affine space $\mathbb{A}^2(\mathbb{C})$. 
Next, for each $0\leq i \leq 2$, there is an inclusion 
$\phi_i: \mathbb{A}^2(\mathbb{C}) \rightarrow \mathbb{P}^2(\mathbb{C})$ \cite[p. 9]{silverman2009}. 
This allows gluing together the product spaces $\phi_i(\mathbb{A}^2(\mathbb{C}))$, $i=0,1,2$. 
In this manner, we obtain a topology naturally defined on $\mathbb{P}^2(\mathbb{C})$, and hence on $E(\mathbb{C}) \hookrightarrow \mathbb{P}^2(\mathbb{C})$ 
based on a homogeneous equation $y^2z=x^3+Axz^2+Bz^3$ for $E$. 
Note that $E(\mathbb{C})$ is a Hausdorff space, so that a sequence of points in this topological space has at most one limit.

In the following result, 
we use the fact that the polynomial $X^3+AX+B$ has three distinct roots, 
since an elliptic curve is non-singular; equivalently, since its discriminant $\Delta$ does not vanish.

\begin{prop}
\label{thm:Proposition2}
Let $E$ be an elliptic curve over $\mathbb{Q}$, having Weierstrass equation $y^2=x^3+Ax+B$, with $A,B\in \mathbb{Z}$. 
Given a prime number $\ell$, set $L_n=\mathbb{Q}(E[\ell^n])$ and $\tr_{n}=\tr_{L_n/\mathbb{Q}}$. 

a) Then, for all prime numbers $\ell >3$ not dividing $\Delta^\prime$, and for any integer $n \geq 1$, one has:
\begin{equation}
\label{eq:eq94final7}
\Bigl \vert \frac{\tr_{n}(\alpha_n)}{[L_n:\mathbb{Q}]} \Bigr \vert_{v} \leq C_{*,v},                                                 
\end{equation}
for any primitive $\ell^n$-torsion point $P_n$ of $E$, where $\alpha_n=\ell^3 \Delta^\prime/y^2(P_n)$, 
and any place $v\not=\ell$ of $\mathbb{Q}$, 
for some constant $C_{*,v}>0$ depending only on $A$ and $B$, the prime $\ell$ and the place $v$.
Namely, one has explicitly:
\begin{eqnarray}
&&i):\quad C_{*,q} = \vert (\ell-1)^{-2} (\ell+1)^{-1} \vert_{q}, \textrm{ if } q\not=\ell;\\
&&ii):\quad C_{*,\infty} = \vert \Delta^\prime \vert \ell^3 \max \left ( 2, 1/\delta^{3} \right ),                                               
\end{eqnarray}
where the constant $\delta>0$ is the minimal distance between $x$-coordinates $x_n$ and $e_1$ of any primitive $\ell^n$- and $2$-torsion points,
respectively, of $E(\mathbb{C})$, such that $\vert x_n \vert < \sqrt{2(\vert A \vert + \vert B \vert)}$.  

b) Let $E$ be a non-CM curve, and assume that $\rho_\ell$ induces an isomorphism $\Gal(\mathbb{Q}(E[\ell^n])/\mathbb{Q})\xrightarrow{\approx} {\bf GL}_2(\mathbb{Z}/\ell^n\mathbb{Z})$ for any $n\geq 1$, where $\ell>3$ is a prime number.  
Then, ${\tr_{n}(\alpha_n)}/{[L_n:\mathbb{Q}]}$ has only finitely many values for all $n\geq 1$ 
($\ell$ being fixed). Moreover, (\ref{eq:eq94final7}) then also holds for the $\ell$-adic norm, taking:
\begin{equation}
iii):\quad C_{*,\ell} = \max_{n=1,2;\, \alpha_n} \Bigl \vert \frac{\tr_n(\alpha_n)}{[L_n:\mathbb{Q}]} \Bigr \vert_{\ell} < \infty.\\                                           
\end{equation}
Similar estimates also hold for the other norms. 
\end{prop}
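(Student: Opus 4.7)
The plan is to treat each norm separately, drawing on Lemma \ref{thm:Lemma3}(a) for integrality, Lemma \ref{thm:Lemma4}(b) for the degree bound, and, in part (b), Corollary \ref{thm:Corollary7} to reduce the trace at level $n$ to level $n-1$.

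For part (a)(i), the first observation is that Lemma \ref{thm:Lemma3}(a) gives $(\ell y(P_n))^2 \mid \Delta'\ell^5$ in $\mathcal{O}_n$, so $\alpha_n = \Delta'\ell^5/(\ell y(P_n))^2$ is an algebraic integer of $L_n$. Consequently $\tr_n(\alpha_n) \in \mathbb{Z}$, so $|\tr_n(\alpha_n)|_q \leq 1$ for every rational prime $q$. Combined with the divisibility $[L_n:\mathbb{Q}] \mid \ell(\ell-1)^2(\ell+1)\ell^{4(n-1)}$ from Lemma \ref{thm:Lemma4}(b)---which for $q \neq \ell$ forces $v_q([L_n:\mathbb{Q}]) \leq v_q((\ell-1)^2(\ell+1))$---this yields the bound $C_{*,q}$ at once. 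For part (a)(ii), I would expand $\tr_n(\alpha_n) = \sum_\sigma \sigma(\alpha_n)$ over the embeddings $\sigma : L_n \hookrightarrow \mathbb{C}$, each sending $P_n$ to a primitive $\ell^n$-torsion point $Q \in E(\mathbb{C})$ with $\sigma(\alpha_n) = \ell^3\Delta'/y(Q)^2$. A uniform lower bound on $|y(Q)^2| = |x(Q)-e_1||x(Q)-e_2||x(Q)-e_3|$ (with $e_1,e_2,e_3$ the distinct roots of $X^3+AX+B$) follows from a dichotomy: if $|x(Q)| \geq \sqrt{2(|A|+|B|)}$, then the elementary estimate $|y(Q)^2| \geq |x(Q)|^3/2 \geq 1/2$ applies (this radius being $\geq 1$ since $A,B$ are integers, not both zero); otherwise the definition of $\delta$ gives $|y(Q)^2| \geq \delta^3$. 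In either case $|\sigma(\alpha_n)| \leq \ell^3|\Delta'|\max(2, 1/\delta^3)$, and the triangle inequality divided by $[L_n:\mathbb{Q}]$ supplies $C_{*,\infty}$.

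For part (b), the Galois transitivity on primitive $\ell^n$-torsion (guaranteed by $\rho_\ell(\mathcal{G}) = {\bf GL}_2(\mathbb{Z}/\ell^n\mathbb{Z})$) ensures that $\tr_n(\alpha_n)$ is independent of the choice of $P_n$. By Corollary \ref{thm:Corollary7}, $[L_n:L_{n-1}] = \ell^4$ for $n \geq 2$, so the tower formula gives
\[
\frac{\tr_n(\alpha_n)}{[L_n:\mathbb{Q}]} = \frac{\tr_{n-1}\bigl(\tr_{n,n-1}(\alpha_n)/\ell^4\bigr)}{[L_{n-1}:\mathbb{Q}]}.
\]
I would then compute $\tr_{n,n-1}(\alpha_n)$ by writing $\alpha_n = \ell^3\Delta'/\prod_{i=1}^3(x_n-e_i)$, performing partial fractions in $x_n$, and evaluating each resulting sum $\sum_\sigma 1/(\sigma(x_n)-e_i)$ as a logarithmic derivative of the minimal polynomial $\Phi_\ell(X, x_{n-1})$ from Corollary \ref{thm:Corollary5} (whose roots are the $\ell^2$ distinct $x$-coordinates of the preimages of $P_{n-1}$ under $[\ell]$). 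The target identity is $\tr_{n,n-1}(\alpha_n) = \ell^4\alpha_{n-1}$ for $n \geq 3$, which by telescoping makes the sequence $\{\tr_n(\alpha_n)/[L_n:\mathbb{Q}]\}$ constant from $n = 2$ onward, forcing the set of values to be finite and reducing the $\ell$-adic supremum to $n \in \{1,2\}$.

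The hard part will be the trace identity $\tr_{n,n-1}(\alpha_n) = \ell^4\alpha_{n-1}$: extracting it requires combining the explicit division-polynomial computations of Section \ref{subsection:Multiplication} (in particular, Corollaries \ref{thm:Corollary3}--\ref{thm:Corollary5}) with evaluations of the logarithmic derivative of $\Phi_\ell(X, x_{n-1})$ at the specific values $X = e_i$, and verifying that the three contributions recombine cleanly into $\ell^4/y_{n-1}^2$. Once this formal identity is in hand, the finiteness claim and the bound $C_{*,\ell}$ follow immediately, and analogous estimates at the other places are obtained by the same averaging argument together with parts (a)(i) and (a)(ii).
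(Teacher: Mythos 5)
Your part (a)(i) is fine and is essentially the paper's own argument: $\alpha_n$ is integral by Lemma \ref{thm:Lemma3}(a), so $\tr_n(\alpha_n)\in\mathbb{Z}$, and the degree divisibility of Lemma \ref{thm:Lemma4}(b) gives $C_{*,q}$. The genuine gap is in (a)(ii): you invoke ``the definition of $\delta$'' as if its positivity were free. But $\delta$ is an infimum over the infinitely many primitive $\ell^n$-torsion points, over \emph{all} $n\geq 1$, lying in the disk $\vert x\vert<\sqrt{2(\vert A\vert+\vert B\vert)}$ (a disk that always contains the roots $e_i$, since $\vert e_i\vert^2\leq \vert A\vert+\vert B\vert$), and the assertion $\delta>0$ --- i.e.\ that under the complex embedding the $x$-coordinates of primitive $\ell$-power torsion points cannot accumulate at a root of $X^3+AX+B$ --- is precisely the nontrivial content of the Archimedean bound. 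The paper spends Steps 4--6 of its proof on exactly this point, via an accumulation argument on $E(\mathbb{C})$ using the group law and the division polynomials (a sequence of primitive $\ell^{n_k}$-torsion points tending to a $2$-torsion point is shown to force two incompatible limits for $[1-\ell]P_{n_k}$). Without some argument of this kind your constant $C_{*,\infty}$ is not known to be finite, so your (a)(ii) is not proved.

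For part (b) your route differs from the paper's, and it hinges entirely on the identity $\tr_{n,n-1}(\alpha_n)=\ell^4\alpha_{n-1}$, which you explicitly leave unverified. Be aware that this is \emph{not} the identity the paper establishes: in its Step 8 the paper computes $\tr^\prime_{n,n-1}\bigl(1/(x_n-e_i)\bigr)$ from the reversed polynomial of $\Phi_\ell(X+e_i,x_{n-1})$, obtains an expression affine-linear in $x_{n-1}$, and then descends the tower by applying Corollary \ref{thm:Corollary7} to $x$-coordinates; so you cannot defer to the paper's computation for your identity. Your identity is equivalent to the statement that $\sum_{[\ell]P=Q}1/y(P)^2=\ell^2/y(Q)^2$ as functions of $Q$. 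Matching the polar parts at the three $2$-torsion points does produce the term $\ell^2/y(Q)^2$, but there remains an additive constant, equal to $\sum_{T\in E[\ell]\setminus\{O\}}y(T)^{-2}$, and you must prove that it vanishes: if it did not, your telescoping would produce an arithmetic progression rather than an eventually constant sequence, and the finiteness claim (and the $\ell$-adic bound $C_{*,\ell}$) would collapse. Nothing in Corollaries \ref{thm:Corollary3}--\ref{thm:Corollary5} or \ref{thm:Corollary7} gives this vanishing; it is the crux of your part (b) and is missing. If you do carry out the coefficient extraction, note also that the reversed polynomial is not monic --- its leading coefficient is $\Phi_\ell(e_i,x_{n-1})=(e_i-x_{n-1})f_\ell(e_i)^2$ --- so the trace of $1/(x_n-e_i)$ is $-\Phi_\ell^\prime(e_i,x_{n-1})/\Phi_\ell(e_i,x_{n-1})$, and the division by this $x_{n-1}$-dependent factor must not be dropped.
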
  

\begin{proof} {\em Step 1.}
Let us fix the prime $\ell > 3$, the integer $n\geq 1$, and the primitive $\ell^n$-torsion point $P_n$ with $x$ and $y$-coordinates 
$x_n=x(P_n)$ and $y_n=y(P_n)$, respectively. We set $\alpha_n=\ell^3 \Delta^\prime/y_n^2$. We assume that $\ell \nmid \Delta^\prime$.

{\em Step 2.} We consider first the case where $v$ is a non-Archimedean place of $\mathbb{Q}$ corresponding to a prime number $q\not = \ell$.
Since $\alpha_n$ is a divisor of $\ell^5 \Delta^\prime$, it follows that $\tr_n (\alpha_n )$ is an integer. Therefore, one obtains:
\begin{equation}
\Bigl \vert \frac{\tr_{n}(\alpha_n)}{[L_n:\mathbb{Q}]} \Bigr \vert_{q} \leq \vert (\ell-1)^{-2} (\ell+1)^{-1} \vert_{q},
\end{equation}
using Lemma \ref{thm:Lemma4}, which proves inequality i).

{\em Step 3.} Next, fix a place $w$ of $L_n$ lying above the Archimedean place $v=\infty$ of $\mathbb{Q}$; 
equivalently, $w$ corresponds to an embedding $\xi:\overline{\mathbb{Q}} \hookrightarrow \mathbb{C}$, with $\vert x \vert_{w} = \vert \xi(x) \vert$. 
We set $C_1:=\max \left ( 1, \sqrt{2(\vert A \vert_w + \vert B \vert_w )}\right ) = \sqrt{2(\vert A \vert + \vert B \vert)}$. 
Then, one has for $\vert x_n \vert_w \geq 1$:
\begin{equation}
\vert x_n^3+Ax_n+B \vert  \geq  \vert x \vert_w^3 - \vert A \vert_w \vert x_n \vert_w - \vert B \vert_w 
\geq \vert x_n \vert_w^3 - \left ( \vert A \vert_w + \vert B \vert_w \right ) \vert x_n \vert_w.
\end{equation}
Assuming also that $\vert x_n \vert_w \geq \sqrt{2(\vert A \vert_w + \vert B \vert_w )}$, 
one obtains $\vert x_n \vert_w^2/2 \geq (\vert A \vert_w + \vert B \vert_w )$, which yields:
\begin{equation}
\vert x_n \vert_w^3 - \left ( \vert A \vert_w + \vert B \vert_w \right ) \vert x_n \vert_w \geq \vert x_n \vert_w^3/2.
\end{equation}
Altogether, the lower bound $\vert x_n \vert_w \geq C_1$ implies the inequality:
\begin{equation}
\Bigl \vert \frac{\Delta^\prime \ell^3}{\left ( x_n^3+Ax_n+B \right )} \Bigr \vert_w 
\leq \frac{\vert \Delta^\prime \vert \ell^3}{\vert x_n \vert_w^3/2} \leq 2 \vert \Delta^\prime \vert \ell^3.
\end{equation}

{\em Step 4.} We are left with the case where the norm $\vert x_n \vert_w$ corresponding to an Archimedean place $w$ of $L_n$, is within the bound $C_1$. 
Let $S$ denote the set of elements $x_n$ such that $\vert x_n \vert_w < C_1$. 
We then need a lower bound for $\vert x_n^3+Ax_n+B \vert_w$, whenever $x_n \in S$.

{\em Step 5.} 
Assume that the set $S$ of step 4 is finite. 

Then, if any element $x_n$ of $S$ is equal to a root of $X^3+AX+B$, 
the point $(x_n,0)$ is a $2$-torsion point of $E(\overline{\mathbb{Q}})$, 
which contradicts the assumption that $x_n$ is the $x$-coordinate of a primitive $\ell^n$-torsion point, as $\ell \not =2$. Therefore, these elements $x_n$ are away from the roots of $X^3+AX+B$, say by a distance $\delta>0$, which implies that $1/(x_n^3+Ax_n+B)$ remains bounded on $S$. 
Therefore, one obtains:
\begin{equation}
\Bigl \vert \frac{\Delta^\prime \ell^3}{\left ( x_n^3+Ax_n+B \right )} \Bigr \vert_w \leq \vert \Delta^\prime \vert \ell^3/\delta^{3},
\end{equation}
for all elements $x_n \in S$. 
In that case, one concludes that:
\begin{equation}
\label{eq:eq103final7}
\Bigl \vert \frac{\tr_n(\alpha_n)}{[L_n:\mathbb{Q}]} \Bigr \vert \leq \vert \Delta^\prime \vert \ell^3 \max \left ( 2, 1/\delta^{3} \right ),
\end{equation}
making use of step 3.

{\em Step 6.} Next, we consider the case where the set $S$ of step 4 is infinite. 
At this point, we make use of the topology defined above on $E(\mathbb{C}) \hookrightarrow \mathbb{P}^2(\mathbb{C})$, and we identify 
$E(\overline{\mathbb{Q}})$ with its image under the embedding $\xi: E(\overline{\mathbb{Q}}) \hookrightarrow E(\mathbb{C})$.

As $S$ is bounded, there exists an accumulation point $x_{**} \in \mathbb{C}$ of $S$ such that 
$\vert x_{**} \vert \leq C_1$. Thus, $\lim_{k\rightarrow \infty} \xi(x_{n_k}) = x_{**}$ for some sequence $\{x_{n_k}\}$. 

If the point $x_{**}$ is of the form $\xi(x_*)$, where $x_*:=e_1$ is a root of $X^3+AX+B=(X-e_1)(X-e_2)(X-e_3)$, then $P_*=(x_*,0)$ is a $2$-torsion point of 
$E(\overline{\mathbb{Q}})$. Since $\ell \not = 2$, one has $[\ell]P_*=P_*$. 
But, from (\ref{eq:eq66final7}), one has: 
\begin{equation}
x([\ell]P_*) = x_* - \frac{\Psi_{\ell-1}^\prime (x_*,0) \Psi_{\ell+1}^\prime (x_*,0)}{\left ( \Psi_\ell^\prime (x_*,0)\right)^2},
\end{equation}
knowing that the denominator 
$\left ( \Psi_\ell^\prime (x_*,0)\right)^2$ does not vanish, since $\ell \not = 2$.
Thus, one concludes that $\Psi_{\ell-1}^\prime (x_*,0) \Psi_{\ell+1}^\prime (x_*,0) = 0$. 

We also have:
\begin{equation}
x([\ell]P_n) = x_n - \frac{\Psi_{\ell-1}^\prime (x_n,y_n) \Psi_{\ell+1}^\prime (x_n,y_n)}{\left ( \Psi_\ell^\prime (x_n,y_n)\right)^2},
\end{equation}
assuming that $n>1$. 
This yields:
\begin{eqnarray}
&&\vert \Psi_{\ell-1}^\prime (x_n,y_n) \Psi_{\ell+1}^\prime (x_n,y_n) \vert_w \nonumber\\
&=& \vert \Psi_{\ell-1}^\prime (x_n,y_n) \Psi_{\ell+1}^\prime (x_n,y_n) - \Psi_{\ell-1}^\prime (x_*,0) \Psi_{\ell+1}^\prime (x_*,0) \vert_w
\nonumber\\
&\leq& C \vert x_n - x_* \vert_w,
\end{eqnarray}
for some constant $C>0$, since $\vert x_n \vert_w \leq C_1$.
We thus conclude that:
\begin{equation}
\vert x_{n_k} - x([\ell]P_{n_k}) \vert_w = \Bigl \vert \frac{\Psi_{\ell-1}^\prime (x_n,y_n) \Psi_{\ell+1}^\prime (x_n,y_n)}{\left ( \Psi_\ell^\prime (x_n,y_n)\right)^2} \Bigr \vert_w \leq C^\prime \vert x_{n_k} - x_* \vert_w,
\end{equation}
for some constant $C^\prime>0$.
In particular, one has $\lim_{k\rightarrow \infty} x([1 - \ell]P_{n_k}) = 0$. Therefore, for some infinite sequence $\{ P_{n_k}\}$ of 
primitive $\ell^{n_k}$-torsion points, one has both $\lim_{k\rightarrow \infty} P_{n_k} = P_*$, and 
$\lim_{k\rightarrow \infty} [1 - \ell]P_{n_k} = (0,\sqrt{B},1)$ or $\lim_{k\rightarrow \infty} [1 - \ell]P_{n_k} = (0,-\sqrt{B},1)$.

We now show that $\lim_{k\rightarrow \infty} [1 - \ell]P_{n_k} = O$, based on the assumption that $\lim_{k\rightarrow \infty} P_{n_k}$ 
is the $2$-torsion point $P_*$, which will yield a contradiction.
For this purpose, we compute the $(x,y)$-coordinates of $Q_n := P_n [-] P_* = P_* [+] P_n$ \cite[pp. 53--54]{silverman2009}. 
Since $P_n \not = P_*$ and $a_1=a_2=a_3=0$, one has:
\begin{equation}
\begin{cases}
P_* = (e_1,0); \quad P_{n_k} = (x_{n_k},y_{n_k});\\
\lambda_{n_k} = \frac{y_{n_k}}{(x_{n_k}-e_1)};\\
\nu_{n_k} = \frac{-y_{n_k} e_1}{(x_{n_k}-e_1)} = -\lambda_{n_k} e_1;\\
x(Q_{n_k}) = \lambda_{n_k}^2 - x_{n_k} - e_1;\\
y(Q_{n_k}) = - \lambda_{n_k} x(Q_{n_k}) - \nu_{n_k} = - \lambda_{n_k} \left ( \lambda_{n_k}^2 - x_{n_k} - 2 e_1 \right ).
\end{cases}
\end{equation}
One develops:
\begin{eqnarray}
\lambda_{n_k}^2 &=& \frac{x_{n_k}^3+A x_{n_k} +B}{(x_{n_k}-e_1)^2} = \frac{(x_{n_k}-e_2)(x_{n_k}-e_3)}{(x_{n_k}-e_1)},
\end{eqnarray}
where $e_2$ and $e_3$ are the two other roots of $X^3+AX+B$. 
Since $e_2,e_3\not = e_1$ by non-singularity of $E$, one obtains $\lim_{k \rightarrow \infty} \vert \lambda_{n_k} \vert_w = \infty$, and henceforth:
\begin{equation}
\begin{cases}
\frac{x(Q_{n_k})}{y(Q_{n_k})} = - \frac{\lambda_{n_k}^2 - x_{n_k} - e_1}{\lambda_{n_k} \left ( \lambda_{n_k}^2 - x_{n_k} - 2 e_1 \right )} \rightarrow 0 \textrm{, as } {k \rightarrow \infty};\\
\frac{1}{y(Q_{n_k})} = - \frac{1}{\lambda_{n_k} \left ( \lambda_{n_k}^2 - x_{n_k} - 2 e_1 \right )} \rightarrow 0 \textrm{, as } {k \rightarrow \infty}.
\end{cases}
\end{equation}
Thus, $\lim_{k\rightarrow \infty} Q_{n_k} = O$, as expected. Here, we have thus used the $(x,z)$-coordinates of $Q_{n_k}$.

Next, based on (\ref{eq:eq66final7}) and (\ref{eq:eq67final7}), one has:
\begin{equation}
\begin{cases}
x([a]Q_{n_k}) = \frac{x(\Psi_{a}^\prime)^2 - \Psi_{a+1}^\prime\Psi_{a-1}^\prime}{(\Psi_{a}^\prime)^2};\\
y([a]Q_{n_k}) = \frac{\Psi_{a+2}^\prime (\Psi_{a-1}^\prime)^2 - \Psi_{a-2}^\prime (\Psi_{a+1}^\prime)^2}{ 4 y (\Psi_{a}^\prime)^3},
\end{cases}
\end{equation}
where $y$ stands for $y(Q_{n_k})$ and $a=\ell-1$, which is an even integer since $\ell\not =2$. 

Now, one computes, based on Lemma \ref{thm:Lemma5}:
\begin{eqnarray}
\left ( \Psi_{a+2}^\prime (\Psi_{a-1}^\prime)^2 - \Psi_{a-2}^\prime (\Psi_{a+1}^\prime)^2 \right ) / (4 y) &=& 
\left ( f_{a+2}(f_{a-1})^2 - f_{a-2}(f_{a+1})^2 \right ) / 4\nonumber\\
&=& \left ( 4x^{3a^2/2} + \textrm{ terms of lower degree} \right) / 4.\nonumber\\
\end{eqnarray}
One also has:
\begin{eqnarray}
(\Psi_{a}^\prime)^3 &=& (f_{a})^3 y^3\nonumber\\
&=& \left ( a^3 x^{3a^2/2 - 6} + \textrm{ terms of lower degree} \right ) y^3.
\end{eqnarray}
This yields:
\begin{eqnarray}
y([a]Q_{n_k}) &=& \frac{\left ( x^{3a^2/2} + \textrm{ terms of lower degree} \right)}{\left ( a^3 x^{3a^2/2 - 6} + \textrm{ terms of lower degree} \right ) y^3}\nonumber\\
&\sim & \frac{1}{a^3} \frac{x^6}{y^3},
\end{eqnarray}
where $x$ stands for $x(Q_{n_k})$. It follows that:
\begin{equation}
\frac{1}{y([a]Q_{n_k})} \sim a^3 \frac{y^3}{x^6} \sim  - a^3 \frac{\lambda_{n_k}^{9}}{\lambda_{n_k}^{12}},
\end{equation}
which yields:
\begin{equation}
\lim_{k \rightarrow \infty} \frac{1}{y([a]Q_{n_k})} = 0,
\end{equation}
since $\lim_{k \rightarrow \infty} \vert \lambda_{n_k} \vert_w = \infty$. 

One shows similarly that:
\begin{equation}
\lim_{k \rightarrow \infty} \frac{x([a]Q_{n_k})}{y([a]Q_{n_k})} = 0.
\end{equation}
Namely, one has from Lemma \ref{thm:Lemma5}:
\begin{eqnarray}
x (\Psi_{a}^\prime)^2 - \Psi_{a+1}^\prime \Psi_{a-1}^\prime &=& 
x (x^3+Ax+B) (f_{a})^2 - f_{a+1} f_{a-1}\nonumber\\
&=& \left ( x^{a^2} + \textrm{ terms of lower degree} \right).\nonumber\\
\end{eqnarray}
One also has:
\begin{eqnarray}
(\Psi_{a}^\prime)^2 &=& (f_{a})^2 y^2\nonumber\\
&=& \left ( a^2 x^{a^2 - 4} + \textrm{ terms of lower degree} \right ) y^2.
\end{eqnarray}
This yields:
\begin{eqnarray}
x([a]Q_{n_k}) &=& \frac{\left ( x^{a^2} + \textrm{ terms of lower degree} \right)}{\left ( a^2 x^{a^2 - 4} + \textrm{ terms of lower degree} \right ) y^2}\nonumber\\
&\sim & \frac{1}{a^2} \frac{x^4}{y^2},
\end{eqnarray}
so that:
\begin{eqnarray}
\frac{x([a]Q_{n_k})}{y([a]Q_{n_k})} &\sim& a \frac{y}{x^2} \sim - a \frac{\lambda_{n_k}^3}{\lambda_{n_k}^4}.
\end{eqnarray}

One concludes that $\lim_{k \rightarrow \infty} [a]Q_{n_k} = O$; {\em i.e.}, $\lim_{k \rightarrow \infty} [a]P_{n_k} = O$, 
since $a$ is an even integer, as $\ell\not =2$, and $P_*$ is a $2$-torsion point. 
Henceforth, one obtains $\lim_{k \rightarrow \infty} [1-\ell]P_{n_k} = O$ 
because $[1-\ell]P_{n_k} = (x([\ell-1]P_{n_k}),-y([\ell-1]P_{n_k}))$ as $a_1=a_3=0$ \cite[p. 53]{silverman2009}. 
But on the other hand, from above, one has $\lim_{k \rightarrow \infty} [1-\ell]P_{n_k}=(0,\pm \sqrt{B}, 1) \not = O$, a contradiction.

Therefore, no root of $X^3+AX+B$ can be an accumulation point of the set $S$. This means that $\vert x_n - e_1 \vert_w \geq \delta$, 
for some constant $\delta>0$, for any $n\geq 1$ and any root $e_1$ of $X^3+AX+B$. This completes the proof of inequality ii), 
since (\ref{eq:eq103final7}) is then valid. 

{\em Step 7.} Part a) being proved, we now show part b) under the assumptions on $E$ and $\ell$ stated in the proposition. 

Let $F/\mathbb{Q}$ be the normal closure of the extension obtained by adjoining the roots $e_i$, $i=1,2,3$, of $X^3+AX+B$. 
Thus, $[F:\mathbb{Q}] \mid 6$. 
Consider $L_n^\prime = L_n F$, and let $\tr_{n,n-1}^\prime$ denote the trace map of the relative extension $L_{n}^\prime/L_{n-1}^\prime$. 
Note that $L_n$ and $L_{n-1}^\prime$ are linearly disjoint over $L_{n-1}$, 
since $[L_n:L_{n-1}]=\ell^4$ and $[L_{n-1}^\prime:L_{n-1}] \mid 6$, as $\ell>3$ by assumption. 

Since $x_n$ is a root of the polynomial $f_{\ell^n}(X)$, whereas any root $e_i$ of $X^3+AX+B$ is not, 
as $\ell\not =2$, it follows that $x_n-e_i \not =0$. 
One then computes:
\begin{eqnarray}
\frac{1}{x_n^3+Ax_n+B} &=& \frac{1}{(x_n-e_1)(x_n-e_2)(x_n-e_3)}\nonumber\\ 
&=&  
\sum_{i=1}^{3}\frac{A_i}{(x_n-e_i)},
\end{eqnarray}
where the coefficients $A_i=A_i(e_1,e_2,e_3)$, $i=1,2,3$, belong to $F$. 
Concretely, one has:
\begin{equation}
\begin{cases}
A_1=- \frac{1}{(e_1-e_3)(e_2-e_1)};\\ 
A_2=- \frac{1}{(e_3-e_2)(e_3-e_1)};\\
A_3=- \frac{1}{(e_3-e_2)(e_1-e_3)}.\\
\end{cases}
\end{equation}
Thus, one obtains:
\begin{eqnarray}
\tr_{n,n-1}^\prime \left ( \alpha_n \right ) 
&=& \Delta^\prime \ell^3 \Bigl \{ \sum_{i=1}^{3} A_i\tr_{n,n-1}^\prime \Bigl ( \frac{1}{(x_n-e_i)} \Bigr )\Bigr \}.
\end{eqnarray}

{\em Step 8.} 
We consider the polynomial of Corollary \ref{thm:Corollary5}, taking $m=\ell$ and $\lambda=x_{n-1}$:
\begin{eqnarray}
\Phi_{\ell}(X, x_{n-1}) &=& \left ( X - x_{n-1}\right ) f_{\ell}^2(X) - f_{\ell-1}(X) f_{\ell+1}(X) \left ( X^3 +AX +B \right )\nonumber\\
&:=& \sum_{j=0}^{\ell^2} a_j X^{j}. 
\end{eqnarray}

We observe that $\Phi_{\ell}(X + e_i, x_{n-1})$ is the minimal polynomial of $x_n-e_i$ over $L_{n-1}^\prime$. 
Furthermore, dividing $\Phi_{\ell}(X + e_i, x_{n-1})$ by $X^{\ell^2}$ and making the change of variable $Y:=1/X$, one obtains 
the polynomial:
\begin{eqnarray}
\sum_{j=0}^{\ell^2} a_j (X + e_i)^{j} X^{-j} X^{-(\ell^2-j)} &=& \sum_{j=0}^{\ell^2} a_j (1 + e_i Y)^{j} Y^{\ell^2-j}, 
\end{eqnarray} 
which is the minimal polynomial of $1/(x_n-e_i)$  over $L_{n-1}^\prime$.

We then obtain the trace $\tr_{n,n-1}^\prime$ of $1/(x_n-e_i)$ as the coefficient of the monomial $-Y^{\ell^2-1}$, which is:
\begin{eqnarray}
-\sum_{j=1}^{\ell^2} a_j j e_i^{j-1} &=&  -\frac{d}{dX}\Phi_{\ell}^\prime(X,x_{n-1})\Bigr \vert_{X=e_i}.
\end{eqnarray}
But then, one computes:
\begin{eqnarray}
&& \frac{d}{dX}\Phi_{\ell}^\prime(X,x_{n-1})\Bigr \vert_{X=e_i} \nonumber\\
&&  \quad = f_{\ell}^2(e_i) + (e_i-x_{n-1})2 f_{\ell}(e_i) \frac{d}{dX}f_{\ell}(X)\Bigr \vert_{X=e_i}
- f_{\ell-1}(e_i) f_{\ell+1}(e_i) (3e_i^2+A), \nonumber\\
\end{eqnarray}
since $e_i^3+Ae_i+B=0$. 
Therefore, one obtains:
\begin{eqnarray}
&& \tr_{n,n-1}^\prime \left ( \alpha_n \right ) \nonumber\\
&& \quad = - \Delta^\prime \ell^3 \Bigl \{ \sum_{i=1}^{3} A_i\Bigl ( f_{\ell}^2(e_i) + 2 e_i f_{\ell}(e_i) \frac{d}{dX}f_{\ell}(X)\Bigr \vert_{X=e_i}
- f_{\ell-1}(e_i) f_{\ell+1}(e_i) (3e_i^2+A) \Bigr )\Bigr \} \nonumber\\
&& \quad + \Delta^\prime \ell^3 \Bigl \{ \sum_{i=1}^{3} A_i\Bigl ( 2 x_{n-1}f_{\ell}(e_i) \frac{d}{dX}f_{\ell}(X)\Bigr \vert_{X=e_i}
\Bigr )\Bigr \}
\end{eqnarray}

Applying Corollary \ref{thm:Corollary7} iteratively, one then computes:
\begin{eqnarray}
&& \frac{\tr_{L_n^\prime/L_1^\prime} \left ( \alpha_n \right )}{[L_{n}^\prime:L_1^\prime]} \nonumber\\
&& \quad = - \frac{\Delta^\prime}{\ell} \Bigl \{ \sum_{i=1}^{3} A_i \Bigl ( f_{\ell}^2(e_i) + 2 e_i f_{\ell}(e_i) \frac{d}{dX}f_{\ell}(X)\Bigr \vert_{X=e_i}
- f_{\ell-1}(e_i) f_{\ell+1}(e_i) (3e_i^2+A) \Bigr )\Bigr \} \nonumber\\
&& \quad + \frac{\Delta^\prime}{\ell} \Bigl \{ \sum_{i=1}^{3} A_i  2 x_{1}f_{\ell}(e_i) \frac{d}{dX}f_{\ell}(X)\Bigr \vert_{X=e_i} \Bigr \}.
\end{eqnarray}

Altogether, we have reached the identity:
\begin{eqnarray}
&& \frac{\tr_{n} \left ( \alpha_n \right )}{[L_n:\mathbb{Q}]} = \frac{\tr_{n}^\prime \left ( \alpha_n \right )}{[L_n^\prime:\mathbb{Q}]} \nonumber\\
&& \quad = - \frac{\Delta^\prime}{\ell [F:\mathbb{Q}]} \tr_{F/\mathbb{Q}} \Bigl \{ \sum_{i=1}^{3} A_i \Bigl ( f_{\ell}^2(e_i) + 2 e_i f_{\ell}(e_i) \frac{d}{dX}f_{\ell}(X)\Bigr \vert_{X=e_i} \Bigr )\Bigr \} \nonumber\\
&& \quad + \frac{\Delta^\prime}{\ell [F:\mathbb{Q}]} \tr_{F/\mathbb{Q}} \Bigl \{ \sum_{i=1}^{3} A_i f_{\ell-1}(e_i) f_{\ell+1}(e_i) (3e_i^2+A) \Bigr \} \nonumber\\
&& \quad + \frac{\Delta^\prime}{\ell [L_1^\prime:\mathbb{Q}]} \tr_{L_1^\prime/\mathbb{Q}} \Bigl \{ \sum_{i=1}^{3} A_i 2 x_{1}f_{\ell}(e_i) \frac{d}{dX}f_{\ell}(X)\Bigr \vert_{X=e_i} \Bigr \}.
\end{eqnarray}

Therefore, the rational number ${\tr_{n} \left ( \alpha_n \right )}/{[L_n:\mathbb{Q}]}$ has only finitely many values 
for $n>1$, namely the values reached when taking $n=2$. As for $n=1$, there are only finitely many values for $\alpha_1$.   
This completes the proof of part b). 
\end{proof}


\subsection{Liftings of points on reduced elliptic curves}
\label{subsection:liftingsReduced}

In this section, we continue to specialize results to the case where $E$ is an elliptic curve over $\mathbb{Q}$ without CM.


\begin{thm}
\label{thm:Theorem6}
Let $E$ be an elliptic over $\mathbb{Q}$ without CM, with Weierstrass equation $y^2=x^3+Ax+B$, 
where $A,B\in \mathbb{Z}$. 
Set $\Delta^\prime = 4 A^3 + 27 B^2$. 
Let $\ell>3$ be a fixed prime number that does not divide $\Delta^\prime$. 
Assume that the isomorphism $\rho_\ell:\Gal(L_\infty/\mathbb{Q})\xrightarrow{\approx} {\bf GL}_2(\mathbb{Z}_\ell)$ holds. 

Let $p$ be a prime number such that $p \nmid \Delta^\prime \ell (\ell-1) (\ell+1)$.
Let $\widetilde P$ be any non-trivial point of the $\ell$-component of the reduced curve $\widetilde E(\mathbb{F}_{p})$. 
Let the prime power $\ell^n$, with $n\geq 1$, be the order of the point $\widetilde P$. 

Then, the point $\widetilde P$ can be lifted to a point $P$ of $E(\overline{\mathbb{Q}})$  
with affine $y$-coordinate satisfying: 
\begin{equation}
\label{eq:eq132final8}
y(P) = \Bigl ( \frac{a}{b} \Bigr )^{1/2},
\end{equation}
where $a$ and $b$ are integers such that $1\leq \vert a \vert, \vert b \vert \leq C$, 
for some constant $C$ independent of $n$, and where both $a$ and $b$ are coprime with $p$. 
\end{thm}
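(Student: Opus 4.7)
The plan is to combine three earlier results: the unramified lifting from good reduction (Lemma~\ref{thm:Lemma2}), the integrality and divisibility bound on $y$-coordinates of primitive $\ell^n$-torsion points (Lemma~\ref{thm:Lemma3}(a)), and the uniform trace estimate of Proposition~\ref{thm:Proposition2}(b). The first two are straightforward to apply; the third is what produces the uniformity in $n$.

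First I would construct the lift. Since $p \nmid \Delta'$, the curve $E$ has good reduction at $p$; since $p \neq \ell$, the Criterion of N\'eron--Ogg--Shafarevich implies the $\ell^n$-torsion extension is unramified at $p$. The reduction map therefore induces an isomorphism $E[\ell^n](\overline{\mathbb{Q}}) \xrightarrow{\approx} \widetilde{E}[\ell^n](\overline{\mathbb{F}}_p)$ after fixing an embedding $\overline{\mathbb{Q}} \hookrightarrow \overline{\mathbb{Q}}_p$ (this is Lemma~\ref{thm:Lemma2} applied to the maximal unramified extension of $\mathbb{Q}_p$). Lift $\widetilde{P}$ along this isomorphism to a primitive $\ell^n$-torsion point $P \in E[\ell^n](\overline{\mathbb{Q}})$.

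Next, Lemma~\ref{thm:Lemma3}(a), valid since $\ell > 3$, yields that $\ell y(P)$ is integral in $\mathcal{O}_n$ and that $(\ell y(P))^2$ divides $\Delta'\ell^5$ there; equivalently, the element $\alpha_n := \ell^3\Delta'/y(P)^2$ of Proposition~\ref{thm:Proposition2} is an algebraic integer dividing $\Delta'\ell^5$ in $\mathcal{O}_n$. Setting $a := (\ell y(P))^2$ and $b := \ell^2$ gives $y(P)^2 = a/b$, i.e.\ the asserted form $y(P) = (a/b)^{1/2}$. The $p$-coprimality is then immediate: the hypothesis $p \nmid \Delta'\ell$ makes $\Delta'\ell^5$ a unit at every prime of $\mathcal{O}_n$ above $p$, so any divisor $a$ is coprime with $p$, and $b = \ell^2$ is coprime with $p$ since $p \neq \ell$.

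The main obstacle is the uniform bound $|a|, |b| \leq C$ with $C$ independent of $n$. Although Lemma~\ref{thm:Lemma3}(a) provides divisibility in $\mathcal{O}_n$, the ring $\mathcal{O}_n$ grows with $n$, and divisibility alone does not bound the (archimedean) size of $a$ uniformly. Proposition~\ref{thm:Proposition2}(b) is precisely the tool that supplies the needed global control: under the Galois-image hypothesis $\rho_\ell(\mathcal{G}) = \mathbf{GL}_2(\mathbb{Z}_\ell)$, the rational quantity $\tr_n(\alpha_n)/[L_n:\mathbb{Q}]$ takes only finitely many values as $n$ varies over $n \geq 1$ and $P$ varies over primitive $\ell^n$-torsion points, and is bounded in every place (archimedean, $\ell$-adic, and non-$\ell$-adic). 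Transferring this trace bound into the actual bounded integer representatives $a, b$ of $y(P)^2$ is the technical heart of the proof, and is where the full-image hypothesis on $\rho_\ell$ is used in an essential way.
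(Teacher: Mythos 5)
There is a genuine gap, and it lies exactly where you defer it. You take the lift $P$ to be the primitive $\ell^n$-torsion point $P_n\in E[\ell^n](\overline{\mathbb{Q}})$ itself and set $a:=(\ell y(P_n))^2$, $b:=\ell^2$. But $(\ell y(P_n))^2$ is an algebraic integer of the large field $L_n=\mathbb{Q}(E[\ell^n])$, not a rational integer: Lemma \ref{thm:Lemma3}(a) only gives a divisibility in $\mathcal{O}_n$, and $y(P_n)^2=\Delta^\prime\ell^3/\alpha_n$ with $\alpha_n\in\mathcal{O}_n$ generally irrational and of unbounded size. So your choice of $a$ does not even have the form required by the statement ($a,b\in\mathbb{Z}$ with $1\le|a|,|b|\le C$), and no amount of massaging Proposition \ref{thm:Proposition2}(b) can fix this for the torsion point: the proposition controls only the averaged quantity $\tr_n(\alpha_n)/[L_n:\mathbb{Q}]$, not $\alpha_n$ itself, so the ``transfer'' you describe as the technical heart cannot be carried out for $P_n$. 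The missing idea — emphasized in Remark 3 of the paper — is that the lift $P$ is \emph{not} required to be a torsion point; it only has to be an algebraic point reducing to $\widetilde P$ modulo $p$.

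The paper's proof exploits this freedom. After lifting $\widetilde P$ to $P_n$ as you do, it picks a rational integer $y\equiv\ell y(P_n)\bmod\mathfrak{P}$ (with $\mathfrak{P}\mid p$ in $L_n$), applies $\tr_n$ to the identity $y^2\alpha_n=\Delta^\prime\ell^5+\alpha_0$ with $\alpha_0\in\mathfrak{P}$, and uses $\tr_n(\alpha_0)\in(p)$ together with $[L_n:\mathbb{Q}]\mid\ell(\ell-1)^2(\ell+1)\ell^{4(n-1)}$ and the hypothesis $p\nmid\Delta^\prime\ell(\ell-1)(\ell+1)$ (which your sketch never uses) to obtain
\begin{equation}
y^2\equiv \frac{\Delta^\prime\ell^5}{\tr_n(\alpha_n)/[L_n:\mathbb{Q}]}\bmod p .
\end{equation}
It then \emph{defines} a new point $P$, in general non-torsion, whose $y$-coordinate is $y(P)=\bigl(\Delta^\prime\ell^3\,[L_n:\mathbb{Q}]/\tr_n(\alpha_n)\bigr)^{1/2}$; this is the square root of a genuine rational number whose numerator and denominator are bounded independently of $n$ and coprime with $p$ by Proposition \ref{thm:Proposition2}(b) and the congruence above, and the congruence also guarantees $P$ still reduces to $\widetilde P$ at the prime above $p$. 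Your proposal correctly assembles the ingredients (Lemma \ref{thm:Lemma2}, Lemma \ref{thm:Lemma3}(a), Proposition \ref{thm:Proposition2}(b)) but, by insisting that the lift be the torsion point, it pursues a route that fails and leaves the actual mechanism — replacing $\alpha_n$ by its normalized trace and checking the mod-$p$ compatibility — unsupplied.
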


\begin{proof}  
{\em Step 1.} Let $\ell$ be a fixed prime number other than $2$ and $3$. 
Let $p\nmid \Delta^\prime \ell (\ell-1)(\ell+1)$ be a prime number. 
In particular, since $\ord_p(\Delta)=0$, the reduced curve $\widetilde E(\mathbb{F}_p)$ is non-singular.  
 
Let $\widetilde P$ be a non-trivial point of the $\ell$-component of the reduced curve $\widetilde E(\mathbb{F}_{p})$. 
Let $(\bar{x},\bar{y})$ be the affine coordinates of $\widetilde P$ and set 
$\ell^n = \ord_{\widetilde E}(\widetilde P)$ with $n\geq 1$.

Let $\xi: \overline{\mathbb{Q}} \hookrightarrow \overline{\mathbb{Q}}_p$ be a fixed embedding of fields.
From Lemma \ref{thm:Lemma2} applied to $m=\ell^n$, $\widetilde P$ can be lifted to a point $P^\prime$ in $E(\mathbb{Q}_p)$,  
since $(\ell^n,p)=1$. Since $\xi$ induces an isomorphism 
$E[\ell^n](\overline{\mathbb{Q}}) \xrightarrow{\approx} E[\ell^n](\overline{\mathbb{Q}}_p )$ 
\cite[Corollary 6.4, part b), p. 86]{silverman2009}, 
it follows that there is an element $P_n \in E[\ell^n](\overline{\mathbb{Q}})$ such that $\xi(P_n) = P^\prime$.

{\em Step 2.} We denote $\mathbb{Q}(E[\ell^n])$ by $L_n$, and its integer ring by $\mathcal{O}_n$. 
From part a) of Lemma \ref{thm:Lemma3}, one has $\ell x(P_n), \ell y(P_n)\in \mathcal{O}_n$, 
and  $\left ( \ell y(P_n) \right )^2 \alpha_n = \Delta^\prime \ell^5$, where $\alpha_n$ is given by (\ref{eq:eq50final7}):
\begin{equation}
\label{eq:eq133final8}
\alpha_n = \ell^3 [ 4 f(x_n) x_n^\prime -g(x_n) ] = \ell^3 \left ( 12x_n^2 x_n^\prime + 16 A x_n^\prime - 3x_n^3 + 5A x_n + 27 B \right ),
\end{equation}
with $x_n=x(P_n)$ and $x_n^\prime=x([2]P_n)$. 
From step 1, there exists $y\in \mathbb{Z}$ such that:
\begin{equation}
y \equiv \ell y(P_n) \mod \mathfrak{P},
\end{equation}
where $\mathfrak{P}\mid p$ is the prime ideal of $L_n$ that 
induces the embedding $\xi: \bigl ( L_n \bigr )_{\mathfrak{P}} \hookrightarrow \overline{\mathbb{Q}}_p$.

Taking $y\in \mathbb{Z}$ as above, we then have:
\begin{eqnarray}
\ell^2 y^2(P_n) \alpha_n = \Delta^\prime \ell^5 &\Rightarrow& y^2 \alpha_n = \Delta^\prime \ell^5 + \alpha_0 \nonumber\\
&\Rightarrow& y^2  \tr_n \left ( \alpha_n \right) = \Delta^\prime \ell^5 [L_n:\mathbb{Q}] + \tr_n \left ( \alpha_0 \right) \in \mathbb{Z},
\end{eqnarray}
for some $\alpha_0 \in \mathfrak{P}$, where $\tr_n$ denotes the trace map from $L_n$ onto $\mathbb{Q}$ and 
$[L_n:\mathbb{Q}]$ denotes the degree of $L_n/\mathbb{Q}$.  
 
Now, $\tr_n \left ( \alpha_0 \right) \in (p)$, since $\alpha_0 \in \mathfrak{P}$.
Indeed, one has a commutative diagram:
\begin{equation}
\begin{CD}
L_n \otimes_{\mathbb{Q}} \mathbb{Q}_p @>>> \prod_{\mathfrak{P}^\prime\mid p} \left ( L_n \right)_{\mathfrak{P}^\prime}\\
@VV{\tr_n}V @VV{\sum_{\mathfrak{P}^\prime\mid p} \tr_{\left ( L_n \right)_{\mathfrak{P}^\prime}/\mathbb{Q}_p}}V\\
\mathbb{Q}_p @= \mathbb{Q}_p.\\
\end{CD}
\end{equation}
Moreover, from \cite[Corollary 1, p. 142]{weil1974}, one has $\tr_{\left ( L_n \right)_{\mathfrak{P}^\prime}/\mathbb{Q}_p}(\alpha_0)\in (p)$, for each 
$\mathfrak{P}^\prime\mid p$. 
  
This yields:
\begin{equation}
\label{eq:eq137final8}
y^{2} \tr_n \left ( \alpha_n \right ) \equiv \Delta^\prime \ell^5 [L_n:\mathbb{Q}] \mod p.
\end{equation}
But $[L_n:\mathbb{Q}]$ divides $\ell(\ell-1)^2(\ell+1)\ell^{4(n-1)}$, 
as follows from Lemma \ref{thm:Lemma4} applied to $m=\ell^n$. 
Henceforth, having assumed that $p \nmid \Delta^\prime \ell(\ell-1)(\ell+1)$, one obtains:
\begin{equation}
\label{eq:eq138final8}
y^{2} \equiv \frac{\Delta^\prime \ell^5}{\tr_n \left ( \alpha_n \right )/[L_n:\mathbb{Q}]} \mod p.
\end{equation}
Indeed, we see that $\tr_n \left ( \alpha_n \right ) \not \equiv 0 \mod p$, for otherwise  
we obtain the contradiction $\Delta^\prime \ell^5 [L_n:\mathbb{Q}] \equiv 0 \mod p$, using (\ref{eq:eq137final8}). 
A similar argument shows that $y\not \equiv 0 \mod p$.

{\em Step 3.} It then follows that:
\begin{equation}
y(P_n) \equiv y/\ell \mod \mathfrak{P},
\end{equation}
and 
\begin{equation}
y/\ell \equiv y(P) \mod \mathfrak{p},
\end{equation}
where
\begin{equation}
\label{eq:eq141final8}
y(P):=\left ( \frac{\Delta^\prime \ell^3}{\tr_n \left ( \alpha_n \right )/[L_n:\mathbb{Q}]} \right )^{1/2},
\end{equation}
and $\mathfrak{p}$ is the prime ideal of $K:=\mathbb{Q}(y(P))$ that induces the embedding 
$\xi:K_{\mathfrak{p}} \hookrightarrow \overline{\mathbb{Q}}_p$.

{\em Step 4.} At this point, we consider the dependency of $\alpha_n$ on $n$. Under the assumptions 
stated in the theorem, Proposition \ref{thm:Proposition2} implies that:
\begin{equation}
\frac{\tr_n(\alpha_n)}{[L_n:\mathbb{Q}]} = \frac{b}{c}, 
\end{equation}
where $c\not=0$ and $b$ are integers that are bounded independently of $n$, and which we may assume relatively prime.  
Moreover, we observe that $b$ has to be different from $0$ and actually coprime with $p$, since $\tr_n(\alpha_n) \not \equiv 0 \mod p$ from step 2. Furthermore, $c$ is coprime with $p$ since $[L_n:\mathbb{Q}]\not \equiv 0 \mod p$. 
The theorem follows using (\ref{eq:eq141final8}), upon setting $a=\Delta^\prime \ell^3 c$.
\end{proof}



\noindent {\bf Remark 3.}
\label{remark3}
The lifting $P$ considered in Theorem \ref{thm:Theorem6} might not be a $\ell^n$-torsion point of $E$, but it projects down to a $\ell^n$-torsion point $\widetilde P$ of the reduced curve $\widetilde E(\mathbb{F}_p)$.
\\



\noindent {\bf Remark 4.}
\label{remark4}
One observes that there are 
only finitely many admissible possibilities for the $y$-coordinate 
$y(P)=\Bigl ( \frac{a}{b} \Bigr )^{1/2}$ of the lifting $P$ considered in the statement of Theorem \ref{thm:Theorem6}. Note that both $a$ and $b$ are units of $\mathbb{Z}_p$ in the statement of Theorem \ref{thm:Theorem6}. If one chooses to express $b^{-1}$ as an integer modulo $p$, there might be infinitely many 
such resulting integers as $p$ varies. But this is unnecessary. 
The point here, is to have finitely many possibilities for $y^2(P)$, rather than expressing $y^2(P)$ as an integer modulo $p$. 
\\


To appreciate Theorem \ref{thm:Theorem6}, let us observe that, from \cite[Corollary 2]{kohel2000},  
it follows directly that the reduced curve $\widetilde E(\mathbb{F}_p)$ of an elliptic curve $E$ over $\mathbb{Q}$ admits a set of generators $\widetilde P_i$ with $y$-coordinates 
satisfying the condition:
\begin{equation}
0 \leq y(\widetilde P_i) \leq \lceil 20 (1 + \log p) p^{1/2} \rceil,
\end{equation}
upon taking the rational function $f=y$ of degree $3$ in this result. This implies in turn that any $\ell^n$-torsion point 
$\widetilde P$ of the reduced curve $\widetilde E$ can be lifted to a point $P$ of $E$ with $x$-coordinate belonging to the compositum of all field extensions over $\mathbb{Q}$ generated by the roots of cubic equations of the form 
\begin{equation}
X^3+AX+B=y^2,
\end{equation}
for some $y \in \mathbb{Z}$ such that $0 \leq y \leq \lceil 20 (1 + \log p) p^{1/2} \rceil$. 
In particular, there are infinitely many such extensions to consider as prime $p$ varies.

On the other hand, Theorem \ref{thm:Theorem6} restricts to $\ell^n$-torsion points  
$\widetilde P$ of the reduced curve $\widetilde E$, and states that 
a lifting $P$ of $\widetilde P$ can be chosen so as to satisfy (\ref{eq:eq132final8}). 
Most importantly, there are only finitely many possibilities for the right-hand side of this equation.\\ 


\subsection{Properties of the specific algebraic number fields used for liftings}
\label{subsection:liftingsField}

It will be convenient to define a field $K^\prime$ step by step as follows. 

First, we consider the cyclotomic field:
\begin{equation}
\label{eq:eq145final8}
K_1=\mathbb{Q}(\mu_{4})=\mathbb{Q}(\sqrt{-1});
\end{equation}
then, the field obtained by adjoining radicals:
\begin{equation}
\label{eq:eq146final8}
K_2: = K_1(p_1^{1/2},...,p_\nu^{1/2}),
\end{equation}
where $p_1,...,p_\nu$ are the $\nu$ distinct prime numbers other than $\ell$ that are bounded by the constant $C$ appearing in Theorem \ref{thm:Theorem6}; 
$C$ is a positive constant depending on $E$ and $\ell$. 
Next, we consider the field obtained by adjoining the remaining radical:
\begin{equation}
\label{eq:eq147final8}
K_3: = K_2(\ell^{1/2}).
\end{equation}
Lastly, one adjoins over $K_3$ the roots $x_1,x_2,x_3$ of cubic equations of the form:
\begin{equation}
\label{eq:eq148final8}
X^3+AX+B=y^2,
\end{equation} 
as $y^{2}$ covers the set $\mathcal{U}$ 
of rational numbers of the form $\frac{a}{b}$, 
where $a$ and $b$ are integers satisfying $1 \leq \vert a \vert, \vert b \vert \leq C$. 
The resulting field extension is denoted $K_3^{(y)}$.

This yields the compositum of fields:
\begin{equation}
\label{eq:eq149final8}
K^\prime: = \prod_{y^{2} \in \mathcal{U}} K_3^{(y)}.
\end{equation}


\begin{prop}
\label{thm:Proposition3}
The extension $K^\prime/\mathbb{Q}$ defined in (\ref{eq:eq149final8}) is a normal extension that contains all affine coordinates of 
liftings $P$ appearing in Theorem \ref{thm:Theorem6}.

Moreover, the degree $[K^\prime:\mathbb{Q}]$ is of the form $2^{s} 3^{t}$, 
for some non-negative integers $s$ and $t$. 
In particular, $[K^\prime:\mathbb{Q}]$ is coprime with $\ell>3$.
\end{prop}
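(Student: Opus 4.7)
The plan is to verify the three assertions of Proposition \ref{thm:Proposition3} in turn: normality of $K'/\mathbb{Q}$, containment of all lift coordinates, and the form of the degree.

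First I would observe that $\mathcal{U}$ is finite, since rationals $a/b$ with $1 \leq \vert a \vert, \vert b \vert \leq C$ form a finite set, so $K'$ is a finite compositum of the $K_3^{(y)}$. To establish normality, the idea is to exhibit $K'$ as a splitting field over $\mathbb{Q}$. Explicitly, $K'$ is generated over $\mathbb{Q}$ by $\sqrt{-1}$, the $\sqrt{p_i}$, $\sqrt{\ell}$, and the roots of $X^3+AX+B-y^2$ for $y^2\in \mathcal{U}$, so $K'$ is the splitting field over $\mathbb{Q}$ of
\begin{equation}
F(X) = (X^2+1)\Bigl(\prod_{i=1}^{\nu}(X^2-p_i)\Bigr)(X^2-\ell)\prod_{y^2\in \mathcal{U}}(X^3+AX+B-y^2),
\end{equation}
and is therefore normal over $\mathbb{Q}$.

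Next, for the containment, fix a lift $P$ produced by Theorem \ref{thm:Theorem6}, so that $y(P)=(a/b)^{1/2}$ with $1\leq \vert a\vert, \vert b \vert\leq C$. Then $y(P)^2=a/b\in \mathcal{U}$, and $x(P)$ is a root of the cubic $X^3+AX+B-a/b$ adjoined in the definition of $K_3^{(y)}$, so $x(P)\in K_3^{(y)}\subseteq K'$. For the $y$-coordinate itself, write $\sqrt{a/b}=\sqrt{a}/\sqrt{b}$ and factor $a$ and $b$ into signs and primes bounded by $C$: the factor $\sqrt{-1}$ lies in $K_1$, each $\sqrt{p_i}$ with $p_i\neq \ell$ lies in $K_2$, and $\sqrt{\ell}$ lies in $K_3$, so $\sqrt{a/b}\in K_3\subseteq K'$.

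Finally, for the degree, $K_3/\mathbb{Q}$ is a compositum of quadratic extensions, so $[K_3:\mathbb{Q}]$ divides $2^{\nu+2}$. For each $y^2\in \mathcal{U}$, the splitting field of $X^3+AX+B-y^2$ over $K_3$ has degree dividing $6$, so $[K_3^{(y)}:K_3]$ divides $6=2\cdot 3$ and $[K_3^{(y)}:\mathbb{Q}]$ has the form $2^{a}3^{b}$. Since the degree of a compositum of two finite extensions divides the product of their degrees, an induction on $\vert \mathcal{U}\vert$ shows that $[K':\mathbb{Q}]$ divides a product of integers of the form $2^{a}3^{b}$, hence is itself of the form $2^{s}3^{t}$. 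As $\ell>3$, this is coprime with $\ell$.

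The only mildly delicate step is the containment $y(P)\in K'$: it requires the prime-factorization argument above, rather than adjoining $\sqrt{a/b}$ directly, and it relies on the explicit bound $\vert a\vert,\vert b\vert\leq C$ of Theorem \ref{thm:Theorem6} so that only finitely many $\sqrt{p}$'s need to be included in $K_3$. All other steps are routine.
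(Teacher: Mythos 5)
Your construction follows the same route as the paper: the same tower $K_1\subset K_2\subset K_3\subset K'$, the same identification of $y(P)=(a/b)^{1/2}$ as lying in $K_3$ via the prime factors of $a$ and $b$ (all bounded by $C$, hence among $p_1,\dots,p_\nu,\ell$, with $\sqrt{-1}\in K_1$ absorbing signs), and $x(P)$ as a root of the adjoined cubic. Your normality argument is a slightly cleaner packaging than the paper's: you exhibit $K'$ outright as the splitting field over $\mathbb{Q}$ of one rational polynomial $F(X)$, whereas the paper argues piecewise that each Kummer layer is normal (the primes being Galois-stable) and that Galois conjugation permutes the cubics $X^3+AX+B=y^2$ among themselves. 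Both are correct, and your version of the containment step is the paper's "clear" assertion spelled out.

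The one step that does not stand as written is the degree computation, which you call routine: you invoke the general principle that the degree of a compositum of two finite extensions divides the product of their degrees. That principle is false; for an irreducible cubic over $\mathbb{Q}$ with Galois group $\mathfrak{S}_3$ and two of its roots $\alpha,\beta$, the fields $\mathbb{Q}(\alpha)$ and $\mathbb{Q}(\beta)$ have degree $3$ each while their compositum has degree $6$, which does not divide $9$. What rescues your argument is normality of the constituents, which you have essentially already proved: your splitting-field observation, applied to the single factor $(X^2+1)\bigl(\prod_{i}(X^2-p_i)\bigr)(X^2-\ell)(X^3+AX+B-y^2)$, shows each $K_3^{(y)}$ is Galois over $\mathbb{Q}$, and restriction then gives an injection $\Gal(K'/\mathbb{Q})\hookrightarrow \prod_{y^2\in\mathcal{U}}\Gal(K_3^{(y)}/\mathbb{Q})$, so in this situation $[K':\mathbb{Q}]$ does divide $\prod_{y^2\in\mathcal{U}}[K_3^{(y)}:\mathbb{Q}]$, a number of the form $2^s3^t$. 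Equivalently (and closer to the paper), work over $K_3$: each $K_3^{(y)}/K_3$ is the splitting field of a cubic, hence Galois of degree dividing $6$, so $\Gal(K'/K_3)$ embeds into a product of groups of order dividing $6$, giving $[K':K_3]$ of the form $2^a3^b$, while $[K_3:\mathbb{Q}]$ is a power of $2$. Add one of these observations and the degree claim, and with it the coprimality with $\ell>3$, is complete.
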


\begin{proof} We proceed step by step as follows.

Firstly, the cyclotomic field $K_1:=\mathbb{Q}(\mu_{4})$ has degree $2$ over $\mathbb{Q}$. 

Next, consider a Kummer extension of the form $K_1( p^{1/2} )/K_1$, 
where $p$ is the prime $\ell$ or one of the prime numbers bounded by $C$. 
This Kummer extension has relative degree dividing $2$. 
Moreover, since the prime $p$ is fixed under Galois action of  
$\Gal(K_1/\mathbb{Q})$, it follows that $K_1( p^{1/2} )$ is normal over $\mathbb{Q}$. 
Hence, both $K_2/\mathbb{Q}$ and $K_3/\mathbb{Q}$ are normal extensions. 

It is clear that a Kummer extension of the form $K_1(y)/K_1$, 
where $y^2 = \frac{a}{b}$ with 
$1 \leq \vert a \vert, \vert b \vert \leq C$, is contained in $K_3$.  

Next, fixing a rational number $y$ as above, one obtains a cubic equation:
\begin{equation}
X^3+AX+B = y^2,
\end{equation}
whose roots are in $K^\prime$, by construction. 
The relative normal closure of this equation over $K_3$ has relative degree dividing $6$. 

Lastly, Galois action on the roots of such a cubic equation yields roots of another such cubic equation. 
Thus, $K^\prime$ is normal over $\mathbb{Q}$.
\end{proof}



\begin{prop} 
\label{thm:Proposition4}
Assume that $p \not \equiv 1 \mod \ell$. 
Then, the $\ell$-component of the reduced curve $\widetilde E(\mathbb{F}_p)$ is cyclic. 
\end{prop}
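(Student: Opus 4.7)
The plan is a short Weil-pairing argument, exploiting the fact that $\widetilde{E}(\mathbb{F}_p)$ is a finite abelian group whose $\ell$-primary component sits inside $\widetilde{E}[\ell^\infty]\approx (\mathbb{Q}_\ell/\mathbb{Z}_\ell)^2$ (when $p\neq \ell$), so that failure of cyclicity is equivalent to containing a copy of $\mathbb{Z}/\ell\mathbb{Z} \oplus \mathbb{Z}/\ell\mathbb{Z}$.

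First I would dispose of the degenerate case $p=\ell$ (which formally satisfies $p\not\equiv 1\bmod \ell$): in characteristic $\ell$, the group $\widetilde{E}[\ell]$ is either $0$ or cyclic of order $\ell$, as recalled in Section \ref{subsection:basicElliptic}. Hence any finite abelian $\ell$-group $G\subseteq \widetilde{E}(\mathbb{F}_p)$ has $G[\ell]$ cyclic, which by the structure theorem forces $G$ itself to be cyclic.

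For $p\neq \ell$, I would argue by contradiction. Suppose the $\ell$-component $\widetilde{E}(\mathbb{F}_p)_\ell$ is not cyclic. Since it is a subgroup of $\widetilde{E}[\ell^\infty] \approx (\mathbb{Q}_\ell/\mathbb{Z}_\ell)^2$, it must be of the form $\mathbb{Z}/\ell^a\mathbb{Z} \oplus \mathbb{Z}/\ell^b\mathbb{Z}$ with $a,b\geq 1$. Taking $\ell$-torsion, one obtains $\widetilde{E}[\ell]\approx \mathbb{Z}/\ell\mathbb{Z}\oplus \mathbb{Z}/\ell\mathbb{Z} \subseteq \widetilde{E}(\mathbb{F}_p)$. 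Now invoke the consequence of the Weil pairing already recalled in Section \ref{subsection:basicElliptic} (Corollary 8.1.1 of \cite{silverman2009}): since $\ell \neq \cha(\mathbb{F}_p)$ and $\widetilde{E}[\ell] \subseteq \widetilde{E}(\mathbb{F}_p)$, we must have $\mu_\ell \subseteq \mathbb{F}_p$. This gives $\ell \mid \#\mathbb{F}_p^\ast = p-1$, i.e., $p \equiv 1 \bmod \ell$, contradicting the hypothesis.

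I do not expect any real obstacle: the only nontrivial input is non-degeneracy and Galois-equivariance of the Weil pairing, which is the same fact invoked earlier in the paper. The whole proof should take just a few lines.
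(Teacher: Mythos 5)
Your proof is correct and is essentially the paper's own argument: non-cyclicity of the $\ell$-component forces $\widetilde E[\ell]\subseteq \widetilde E(\mathbb{F}_p)$, whence $\mu_\ell\subseteq\mathbb{F}_p$ by the Weil-pairing corollary \cite[Corollary 8.1.1, p. 96]{silverman2009}, contradicting $p\not\equiv 1 \bmod \ell$. Your extra remark covering the degenerate case $p=\ell$ (where $\widetilde E[\ell]$ is $0$ or cyclic) is a harmless addition not spelled out in the paper, but the core argument is the same.
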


\begin{proof} 
If $\widetilde E(\mathbb{F}_p)_{\ell}$ is not cyclic, then it contains $\widetilde E[\ell]$, from which it follows that $\mu_\ell \subset \mathbb{F}_p$ \cite[Corollary 8.1.1, p. 96]{silverman2009} (consequence of the Weil pairing). 
Thus, one would have $\ell \mid (p-1)$, contrary to the assumption that $p \not \equiv 1 \mod \ell$.
\end{proof}


The following results will be crucial in the proof of Lemma \ref{thm:Lemma9}.


\begin{prop}
\label{thm:Proposition5}
Let $\ell$ be a prime number. Let $n$ be a positive integer coprime with $\ell$. 
Define $K_1=\mathbb{Q}(\mu_n)$, and consider a field of the form 
\begin{equation}
K_2 = K_1( p_1^{1/n},...,p_\nu^{1/n}),
\end{equation}
where $p_1,..,p_\nu$ are $\nu$ distinct prime numbers, each coprime with $\ell$. Then, $\ell$ is unramified in $K_2$.
\end{prop}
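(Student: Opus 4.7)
The idea is to split the problem into the base extension $K_1/\mathbb{Q}$ and the top extension $K_2/K_1$, then reduce the latter to a local Hensel-type argument. First, since $\gcd(n,\ell)=1$, the extension $K_1=\mathbb{Q}(\mu_n)/\mathbb{Q}$ is unramified at $\ell$: indeed, $\mathrm{disc}(K_1/\mathbb{Q})$ divides $n^{\varphi(n)}$, so any rational prime dividing it must divide $n$. By the multiplicativity of differents in the tower $K_2/K_1/\mathbb{Q}$, namely $\mathfrak{d}_{K_2/\mathbb{Q}}=\mathfrak{d}_{K_2/K_1}\,\mathfrak{d}_{K_1/\mathbb{Q}}$ (as ideals of $\mathcal{O}_{K_2}$), it thus suffices to prove that $K_2/K_1$ is unramified at every prime $\mathfrak{l}$ of $K_1$ lying above $\ell$.

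Fix such an $\mathfrak{l}$ and localize: set $F:=(K_1)_{\mathfrak{l}}$, a finite unramified extension of $\mathbb{Q}_\ell$. I will show by induction on $\nu$ that $F(p_1^{1/n},\dots,p_\nu^{1/n})/F$ is unramified. For the inductive step, let $F'=F(p_1^{1/n},\dots,p_{i-1}^{1/n})$; by induction $F'/F$ is unramified, so its residue field $k_{F'}$ has characteristic $\ell$ coprime to $n$, and $p_i$ remains a unit in $\mathcal{O}_{F'}$ (since $p_i\neq\ell$). It remains to show that $F'(p_i^{1/n})/F'$ is unramified.

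For this, consider $X^n-p_i\in\mathcal{O}_{F'}[X]$ and its reduction $X^n-\bar p_i\in k_{F'}[X]$. Because $\gcd(n,\ell)=1$ and $\bar p_i\neq 0$, the derivative $nX^{n-1}$ is coprime to $X^n-\bar p_i$, so the reduction is separable. Hensel's lemma thus lifts the factorization of $X^n-\bar p_i$ into distinct irreducibles over $k_{F'}$ to a factorization of $X^n-p_i$ in $F'[X]$ into coprime factors $g_j(X)$ whose reductions are the distinct separable irreducibles. The minimal polynomial of $p_i^{1/n}$ over $F'$ is one such $g_j$, and since its reduction is an irreducible separable polynomial over $k_{F'}$, it generates an unramified extension of $F'$.

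The main obstacle is precisely this final point: the minimal polynomial of $p_i^{1/n}$ over $F'$ need not equal $X^n-p_i$ in general, only divide it, so one must argue that all factors appearing are themselves unramified. The Hensel argument above handles this cleanly; alternatively, one can observe that $\mathrm{disc}(X^n-p_i)=\pm n^n p_i^{\,n-1}\in\mathcal{O}_{F'}^{\times}$, which divides the discriminant of the order $\mathcal{O}_{F'}[p_i^{1/n}]$, and the latter in turn is a multiple of the discriminant of $F'(p_i^{1/n})/F'$, forcing the different (and hence the ramification index) of that extension to be trivial.
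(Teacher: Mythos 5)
Your argument is correct, but the mechanism in the crucial step differs from the paper's. Both proofs follow the same outer scheme: $\ell$ is unramified in the cyclotomic field $K_1$, and one adjoins the radicals one at a time and inducts. The paper, however, stays global: using Kummer theory (available since $\mu_n\subset K_1$) it identifies the minimal polynomial of $p_{r+1}^{1/n}$ over the previously constructed field as $f(X)=X^{d}-y^{d}$ with $y=p_{r+1}^{1/n}$ and $d$ the relative degree, bounds the relative different by the ideal $\bigl(f'(y)\bigr)=\bigl(d\,y^{d-1}\bigr)$, which is prime to $\ell$, and concludes by transitivity of the different. You instead complete at a prime of $K_1$ above $\ell$ and replace the different estimate by Hensel's lemma: since $X^n-p_i$ has separable reduction (as $\ell\nmid n$ and $p_i$ is an $\ell$-unit), it factors over $\mathcal{O}_{F'}$ into monic pieces with irreducible separable reductions, the chosen root satisfies one of them, and such a factor generates an unramified extension. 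What the paper's route buys is that it never needs to pass to completions and its different bound is a one-line citation; what your route buys is that you never need the Kummer-theoretic identification of the minimal polynomial (any root of $X^n-p_i$ is handled uniformly), and unramifiedness is read off directly at the level of residue fields.

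One small slip, confined to your closing ``alternatively'' remark: the divisibility runs the other way. The discriminant of the order $\mathcal{O}_{F'}[p_i^{1/n}]$ equals $\mathrm{disc}(g_j)$, where $g_j$ is the minimal polynomial of $p_i^{1/n}$, and the correct chain is that the discriminant of $F'(p_i^{1/n})/F'$ divides $\mathrm{disc}(g_j)$, which in turn divides $\mathrm{disc}(X^n-p_i)=\pm n^n p_i^{\,n-1}\in\mathcal{O}_{F'}^{\times}$ (via $\mathrm{disc}(fg)=\mathrm{disc}(f)\,\mathrm{disc}(g)\,\mathrm{Res}(f,g)^2$). As you stated it, with $\mathrm{disc}(X^n-p_i)$ dividing the discriminant of the order, the unit conclusion does not follow. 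This does not affect the proof, since your Hensel argument is complete on its own.
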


\begin{proof} Firstly, $\ell$ is unramified in the cyclotomic field $K_1$ \cite[Theorem 2, p. 74]{lang1986}. 

Next, let us consider the extension $K_{(p_1)}:=K_1(p_1^{1/n})$. 
This is a Kummer extension over $K_1$ of degree $d_1$ dividing $n$.
Then, setting $y_1=p_1^{1/n}$, one deduces from Kummer theory that $y_1^{d_1}$ belongs to $K_1$.
In particular, $f_1(X)=X^{d_1}-y_1^{d_1}$ is the minimal polynomial of $y_1$ over $K_1$.
It follows that $\ell$ is coprime with the discriminant of the relative Kummer extension $K_{(p_1)}/K_1$, 
because the different of this extension divides the ideal generated by 
$f_1^\prime(y_1)=d_1y_1^{d_1-1}$ \cite[Corollary 2, p. 56]{serre1979}, 
as $y_1$ belongs to the integer ring of $K_1(y_1)$, and both
$d_1$ and $y_1$ are coprime with $\ell$. 

Proceeding by induction, one considers the Kummer extension $K_{(p_1,...,p_r,p_{r+1})}$ over 
$K_{(p_1,...,p_r)}$, defined as $K_{(p_1,...,p_r)}(p_{r+1}^{1/n})$,  where $1\leq r < \nu$. The same argument as above shows that the relative different of this extension 
divides the ideal generated by $f_{r+1}(y_{r+1})=d_{r+1}y_{r+1}^{d_{r+1}-1}$, 
where $y_{r+1}=p_{r+1}^{1/n}$, $d_{r+1}$ is the degree 
of the relative extension, and $f_{r+1}(X)=X^{d_{r+1}} - y_{r+1}^{d_{r+1}}$ is the minimal polynomial of $y_{r+1}$ over 
$K_{(p_1,...,p_r)}$.

One concludes that $\ell$ is coprime with the different of the relative extension $K_2/K_1$, from the transitivity property 
\cite[Proposition 8, p. 51]{serre1979}. 
Altogether, it follows that $\ell$ is unramified in $K_2$, applying \cite[Proposition 6, p. 50]{serre1979} and 
\cite[Corollary 1, p. 53]{serre1979}. 
\end{proof}



\begin{cor}
\label{thm:Corollary8}
Let $\ell>2$ be a prime number. 
Let $K^\prime$ be the field defined in (\ref{eq:eq149final8}). Then,

a) $\ell$ is unramified in $K_2$, and hence, $\mathbb{Q}(\mu_\ell) \cap K_2 = \mathbb{Q}$;

b) $\mathbb{Q}(\mu_\ell) \cap K_3$ is equal to the unique quadratic subfield $K_0$ of $\mathbb{Q}(\mu_\ell)$ 
({\em i.e.}, $\mathbb{Q}(\sqrt{\ell})$ if $\ell \equiv 1 \mod 4$, or $\mathbb{Q}(\sqrt{-\ell})$ if  
$\ell \equiv 2,3 \mod 4$);

c) the extension $(\mathbb{Q}(\mu_\ell) \cap K^\prime)/\mathbb{Q}$ has degree dividing $12$.
\end{cor}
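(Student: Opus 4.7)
The plan is to establish the three parts in order, using Proposition \ref{thm:Proposition5} to control ramification at $\ell$, and standard Galois-theoretic identities to control intersections with $\mathbb{Q}(\mu_\ell)$.

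\emph{Part a).} I would apply Proposition \ref{thm:Proposition5} with $n=4$. Since $\gcd(4,\ell)=1$ (as $\ell>2$), the proposition gives that $\ell$ is unramified in $\mathbb{Q}(\mu_4)(p_1^{1/4},\ldots,p_\nu^{1/4})$. As $K_2\subseteq \mathbb{Q}(\mu_4)(p_1^{1/4},\ldots,p_\nu^{1/4})$, one concludes that $\ell$ is unramified in $K_2/\mathbb{Q}$. For the second assertion, $\mathbb{Q}(\mu_\ell)/\mathbb{Q}$ is totally ramified at $\ell$, so any subfield in which $\ell$ is unramified must reduce to $\mathbb{Q}$. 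This yields $\mathbb{Q}(\mu_\ell)\cap K_2=\mathbb{Q}$.

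\emph{Part b).} First verify $K_0\subseteq K_3$. The unique quadratic subfield of $\mathbb{Q}(\mu_\ell)$ is $\mathbb{Q}(\sqrt{\ell^*})$ with $\ell^*=(-1)^{(\ell-1)/2}\ell$. If $\ell\equiv 1\pmod 4$, then $K_0=\mathbb{Q}(\sqrt{\ell})\subseteq K_3$ directly. If $\ell\equiv 3\pmod 4$, then $K_0=\mathbb{Q}(\sqrt{-\ell})=\mathbb{Q}(\sqrt{-1}\cdot\sqrt{\ell})\subseteq K_1\cdot K_3 = K_3$. For the reverse inclusion, set $L=K_3\cap\mathbb{Q}(\mu_\ell)$; this is Galois over $\mathbb{Q}$ as an intersection of Galois extensions. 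Then $L\cap K_2\subseteq \mathbb{Q}(\mu_\ell)\cap K_2=\mathbb{Q}$ by part a). Since $L/\mathbb{Q}$ is Galois, the standard compositum identity gives $[LK_2:K_2]=[L:L\cap K_2]=[L:\mathbb{Q}]$. But $LK_2\subseteq K_3$ forces $[LK_2:K_2]\leq [K_3:K_2]\leq 2$. Hence $[L:\mathbb{Q}]\leq 2$, and combined with $K_0\subseteq L$ of degree 2 one obtains $L=K_0$.

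\emph{Part c).} Let $L^\prime = K^\prime\cap \mathbb{Q}(\mu_\ell)$. Since $K_3\subseteq K^\prime$, part b) gives $L^\prime\cap K_3 = K_3\cap\mathbb{Q}(\mu_\ell) = K_0$. Because $\mathbb{Q}(\mu_\ell)/\mathbb{Q}$ is cyclic, so are $L^\prime/\mathbb{Q}$ and $\Gal(L^\prime K_3/K_3)\cong \Gal(L^\prime/K_0)$. Now $K^\prime$ is the compositum over $K_3$ of the splitting fields $F_{y^2}$ of the cubics $X^3+AX+B-y^2$ for $y^2\in\mathcal{U}$, so $\Gal(K^\prime/K_3)$ embeds into $\prod_{y^2\in\mathcal{U}} \Gal(F_{y^2}/K_3)\subseteq \prod S_3$. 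Every element of $S_3$ has order dividing 6, so every element of $\prod S_3$ does as well; hence $\Gal(K^\prime/K_3)$ has exponent dividing 6, and so does any quotient, in particular the cyclic quotient $\Gal(L^\prime K_3/K_3)$. Therefore $[L^\prime K_3:K_3]\mid 6$, and $[L^\prime:\mathbb{Q}]=2\,[L^\prime K_3:K_3]$ divides 12.

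The main obstacle is the Galois-theoretic bookkeeping: leveraging $[LK_2:K_2]=[L:L\cap K_2]$ (which needs $L/\mathbb{Q}$ Galois) in part b), and in part c) reducing the problem to bounding cyclic quotients of $\Gal(K^\prime/K_3)$ via $\exp(S_3)=6$. A subtlety to watch is that abelianizations of subgroups of $\prod S_3$ need not equal products of abelianizations; the bound goes through because exponents are preserved under both passage to subgroups and to quotients.
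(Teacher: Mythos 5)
Your proof is correct and follows essentially the same route as the paper: Proposition \ref{thm:Proposition5} with $n=4$ plus total ramification of $\ell$ in $\mathbb{Q}(\mu_\ell)$ for a), the degree-$2$ bound $[K_3:K_2]\le 2$ descending to the intersection together with $\sqrt{\pm\ell}\in K_3$ for b), and the exponent-$6$ bound on $\Gal(K^\prime/K_3)$ coming from cubic splitting fields, combined with cyclicity of $\Gal(\mathbb{Q}(\mu_\ell)/\mathbb{Q})$, for c). Your write-up is in fact slightly more careful than the paper's in justifying the compositum identities $[LK_2:K_2]=[L:L\cap K_2]$ and $[L^\prime K_3:K_3]=[L^\prime:K_0]$ via the Galois property of the intersected fields.
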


\begin{proof} Part a). The first statement follows from Proposition \ref{thm:Proposition5} applied to $n=4$ and the distinct prime factors
$p_1,...,p_\nu$ other than $\ell$ that are bounded by $C$, having assumed that $\ell>2$. 

Thus, the prime $\ell$ is unramified in $K_2 \subset K_1( p_1^{1/4},...,p_\nu^{1/4})$. See (\ref{eq:eq146final8}). 
On the other hand, $\ell$ is totally ramified in $\mathbb{Q}(\mu_\ell)$ \cite[Proposition 17, p. 78]{serre1979}. 
This yields the equality $\mathbb{Q}(\mu_\ell) \cap K_{2} = \mathbb{Q}$. 

Part b). The extension $K_3/K_2$ has degree dividing $2$, and hence the same property holds true for 
$(\mathbb{Q}(\mu_\ell)\cap K_3)/(\mathbb{Q}(\mu_\ell)\cap K_2)$. But $\mathbb{Q}(\mu_\ell)\cap K_2 = \mathbb{Q}$ from part a).
Furthermore, $\mathbb{Q}(\mu_\ell)\cap K_3$ contains both quadratic fields $\mathbb{Q}(\sqrt{\ell})$ and $\mathbb{Q}(\sqrt{-\ell})$. 
Thus, it contains the unique quadratic subfield of $\mathbb{Q}(\mu_\ell)$. 

Part c). The field $K^\prime$ is obtained by adjoining to $K_3$ various roots of cubic equations. 
For each of these cubic equations, the splitting field has Galois group a quotient of the permutation group $\mathfrak{S}_3$, 
so that its Galois group has cardinality dividing $6$. Therefore, the normal extension $K^\prime/K_3$ has Galois group of exponent dividing $6$.

Now, we have an isomorphism of groups:
\begin{equation} 
\Gal((\mathbb{Q}(\mu_\ell) \cap K^\prime) \cdot K_3/K_3) \xrightarrow{\approx} 
\Gal((\mathbb{Q}(\mu_\ell) \cap K^\prime)/(\mathbb{Q}(\mu_\ell) \cap K^\prime) \cap K_3).
\end{equation}
But since, $K_3 \subseteq K^\prime$, it follows that 
\begin{equation} 
\Gal((\mathbb{Q}(\mu_\ell) \cap K^\prime) \cdot K_3/K_3) \xrightarrow{\approx} 
\Gal((\mathbb{Q}(\mu_\ell) \cap K^\prime)/(\mathbb{Q}(\mu_\ell) \cap K_3)).
\end{equation}
Then, since $(\mathbb{Q}(\mu_\ell) \cap K^\prime) \cdot K_3 \subseteq K^\prime$, we obtain a surjective group homomorphism:
\begin{equation} 
\Gal(K^\prime/K_3) \twoheadrightarrow
\Gal((\mathbb{Q}(\mu_\ell) \cap K^\prime)/(\mathbb{Q}(\mu_\ell) \cap K_3)).
\end{equation}

Therefore, $\Gal((\mathbb{Q}(\mu_\ell) \cap K^\prime)/(\mathbb{Q}(\mu_\ell) \cap K_3))$ is annihilated by $6$, and henceforth, 
$\Gal((\mathbb{Q}(\mu_\ell) \cap K^\prime)/\mathbb{Q})$ has exponent dividing $12$. 
Since $\Gal(\mathbb{Q}(\mu_\ell)/\mathbb{Q})$ is a cyclic group, 
we conclude that $(\mathbb{Q}(\mu_\ell) \cap K^\prime)/\mathbb{Q}$ has degree dividing $12$. 
\end{proof}



\section{Proof of the Main Theorem}             
\label{section:ProofMainTheorems}

Let $E$ be an elliptic curve over an algebraic number field $K$ and $\ell$ be a prime number. 
We denote the group of $\ell$-torsion points $E[\ell](\overline{\mathbb{Q}})$ by $E[\ell]$. 
We set $G=\Gal(L/K)$, where $L=K(E[\ell])$. 
The absolute Galois groups $\Gal(\overline{\mathbb{Q}}/K)$ and $\Gal(\overline{\mathbb{Q}}/L)$ 
are denoted $\mathcal{G}$ and $\mathcal{H}$, respectively.

For each places $v_0\mid p$ and $v\mid v_0$ of $K$ and $L$, respectively, where $p$ is a prime number, 
we set $\mathcal{G}_{v_0}=\Gal(\overline{\mathbb{Q}}_p/K_{v_0})$ and 
$\mathcal{H}_v=\Gal(\overline{\mathbb{Q}}_p/L_v)$. We consider the $[\ell]$-Selmer groups:
\begin{eqnarray}
S^{[\ell]}(E/L) &=&\Ker \Bigl \{ \Hom(\mathcal{H},E[\ell]) \xrightarrow{\oplus_v Res^{\mathcal{H}}_{\mathcal{H}_v}} \oplus_{v} 
\Homol^1(\mathcal{H}_v,E(\overline{\mathbb{Q}}_p)) \Bigr \};\\
S^{[\ell]}(E/K) &=& \Ker \Bigl \{ \Hom(\mathcal{G},E[\ell]) \xrightarrow{\oplus_{v_0} Res^{\mathcal{G}}_{\mathcal{G}_{v_0}}} \oplus_{v_0} 
\Homol^1(\mathcal{G}_{v_0},E(\overline{\mathbb{Q}}_p)) \Bigr \}.
\end{eqnarray}
Here, $v$ and $v_0$ cover all places of $L$ and $K$, respectively.

We are interested in computing the groups $S^{[\ell]}(E/K)$ and $S^{[\ell]}(E/L)$. Note that the $[\ell]$-Selmer groups are finite \cite[Theorem 4.2, part b), p. 333]{silverman2009} and, in fact, are finite vector spaces over $\mathbb{F}_\ell$. 
In Sections \ref{subsection:caseQ} and \ref{subsection:proofTheorem5}, we specialize to the case where $K=\mathbb{Q}$. 


\subsection{Map from $S^{[\ell]}(E/K)$ into $S^{[\ell]}(E/L)$}
\label{subsection:MapFrom}

We recall the exact inflation-restriction sequence of Galois cohomology.


\begin{lem}[Inflation-restriction exact sequence]
\label{thm:Lemma6}
Let $G$ be a (possibly infinite) profinite group with closed normal subgroup $N$. Let $A$ be a $G$-module. Then, there is an exact (inflation-restriction) sequence:
\begin{equation}
0 \rightarrow \Homol^1(G/N,A^N) \xrightarrow{Inf^{G}_{G/N}} \Homol^1(G,A) \xrightarrow{Res^{G}_{N}} \Homol^1(N,A)^{G/N},
\end{equation}
where $g\in G$ acts on a $1$-cocycle $f:N \rightarrow A$ as $(g\cdot f)(n)=\,^gf(g^{-1}n g)$ for $n\in N$.
\end{lem}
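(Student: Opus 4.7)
The plan is to prove this classical result directly at the level of continuous $1$-cocycles. The continuity of all cocycles in sight is preserved under inflation, restriction, and subtraction of coboundaries (because $N$ is closed, $A$ is discrete, and $G/N$ carries the profinite quotient topology), so I shall leave that verification implicit throughout.

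First I would handle inflation and its injectivity. Given a continuous cocycle $\bar{f}: G/N \to A^N$, define $\mathrm{Inf}(\bar f)(g) := \bar f(gN)$, viewed as an element of $A$. The cocycle identity for $\bar f$ lifts to one for $\mathrm{Inf}(\bar f)$ because the $G$-action on $A^N \subseteq A$ factors through $G/N$. For injectivity, suppose $\mathrm{Inf}(\bar f) = \partial a$ for some $a \in A$; restricting to $n \in N$ gives $0 = (n-1)a$, so $a \in A^N$, and then $\bar f(gN) = (g-1)a$ exhibits $\bar f$ itself as a coboundary in $H^1(G/N, A^N)$.

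Next, exactness at $H^1(G,A)$: the inclusion $\mathrm{Im}(\mathrm{Inf}) \subseteq \mathrm{Ker}(\mathrm{Res})$ is immediate, since $\mathrm{Inf}(\bar f)(n) = \bar f(eN) = 0$ for $n \in N$. Conversely, if $f: G \to A$ restricts on $N$ to the coboundary $n \mapsto (n-1)a$, I replace $f$ with $f - \partial a$ inside its cohomology class to arrange $f|_N \equiv 0$. The cocycle identity $f(gn) = f(g) + g f(n) = f(g)$ shows $f$ descends to a map $\bar f: G/N \to A$. To force $\bar f$ to land in $A^N$, I expand $f(ng)$ in two different ways: on the one hand $f(ng) = f(n) + n f(g) = n f(g)$, and on the other $f(ng) = f(g \cdot g^{-1}n g) = f(g) + g f(g^{-1}n g) = f(g)$, where $g^{-1}n g \in N$ by normality. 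Comparing yields $n f(g) = f(g)$, so $\bar f$ takes values in $A^N$ and $\mathrm{Inf}(\bar f) = f$.

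Finally, for the statement that restriction lands in $H^1(N,A)^{G/N}$, I would carry out the standard computation: for a $G$-cocycle $f$ and $g \in G$, the relation $g f(g^{-1}) = -f(g)$ (from $f(e)=0$) together with two applications of the cocycle identity gives $(g \cdot f|_N)(n) = g f(g^{-1}n g) = -f(g) + f(n) + n f(g)$, so $(g \cdot f|_N)(n) - f|_N(n) = (n-1) f(g)$, a coboundary on $N$. The step that requires the most care is exactness at the middle term — specifically, the double expansion of $f(ng)$ that forces values into $A^N$ — since it is the only place where normality of $N$ enters essentially. Everything else is formal bookkeeping with the cocycle identity.
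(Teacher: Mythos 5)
Your proof is correct, and it is somewhat more self-contained than the paper's. The paper simply cites the standard exactness of $0 \rightarrow \Homol^1(G/N,A^N) \rightarrow \Homol^1(G,A) \rightarrow \Homol^1(N,A)$ (referring to Silverman) and devotes its proof entirely to the cocycle identity $\,^{g}f(g^{-1}ng) = f(n) + (n-1)f(g)$, which is exactly the computation you perform in your last paragraph; so on the invariance statement the two arguments coincide. Where you differ is that you reprove injectivity of inflation and exactness at $\Homol^1(G,A)$ from scratch at the level of continuous cocycles (normalize $f$ so that $f|_N\equiv 0$, descend via $f(gn)=f(g)$, and use the two expansions of $f(ng)$ to force values into $A^N$); this buys a proof that does not lean on an external reference, at the cost of the routine continuity checks you rightly flag as implicit. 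One small point you should make explicit: the notation $\Homol^1(N,A)^{G/N}$ presupposes that the $G$-action on $\Homol^1(N,A)$ factors through $G/N$, i.e.\ that $N$ acts trivially on $\Homol^1(N,A)$. The paper gets this by specializing its identity to $g\in N$ (where the computation only needs $f$ defined on $N$), whereas your computation is carried out for a cocycle defined on all of $G$ and shows $G$-invariance of the restricted class; adding the one-line remark that the same manipulation with $g\in N$ and $f$ a cocycle on $N$ alone gives the trivial $N$-action would close this gap and make the statement of the target fully justified.
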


\begin{proof}
As in \cite[p. 420]{silverman2009}, one has an exact sequence:
\begin{equation}
0 \rightarrow \Homol^1(G/N,A^N) \xrightarrow{Inf^{G}_{G/N}} \Homol^1(G,A) \xrightarrow{Res^{G}_{N}} \Homol^1(N,A).
\end{equation}
Then, one computes directly, assuming that $f$ is defined at $g$:
\begin{eqnarray}
\,^{g}f(g^{-1} n g) &=& \,^{g}\{f(g^{-1}) + \,^{g^{-1}}\left ( f(n) + \,^{n} f(g) \right )\}\nonumber\\
&=& \,^{g}f(g^{-1}) + f(n) + \,^{n} f(g) = -f(g) + f(n) + \,^{n} f(g)\nonumber\\
& =& f(n) + (n-1) f(g). 
\end{eqnarray}
So, taking $g\in N$, one sees that the action of $G$ on $\Homol^1(N,A)$ factors through $G/N$. Moreover, if $f$ is defined on all of $G$, one deduces that $Res^{G}_{N}(f)\in \Homol^1(N,A)^{G/N}$. 
\end{proof}



\begin{prop}
\label{thm:Proposition6} 
Let $\ell$ be a prime number and $E$ an elliptic curve over an algebraic number field $K$. 
Set $L=K(E[\ell])$.   
Then, one has a homomorphism of groups:
\begin{equation}
Res^{\mathcal{G}}_{\mathcal{H}}: S^{[\ell]}(E/K) \rightarrow S^{[\ell]}(E/L)^{\mathcal{G}/\mathcal{H}},
\end{equation}
where $\mathcal{G}$ and $\mathcal{H}$ denote the absolute Galois groups of $K$ and $L$, respectively.
\end{prop}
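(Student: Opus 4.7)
The plan is to construct the restriction map at the level of cohomology classes and then verify two compatibility properties: that it preserves the Selmer local vanishing conditions, and that its image lies in the $\mathcal{G}/\mathcal{H}$-invariants.

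First, I would invoke the standard restriction $Res^{\mathcal{G}}_{\mathcal{H}} : \Homol^1(\mathcal{G}, E[\ell]) \to \Homol^1(\mathcal{H}, E[\ell])$, defined by restricting $1$-cocycles from $\mathcal{G}$ to $\mathcal{H}$. Since $L = K(E[\ell])$, the subgroup $\mathcal{H}$ acts trivially on $E[\ell]$, so $\Homol^1(\mathcal{H}, E[\ell]) = \Hom(\mathcal{H}, E[\ell])$, which is consistent with the notation used in the definition of $S^{[\ell]}(E/L)$. To show that the image lands in the $\mathcal{G}/\mathcal{H}$-invariants, I would directly apply Lemma \ref{thm:Lemma6} (the inflation-restriction sequence) with $G = \mathcal{G}$, $N = \mathcal{H}$, and $A = E[\ell]$: the lemma asserts precisely that $Res^{G}_{N}$ factors through $\Homol^1(N, A)^{G/N}$.

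The substantive step is to check that $Res^{\mathcal{G}}_{\mathcal{H}}$ preserves the Selmer condition. For each place $v$ of $L$ lying above a place $v_0$ of $K$, I would fix compatible embeddings $\overline{K} \hookrightarrow \overline{K}_{v_0}$ and $\overline{L} \hookrightarrow \overline{L}_v$, noting that $\overline{L}_v$ may be identified with $\overline{K}_{v_0}$. This places $\mathcal{H}_v \subseteq \mathcal{G}_{v_0}$ as subgroups of $\mathcal{G}$, and functoriality of restriction produces the commutative diagram
\begin{equation*}
\begin{CD}
\Homol^1(\mathcal{G}, E[\ell]) @>>> \Homol^1(\mathcal{G}_{v_0}, E[\ell]) @>>> \Homol^1(\mathcal{G}_{v_0}, E(\overline{K}_{v_0})) \\
@VV{Res^{\mathcal{G}}_{\mathcal{H}}}V @VVV @VVV \\
\Homol^1(\mathcal{H}, E[\ell]) @>>> \Homol^1(\mathcal{H}_{v}, E[\ell]) @>>> \Homol^1(\mathcal{H}_{v}, E(\overline{K}_{v_0})).
\end{CD}
\end{equation*}
For $f \in S^{[\ell]}(E/K)$, its image in the top-right group vanishes by the Selmer defining condition; chasing around the right-hand column, the image of $Res^{\mathcal{G}}_{\mathcal{H}}(f)$ in the bottom-right group vanishes as well. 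Since every place of $L$ arises in this way, $Res^{\mathcal{G}}_{\mathcal{H}}(f)$ satisfies all local vanishing conditions for $S^{[\ell]}(E/L)$.

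I expect no serious obstacle: the argument is a routine diagram chase combining functoriality of Galois cohomology with the inflation-restriction sequence. The only mild subtlety is the compatibility of embeddings, so that the local Galois groups $\mathcal{H}_v$ really do sit inside $\mathcal{G}_{v_0}$ as needed; this is precisely the setup already used earlier in the paper in the description of $Res^{\mathcal{G}}_{\mathcal{G}_v}$.
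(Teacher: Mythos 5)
Your proposal is correct and follows essentially the same route as the paper: the paper likewise invokes the inflation-restriction sequence (Lemma \ref{thm:Lemma6}) to land in the $\mathcal{G}/\mathcal{H}$-invariants and then verifies the local Selmer conditions by a commutative diagram of restriction maps between $\Homol^1(\mathcal{G},E[\ell])$, $\Homol^1(\mathcal{H},E[\ell])^{\mathcal{G}/\mathcal{H}}$, and the local groups $\Homol^1(\mathcal{G}_{v_0},E(\overline{\mathbb{Q}}_p))[\ell]$, $\Homol^1(\mathcal{H}_v,E(\overline{\mathbb{Q}}_p))[\ell]$, followed by the same diagram chase. Your intermediate factoring through $\Homol^1(\mathcal{G}_{v_0},E[\ell])$ and the remark on compatible embeddings are only cosmetic differences.
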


\begin{proof} 
From the inflation-restriction sequence, we obtain a commutative diagram for any places $v_0\mid p$ of $K$ and 
$v\mid v_0$ of $L$:
\begin{equation}
\begin{CD}
\Homol^1(\mathcal{G},E[\ell]) @>{Res^{\mathcal{G}}_{\mathcal{H}}}>> \Homol^1(\mathcal{H},E[\ell])^{\mathcal{G}/\mathcal{H}}\\
@VV{Res^{\mathcal{G}}_{\mathcal{G}_{v_0}}}V  @VV{Res^{\mathcal{H}}_{\mathcal{H}_v}}V \\
\Homol^1(\mathcal{G}_{v_0},E(\overline{\mathbb{Q}}_p))[\ell] @>{Res^{\mathcal{G}_{v_0}}_{\mathcal{H}_v}}>> \Homol^1(\mathcal{H}_v,E(\overline{\mathbb{Q}}_p))[\ell]. 
\end{CD}
\end{equation}
where $\mathcal{G}_{v_0}$ and $\mathcal{H}_v$ are the absolute Galois groups of $K_{v_0}$ and $L_v$, respectively.
The result now follows from an easy diagram chasing.
\end{proof} 



\subsection{The group $S^{[\ell]}(E/L)$}
\label{subsection:SelmerL}

In this section, $E$ is an elliptic curve over an algebraic number field $K$. 

Motivated by Proposition \ref{thm:Proposition6}, we consider the $[\ell]$-Selmer group of $E$ over $L=K(E[\ell])$.
 
Now, by construction, the Galois group $\mathcal{H}$ acts trivially on $E[\ell]$, so that one has:
\begin{equation}
\Homol^1(\mathcal{H},E[\ell]) = \Hom(\mathcal{H},E[\ell])\approx \oplus_{i=1}^{2} 
\Hom(\mathcal{H},\mathbb{F}_\ell).
\end{equation}

Moreover, $L$ contains necessarily $\mu_\ell$ because $\Lambda^2 T_\ell(E) \approx T_\ell(\mu)$, where $\mu$ denotes here the multiplicative group. See \cite[p. 99]{silverman2009}.

Thus, any non-trivial character $\chi$ in $\Hom(\mathcal{H},\mathbb{F}_\ell)$ factors through the Galois group of a Kummer extension of degree $\ell$. In particular, one has an isomorphism:
\begin{equation}
L^*/(L^*)^{\ell} \xrightarrow{\approx} \Hom(\mathcal{H},\mathbb{F}_\ell) = \Hom(\mathcal{H}/\mathcal{H}^\ell,\mathbb{F}_\ell).
\end{equation}
Since the group $L^*/(L^*)^{\ell}$ is infinite, we want to specify those group homomorphisms in $\Hom(\mathcal{H},E[\ell])$ that belong to the $[\ell]$-Selmer group of $E$ over $L$.

Thus, we consider a group homomorphism $\Psi \in \Hom(\mathcal{H},E[\ell])$ such that
the following condition holds:
\begin{equation}
\label{eq:eq164final8}
(*) \qquad Res^{\mathcal{H}}_{\mathcal{H}_v}(\Psi) \mapsto  0 \in \Homol^1(\mathcal{H}_v,E(\overline{\mathbb{Q}}_p)),
\end{equation}
for a given finite place $v\mid p$ of $L$. 

Let $L^\prime=\overline{\mathbb{Q}}^{\Ker \Psi}$, $N=\Gal(L^\prime/L)$ and $N_{v^\prime}=N_v$ ($N$ is Abelian) be its decomposition group at $v^\prime\mid v$. Thus, $N$ can be viewed as a subgroup of 
$\mathbb{F}_\ell \oplus \mathbb{F}_\ell$. 
We have a commutative diagram with exact rows:
\begin{equation}
\begin{CD}
0 @>>> \Hom(N,E[\ell]) @>{Inf^{\mathcal{H}}_{N}}>> \Hom(\mathcal{H},E[\ell])\\
@. @VV{Res^{N}_{N_v}}V  @VV{Res^{\mathcal{H}}_{\mathcal{H}_v}}V \\
0 @>>> \Homol^1(N_v,E(L^\prime_{v^\prime})) @>{Inf^{\mathcal{H}_v}_{N_v}}>> \Homol^1(\mathcal{H}_v,E(\overline{\mathbb{Q}}_p)).
\end{CD}
\end{equation}
Therefore, given $\Psi^\prime \in \Hom(N,E[\ell])$, $Res^{N}_{N_v}(\Psi^\prime)$ splits in $E(L^\prime_{v^\prime})$ if and only if 
$Res^{\mathcal{H}}_{\mathcal{H}_v} \circ Inf^{\mathcal{H}}_{N} (\Psi^\prime)$ does in $E(\overline{\mathbb{Q}}_p)$. 
Henceforth, condition (*) -- applied to $\Psi=Inf^{\mathcal{H}}_{N} (\Psi^\prime)$ -- 
is equivalent to:
\begin{equation}
\label{eq:eq166final8}
(**) \qquad Res^{N}_{N_v}(\Psi^\prime) \mapsto 0 \in \Homol^1(N_v,E(L^\prime_{v^\prime})).
\end{equation}
In particular, if $N_v=0$, condition (**) holds trivially. Note also that $\Psi^\prime$ is not identically $0$ on any non-trivial subgroup of $N$. 


\begin{prop}
\label{thm:Proposition7} 
Let $\ell>3$ be a prime number. 

a) In cases B and C, condition (**) holds at a place $v$ of $L$ if only if 

$v$ is unramified in $L^\prime_{v^\prime}$ and $Res^{N}_{N_v}(\Psi^\prime)$ maps to $0\in \Homol^1(N_v,\widetilde E_v(k_{v^\prime}))$, where $k_{v^\prime}$ denotes the residue field of $L^\prime_{v^\prime}$.

b) In Case D, $v$ is at most tamely ramified in $L^\prime_{v^\prime}$.
\end{prop}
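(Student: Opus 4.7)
The plan is to exploit the reduction exact sequence of $N_v$-modules, which becomes available once $E$ has good reduction over $L_v$: in Case B this is automatic, and in Case C it follows from Lemma~\ref{thm:Lemma1}, since $\ell>3$ and $v\nmid\ell$. Throughout, write $k_{v'}$ for the residue field of $L'_{v'}$ and $E_1(L'_{v'})$ for the kernel of the reduction map $E(L'_{v'})\to\widetilde E_v(k_{v'})$.

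For the forward direction of (a), I would assume (**) and pick $P\in E(L'_{v'})$ with $\sigma P - P = \Psi'(\sigma)$ for every $\sigma\in N_v$. Reduction is $N_v$-equivariant, and the inertia subgroup $I_v\subset N_v$ acts trivially on $k_{v'}$, hence on $\widetilde E_v(k_{v'})$; therefore, for $\sigma\in I_v$ one obtains $\widetilde{\Psi'(\sigma)}=\sigma(\widetilde P)-\widetilde P=0$. Lemma~\ref{thm:Lemma2} gives injectivity of reduction on $E[\ell]$ (since $\ell\neq p$), forcing $\Psi'(\sigma)=0$; the injectivity of $\Psi'$ on $N$ built into the definition of $L'$ then gives $\sigma=0$, so $I_v=0$ and $v$ is unramified in $L'_{v'}/L_v$. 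Reducing the cocycle identity $\sigma P - P = \Psi'(\sigma)$ over all of $N_v$ exhibits $\widetilde P$ as a splitting of $Res^{N}_{N_v}(\Psi')$ in $\widetilde E_v(k_{v'})$.

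For the converse of (a), the idea is to use the short exact sequence of $N_v$-modules
\[
0 \to E_1(L'_{v'}) \to E(L'_{v'}) \to \widetilde E_v(k_{v'}) \to 0,
\]
available since good reduction is preserved under the base extension from $L_v$ to $L'_{v'}$, together with the vanishing $H^i(N_v, E_1(L'_{v'}))=0$ for $i\ge 1$, to obtain an injection $H^1(N_v, E(L'_{v'})) \hookrightarrow H^1(N_v, \widetilde E_v(k_{v'}))$. This cohomology vanishing combines two facts: $N_v\subseteq E[\ell]\cong(\mathbb{F}_\ell)^2$ is a finite $\ell$-group, while under the identification $E_1(L'_{v'})\cong F_{v'}(\mathcal{M}_{v'})$ multiplication by $\ell$ is an automorphism of the formal group (since $v\nmid\ell$), so the standard annihilation of $H^i(N_v,\cdot)$ by $|N_v|$ kills the uniquely $\ell$-divisible module $E_1(L'_{v'})$. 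The hypothesis that the reduced class vanishes then propagates back to (**) through this injection.

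For part (b), in Case D one has $v\nmid\ell$, so the residue characteristic $p$ of $L_v$ differs from $\ell$; since $N\hookrightarrow E[\ell]\cong(\mathbb{F}_\ell)^2$, the decomposition group $N_v=\Gal(L'_{v'}/L_v)$ has order dividing $\ell^2$, and so does its ramification index, which is therefore coprime with $p$, making the extension at most tamely ramified. The main obstacle is the cohomology vanishing $H^1(N_v,E_1(L'_{v'}))=0$ in the converse of (a); once the cancellation between the $\ell$-group structure of $N_v$ and the $\ell$-divisibility of the formal group is secured, the rest of the proof is formal, and Case D reduces to an elementary $p$-versus-$\ell$ comparison.
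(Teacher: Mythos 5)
Your argument is correct, and its overall skeleton matches the paper's: the forward direction uses that inertia acts trivially on $\widetilde E_v(k_{v^\prime})$ together with injectivity of reduction on prime-to-$p$ torsion (Lemma \ref{thm:Lemma2}) and injectivity of $\Psi^\prime$ on $N$ to force $I_v=0$; Case C is reduced to Case B via Lemma \ref{thm:Lemma1}; and Case D is the observation that the inertia group, being a subgroup of $N_v\subseteq \mathbb{F}_\ell\oplus\mathbb{F}_\ell$, has order a power of $\ell$ coprime to $p$. Where you genuinely diverge is the key vanishing $\Homol^1(N_v,E_1(L^\prime_{v^\prime}))=0$ needed for the converse in part a). The paper proves it by filtering the formal group via the exact sequence $0\rightarrow \pi_v^r\mathcal{O}_{v^\prime}\rightarrow F_v(\mathcal{M}_{v^\prime})\rightarrow M\rightarrow 0$ (with $M$ finite, annihilated by a power of $p\not=\ell$, so $\Homol^1(N_v,M)=0$) and then invoking $\Homol^1(N_v,\mathcal{O}_{v^\prime})=0$ for unramified extensions; this is why the paper restricts that step to the unramified case. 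You instead note that since $\ell$ is a unit in $\mathcal{O}_{v^\prime}$, multiplication by $\ell$ is an automorphism of the formal group, hence $E_1(L^\prime_{v^\prime})$ is uniquely $\ell$-divisible, and since $N_v$ is an $\ell$-group whose order annihilates $\Homol^i(N_v,\cdot)$, the cohomology vanishes. This is shorter, avoids the unramifiedness hypothesis at that step (harmless here, since unramifiedness is part of the stated condition anyway), and yields the injection $\Homol^1(N_v,E(L^\prime_{v^\prime}))\hookrightarrow \Homol^1(N_v,\widetilde E_v(k_{v^\prime}))$ in one stroke; the paper's route, by contrast, stays closer to the standard Silverman toolkit but does more bookkeeping. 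Either way the proposition follows.
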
 

\begin{proof} Part a). 
Case B: $v \mid v_0 \mid p \not = \ell$ and $v_0 \not \in \Sigma_E$. Assume that condition (**) holds. 
Let $I_v$ be the inertia subgroup of $L^\prime_{v^\prime}/L_v$.
Then, there is an exact sequence:
\begin{equation}
0 \rightarrow E_1(L^\prime_{v^\prime}) \rightarrow E(L^\prime_{v^\prime}) \rightarrow \widetilde E_v (k_{v^\prime}) \rightarrow 0, 
\end{equation}
where $k_{v^\prime}$ denotes the residue field of $L^\prime_{v^\prime}$.  
Condition (**) then implies that 
$Res^{N}_{N_v}(\Psi^\prime)$ splits in $\widetilde E_v (k_{v^\prime})$. Since $I_v$ acts trivially on $\widetilde E_v (k_{v^\prime})$ and $E[\ell] \approx \widetilde E_v [\ell] \subset \widetilde E_v (k_{v^\prime})$, this implies that $\Psi^\prime$ is trivial on $I_v$. 
Therefore, $I_v=0$, which means that $v$ is unramified in $L^\prime_{v^\prime}$. 
Furthermore, we obviously have that $Res^{N}_{N_v}(\Psi^\prime)$ maps to $0\in \Homol^1(N_v,\widetilde E_v (k_{v^\prime}))$.

For the converse, it is sufficient to prove that
\begin{equation}
\Homol^1(N_v, E_1(L^\prime_{v^\prime}))=0,
\end{equation}
whenever $L^\prime_{v^\prime}/L_v$ is unramified of degree a power of $\ell$, for then one obtains a commutative diagram:
\begin{equation}
\begin{CD}
@. \Hom(N,E[\ell]) @>\approx>> \Hom(N,\widetilde E_v[\ell])\\
@. @VV{Res^{N}_{N_v}}V  @VV{Res^{N}_{N_v}}V \\
0 @>>>  \Homol^1(N_v,E(L^\prime_{v^\prime})) @>{reduction}>> \Homol^1(N_v,\widetilde E_v(k_{v^\prime})).
\end{CD}
\end{equation}
Now, $E_1(L^\prime_{v^\prime})\approx F_v(\mathcal{M}_{v^\prime})$, where $F_v$ is the formal group of $E$ over 
$\mathcal{O}_{v}$, the integer ring of $L_v$, and $\mathcal{M}_{v^\prime}$ is the maximal ideal of the integer ring $\mathcal{O}_{v^\prime}$ of $L^\prime_{v^\prime}$.  
Moreover, from \cite[Proposition 6.3, p. 200]{silverman2009}, one has a short exact sequence:
\begin{equation}
0 \rightarrow \pi_{v}^r \mathcal{O}_{v^\prime}  \rightarrow 
F_v(\mathcal{M}_{v^\prime}) \rightarrow M \rightarrow 0 
\end{equation}
for some positive integer $r$, where $\pi_v$ is a uniformizer of $L_v$, and 
$M$ is a finite $\mathbb{Z}_p$-representation of $N_v$ that is annihilated by a power of $p$ (recall that $v\mid p\not = \ell$). Since $N_v$ has order $\ell$ (recall that $N_v$ is a cyclic subgroup of $\mathbb{F}_\ell \oplus \mathbb{F}_\ell$), it follows that 
$\Homol^1(N_v,M)=0$. Thus, it is sufficient to show that:
\begin{equation}
\Homol^1(N_v, \mathcal{O}_{v^\prime})=0,
\end{equation}
whenever $L^\prime_{v^\prime}/L_v$ is unramified. This result is well-known and can be proved by using the facts that 
$\Homol^1(N_v,k_{v^\prime})=0$ and  that $\mathcal{O}_{v^\prime}$ is complete. 

Case C: $v \mid v_0 \mid p \not=\ell$ and $p\in \Sigma_E$, with additive reduction of $E$ at $v_0$ and $v_0(j(E))\geq 0$. 
From Lemma \ref{thm:Lemma1} (by assumption, $\ell>3$), it follows that $E$ has good reduction over $L$. Therefore, the argument in case B applies.

Part b). Case D: This is clear since $\ell$ is coprime with the characteristic of the residue field of $L_v$.
\end{proof}


At this point, we make use of class field theory. 
One may consult reference \cite{lang1986} for the classical approach, close to Takagi-Artin's treatment. 
One may also consult \cite{artin1967,tate1967} for global class field theory, and \cite{serre1967,serre1979} for local class field theory, with a treatment based extensively on homological algebra, including the notion of invariant class. 
A treatment based on the notion of henselian valuation with respect to a degree map can be found in \cite{neukirch1986}. 
Reference \cite{fesenko2002} should be consulted for a development of local class field theory, including explicit reciprocity laws, 
that does not rely on homological algebra. 
In this work, we found convenient to refer to \cite{lang1986,neukirch1986,serre1979}.

We now recall the notion of conductor of a finite Abelian extension of local fields $F/L_v$ \cite[p. 44]{neukirch1986}. 
Let $\pi_{v}$ be a uniformizer of $L_v$ and denote the prime ideal $(\pi_{v})$ by $\mathfrak{p}_v$. 
Set $U_{v}^{(0)}=U_{v}$ the group of units of the ring of integers $\mathcal{O}_v$ of $L_v$, and $U_{v}^{(n)}=1+\mathfrak{p}_v^n$ for $n\geq 1$. The conductor of $F/L_v$ is equal to $\mathfrak{f}_{v}=\mathfrak{p}_v^n$, where $n$ is the smallest integer such that 
$U_{v}^{(n)}\subset N_{F/L_v}(F^*)$.

Next, assume that $\mu_\ell\subset L_v^*$. 
Let $(\;\; ,\,L_v^{ab}/L_v): L_v^* \rightarrow \Gal(L_v^{ab}/L_v)$ be the local reciprocity map, where $L_v^{ab}$ denotes the maximal Abelian extension over $L_v$ \cite[pp. 168--171]{serre1979}. Let $\chi_a$ be the Kummer character associated to an element $a$ of $L_v^*$; {\em i.e.}, $\chi_a(\sigma)=\sigma(a^{1/\ell})/a^{1/\ell}$. Then, Hilbert's local symbol is defined as $(a,b)_{v,\ell}=\chi_a((b,\,L_v^{ab}/L_v))\in \mu_{\ell}$ \cite[p. 205--206]{serre1979}. One has: $(a,b)_{v,\ell}=1$ if and only if $b$ is a norm from the extension $L_v(a^{1/\ell})/L_v$ \cite[Proposition 4, p. 206]{serre1979}. It follows that the conductor of the Kummer extension $L_v(a^{1/\ell})/L_v$ is given by the smallest integer $n$ such that $(a,b)_{v,\ell}=1$ for all $b\in U_{v}^{(n)}$.


\begin{prop}
\label{thm:Proposition8} 
Let $\ell>3$. 
Assume that condition (**) holds at a place $v$ of $L$. Let $\mathfrak{f}_v$ 
be the conductor of $L^\prime_{v^\prime}/L_v$. Then,

a) Case A: $\mathfrak{f}_v\mid \mathfrak{p}_v^{1+e_v\ell/(\ell-1)}$, where $e_v$ is the absolute ramification index of $L_v$.

b) Cases B and C: $\mathfrak{f}_v=1$.

c) Case D: $\mathfrak{f}_v\mid \mathfrak{p}_v$.
\end{prop}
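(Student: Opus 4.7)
The plan is to handle the three cases separately. For Cases B, C, and D, I would rely on Proposition \ref{thm:Proposition7} to reduce to a ramification assertion and then read off the conductor; for the deep Case A, the argument is pure local class field theory combined with the standard filtration analysis of $\ell$-th powers in the units.

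\textbf{Cases B and C.} Proposition \ref{thm:Proposition7} a) asserts, under condition (**), that $L'_{v'}/L_v$ is unramified. The conductor of an unramified Abelian extension of local fields is the unit ideal, so $\mathfrak{f}_v = 1$.

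\textbf{Case D.} Proposition \ref{thm:Proposition7} b) says $L'_{v'}/L_v$ is at most tamely ramified. Now $N_v$ is a cyclic subgroup of $\mathbb{F}_\ell \oplus \mathbb{F}_\ell$, so of order $1$ or $\ell$, and $\mu_\ell \subset L_v^*$ because $E[\ell] \subset L$ (via the Weil pairing, as recalled in Section \ref{subsection:SelmerL}). Thus $L'_{v'}/L_v$ is either trivial or a tamely ramified cyclic Kummer extension of prime degree $\ell$. In either situation, a standard fact from local class field theory bounds the conductor exponent by $1$, giving $\mathfrak{f}_v \mid \mathfrak{p}_v$.

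\textbf{Case A.} This is the substantive case. Write $L'_{v'} = L_v(a^{1/\ell})$ for some $a \in L_v^*$. By the characterisation of the conductor via Hilbert's local symbol recalled just before the proposition, $\mathfrak{f}_v = \mathfrak{p}_v^{n}$ where $n$ is the smallest integer such that $(a,u)_{v,\ell} = 1$ for every $u \in U_v^{(n)}$, equivalently the smallest integer with $U_v^{(n)} \subseteq N_{L'_{v'}/L_v}((L'_{v'})^*)$. The trivial inclusion $(L_v^*)^\ell \subseteq N_{L'_{v'}/L_v}((L'_{v'})^*)$ reduces the task to bounding the smallest $n$ with $U_v^{(n)} \subseteq (L_v^*)^\ell$. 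Expanding $(1+x)^\ell = 1 + \ell x + \binom{\ell}{2}x^2 + \cdots + x^\ell$ for $v(x) = m$, the two extreme terms have valuations $e_v + m$ and $\ell m$, which cross at $m = e_v/(\ell-1)$. Consequently, for $m > e_v/(\ell-1)$ the $\ell$-th power map sends $U_v^{(m)}$ into $U_v^{(m+e_v)}$, and Hensel's lemma shows it is surjective onto $U_v^{(m+e_v)}$. Taking $m = 1 + e_v/(\ell-1)$ and noting that $(\ell-1) \mid e_v$ (since $L_v \supseteq \mathbb{Q}_\ell(\mu_\ell)$, which is totally ramified of degree $\ell-1$ over $\mathbb{Q}_\ell$, so $1 + e_v/(\ell-1)$ and $1 + e_v\ell/(\ell-1)$ are integers), we obtain $U_v^{(1 + e_v\ell/(\ell-1))} \subseteq (L_v^*)^\ell$, hence the required divisibility $\mathfrak{f}_v \mid \mathfrak{p}_v^{1 + e_v \ell/(\ell-1)}$.

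The principal obstacle is the valuation book-keeping in Case A, specifically identifying the break $m = e_v/(\ell-1)$ precisely and confirming that the advertised exponent $1 + e_v\ell/(\ell-1)$ is integral; the rest is a direct reading of the definitions together with Proposition \ref{thm:Proposition7}.
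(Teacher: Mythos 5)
Your argument is correct, and in Cases B and C it coincides with the paper's proof (Proposition \ref{thm:Proposition7} plus the equivalence ``conductor trivial $\Leftrightarrow$ unramified''). In Case A you follow the same overall strategy as the paper — bound the conductor by showing $U_v^{(1+e_v\ell/(\ell-1))}$ consists of $\ell$-th powers, which are automatically norms — but where the paper simply cites Lang for the inclusion $U_v^{(1+e_v\ell/(\ell-1))}\subseteq (L_v^*)^{\ell}$, you re-derive it via the binomial/graded-piece argument (your ``Hensel'' step is really successive approximation on the filtration, valid precisely for $m>e_v/(\ell-1)$), and you add the useful observation that $(\ell-1)\mid e_v$ since $\mathbb{Q}_\ell(\mu_\ell)\subseteq L_v$, so the exponent is integral. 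In Case D you diverge from the paper: it computes the tame Hilbert symbol $(\pi_v,b)_{v,\ell}$ explicitly (following Serre) and even gets $\mathfrak{f}_v=\mathfrak{p}_v$ in the ramified case, whereas you invoke the general fact that a tamely ramified Abelian extension has conductor dividing $\mathfrak{p}_v$ (equivalently, $U_v^{(1)}$ is pro-$p$ with $p\neq\ell$, hence lies in the norm group); your route is shorter and suffices for the stated divisibility. One small point to patch: in Case A you write $L^\prime_{v^\prime}=L_v(a^{1/\ell})$, but $\Gal(L^\prime_{v^\prime}/L_v)$ can have order $\ell^2$ (the compositum of two Kummer extensions); the paper reduces to cyclic subextensions explicitly by noting $N_{F^\prime F''/L_v}=N_{F^\prime/L_v}\cap N_{F''/L_v}$, and you should either do the same or observe that $(L_v^*)^{\ell}$ lies in the norm group in any case because the Galois group has exponent $\ell$ — the same remark also covers your unjustified cyclicity assertion in Case D, where in any event the tame bound does not need it.
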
 

\begin{proof} Part a). 
Case A: The prime number $\ell$ is equal to the characteristic of the residue field of $L_v$. Moreover, one has $\mu_\ell \subset L_v^*$. It is sufficient to consider the case where $F^\prime/L_v$ is a cyclic sub-extension of $L^\prime_{v^\prime}/L_v$ of degree $\ell$. Indeed, $L^\prime_{v^\prime}/L_v$ is either the trivial extension, a cyclic extension of degree $\ell$ or the compositum of two cyclic extensions $F^\prime$ and $F''$ of degree $\ell$. In the first case, there is nothing to prove. In the third case, the inclusions $1+\mathfrak{p}_v^m \subset N_{F^\prime/L_v}((F^\prime)^*)$ and $1+\mathfrak{p}_v^n \subset N_{F''/L_v}((F'')^*)$ imply that 
$1+\mathfrak{p}_v^{\max(m,n)} \subset N_{F^\prime/L_v}((F^\prime)^*) \cap N_{F''/L_v}((F'')^*)=N_{F^\prime F''/L_v}((F^\prime F'')^*)$. Thus, 
we consider the case where $F^\prime/L_v$ is a Kummer extension of degree $\ell$, say $F^\prime=L_v(x^{1/\ell})$, that might be wildly ramified. 
From \cite[p. 186]{lang1986}, one has the inclusion $U_{v}^{(1+e_v\ell/(\ell-1))}\subset U_{v}^{\ell}$. 
But, one obviously has $(x,b)_{v,\ell}=1$ for all $b\in U_{v}^{\ell}$. 
Thus, $\mathfrak{f}_v\mid \mathfrak{p}_v^{1+e_v\ell/(\ell-1)}$.

Part b). Cases B and C: From local class field theory, one has $\mathfrak{f}_{v}=1$ if and only if 
$L^\prime_{v^\prime}/L_v$ is unramified \cite[Proposition (3.4), p. 44]{neukirch1986}, which holds from Proposition \ref{thm:Proposition7} (since $\ell>3$).

Part c). Case D: The prime number $\ell$ is coprime with the characteristic $p$ of the residue field of $L_v$. Moreover, one has $\mu_\ell \subset L_v^*$. As above, it is sufficient to consider the case where $F^\prime/L_v$ is a cyclic sub-extension of 
$L^\prime_{v^\prime}/L_v$ of degree $\ell$. Then, $F^\prime/L_v$ is tamely ramified and it is sufficient to consider the case where it is totally tamely ramified. Thus, we consider the case where $F^\prime/L_v$ is a Kummer extension of the form $L_v(\pi_{v}^{1/\ell})/L_v$ for some uniformizer $\pi_{v}$ of $L_v$ \cite[Proposition 12, p. 52]{lang1986}.   
In the case where $\ell$ is coprime with $p$, Hilbert's local symbol can be computed explicitly as in \cite[pp. 210--211]{serre1979}. Namely, let $(a)=(\pi_{v})^{\alpha}$ and  $(b)=(\pi_{v})^{\beta}$. Set $c=(-1)^{\alpha \beta} a^{\beta}/b^{\alpha}$. Then, $(a,b)_{v,\ell}=\overline{c}^{(q-1)/\ell}$, where $\overline{c}$ is the image of $c$ in the residue field of $L_v$ and $q$ is the cardinality of the residue field. In our case, $a=\pi_{v}$, so that $\alpha=1$. Now, let $b\in U_{v}$, so that $\beta=0$. Then, $c=1/b$, so that $(a,b)_{v,\ell}=(\overline{b})^{-(q-1)/\ell}$. It follows that $(\pi_{v},b)_{v,\ell}=1$ for all $b\in U_{v}^{(1)}$ ({\em i.e.}, the group of units that map to $1$ in the residue field). On the other hand, the extension $L_v(\pi_v^{1/\ell})/L_v$ is ramified, so that 
$\mathfrak{f}_{v}\not = 1$. Therefore, in the totally tamely ramified case, one concludes that $\mathfrak{f}_{v}=\mathfrak{p}_v$. 
\end{proof}


Let $\mathfrak{m}$ be the cycle \cite[p. 97]{neukirch1986} defined as:
\begin{equation}
\label{eq:eq172final8}
\mathfrak{m} = \prod_{v\mid \ell} \mathfrak{p}_v^{1+e_{v} \ell/(\ell-1)} \prod_{v\mid v_0 \in \Sigma_{E,p.m.}} \mathfrak{p}_v,
\end{equation}  
where $e_{v}$ is the absolute ramification index of $L_v$. 
One considers the subgroup $I^{\mathfrak{m}}_L$ of the idele group $I_L$ \cite[p. 98]{neukirch1986}:
\begin{equation}
I^{\mathfrak{m}}_L = \prod_{v\mid \ell} U_v^{(1+e_{v} \ell/(\ell-1))} \times \prod_{v\mid v_0 \in \Sigma_{E,p.m.}} U_v^{(1)} 
\times \prod_{v \not \in T} U_v,
\end{equation}
where $T=\{v: v\mid \ell\} \cup \{v: v\mid v_0 \in \Sigma_{E,p.m.}\}$. 
Here, $\{v\}$ includes the set $S_\infty$ of infinite places of $L$. Since $\mu_{\ell}\subset L^*$, assuming that $\ell\not=2$, it follows that the infinite places of $L$ are all complex. In that case, one sets $U_v=\mathbb{C}^*$. 
From global class field theory \cite[Chapter IV, \S 7]{neukirch1986}, there exists a unique finite Abelian extension $L^{\mathfrak{m}}/L$ such that:
 \begin{equation}
(\;\; ,\,L^{\mathfrak{m}}/L)\::\: C_L/ C^{\mathfrak{m}}_L \xrightarrow{\approx} \Gal(L^{\mathfrak{m}}/L),
\end{equation}                      
where $C_L = \left (L^* \cdot I_L \right )/L^*$ and $C^{\mathfrak{m}}_L = \left (L^* \cdot I^{\mathfrak{m}}_L \right )/L^*$. 

In the next result, we let $I_L^S$ denote $\prod_{v\in S} L_v^* \times \prod_{v\not \in S} U_v$ \cite[p. 76]{neukirch1986}. 
Now, let $T$ be any finite set of prime ideals of an algebraic number field 
$L$. Then, there exists a finite set of primes $S$, disjoint from $T$, such that the
classes of the elements of $S$ generate the ideal class group of $L$ \cite[pp. 124--125]{lang1986}. 
Next, let $S$ be any finite set of primes of an algebraic number field $L$,
such that: 1) $S$ includes the set $S_\infty$ of infinite places of $L$; 2) the classes of
the elements of $S \setminus S_\infty$ generate the ideal class group of $L$. Then, $L^* \cdot I_L^S = L^* \cdot I_L$ 
\cite[pp. 77--78]{neukirch1986}. 

Thus, given a finite set $T$ of non-Archimedean places, there exists a finite set of places $S \supseteq S_\infty$ disjoint from $T$ such that $L^* \cdot I_L^S = L^* \cdot I_L$.


\begin{lem} 
\label{thm:Lemma7}
Let $E$ be an elliptic curve over $K$ and $\ell>2$ be a prime number. Set $L=K(E[\ell])$. 
Let $T=\{v: v\mid \ell\} \cup \{v: v\mid v_0 \in \Sigma_{E,p.m.}\}$. Let $S\supseteq S_\infty$ be a finite set of places of $L$, disjoint from $T$, such that $L^* \cdot I_L^S = L^* \cdot I_L$.  Take $S$ sufficiently large, so that $S$ is closed under Galois action of $\Gal(L/K)$. 
Let $\mathcal{N}$ be the subgroup of $I_L$ defined as:
\begin{equation}
\mathcal{N} = 
\prod_{v\in T} U_v^\ell \times
\prod_{v\in S} U_v \cdot (L_v^*)^\ell \times 
\prod_{v\not \in S \cup T} U_v.
\end{equation} 

Then, $\overline{\mathcal{N}} = \left ( L^* \cdot \mathcal{N} \right )/L^*$ is the class group of the maximal sub-extension $\widetilde L/L$ of $L^{\mathfrak{m}}/L$ whose Galois group is annihilated by $\ell$. 
In particular, $\Gal(\widetilde L/L)$ is a direct product of cyclic groups of order $\ell$ such that:
\begin{equation}
\Hom(\Gal(\widetilde L/L),E[\ell]) = \Hom(\Gal(L^{\mathfrak{m}}/L),E[\ell]).
\end{equation}
Moreover, $\widetilde L/K$ is a Galois extension. 
\end{lem}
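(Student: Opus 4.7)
The plan is to apply global class field theory to identify $\widetilde L/L$ with the sub-extension of $L^{\mathfrak{m}}/L$ corresponding, under the reciprocity isomorphism $C_L/C_L^{\mathfrak{m}} \xrightarrow{\approx} \Gal(L^{\mathfrak{m}}/L)$, to the subgroup generated by $C_L^{\mathfrak{m}}$ and all $\ell$-th powers in $C_L$. Concretely, I intend to prove the two inclusions
\begin{equation}
L^* \cdot I_L^{\mathfrak{m}} \subseteq L^* \cdot \mathcal{N} \quad \text{and} \quad L^* \cdot (I_L)^{\ell} \subseteq L^* \cdot \mathcal{N},
\end{equation}
together with the reverse inclusion $L^* \cdot \mathcal{N} \subseteq L^* \cdot I_L^{\mathfrak{m}} \cdot (I_L)^{\ell}$. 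These together give $L^* \cdot \mathcal{N} = L^* \cdot I_L^{\mathfrak{m}} \cdot (I_L)^{\ell}$, which identifies $\overline{\mathcal{N}}$ as the subgroup of $C_L$ cutting out the maximal sub-extension of $L^{\mathfrak{m}}/L$ whose Galois group is annihilated by $\ell$.

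For the first inclusion $I_L^{\mathfrak{m}} \subseteq \mathcal{N}$, I would check the three factor types in (\ref{eq:eq172final8}). For $v \mid \ell$, the inclusion $U_v^{(1+e_v\ell/(\ell-1))} \subseteq U_v^{\ell}$ follows from \cite[p. 186]{lang1986}, as was already invoked in the proof of Proposition \ref{thm:Proposition8}. For $v \mid v_0 \in \Sigma_{E,p.m.}$, the residue characteristic $p$ is different from $\ell$ since $v \nmid \ell$, so the pro-$p$ group $U_v^{(1)} = 1 + \mathfrak{p}_v$ is $\ell$-divisible, giving $U_v^{(1)} = (U_v^{(1)})^{\ell} \subseteq U_v^{\ell}$. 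For $v \notin T$, the component $U_v$ of $I_L^{\mathfrak{m}}$ is contained in the $v$-component of $\mathcal{N}$ by inspection.

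For the second inclusion, I use the hypothesis $L^* \cdot I_L^S = L^* \cdot I_L$. Any idele $a$ decomposes as $a = b c$ with $b \in L^*$ and $c \in I_L^S$, whence $a^\ell = b^\ell c^\ell$ with $b^\ell \in L^*$. Since $c_v \in U_v$ for $v \notin S$, one has $c_v^\ell \in U_v^\ell$ for $v \in T$ (recalling $T \cap S = \emptyset$) and $c_v^\ell \in U_v \subseteq U_v \cdot (L_v^*)^\ell$ for $v \notin S \cup T$; at $v \in S$ the factor $c_v^\ell \in (L_v^*)^\ell$ is absorbed by $U_v \cdot (L_v^*)^\ell$. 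Hence $c^\ell \in \mathcal{N}$. The reverse inclusion $\mathcal{N} \subseteq I_L^{\mathfrak{m}} \cdot (I_L)^\ell$ follows componentwise: at $v \in T$ one has $U_v^\ell \subseteq (I_L)^\ell$; at $v \in S$ one has $U_v \cdot (L_v^*)^\ell \subseteq I_L^{\mathfrak{m}} \cdot (I_L)^\ell$ since $v \notin T$; at $v \notin S \cup T$ one has $U_v = I_L^{\mathfrak{m}}\!\mid_v$.

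Putting these together, $\overline{\mathcal{N}} = C_L^{\mathfrak{m}} \cdot C_L^\ell$, so $C_L/\overline{\mathcal{N}}$ is a finite Abelian group annihilated by $\ell$, i.e.\ a finite-dimensional $\mathbb{F}_\ell$-vector space, and reciprocity identifies this with $\Gal(\widetilde L/L)$. Any homomorphism from $\Gal(L^{\mathfrak{m}}/L)$ into the $\ell$-torsion module $E[\ell]$ factors through the maximal quotient of exponent $\ell$, namely $\Gal(\widetilde L/L)$, yielding the stated $\Hom$ identification. Finally, the sets $T$ and $S$ are $\Gal(L/K)$-stable (the former because it is defined by places of $K$, the latter by hypothesis), and the operations $U_v \mapsto U_{\sigma v}$ and $(L_v^*)^\ell \mapsto (L_{\sigma v}^*)^\ell$ are respected by the Galois action on $I_L$, so $\mathcal{N}$ is $\Gal(L/K)$-stable; hence so is $L^* \cdot \mathcal{N}$, and the corresponding extension $\widetilde L/K$ is Galois. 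The most delicate step is the first inclusion at the ramified places, where the explicit higher-unit inclusion from \cite[p. 186]{lang1986} is essential; everything else is bookkeeping within the idele class group.
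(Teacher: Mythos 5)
Your proof is correct and takes essentially the same route as the paper: both arguments run through global class field theory and rest on the same componentwise facts, namely $(I_L^S)^{\ell} \subseteq \mathcal{N}$ (giving that $C_L/\overline{\mathcal{N}}$ has exponent $\ell$) together with the inclusions $U_v^{(1+e_v\ell/(\ell-1))} \subseteq U_v^{\ell}$ at $v \mid \ell$ and $U_v^{(1)} \subseteq U_v^{\ell}$ at the remaining places of $T$, so that $\overline{\mathcal{N}}$ is identified with $C_L^{\mathfrak{m}} \cdot C_L^{\ell}$. If anything, you make explicit the containment $I_L^{\mathfrak{m}} \subseteq \mathcal{N}$ (needed so that $L_{\overline{\mathcal{N}}} \subseteq L^{\mathfrak{m}}$), which the paper uses only implicitly when concluding $L_{\overline{\mathcal{N}}} \subseteq \widetilde L$.
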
 

\begin{proof}
Let $\widetilde L/L$ be the maximal sub-extension of 
$L^{\mathfrak{m}}/L$ with Galois group annihilated by $\ell$. Then, one has:
\begin{equation}
\Hom(\Gal(\widetilde L/L),E[\ell]) = \Hom(\Gal(L^{\mathfrak{m}}/L),E[\ell]),
\end{equation}
and $\Gal(\widetilde L/L)$ is a direct product of cyclic groups of order $\ell$. 
We show that $\overline{\mathcal{N}}$ is its class group; 
{\em i.e.}, the class field $L_{\overline{\mathcal{N}}}$ is equal to $\widetilde L$.

First, we observe that $C_L/\overline{\mathcal{N}}\approx (L^* \cdot I_L )/(L^* \cdot \mathcal{N})$ is annihilated by $\ell$, 
since $L^* \cdot I_L = L^* \cdot I_L^S$ and $\left ( I_L^S \right)^{\ell} \subseteq \mathcal{N}$. 
Thus, $L_{\overline{\mathcal{N}}} \subseteq \widetilde L$.

Conversely, the Galois group $\Gal(\widetilde L/L)$ is the maximal quotient group of 
$\Gal(L^{\mathfrak{m}}/L) \approx C_L^{\mathfrak{m}} \approx (L^* \cdot I_L^S)/(L^* \cdot I_L^{\mathfrak{m}})$ that is annihilated by $\ell$. 
But $\prod_{v\in T} U_v^{\ell} \times \prod_{v \in S} \left ( L_v^*\right)^{\ell} \subset \left (I_L^S \right)^{\ell}$, 
and $\prod_{v \in S} U_v \times \prod_{v \not \in S \cup T} U_v \subset I_L^{\mathfrak{m}}$. 
Since both $\left (I_L^S \right)^{\ell}$ and $I_L^{\mathfrak{m}}$ map to $0$ under the natural projection 
$I_L \rightarrow C_L^{\mathfrak{m}}/\left ( C_L^{\mathfrak{m}} \right)^{\ell}$, 
$\overline{\mathcal{N}}$ is contained in the class group of $\widetilde L/L$, 
which means that $\widetilde L \subseteq L_{\overline{\mathcal{N}}}$.

Since $S$ is taken closed under Galois action of $\Gal(L/K)$, the same holds true for the class group $\overline{\mathcal{N}}$. It follows that
$L_{\overline{\mathcal{N}}}$ is closed under any element of $\mathcal{G}=\Gal(\overline{K}/K)$. 
\end{proof}


Having assumed that $\ell>2$, the group $U_v \cdot (L_v^*)^\ell$ is actually equal to $U_v=L_v^*=\mathbb{C}^*$, for any 
$v\in S_\infty$.  

Combining Propositions \ref{thm:Proposition7} and \ref{thm:Proposition8},  
and Lemma \ref{thm:Lemma7}, we have reached the following result.


\begin{prop} 
\label{thm:Proposition9}
Let $\ell>3$ be a prime number. 
Let $\overline{\mathcal{N}}$ be the class group defined in Lemma \ref{thm:Lemma7}.
Let $\widetilde L = L_{\overline{\mathcal{N}}}$ be the corresponding class field. Set $\widetilde H=\Gal(\widetilde L/L)$. 
Then, one has:
\begin{eqnarray}
S^{[\ell]}(E/L) = 
Inf^{\mathcal{H}}_{\widetilde H} \Ker \Bigl \{ \Hom(\widetilde H,E[\ell]) \rightarrow 
\oplus_{w} \Homol^{1}(\widetilde H_w,E(\widetilde L_w)) \Bigr \},
\end{eqnarray}
where $w$ covers all places of $\widetilde L$. 
\end{prop}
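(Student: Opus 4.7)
The plan is to prove the two inclusions between $S^{[\ell]}(E/L)$ and the inflation of the local‑kernel subgroup of $\Hom(\widetilde H, E[\ell])$, by combining the local analysis of Propositions \ref{thm:Proposition7} and \ref{thm:Proposition8} with global class field theory and the description of $\widetilde L$ given in Lemma \ref{thm:Lemma7}.

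For the containment $S^{[\ell]}(E/L) \subseteq Inf^{\mathcal{H}}_{\widetilde H}(\ldots)$, I take $\Psi \in S^{[\ell]}(E/L)$, set $L^\prime = \overline{\mathbb{Q}}^{\Ker \Psi}$ and $N = \Gal(L^\prime/L)$, and use the induced injection $N \hookrightarrow E[\ell]$ to conclude that $N$ is Abelian and annihilated by $\ell$. The local inflation‑restriction diagram preceding (\ref{eq:eq166final8}) converts the splitting condition (\ref{eq:eq164final8}) at each place $v$ of $L$ into condition (\ref{eq:eq166final8}) for the corresponding $\Psi^\prime: N \rightarrow E[\ell]$. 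Proposition \ref{thm:Proposition8}, applied in each of the four cases A, B, C, D that exhaust the non‑Archimedean places of $L$, then implies that the local conductor $\mathfrak{f}_v$ of $L^\prime_{v^\prime}/L_v$ divides the corresponding local factor of the cycle $\mathfrak{m}$ defined in (\ref{eq:eq172final8}); the (complex) Archimedean places of $L$ contribute nothing since $\ell$ is odd. Global class field theory \cite[Chapter IV, \S 7]{neukirch1986} then places $L^\prime$ inside $L^{\mathfrak{m}}$, and since $\Gal(L^\prime/L)$ has exponent dividing $\ell$, the characterization of $\widetilde L$ in Lemma \ref{thm:Lemma7} forces $L^\prime \subseteq \widetilde L$. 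Consequently $\Psi = Inf^{\mathcal{H}}_{\widetilde H}(\widetilde \Psi)$ for a unique $\widetilde \Psi \in \Hom(\widetilde H, E[\ell])$, and a final local diagram chase shows that $Res^{\widetilde H}_{\widetilde H_w}(\widetilde \Psi)$ vanishes in $\Homol^1(\widetilde H_w, E(\widetilde L_w))$ for every place $w$ of $\widetilde L$.

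For the reverse containment, given $\widetilde \Psi$ whose restriction at each place $w$ of $\widetilde L$ vanishes in $\Homol^1(\widetilde H_w, E(\widetilde L_w))$, I set $\Psi = Inf^{\mathcal{H}}_{\widetilde H}(\widetilde \Psi)$ and use, at each place $v$ of $L$ with $w \mid v$, the commutative square
\begin{equation*}
\begin{CD}
\Homol^1(\widetilde H, E[\ell]) @>{Res^{\widetilde H}_{\widetilde H_w}}>> \Homol^1(\widetilde H_w, E(\widetilde L_w))\\
@VV{Inf^{\mathcal{H}}_{\widetilde H}}V @VV{Inf^{\mathcal{H}_v}_{\widetilde H_w}}V \\
\Homol^1(\mathcal{H}, E[\ell]) @>{Res^{\mathcal{H}}_{\mathcal{H}_v}}>> \Homol^1(\mathcal{H}_v, E(\overline{\mathbb{Q}}_p))
\end{CD}
\end{equation*}
together with the injectivity of the right‑hand inflation (a consequence of Lemma \ref{thm:Lemma6}) to conclude that $Res^{\mathcal{H}}_{\mathcal{H}_v}(\Psi) = 0$ at every $v$, thereby placing $\Psi$ in $S^{[\ell]}(E/L)$.

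The main obstacle lies in the first containment: one must verify that the conductor bounds furnished by Proposition \ref{thm:Proposition8} — in particular the delicate exponent $1 + e_v\ell/(\ell-1)$ of case A, together with the trivial conductor of cases B and C and the tame bound of case D — match exactly the local exponents entering the cycle $\mathfrak{m}$ of (\ref{eq:eq172final8}), so that the global inclusion $L^\prime \subseteq L^{\mathfrak{m}}$ drops out cleanly from the local‑to‑global principle of class field theory. Once this matching is established, the descent from $L^{\mathfrak{m}}$ to $\widetilde L$ is immediate from the characterization in Lemma \ref{thm:Lemma7}, and the local compatibility at each place is routine inflation‑restriction bookkeeping.
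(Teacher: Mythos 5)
Your proposal is correct and follows essentially the same route as the paper, which proves Proposition \ref{thm:Proposition9} precisely by combining Propositions \ref{thm:Proposition7} and \ref{thm:Proposition8} with Lemma \ref{thm:Lemma7}; your write-up simply makes explicit the conductor-divides-$\mathfrak{m}$ argument, the resulting inclusion $L^\prime \subseteq L^{\mathfrak{m}} $ and hence $L^\prime \subseteq \widetilde L$, and the inflation--restriction bookkeeping at each place. (Minor remark: the injectivity of the local inflation map is what you need in the forward containment, not the reverse one, where inflation of the zero class being zero suffices.)
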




\subsection{Returning to the group $S^{[\ell]}(E/K)$}
\label{subsection:Returning}

In this section, $E$ is an elliptic curve over an algebraic number field $K$.

Given a prime number $\ell>3$, we set $L=K(E[\ell])$.
Let $\overline{\mathcal{N}}$ be the class group defined in Lemma \ref{thm:Lemma7}, and 
let $\widetilde L = L_{\overline{\mathcal{N}}}$ be the corresponding class field. 
We set $\widetilde G=\Gal(\widetilde L/K)$ and $\widetilde H=\Gal(\widetilde L/L)$. 

Combining Propositions \ref{thm:Proposition6} and \ref{thm:Proposition9}, we have obtained the following result.


\begin{cor} 
\label{thm:Corollary9}
Let $E$ be an elliptic curve over $K$. 
Let $\ell>3$ be a prime number. 
Then, one has:
\begin{eqnarray}
S^{[\ell]}(E/K) = 
Inf^{\mathcal{G}}_{\widetilde G} \Ker \Bigl \{ \Homol^1(\widetilde G,E[\ell]) \rightarrow 
\oplus_{w} \Homol^{1}(\widetilde G_{w},E(\widetilde L_w)) \Bigr \},
\end{eqnarray}
where $w$ covers the places of $\widetilde L$ and $\widetilde G_{w}$ is the decomposition group of $w$ in $\widetilde L/K$. 
\end{cor}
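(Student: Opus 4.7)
The strategy is to chain Propositions \ref{thm:Proposition6} and \ref{thm:Proposition9}: the first restricts any $\Psi \in S^{[\ell]}(E/K)$ from $\mathcal{G}$ to $\mathcal{H} = \Gal(\overline K/L)$ landing in $S^{[\ell]}(E/L)^{\mathcal{G}/\mathcal{H}}$, and the second then restricts further to $\widetilde{\mathcal{H}} := \Gal(\overline K/\widetilde L)$, where the class dies. Using that $\widetilde L/K$ is a Galois extension (Lemma \ref{thm:Lemma7}), so that $\widetilde{\mathcal{H}}$ is a closed normal subgroup of $\mathcal{G}$ with quotient $\widetilde G$, the inflation-restriction sequence of Lemma \ref{thm:Lemma6} will realize $\Psi$ as the inflation of a unique class $\Psi_{\widetilde G} \in \Homol^1(\widetilde G, E[\ell])$. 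Note that $E[\ell]^{\widetilde{\mathcal{H}}} = E[\ell]$ since $L \subseteq \widetilde L$, so the fixed-point hypothesis in Lemma \ref{thm:Lemma6} imposes nothing.

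More precisely, the first step takes $\Psi \in S^{[\ell]}(E/K)$ and, by Proposition \ref{thm:Proposition6}, concludes that $Res^{\mathcal{G}}_{\mathcal{H}}(\Psi)$ belongs to $S^{[\ell]}(E/L)^{\mathcal{G}/\mathcal{H}}$. Proposition \ref{thm:Proposition9} then gives $Res^{\mathcal{H}}_{\widetilde{\mathcal{H}}}\bigl(Res^{\mathcal{G}}_{\mathcal{H}}(\Psi)\bigr) = 0$, and by transitivity of the restriction maps $Res^{\mathcal{G}}_{\widetilde{\mathcal{H}}}(\Psi) = 0$. The inflation-restriction sequence exhibits $\Psi$ as $Inf^{\mathcal{G}}_{\widetilde G}(\Psi_{\widetilde G})$ for a unique $\Psi_{\widetilde G} \in \Homol^1(\widetilde G, E[\ell])$.

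Matching the local conditions is the second step. For a place $v_0$ of $K$ and a place $w$ of $\widetilde L$ lying above $v_0$, a choice of embedding $\overline K \hookrightarrow \overline K_{v_0}$ compatible with $w$ identifies $\widetilde G_w$ with the image of $\mathcal{G}_{v_0}$ in $\widetilde G$, with kernel $\widetilde{\mathcal{H}}_w$. The local inflation-restriction sequence then gives an injection $Inf : \Homol^1(\widetilde G_w, E(\widetilde L_w)) \hookrightarrow \Homol^1(\mathcal{G}_{v_0}, E(\overline K_{v_0}))$, and by naturality of restriction maps, this injection sends $Res^{\widetilde G}_{\widetilde G_w}(\Psi_{\widetilde G})$ to $Res^{\mathcal{G}}_{\mathcal{G}_{v_0}}(\Psi)$. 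Consequently, one local class vanishes if and only if the other does, matching up the two Selmer-type kernels. The reverse inclusion is obtained by reading the same diagram backwards: any class in $\Homol^1(\widetilde G, E[\ell])$ whose local restrictions vanish in $\Homol^1(\widetilde G_w, E(\widetilde L_w))$ at every $w$ inflates to a class in $\Homol^1(\mathcal{G}, E[\ell])$ whose local restrictions vanish in $\Homol^1(\mathcal{G}_{v_0}, E(\overline K_{v_0}))$ at every $v_0$, hence lies in $S^{[\ell]}(E/K)$.

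The only technical point is the compatibility between the surjection $\mathcal{G}_{v_0} \twoheadrightarrow \widetilde G_w$ and the inflation-restriction machinery, but this is standard. I do not anticipate any substantive obstacle beyond diagram chasing, since Propositions \ref{thm:Proposition6} and \ref{thm:Proposition9} together with the Galois property of $\widetilde L/K$ supplied by Lemma \ref{thm:Lemma7} already carry all of the content of the statement.
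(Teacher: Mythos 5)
Your proposal is correct and takes essentially the same route as the paper's proof: combine Propositions \ref{thm:Proposition6} and \ref{thm:Proposition9} with the Galois property of $\widetilde L/K$ from Lemma \ref{thm:Lemma7} to conclude that any Selmer class is inflated from $\widetilde G$, then transfer the local conditions using the injectivity of the local inflation maps into $\Homol^1(\mathcal{G}_{v_0},E(\overline{K}_{v_0}))$. The only cosmetic difference is that the paper checks the factorization through $\widetilde G$ by an explicit cocycle computation, whereas you invoke the inflation-restriction sequence of Lemma \ref{thm:Lemma6} after noting that the restriction to $\Gal(\overline{K}/\widetilde L)$ vanishes; this is not a substantive divergence.
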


\begin{proof} Let $w\mid v \mid v_0 \mid p$ be places of $\widetilde L$, $L$, $K$, and $\mathbb{Q}_p$, respectively. 

We show the inclusion:
\begin{equation}
S^{[\ell]}(E/K) \subseteq 
Inf^{\mathcal{G}}_{\widetilde G} \Ker \Bigl \{ \Homol^1(\widetilde G,E[\ell]) \rightarrow 
\oplus_{w} \Homol^{1}(\widetilde G_{w},E(\widetilde L_w)) \Bigr \}.
\end{equation} 
Let $f\in S^{[\ell]}(E/K)$. Then, 
$Res^{\mathcal{G}}_{\mathcal{H}}(f)=Inf^{\mathcal{H}}_{\widetilde H}(g)$, for some 
group homomorphism $g \in \Hom(\widetilde H,E[\ell])$ satisfying the property of Proposition \ref{thm:Proposition9}. 
Then, for $\sigma_1 \in \mathcal{G}$ and $\sigma_2 \in \Gal(\overline{\mathbb{Q}}/\widetilde L)$, one computes: 
\begin{equation}
f(\sigma_1 \sigma_2)=f(\sigma_1)+\,^{\sigma_1} f(\sigma_2)=f(\sigma_1)+\,^{\sigma_1} g(\sigma_2)
=f(\sigma_1).
\end{equation} 
Thus, $f = Inf^{\mathcal{G}}_{\widetilde G}(\widetilde f)$, for some 
$\widetilde f \in \Homol^1(\widetilde G,E[\ell])$, and satisfies the stated property, 
as follows from the following diagram with exact bottom row:
\begin{equation}
\begin{CD}
@.  \Homol^1(\widetilde G,E[\ell]) @>{Inf^{\mathcal{G}}_{\widetilde G}}>> \Homol^1(\mathcal{G},E[\ell])\\
@. @VV{Res^{\widetilde G}_{\widetilde G_{w}}}V  @VV{Res^{\mathcal{G}}_{\mathcal{G}_{v_0}}}V \\
0 @>>> \Homol^1(\widetilde G_{w},E(\widetilde L_w))[\ell] @>{Inf^{\mathcal{G}_{v_0}}_{\widetilde G_{w}}}>> 
\Homol^1(\mathcal{G}_{v_0},E(\overline{\mathbb{Q}}_p))[\ell],
\end{CD}
\end{equation}
where $\mathcal{G}_{v_0}$ denotes the absolute Galois group of $K_{v_0}$.

The other inclusion is clear.
\end{proof}



\subsection{The group $S^{[\ell]}(E/\mathbb{Q})$}
\label{subsection:caseQ}   

We now specialize to the case where $E$ is an elliptic curve over $\mathbb{Q}$, of Weierstrass equation of the form
$y^2=x^3+Ax+B$, where $A,B\in \mathbb{Z}$. Let $\Delta^\prime:=4A^3+27B^2$. 

We consider a prime number $\ell>3$, and set $L=\mathbb{Q}(E[\ell])$. 
We let $\widetilde L$ denote the class field corresponding to the subgroup $\overline{\mathcal{N}}$ of $C_{L}$ as in Lemma \ref{thm:Lemma7}, when taking the base field $K=\mathbb{Q}$. We set $\widetilde H=\Gal(\widetilde L/L)$.     

Next, we consider the field $K^\prime$ defined in (\ref{eq:eq149final8}), and we denote:
\begin{equation}
\label{eq:eq183final8}
L^\prime: = L K^\prime.
\end{equation}
In addition to the field $\widetilde L$, we also consider $\widetilde L^\prime$ the class field corresponding to the subgroup $\overline{\mathcal{N}}$ of $C_{L^\prime}$ as in Lemma \ref{thm:Lemma7}, when taking the base field $K=K^\prime$. 

The motivation for Theorem \ref{thm:Theorem6} and Proposition \ref{thm:Proposition3} was to reach the following 
result, which is useful for passing from $p$-adic rational points to algebraic ones.


\begin{prop}
\label{thm:Proposition10}
Let $E$ be an elliptic curve over $\mathbb{Q}$ without CM, 
with Weierstrass equation of the form $y^2=x^3+Ax+B$, where $A,B\in \mathbb{Z}$. 
Set $\Delta^\prime=4A^3+27B^2$.

Let $\ell > 3$ be a prime number such that $\rho_\ell(\mathcal{G}) = {\bf GL}_2(\mathbb{Z}_\ell)$. 
Let $p$ be any prime number such that $p\nmid \Delta^\prime \ell (\ell-1) (\ell+1)$.  
Let $\xi$ be a fixed embedding of $\overline{\mathbb{Q}}$ into $\overline{\mathbb{Q}}_p$, inducing an embedding of Galois groups 
$\mathcal{G}_{p}=\Gal(\overline{\mathbb{Q}}_p/\mathbb{Q}_p) \hookrightarrow \mathcal{G}=\Gal(\overline{\mathbb{Q}}/\mathbb{Q})$. 

Then, one can write any point $P_0 \in E(\mathbb{Q}_{p})$ in the form:
\begin{equation}
P_0 = \xi(P) + [\ell] Q^\prime,
\end{equation}
for some point $P\in E(K^\prime)$, where $K^\prime$ is the field defined in (\ref{eq:eq149final8}), 
and $Q^\prime\in E(\mathbb{Q}_{p})$. 

More precisely, let $\mathfrak{p}$ be the prime ideal of $K^\prime$ lying above $p$ such that $\xi$ induces 
$K^\prime_{\mathfrak{p}} \hookrightarrow \overline{\mathbb{Q}}_p$. Let $F$ be the subfield of $K^\prime$ 
fixed by the decomposition group $\Gal(K^\prime/\mathbb{Q})_{\mathfrak{p}}$. Then, in fact, one may take $P\in E(F)$.  
\end{prop}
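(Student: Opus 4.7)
The plan is to descend from $E(\mathbb{Q}_p)$ to the reduced curve $\widetilde E(\mathbb{F}_p)$, isolate the $\ell$-primary obstruction, lift it algebraically via Theorem~\ref{thm:Theorem6}, and then descend from $E(K')$ to $E(F)$ by a Galois-trace averaging. First, since $p\nmid \Delta'$ and hence $p\nmid \Delta=-16\Delta'$, the reduction map $E(\mathbb{Q}_p)\twoheadrightarrow \widetilde E(\mathbb{F}_p)$ is surjective with kernel $E_1(\mathbb{Q}_p)\approx F_{v_0}(p\mathbb{Z}_p)$, a pro-$p$ group on which $[\ell]$ acts bijectively because $p\neq \ell$. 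The snake lemma then gives an isomorphism
\begin{equation*}
E(\mathbb{Q}_p)/[\ell]E(\mathbb{Q}_p)\xrightarrow{\approx}\widetilde E(\mathbb{F}_p)/[\ell]\widetilde E(\mathbb{F}_p),
\end{equation*}
so the task reduces to producing $P\in E(F)$ whose reduction lies in the same class as the reduction $\widetilde P_0$ of $P_0$ modulo $[\ell]\widetilde E(\mathbb{F}_p)$.

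Next, I would split $\widetilde E(\mathbb{F}_p)$ into its $\ell$-primary subgroup $T$ and its $\ell$-coprime subgroup $U$, and write $\widetilde P_0=\widetilde P+\widetilde R$ with $\widetilde P\in T$ and $\widetilde R\in U$. Since $[\ell]$ is invertible on $U$, we already have $\widetilde R\in [\ell]\widetilde E(\mathbb{F}_p)$, so only $\widetilde P$ must be lifted. If $\widetilde P=O$ take $P=O$; otherwise $\widetilde P$ has order $\ell^n$ for some $n\geq 1$, and Theorem~\ref{thm:Theorem6} produces an algebraic lift $\widehat P\in E(K')$ with $\xi(\widehat P)$ reducing to $\widetilde P$.

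The main obstacle is that this $\widehat P$ generally lies only in $E(K')$, not in $E(F)$, so $\xi(\widehat P)$ need not be in $E(\mathbb{Q}_p)$. To overcome this I would use Proposition~\ref{thm:Proposition3}, which gives $[K':\mathbb{Q}]=2^{s}3^{t}$: since $\ell>3$, the decomposition group $D=\Gal(K'/\mathbb{Q})_{\mathfrak{p}}=\Gal(K'/F)$ has order coprime with $\ell$. Choosing $m\in\mathbb{Z}$ with $m|D|\equiv 1 \pmod{\ell}$, I would set
\begin{equation*}
P:=[m]\sum_{\sigma\in D}\sigma(\widehat P)\in E(F).
\end{equation*}
Since $\mathfrak{p}\cap F$ has trivial decomposition in $F/\mathbb{Q}$, $F$ embeds into $\mathbb{Q}_p$ under $\xi$, hence $\xi(P)\in E(\mathbb{Q}_p)$. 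For each $\sigma\in D$, $\sigma^{\mathrm{loc}}(\xi(\widehat P))$ reduces to $\widetilde P$ (the residue action of $\sigma^{\mathrm{loc}}$ fixes the $\mathbb{F}_p$-rational point $\widetilde P$), so $\xi(P)$ reduces to $[m|D|]\widetilde P\equiv \widetilde P \pmod{\ell\widetilde E(\mathbb{F}_p)}$.

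Combining these facts, $P_0-\xi(P)$ reduces to $\widetilde P_0-[m|D|]\widetilde P\in [\ell]\widetilde E(\mathbb{F}_p)$, so the first-step isomorphism forces $P_0-\xi(P)\in [\ell]E(\mathbb{Q}_p)$. Writing $P_0-\xi(P)=[\ell]Q'$ with $Q'\in E(\mathbb{Q}_p)$ yields the desired decomposition $P_0=\xi(P)+[\ell]Q'$ with $P\in E(F)$, which simultaneously establishes the main claim (with $P\in E(K')$) and the refined ``more precisely'' statement.
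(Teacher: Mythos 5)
Your proposal is correct and follows essentially the same route as the paper's proof: lift the $\ell$-part of the reduction via Theorem~\ref{thm:Theorem6}, descend from $E(K^\prime)$ to $E(F)$ by summing over the decomposition group (possible since $[K^\prime:\mathbb{Q}]=2^s3^t$ is prime to $\ell$ by Proposition~\ref{thm:Proposition3}, and $F_{\mathfrak{p}^\prime}=\mathbb{Q}_p$), and absorb the discrepancy using the unique $\ell$-divisibility of $E_1(\mathbb{Q}_p)$. The only difference is bookkeeping: you work modulo $[\ell]$ through the isomorphism $E(\mathbb{Q}_p)/[\ell]E(\mathbb{Q}_p)\approx \widetilde E(\mathbb{F}_p)/[\ell]\widetilde E(\mathbb{F}_p)$ and a factor $m$ with $m\vert D\vert \equiv 1 \pmod{\ell}$, where the paper instead multiplies by the prime-to-$\ell$ order, traces the local point, and finishes with a B\'ezout identity.
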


\begin{proof}  
Since $\ord_p(\Delta)=0$, the reduced curve $\widetilde E$ of $E$ over the residue field $\mathbb{F}_{p}$ is non-singular.  Let $\# \widetilde E(\mathbb{F}_{p}) = \ell^n m$, where $n\geq 0$ and $(m,\ell)=1$. 

Let $P_0\in E(\mathbb{Q}_{p})$. Then, $[m]P_0$ projects to a point $\widetilde P$ of $\widetilde E[\ell^n]$. 
We first consider the non-trivial case where $n>0$. 
From Theorem \ref{thm:Theorem6}, 
there is a lifting $P^\prime$ of $\widetilde P$ with affine coordinates in 
the field $K^\prime$ that projects to $\widetilde P$. 
If $n=0$, then one may take $P^\prime=O$, so that $P^\prime\in E(\mathbb{Q})\subset K^\prime$ and $P^\prime$ 
is trivially a lifting of $[m]P_0 = O$. 

Then, $[m]P_0 - \xi(P^\prime)\in E_1( K^\prime_{\mathfrak{p}})$, where $\mathfrak{p}$ is the maximal ideal of $K^\prime$ 
lying above $p$ that is compatible with the embedding $\xi$. 
Now, one has a commutative diagram:
\begin{equation}
\begin{CD}
E(K^\prime) @>{\xi}>> E(K^\prime_{\mathfrak{p}})\\
@VV{\tr_{K^\prime/F}}V  @VV{\tr_{K^\prime_{\mathfrak{p}}/\mathbb{Q}_{p}}}V\\
E(F) @>{\xi}>> E(\mathbb{Q}_{p}),\\
\end{CD}
\end{equation}
where $F$ is the fixed field of $K^\prime$ by the decomposition group $\Gal(K^\prime/\mathbb{Q})_{\mathfrak{p}}$.
Here, we have used the fact that $F_{\mathfrak{p}^\prime}=\mathbb{Q}_p$, where $\mathfrak{p}^\prime$ is the prime ideal of $F$ lying below
$\mathfrak{p}$, as well as \cite[Exerc. 1.12 b), p.16]{silverman2009}. 
This yields $[m]\tr_{K^\prime_{\mathfrak{p}}/\mathbb{Q}_{p}}(P_0) - \xi(\tr_{K^\prime/F}(P^\prime))\in E_1(\mathbb{Q}_{p})$. 
We set $P'':=\tr_{K^\prime/F}(P^\prime) \in E(F)$. Moreover, one has $\tr_{K^\prime_{\mathfrak{p}}/\mathbb{Q}_{p}}(P_0)=[m^\prime]P_0$, where 
$m^\prime = \vert \Gal(K^\prime/\mathbb{Q})_{\mathfrak{p}} \vert$ is coprime with $\ell$, based on Proposition \ref{thm:Proposition3}. 
We set $m'':=m \cdot m^\prime$.

Now, $E_1(\mathbb{Q}_{p})\approx F_{p}(\mathcal{M}_{p})$, 
where $F_{p}$ is the formal group of $E$ over $\mathbb{Q}_{p}$. 
From \cite{kolyvagin1980}, $F_{p}$ is necessarily a formal $\mathbb{Z}_p$-module. Since $\ell\in \mathbb{Z}_p^{*}$ by assumption, 
it follows that 
$[m'']P_0 - \xi(P'') = [\ell] Q'' \in [\ell](E_1(\mathbb{Q}_{p}))$, for some point $Q'' \in E_1(\mathbb{Q}_{p})$.  
Writing $1=m'' a+\ell b$, with $a,b\in\mathbb{Z}$, one deduces that 
\begin{eqnarray}
P_0 &=& [a]([m'']P_0) + [\ell]([b]P_0) = [a](\xi(P'') + [\ell]Q'') + [\ell]([b]P_0)\nonumber\\ 
&=& \xi([a]P'') + [\ell] ([a]Q'' + [b]P_0),
\end{eqnarray}
where $P:=[a]P'' \in E(F)$ and $Q^\prime:=[a]Q'' + [b]P_0 \in E(\mathbb{Q}_{p})$. 
\end{proof} 


Proposition \ref{thm:Proposition10} allows proving the following result.

               
\begin{prop}
\label{thm:Proposition11}
Let $E$ be an elliptic curve over $\mathbb{Q}$ without CM, with Weierstrass equation 
$y^2 = x^3 + A x + B$, where $A,B\in \mathbb{Z}$. Set $\Delta^\prime=4A^3+27B^2$. 

Let $\ell>3$ be a prime number such that $\rho_\ell(\mathcal{G}) = {\bf GL}_2(\mathbb{Z}_\ell)$.  
Let $p$ be a prime number such that $p\nmid \Delta^\prime \ell(\ell-1)(\ell+1)$. 
Consider $\widetilde L$, $K^\prime$, and $\widetilde L^\prime$ as above.   
Let $w^\prime$ be a place of $\widetilde L^\prime$ that lies above $p$, 
and consider the place $w$ of $\widetilde L$ that lies below $w^\prime$. 
Assume that the decomposition group $\widetilde H^\prime_{w^\prime}$, 
where $\widetilde H^\prime=\Gal(\widetilde L^\prime/L^\prime)$,  
maps onto the decomposition group 
$\widetilde H_w$, where $\widetilde H=\Gal(\widetilde L/L)$, under the natural projection 
$Res_{\widetilde L}^{\widetilde L^\prime}: \widetilde H^\prime \rightarrow \widetilde H$ (defined by restriction of automorphisms to $\widetilde L$).

Consider $\widetilde f$ in $\Ker \Bigl \{ \Homol^1(\widetilde G,E[\ell]) \xrightarrow{Res^{\widetilde G}_{\widetilde G_w}} 
 \Homol^{1}(\widetilde G_w,E(\widetilde L_w)) \Bigr \}$. 

Then, there exists an element $\widetilde Q \in E(\overline{\mathbb{Q}})$ such that $[\ell]\widetilde Q \in E(K^\prime)$, 
and $\widetilde f(Res_{\widetilde L}^{\widetilde L^\prime}(\sigma)) = [\sigma-1] \widetilde Q$, 
for all $\sigma \in \widetilde H^\prime_{w^\prime}$. 
\end{prop}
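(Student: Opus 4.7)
The strategy is to transfer the local splitting of $\widetilde{f}$ at $w$ to a global $[\ell]$-preimage of a $K'$-rational point via Proposition \ref{thm:Proposition10}. The fact that $\widetilde{f}$ takes values in the finite group $E[\ell]$ is what forces a key piece of data into $E(\mathbb{Q}_p)$ and makes that Proposition applicable.

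First, I would unpack the vanishing hypothesis. Since $Res^{\widetilde{G}}_{\widetilde{G}_w}(\widetilde{f})$ lies in the kernel of the map $\Homol^{1}(\widetilde{G}_w,E[\ell]) \to \Homol^{1}(\widetilde{G}_w,E(\widetilde{L}_w))$, there exists $P_0 \in E(\widetilde{L}_w)$ with $\widetilde{f}(\tau)=(\tau-1)P_0$ for every $\tau \in \widetilde{G}_w$. Applying $[\ell]$ to this relation and using that $\widetilde{f}(\tau)\in E[\ell]$ gives $(\tau-1)([\ell]P_0)=[\ell]\widetilde{f}(\tau)=0$ for every such $\tau$, so $[\ell]P_0$ lies in $E(\widetilde{L}_w)^{\widetilde{G}_w}=E(\mathbb{Q}_p)$.

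Next, I would apply Proposition \ref{thm:Proposition10} to the point $[\ell]P_0 \in E(\mathbb{Q}_p)$; this is the substantive step. It produces $R \in E(K')$ (actually $R\in E(F)$) and $Q' \in E(\mathbb{Q}_p)$ with $[\ell]P_0 = \xi(R)+[\ell]Q'$, equivalently $\xi(R) = [\ell](P_0-Q')$. Since $\xi$ identifies the fiber $[\ell]^{-1}(R)\subset E(\overline{\mathbb{Q}})$ bijectively with $[\ell]^{-1}(\xi(R))\subset E(\overline{\mathbb{Q}}_p)$, there is a unique $\widetilde{Q} \in E(\overline{\mathbb{Q}})$ such that $\xi(\widetilde{Q})=P_0-Q'$, and for this choice one has $[\ell]\widetilde{Q}=R\in E(K')$ by construction.

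It remains to verify the cocycle identity. For $\tau \in \widetilde{G}_w$, the $\tau$-invariance of $Q'$ yields
\begin{equation*}
\widetilde{f}(\tau) = (\tau-1)P_0 = (\tau-1)\xi(\widetilde{Q}) + (\tau-1)Q' = (\tau-1)\xi(\widetilde{Q}).
\end{equation*}
Given $\sigma \in \widetilde{H}'_{w'}$, set $\tau := Res^{\widetilde{L}'}_{\widetilde{L}}(\sigma) \in \widetilde{H}_w \subseteq \widetilde{G}_w$. Because $\xi(\widetilde{Q}) \in E(\widetilde{L}_w)$ and the action of $\sigma$ on $\widetilde{L}_w \subseteq \widetilde{L}'_{w'}$ factors through $\tau$, we obtain $(\sigma-1)\xi(\widetilde{Q}) = (\tau-1)\xi(\widetilde{Q}) = \widetilde{f}(Res(\sigma))$. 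Since both sides lie in $E[\ell]$, on which $\xi$ acts as the canonical identification, this transports to $\widetilde{f}(Res(\sigma))=(\sigma-1)\widetilde{Q}$ in $E[\ell]\subseteq E(\overline{\mathbb{Q}})$. The real depth of the argument is concentrated in Proposition \ref{thm:Proposition10} (and through it in Theorem \ref{thm:Theorem6}), which is what controls the rationality field of $R$; everything else is bookkeeping across the tower $\mathbb{Q}_p \subseteq \widetilde{L}_w \subseteq \widetilde{L}'_{w'}$ together with the fact that $\widetilde{f}$ is valued in $E[\ell]$.
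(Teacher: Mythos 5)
Your proof is correct and follows essentially the same route as the paper's: extract $Q_0\in E(\widetilde L_w)$ from the local vanishing, observe that $[\ell]Q_0\in E(\mathbb{Q}_p)$ because $\widetilde f$ is $E[\ell]$-valued, apply Proposition \ref{thm:Proposition10}, and conclude using that $Q^\prime$ is fixed by the decomposition group. The only (cosmetic) difference is that you select $\widetilde Q$ as the unique $\ell$-division point of $R$ matching $P_0-Q^\prime$ under $\xi$, whereas the paper takes an arbitrary $Q$ with $[\ell]Q=P$ and corrects it by an $\ell$-torsion point $Q^{\prime\prime}$; these amount to the same choice.
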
  

\begin{proof} 
Firstly, consider a place $w^\prime$ of $\widetilde L^\prime$, as in the statement of the proposition. 
Let us fix an embedding 
$\xi:\overline{\mathbb{Q}} \hookrightarrow \overline{\mathbb{Q}}_p$ that yields the embedding 
$\bigl ( \widetilde L^\prime \bigr )_{w^\prime} \hookrightarrow \overline{\mathbb{Q}}_p$. 

Let $\widetilde f:\widetilde G \rightarrow E[\ell]$ be a $1$-cocycle such that 
$\widetilde f(\sigma)=[\sigma-1] Q_0$ for all $\sigma\in \widetilde G_w$, for some $Q_0\in E(\widetilde L_w)$. 
Then, one has $P_0:=[\ell]Q_0 \in E(\widetilde L_w^{\widetilde G_w})=E(\mathbb{Q}_{p})$. 
Now, Proposition  \ref{thm:Proposition10} applies to $P_0$, having assumed that 
$\ell > 3$, and $p\nmid \Delta^\prime \ell(\ell-1)(\ell+1)$.
Thus, one may write $P_0$ in the form $\xi(P) + [\ell]Q^\prime$, where $P \in E(K^\prime)$, and $Q^\prime \in E(\mathbb{Q}_{p})$, 
based on Proposition \ref{thm:Proposition10}.

Let $Q \in E(\overline{\mathbb{Q}})$ such that $[\ell]Q=P$.  
The extension $L^\prime(Q)/L^\prime$ is an Abelian extension with Galois group 
embedded into $E[\ell]$, since $E[\ell] \subset L$. Indeed, the function $\Gal(L^\prime(Q)/L^\prime) \rightarrow E[\ell]$ 
that maps $\sigma \in \Gal(L^\prime (Q)/L^\prime)$ to $[\sigma -1]Q$ 
is a group homomorphism (it is a ``Kummer character'' of the elliptic curve). 
Moreover, it is injective.
Furthermore, one computes in $E(\overline{\mathbb{Q}}_p)$:   
\begin{equation}
[\ell](Q_0-\xi(Q) - Q^\prime)= P_0 - \xi(P) - [\ell] Q^\prime = O.
\end{equation} 
Thus, one has $Q_0 = \xi(Q) + Q^\prime + \xi(Q'')$, with $Q'' \in E[\ell] \subset E(L)$. 
We set 
\begin{equation}
\widetilde Q = Q + Q''.
\end{equation} 

Let us define $\widetilde g(\sigma):=[\sigma-1]\widetilde Q$, for $\sigma\in \mathcal{G}$. 
Let $w$ be the place of $\widetilde L$ lying below $w^\prime$, and consider 
the case where $\widetilde H_{w}=Res_{\widetilde L}^{\widetilde L^\prime}(\widetilde H^\prime_{w^\prime})$, as in the statement of the proposition.
Let $\sigma \in \widetilde H^\prime_{w^\prime}$. 
One then computes: 
\begin{eqnarray}
\xi \left ( \widetilde f(Res_{\widetilde L}^{\widetilde L^\prime}(\sigma)) \right ) &=& [\sigma-1] Q_0  = [\sigma-1](Q_0 - Q^\prime)\nonumber\\
&=& [\sigma -1](\xi(Q + Q'')) = [\sigma-1]\xi(\widetilde Q) = \xi ( [\sigma-1]\widetilde Q )\nonumber\\
&=& \xi \left ( \widetilde g(\sigma) \right ),
\end{eqnarray}
since $Q^\prime \in E(\mathbb{Q}_{p})$, and the embedding $\xi$ was chosen to be compatible with localization of 
$\widetilde L^\prime$ at $w^\prime$. Thus, one obtains $\widetilde f(Res_{\widetilde L}^{\widetilde L^\prime}(\sigma)) = g(\sigma)$ for any $\sigma \in \widetilde H^\prime_{w^\prime}$.
\end{proof}


The hypotheses of Proposition \ref{thm:Proposition11} comprise a condition on the characteristic $p$ of the finite field intervening in the reduced curve, 
as well as condition $Res_{\widetilde L}^{\widetilde L^\prime}(\widetilde H_{w^\prime}^\prime)=\widetilde H_{w}$. 
Using Chebotarev's Density Theorem, one can show that these conditions can be met, in a form relevant to Proposition \ref{thm:Proposition12}. Namely, we have the following result.


\begin{lem}
\label{thm:Lemma8} 
Let $E$ be an elliptic curve over $\mathbb{Q}$, of Weierstrass equation $y^2=x^3+Ax+B$, where $A,B\in \mathbb{Z}$. 
Let $\ell \not = 2,3,5,7,13$ be a prime number such that $\ell\nmid \Delta^\prime:=4A^3+27B^2$.  
Set $L=\mathbb{Q}(E[\ell])$, and let $d_0$ be a prime factor of $(\ell-1)/\gcd(\ell-1,12)$. 

Let $K^\prime$ and $L^\prime$ be the fields 
defined in (\ref{eq:eq149final8}) and (\ref{eq:eq183final8}), respectively. 
Let $H_1$ be a subspace of $\widetilde H^\prime=\Gal(\widetilde L^\prime/L^\prime)$, of dimension $n_1$, that is a normal
subgroup of $\Gal(\widetilde L^\prime/\left ( L^\prime \right )^{\sigma^\prime_0})$, for some element $\sigma^\prime_0$ of 
order $d_0$ in $\Gal(L^\prime/K^\prime)$. 

Then, $H_1$ admits a decomposition $\oplus_{\nu=1}^{n_1} C_\nu$, where each $C_\nu$ is a cyclic group (of order $\ell$) that is closed under conjugation by elements of $\Gal(\widetilde L^\prime/\left ( L^\prime \right )^{\sigma^\prime_0})$. 
In particular, $\langle \sigma^\prime_0 \rangle$ acts on each group $C_\nu$ through some character $\chi_\nu$. 

Then, in the case where $\chi_\nu$ is the trivial character, the group $C_\nu$ is equal to 
the decomposition group $\widetilde H^\prime_{w^\prime}$ of some place $w^\prime$ of $\widetilde L^\prime$ ({\em depending on} $C_\nu$)
that lies above a prime $p$ satisfying the condition $p\nmid \Delta^\prime \ell (\ell-1)(\ell+1)$. 

Moreover, assume that $Res_{\widetilde L}^{\widetilde L^\prime}(C_\nu)\not = 0$, where $Res_{\widetilde L}^{\widetilde L^\prime}$ denotes the natural projection  
$\widetilde H^\prime \rightarrow \widetilde H$, with $\widetilde H=\Gal(\widetilde L/L)$. 
Then, one has $Res_{\widetilde L}^{\widetilde L^\prime}(C_\nu)=\widetilde H_w$, where $w$ is the place of $\widetilde L$ lying below $w^\prime$. 
\end{lem}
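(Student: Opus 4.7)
First I would decompose $H_1$. By Lemma \ref{thm:Lemma7}, $\widetilde H^\prime$ is elementary abelian of exponent $\ell$, so $H_1$ is an $\mathbb{F}_\ell$-vector space. Since $\widetilde H^\prime$ is abelian and $H_1$ is normal in $\Gal(\widetilde L^\prime/(L^\prime)^{\sigma^\prime_0})$, conjugation by any lift of $\sigma^\prime_0$ induces a well-defined $\mathbb{F}_\ell$-linear action of the cyclic group $\langle \sigma^\prime_0 \rangle$ of order $d_0$ on $H_1$ (independence of the lift follows from $\widetilde H^\prime$ being abelian). Since $d_0 \mid \ell - 1$, we have $\gcd(d_0,\ell)=1$ (so Maschke's theorem yields semisimplicity) and $\mathbb{F}_\ell$ contains a primitive $d_0$-th root of unity (so each irreducible summand is one-dimensional). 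This gives the eigenspace decomposition $H_1 = \oplus_{\nu=1}^{n_1} C_\nu$, with each $C_\nu$ cyclic of order $\ell$ on which $\sigma^\prime_0$ acts by a character $\chi_\nu: \langle \sigma^\prime_0 \rangle \to \mathbb{F}_\ell^*$.

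Next, to construct a place $w^\prime$ with $\widetilde H^\prime_{w^\prime} = C_\nu$, the main tool is Chebotarev's density theorem applied to $\widetilde L^\prime/\mathbb{Q}$. I first verify that this extension is Galois: this follows from the class-field construction of $\widetilde L^\prime$ by choosing the auxiliary set $S$ from Lemma \ref{thm:Lemma7} closed under the action of $\Gal(L^\prime/\mathbb{Q})$, which is feasible because the set $T$ is defined by conditions that are Galois-stable over $\mathbb{Q}$. Fix a generator $\tau_\nu$ of $C_\nu$; as an element of $\widetilde G := \Gal(\widetilde L^\prime/\mathbb{Q})$ it has order $\ell$. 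Chebotarev produces infinitely many primes $p$ unramified in $\widetilde L^\prime$ admitting a place $w^\prime$ above $p$ with Frobenius exactly $\tau_\nu$; after discarding the finitely many primes dividing $\Delta^\prime \ell (\ell-1)(\ell+1)$, infinitely many candidates remain. For any such choice, $\widetilde G_{w^\prime} = \langle \tau_\nu \rangle = C_\nu$ lies inside $\widetilde H^\prime$, so $p$ splits completely in $L^\prime$ and $\widetilde H^\prime_{w^\prime} = \widetilde G_{w^\prime} = C_\nu$.

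For the last identity, the standard compatibility of decomposition groups under Galois-equivariant quotients gives $Res_{\widetilde L}^{\widetilde L^\prime}(\widetilde H^\prime_{w^\prime}) = \widetilde H_w$ for $w = w^\prime|_{\widetilde L}$, which combined with the previous step yields $Res_{\widetilde L}^{\widetilde L^\prime}(C_\nu) = \widetilde H_w$. The hypothesis that this image is nonzero forces $Res_{\widetilde L}^{\widetilde L^\prime}$ to be injective on $C_\nu$ (as $|C_\nu|=\ell$ is prime), consistent with the stated conclusion.

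The principal obstacle is the Chebotarev step: one must confirm that $\widetilde L^\prime/\mathbb{Q}$ is Galois (revisiting the proof of Lemma \ref{thm:Lemma7} with base $\mathbb{Q}$ in mind) and then arrange for Frobenius at a suitable $w^\prime$ to coincide \emph{exactly} with a chosen generator of $C_\nu$ while avoiding the finitely many bad primes. The triviality hypothesis on $\chi_\nu$ is stronger than what this naive Chebotarev argument requires; I expect the intended proof uses it to choose a Frobenius of the form $\tau_\nu \cdot \widetilde{\sigma}^\prime_0$, with $\widetilde{\sigma}^\prime_0$ a lift of $\sigma^\prime_0$ commuting with $\tau_\nu$---a choice available precisely when $\chi_\nu$ is trivial---thereby ensuring additional compatibility with $\sigma^\prime_0$ presumably needed in the subsequent Proposition \ref{thm:Proposition11}.
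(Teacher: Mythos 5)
Your proposal is correct, and on the decomposition step it coincides with the paper (same Maschke/eigenvalue argument using $d_0\mid \ell-1$), but on the Chebotarev step it takes a genuinely different and simpler route. The paper lifts $\sigma^\prime_0$ to $\sigma_0''\in\Gal(\widetilde L^\prime/K^\prime)$, sets $\widetilde\sigma^\prime_0=(\sigma_0'')^{\ell}$ (an element of order $d_0$), uses the triviality of $\chi_\nu$ precisely to make a generator $h^\prime$ of $C_\nu$ commute with $\widetilde\sigma^\prime_0$, and applies Chebotarev to the class of $h^\prime\widetilde\sigma^\prime_0$; then $p$ has residue degree $d_0$ in $L^\prime/\mathbb{Q}$, the relative Frobenius is $(h^\prime\widetilde\sigma^\prime_0)^{d_0}=(h^\prime)^{d_0}$, which generates $C_\nu$, and a separate computation ($f_{L/\mathbb{Q}}=f_{L^\prime/\mathbb{Q}}$) yields $Res_{\widetilde L}^{\widetilde L^\prime}(C_\nu)=\widetilde H_w$. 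You instead apply Chebotarev directly to the class of a generator $\tau_\nu\in\widetilde H^\prime$, so that the full decomposition group of $w^\prime$ over $\mathbb{Q}$ is already $C_\nu$, $p$ splits completely in $L^\prime$ (and in $L$), and both conclusions follow at once; in particular your argument never uses the triviality of $\chi_\nu$, so you in effect prove a slightly stronger statement, and you correctly identify that the paper's twisted Frobenius buys only the extra (unused downstream) information that $\Frob_{L^\prime/\mathbb{Q}}$ at $p$ is $(\sigma^\prime_0)^{\ell}$ of order $d_0$ — Corollary \ref{thm:Corollary10} and Proposition \ref{thm:Proposition12} only need $\widetilde H^\prime_{w^\prime}=C_\nu$, $Res_{\widetilde L}^{\widetilde L^\prime}(C_\nu)=\widetilde H_w$, and $p\nmid \Delta^\prime\ell(\ell-1)(\ell+1)$. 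Two small points of care, both of which you essentially handle: the Galoisness of $\widetilde L^\prime/\mathbb{Q}$ does need the auxiliary set $S$ of Lemma \ref{thm:Lemma7} to be chosen stable under $\Gal(L^\prime/\mathbb{Q})$ (the paper's Lemma \ref{thm:Lemma7}, applied with base $K^\prime$, only gives Galoisness over $K^\prime$, and its own proof of Lemma \ref{thm:Lemma8} needs the same enlargement of $S$); and in your final step the blanket appeal to ``compatibility of decomposition groups'' should be phrased as: since $\Frob_{\widetilde L^\prime/\mathbb{Q}}(w^\prime)=\tau_\nu$ lies in $\widetilde H^\prime$, its restriction lies in $\widetilde H$, so $p$ splits completely in $L$ and $\widetilde H_w$ equals the full decomposition group of $w$ over $\mathbb{Q}$, which is $\langle Res_{\widetilde L}^{\widetilde L^\prime}(\tau_\nu)\rangle=Res_{\widetilde L}^{\widetilde L^\prime}(C_\nu)$ whenever the latter is non-zero — with that spelled out, your argument is complete.
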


\begin{proof} 
{\em Step 1.} We consider the fields $K^\prime$ and $L^\prime$ as in (\ref{eq:eq149final8}) and (\ref{eq:eq183final8}), respectively. 
We view the Galois group $\Gal(L^\prime/K^\prime)$ as the subgroup $\widetilde \rho_\ell(\Gal(L/(L\cap K^\prime)))$ of 
$\widetilde \rho_\ell(\Gal(L/\mathbb{Q})) < {\bf GL}_2(\mathbb{F}_\ell)$, under the Galois embedding $\widetilde \rho_\ell$. 

We denote $Res_{L^\prime}^{\widetilde L^\prime}$ the projection of Galois groups $\Gal(\widetilde L^\prime/K^\prime) \rightarrow \Gal(L^\prime/K^\prime)$.

{\em Step 2.}  
Let $\sigma_0^\prime$ be an element of $\Gal(L^\prime/K^\prime)$ of order $d_0$. 
Thus, one has: i) $\ord(\sigma_0^\prime)=d_0$ is coprime with $\ell$. 

Let then $\sigma_0''$ be any lifting of $\sigma_0^\prime$ in $\Gal(\widetilde L^\prime/K^\prime)$. 
Set $\widetilde \sigma_0^\prime:=(\sigma_0'')^{\ell}$. 
Then, $(\widetilde \sigma_0^\prime)^{d_0}=((\sigma_0'')^{d_0})^{\ell}=1$, since 
$(\sigma_0'')^{d_0} \in \widetilde H^\prime$, as $(\sigma_0^\prime)^{d_0}=1$, and $\widetilde H^\prime$ is a vector space over 
$\mathbb{F}_\ell$. 
Therefore, one has $d'':= \ord(\widetilde \sigma_0^\prime)\mid d_0$. 

Moreover, the projection $Res_{L^\prime}^{\widetilde L^\prime}(\widetilde \sigma_0^\prime)\in \Gal(L^\prime/K^\prime)$ is equal to $(\sigma_0^\prime)^{\ell}$. 
Thus, one has $(\sigma_0^\prime)^{d''\ell}=1$, which combined with 
$(\sigma_0^\prime)^{d_0}=1$, yields $d_0 = \ord(\sigma_0^\prime) \mid \gcd(d''\ell,d_0)=d''$ 
(since $d''\mid d_0$ is coprime with $\ell$). 
Thus, one has ii) $\ord(\widetilde \sigma_0^\prime)=d_0$.  

{\em Step 3.} Assume that $h^\prime$ is a non-trivial element of $\widetilde H^\prime$ that is in the 
centralizer of $\widetilde \sigma_0^\prime$. Consider the cyclic group $\langle h^\prime \widetilde \sigma_0^\prime \rangle$, 
where $\widetilde \sigma_0^\prime$ is as in step 2. Then, this cyclic group has order $d_0 \ell$,
since $d_0$ is coprime with $\ell$. 
It follows that iii) $(h^\prime \widetilde \sigma_0^\prime)^{d_0}=(h^\prime)^{d_0}$ is a generator of 
the cyclic group $\langle h^\prime \rangle$. 

{\em Step 4.} Assume now that $p$ is a prime that does not ramify in $\widetilde L^\prime$,
and that $w^\prime \mid p$ is a place of $\widetilde L^\prime$ such that 
$\Frob_{\widetilde L^\prime/\mathbb{Q}}(w^\prime)=h^\prime \widetilde \sigma_0^\prime$, with $h^\prime$ as in step 3, and 
$\widetilde \sigma_0^\prime$ as in step 2. 

Then, except for finitely many such primes, one may assume that $p\nmid \Delta^\prime \ell (\ell-1)(\ell+1)$. 

{\em Step 5.} Since the element $Res_{L^\prime}^{\widetilde L^\prime}(\widetilde \sigma_0^\prime)$ considered in step 2 has order $d_0$, it follows that 
 a prime $p$ as in step 4 would have residue degree $f_{L^\prime/\mathbb{Q}}$ equal to $d_0$. 
Then, this means that any place $v^\prime$ of $L^\prime$ lying above $p$ would have Frobenius element equal to 
$\Frob_{\widetilde L^\prime/L^\prime}(w^\prime)=\Frob_{\widetilde L^\prime/\mathbb{Q}}(w^\prime)^{f_{L^\prime/\mathbb{Q}}}
=(h^\prime)^{d_0}$, from property iii) of step 3. 
Therefore,  $\Frob_{\widetilde L^\prime/L^\prime}(w^\prime)$ generates the cyclic group $\langle h^\prime \rangle$.

Therefore, all desired properties for $p$ would be met, provided $\sigma_0^\prime$ satisfies condition i) -- step 2 --, and 
$h^\prime$ is in the centralizer of $\widetilde \sigma_0^\prime$  -- step 3.

{\em Step 6.} 
Now, the cyclic group $\langle \sigma^\prime_0 \rangle < \Gal(L^\prime/K^\prime)$ acts by Galois conjugation on the 
$\mathbb{F}_\ell$-vector space $H_1$, which yields a representation $\langle \sigma^\prime_0 \rangle \hookrightarrow \Aut( H_1 )$. 
Since $\# \langle \sigma^\prime_0 \rangle =d_0 \mid (\ell-1)$, the eigenvalues of this representation belong to $\mathbb{F}_\ell^*$. 
Therefore, one obtains a decomposition of representations of the finite group $\langle \sigma^\prime_0 \rangle$ over $\mathbb{F}_\ell$:
\begin{equation}
H_1 = \oplus_{\nu=1}^{n_1} C_\nu,
\end{equation}  
where $n_1:=\dim_{\mathbb{F}_\ell}( H_1 )$, and $C_\nu \approx \mathbb{F}_\ell(\chi_\nu)$, for some character 
$\chi_\nu$ of the group $\langle \sigma^\prime_0 \rangle$.

If ever $\chi_\nu$ is the trivial character, then the element $\sigma_0^\prime$ satisfies 
condition i), {\em and} $C_\nu$ is in the centralizer of $\widetilde \sigma_0^\prime$. 

An application of Chebotarev's Density Theorem \cite{chebotarev1926}
to $\widetilde L^\prime/\mathbb{Q}$ and the conjugacy class of $h^\prime \widetilde \sigma_0^\prime$,  
then yields $\Frob_{\widetilde L^\prime/\mathbb{Q}}(w^\prime_0) = \tau h^\prime \widetilde \sigma_0^\prime \tau^{-1}$,
for some place $w^\prime_0$ of $\widetilde L^\prime$ and element $\tau$ of the Galois group
$\Gal(\widetilde L^\prime/\mathbb{Q})$. 
But then, $\Frob_{\widetilde L^\prime/\mathbb{Q}}(\tau^{-1 }w^\prime_0)=h^\prime \widetilde \sigma_0^\prime$, 
so that one may take $w^\prime=\tau^{-1 }w^\prime_0$. Then, the prime $p$ lying below $w^\prime$
satisfies all the desired properties: 
$p$ is unramified in $\widetilde L^\prime$; 
$p\nmid \Delta^\prime \ell(\ell-1)(\ell+1)$;  
and $\Frob_{\widetilde L^\prime/L^\prime}(w^\prime)$ generates $C_\nu$. 

{\em Step 7.} One has 
$\Frob_{\widetilde L/\mathbb{Q}}(w) = Res^{\widetilde L^\prime}_{\widetilde L} (  \Frob_{\widetilde L^\prime/\mathbb{Q}}(w^\prime) )$, 
where $w$ is the place of $\widetilde L$ lying below $w^\prime$. Let $v$ be the place of $L$ lying below $w$, and 
$v»^\prime$ be the place of $L^\prime$ lying below $w^\prime$.
One computes:
\begin{eqnarray}
\Frob_{L/\mathbb{Q}}(v) &=& Res^{\widetilde L}_{L} (  \Frob_{\widetilde L/\mathbb{Q}}(w) )\nonumber\\
 &=& Res^{\widetilde L}_{L} \circ Res^{\widetilde L^\prime}_{\widetilde L} (  \Frob_{\widetilde L^\prime/\mathbb{Q}}(w^\prime) )\nonumber\\
&=& Res^{L^\prime}_{L} \circ Res^{\widetilde L^\prime}_{L^\prime} (  \Frob_{\widetilde L^\prime/\mathbb{Q}}(w^\prime) )\nonumber\\
&=& Res^{L^\prime}_{L} (  \Frob_{L^\prime/\mathbb{Q}}(v^\prime) ),
\end{eqnarray}
which shows that $f_{L/\mathbb{Q}}=f_{L^\prime/\mathbb{Q}}$, as $\sigma_0=\Frob_{L/\mathbb{Q}}(v)$ has same order as 
$\sigma_0^\prime=\Frob_{L^\prime/\mathbb{Q}}(v^\prime)$ under the isomorphism 
$Res^{L^\prime}_{L}: \Gal(L^\prime/K^\prime) \xrightarrow{\approx} \Gal(L/(L \cap K^\prime))$. 
One then obtains:
\begin{eqnarray}
\Frob_{\widetilde L/L}(w) &=& \Frob_{\widetilde L/\mathbb{Q}}(w)^{f_{L/\mathbb{Q}}}\nonumber\\
 &=& Res^{\widetilde L^\prime}_{\widetilde L} (  \Frob_{\widetilde L^\prime/\mathbb{Q}}(w^\prime) )^{f_{L/\mathbb{Q}}}\nonumber\\
&=& Res^{\widetilde L^\prime}_{\widetilde L} (  \Frob_{\widetilde L^\prime/\mathbb{Q}}(w^\prime)^{f_{L^\prime/\mathbb{Q}}} )\nonumber\\
&=& Res^{\widetilde L^\prime}_{\widetilde L} ( \Frob_{\widetilde L^\prime/L^\prime}(w^\prime) ),
\end{eqnarray}
since $f_{L/\mathbb{Q}}=f_{L^\prime/\mathbb{Q}}$. It then follows that $\Frob_{\widetilde L/L}(w)$ generates 
$Res_{\widetilde L}^{\widetilde L^\prime}(C_v)=\langle Res^{\widetilde L^\prime}_{\widetilde L} ( \Frob_{\widetilde L^\prime/L^\prime}(w^\prime) ) \rangle$. 
One then concludes that $Res_{\widetilde L}^{\widetilde L^\prime}(C_\nu)=\widetilde H_w$ since $p$ is unramified in $\widetilde L$,
and $\widetilde H$ is an Abelian group of exponent $\ell$, which implies that any of its non-trivial cyclic subgroups has order $\ell$.
\end{proof}


Combining Lemma \ref{thm:Lemma8} and Proposition \ref{thm:Proposition11}, we have reached the following result.


\begin{cor}
\label{thm:Corollary10} 
Let $E$ be an elliptic curve over $\mathbb{Q}$, of Weierstrass equation $y^2=x^3+Ax+B$ without CM, where $A,B\in \mathbb{Z}$.    
Let $\ell \not = 2,3,5,7,13$ be a prime number such that i) $\rho_\ell(\mathcal{G}) = {\bf GL}_2(\mathbb{Z}_\ell)$; 
and ii) $\ell\nmid \Delta^\prime:=4A^3+27B^2$.    
Set $L=\mathbb{Q}(E[\ell])$, and let $d_0$ be a prime factor of $(\ell-1)/\gcd(\ell-1,12)$. 

Let $K^\prime$ and $L^\prime$ be the fields 
defined in (\ref{eq:eq149final8}) and (\ref{eq:eq183final8}), respectively (depending on $A$, $B$ and $\ell$). 
Let $H_1$ be a subspace of $\widetilde H^\prime=\Gal(\widetilde L^\prime/L^\prime)$, of dimension $n_1$, that is a normal
subgroup of $\Gal(\widetilde L^\prime/\left ( L^\prime \right )^{\sigma^\prime_0})$, for some element $\sigma^\prime_0$ of 
order $d_0$ in $\Gal(L^\prime/K^\prime)$.  

Let $H_1 = \oplus_{\nu=1}^{n_1} C_\nu$ be the decomposition of representations of $\langle \sigma^\prime_0 \rangle$ over $\mathbb{F}_\ell$, 
as in Lemma \ref{thm:Lemma8}. 

Let $f=Inf^{\mathcal{G}}_{\widetilde G}(\widetilde f)$ be a $1$-cocycle in the Selmer group $S^{[\ell]}(E/\mathbb{Q})$, 
where $\widetilde G=\Gal(\widetilde L/\mathbb{Q})$ and $\widetilde f\in \Homol^1(\widetilde G,E[\ell])$. 

Then, in the case where $\chi_\nu$ is the trivial character and $Res_{\widetilde L}^{\widetilde L^\prime}(C_\nu)\not = 0$, 
one has $\widetilde f(Res_{\widetilde L}^{\widetilde L^\prime}(\sigma)) = [\sigma -1]\widetilde Q$, for any $\sigma\in C_\nu$, for some  
$\widetilde Q \in E(\overline{\mathbb{Q}})$ such that $[\ell]\widetilde Q \in E(K^\prime)$. 
The point $\widetilde Q$ depends on $\widetilde f$ and $C_\nu$. 
\end{cor}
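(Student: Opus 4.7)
The plan is to combine Lemma \ref{thm:Lemma8} and Proposition \ref{thm:Proposition11}, which are tailor-made to interlock at precisely the situation described in the corollary; the role of the hypothesis that $\chi_\nu$ is trivial and that $Res_{\widetilde L}^{\widetilde L^\prime}(C_\nu) \not = 0$ is exactly to provide the input required by Lemma \ref{thm:Lemma8}.

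First, I would invoke Lemma \ref{thm:Lemma8} in the case where $\chi_\nu$ is the trivial character. Its conclusion produces a place $w^\prime$ of $\widetilde L^\prime$ lying above some rational prime $p$ with $p\nmid \Delta^\prime \ell(\ell-1)(\ell+1)$, such that $C_\nu = \widetilde H^\prime_{w^\prime}$. Moreover, the extra assumption $Res_{\widetilde L}^{\widetilde L^\prime}(C_\nu) \not = 0$ activates the last statement of Lemma \ref{thm:Lemma8}, yielding $Res_{\widetilde L}^{\widetilde L^\prime}(C_\nu) = \widetilde H_{w}$, where $w$ is the place of $\widetilde L$ lying below $w^\prime$. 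Thus both hypotheses needed for Proposition \ref{thm:Proposition11} concerning the prime $p$ and the surjectivity of $\widetilde H^\prime_{w^\prime} \twoheadrightarrow \widetilde H_w$ are satisfied.

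Second, I would verify that $\widetilde f$ sits in the required kernel at the place $w$. Since $f \in S^{[\ell]}(E/\mathbb{Q})$ and $f = Inf^{\mathcal{G}}_{\widetilde G}(\widetilde f)$, Corollary \ref{thm:Corollary9} tells us that $\widetilde f$ lies in
\begin{equation}
\Ker \Bigl \{ \Homol^1(\widetilde G,E[\ell]) \rightarrow
\oplus_{w} \Homol^{1}(\widetilde G_{w},E(\widetilde L_w)) \Bigr \}.
\end{equation}
In particular, the local restriction of $\widetilde f$ at $w$ is trivial in $\Homol^1(\widetilde G_w, E(\widetilde L_w))$, which is exactly the hypothesis on $\widetilde f$ in Proposition \ref{thm:Proposition11}.

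Third, I would apply Proposition \ref{thm:Proposition11} directly. Its conclusion supplies a point $\widetilde Q \in E(\overline{\mathbb{Q}})$ with $[\ell]\widetilde Q \in E(K^\prime)$ such that
\begin{equation}
\widetilde f(Res_{\widetilde L}^{\widetilde L^\prime}(\sigma)) = [\sigma-1]\widetilde Q
\end{equation}
for every $\sigma \in \widetilde H^\prime_{w^\prime}$. Since Lemma \ref{thm:Lemma8} identified $\widetilde H^\prime_{w^\prime}$ with $C_\nu$, this is precisely the claim of the corollary. The mild bookkeeping to check is that $\widetilde Q$ depends on the particular cocycle $\widetilde f$ and on the chosen summand $C_\nu$ (through the choice of $w^\prime$ supplied by Chebotarev), which is the content of the last sentence of the statement; no separate argument beyond the two results invoked is needed, so I do not anticipate a genuine obstacle beyond the careful matching of notation in the two ingredients.
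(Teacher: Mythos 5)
Your proposal is correct and takes essentially the same route as the paper: Lemma \ref{thm:Lemma8} supplies the place $w^\prime$ with $\widetilde H^\prime_{w^\prime}=C_\nu$ lying above a prime $p\nmid \Delta^\prime \ell(\ell-1)(\ell+1)$ and, via the hypothesis $Res_{\widetilde L}^{\widetilde L^\prime}(C_\nu)\not=0$, the identification $Res_{\widetilde L}^{\widetilde L^\prime}(C_\nu)=\widetilde H_w$, after which Proposition \ref{thm:Proposition11} produces the point $\widetilde Q$. Your explicit verification through Corollary \ref{thm:Corollary9} that $\widetilde f$ is locally trivial at $w$ is exactly the condition the paper uses implicitly when it invokes Proposition \ref{thm:Proposition11}, so there is no substantive difference.
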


\begin{proof} 
Let $E$ have Weierstrass equation $y^2=x^3+Ax+B$, where $A,B\in \mathbb{Z}$. Set $\Delta^\prime=4A^3+27B^2$.  
Assume that $C_\nu$ has conjugacy action defined by the trivial character of $\langle \sigma^\prime_0 \rangle$.
Then, using Lemma \ref{thm:Lemma8}, one can take a place $w^\prime$ of $\widetilde L^\prime$ such that 
$\widetilde H^\prime_{w^\prime} =  C_\nu$, where $w^\prime$ lies above a prime 
$p$ such that $p\nmid \Delta^\prime \ell (\ell-1)(\ell+1)$.  
The condition $Res_{\widetilde L}^{\widetilde L^\prime}(C_\nu)\not = 0$ implies that $\widetilde H_{w} = Res_{\widetilde L}^{\widetilde L^\prime}(C_\nu)$, where $w$ is the place of $\widetilde L$ lying below $w^\prime$. 
The corollary now follows from Proposition \ref{thm:Proposition11}. 
\end{proof}



\subsection{Proof of Theorem \ref{thm:Theorem5}}
\label{subsection:proofTheorem5}   

The following lemma will be crucial in the proof of the important intermediate result, Proposition \ref{thm:Proposition12}, and  
the proof of Theorem \ref{thm:Theorem5}. 


\begin{lem} 
\label{thm:Lemma9}
Let $E$ be an elliptic curve over $\mathbb{Q}$ without CM, of Weierstrass equation $y^2=x^3+Ax+B$, with $A,B\in \mathbb{Z}$. 
Given a prime number $\ell \not = 2,3,5,7,13$, set $L=\mathbb{Q}(E[\ell])$.
Assume that: i) $\Gal(L/\mathbb{Q}) \approx {\bf GL}_2(\mathbb{F}_\ell)$; and ii) $\ell\nmid \Delta^\prime:=4A^3+27B^2$. 
Let $K^\prime$ and $K_2$ be the fields defined in (\ref{eq:eq149final8}) and (\ref{eq:eq146final8}), respectively
(depending on $A,B$ and $\ell$). Then, one has: 

a) $\mathbb{Q}(\mu_\ell)/\mathbb{Q}$ is the maximal Abelian subextension of $L/\mathbb{Q}$, and 
$\Gal(L/\mathbb{Q}(\mu_\ell))$ corresponds to ${\bf SL}_2(\mathbb{F}_\ell)$ under $\widetilde \rho_\ell$;

b) $L \cap K_2 = \mathbb{Q}$, and $(L \cap K_3)/\mathbb{Q}$ has degree $2$;

c) $L \cap K^\prime \subseteq \mathbb{Q}(\mu_\ell)$;

d) $L \cap K^\prime = \mathbb{Q}(\mu_\ell) \cap K^\prime$ has degree over $\mathbb{Q}$ dividing $12$;

e) let $d_0$ be a prime divisor of $(\ell-1)/\gcd(\ell-1,12)$; 
then, there exists an element $\sigma_0\in \Gal(L/(L \cap K^\prime))$, of order $d_0$, such that 
$\widetilde \rho_\ell(\sigma_0)=\begin{pmatrix} \lambda_0 & 0\\0 & 1\end{pmatrix}$, with $\lambda_0\in \mathbb{F}_\ell^*$ (of order $d_0$); 
in particular, $\sigma_0$ acts on $X_{(\chi)}:=\langle \begin{pmatrix} 1\\0\end{pmatrix} \rangle$ through the character defined by 
$\chi(\sigma_0)=\lambda_0$, and acts on $X_{(1)}:=\langle \begin{pmatrix} 0\\1\end{pmatrix} \rangle$ through the trivial character;

f) the element $\tau \in \Gal(L/\mathbb{Q})$ defined by  
$\widetilde \rho_\ell(\tau)=\begin{pmatrix} 1 & 0\\1 & 1\end{pmatrix}$ belongs to the subgroup $\Gal(L/(L \cap K^\prime))$; 
in particular, $X_{(\chi)}$ is not stable under the action of $\tau$;

g) the element $\beta \in \Gal(L/\mathbb{Q})$ defined by  
$\widetilde \rho_\ell(\beta)=\begin{pmatrix} \lambda & 0\\0 & \lambda\end{pmatrix}$, where $\lambda$ has order $(\ell-1)/\gcd(\ell-1,12)$, 
belongs to the subgroup $\Gal(L/(L \cap K^\prime))$, 
and is non-trivial;
in particular, $\beta$ is in the center of $\Gal(L/(L \cap K^\prime))$, and $\beta-1$ defines an automorphism of $E[\ell]$.
\end{lem}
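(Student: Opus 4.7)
The plan is to combine the structure theory of ${\bf GL}_2(\mathbb{F}_\ell)$ with the explicit degree bound $[K':\mathbb{Q}]=2^s 3^t$ furnished by Proposition~\ref{thm:Proposition3}. For (a), I would use that $\det\circ\widetilde\rho_\ell$ is the mod-$\ell$ cyclotomic character (via the Weil pairing $\Lambda^2 E[\ell]\cong \mu_\ell$), so $\Gal(L/\mathbb{Q}(\mu_\ell))$ corresponds under $\widetilde\rho_\ell$ to ${\bf SL}_2(\mathbb{F}_\ell)$. Since ${\bf SL}_2(\mathbb{F}_\ell)$ is perfect for $\ell\geq 5$, the quotient ${\bf GL}_2(\mathbb{F}_\ell)/{\bf SL}_2(\mathbb{F}_\ell)\cong \mathbb{F}_\ell^*$ realises the maximal Abelian quotient of $\Gal(L/\mathbb{Q})$, which is (a).

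For (b)--(d), the strategy is that every ``non-$\ell$'' subfield of $L$ is forced inside $\mathbb{Q}(\mu_\ell)$. Since $K_2/\mathbb{Q}$ is Abelian, the Galois subextension $L\cap K_2$ is Abelian and hence contained in $\mathbb{Q}(\mu_\ell)$ by (a); combined with Corollary~\ref{thm:Corollary8}(a) this gives $L\cap K_2=\mathbb{Q}$. The same argument together with Corollary~\ref{thm:Corollary8}(b) yields $L\cap K_3\subseteq K_0$, while the reverse inclusion is immediate from $K_0\subseteq \mathbb{Q}(\mu_\ell)\subseteq L$ and $K_0\subseteq K_3$ (both $\sqrt{\pm\ell}$ lie in $K_3$ since $\sqrt{-1}\in K_1$). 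For (c), $L\cap K'$ is Galois over $\mathbb{Q}$ of degree dividing $2^s 3^t$, hence coprime with $\ell$; then any normal subgroup of ${\bf GL}_2(\mathbb{F}_\ell)$ whose quotient has order coprime to $\ell$ must contain an $\ell$-Sylow subgroup and therefore its normal closure, which for $\ell\geq 5$ is ${\bf SL}_2(\mathbb{F}_\ell)$ by simplicity of ${\bf PSL}_2(\mathbb{F}_\ell)$. Hence $\Gal(L/L\cap K')\supseteq {\bf SL}_2(\mathbb{F}_\ell)$ and $L\cap K'\subseteq \mathbb{Q}(\mu_\ell)$, proving (c); combining with Corollary~\ref{thm:Corollary8}(c) gives (d).

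For (e)--(g), set $d:=[L\cap K':\mathbb{Q}]$; from (d) we have $d\mid \gcd(\ell-1,12)$. Via $\det$, the normal subgroup $\Gal(L/L\cap K')$ corresponds to the unique subgroup $H\leq\mathbb{F}_\ell^*$ of index $d$, of order $(\ell-1)/d$, which is a multiple of $(\ell-1)/\gcd(\ell-1,12)$. Consequently every element of $\mathbb{F}_\ell^*$ whose order divides $(\ell-1)/\gcd(\ell-1,12)$ lies in $H$. For (e), taking $\lambda_0$ of order $d_0$ and $\sigma_0$ a preimage of $\mathrm{diag}(\lambda_0,1)$ yields the desired element, and its actions on $X_{(\chi)}$ and $X_{(1)}$ are tautological. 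For (f), the matrix of $\tau$ has determinant $1\in H$, so $\tau\in\Gal(L/L\cap K')$, and sends $\begin{pmatrix}1\\0\end{pmatrix}$ to $\begin{pmatrix}1\\1\end{pmatrix}\notin X_{(\chi)}$. For (g), $\det\beta=\lambda^2$ has order dividing $(\ell-1)/\gcd(\ell-1,12)$ and so lies in $H$; scalar matrices are central; and the exclusion $\ell\not\in\{2,3,5,7,13\}$ is precisely the condition $\ell-1\nmid 12$, so $\lambda\neq 1$ and $\beta-1=(\lambda-1)I$ is an automorphism of $E[\ell]$.

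The delicate step will be (c): the reduction from ``coprime-to-$\ell$ quotient of ${\bf GL}_2(\mathbb{F}_\ell)$'' to ``contains ${\bf SL}_2(\mathbb{F}_\ell)$'' relies on the simplicity of ${\bf PSL}_2(\mathbb{F}_\ell)$, and this is exactly where the hypothesis $\ell\geq 5$ enters the argument. The further exclusion of $\ell\in\{5,7,13\}$ surfaces only in the final line of (g), where it guarantees $(\ell-1)/\gcd(\ell-1,12)>1$ and hence $\lambda\neq 1$.
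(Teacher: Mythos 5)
Your proof is correct, and its skeleton coincides with the paper's: (a) via the Weil pairing identifying $\det\circ\widetilde\rho_\ell$ with the cyclotomic character plus perfectness of ${\bf SL}_2(\mathbb{F}_\ell)$; (b)--(d) by forcing the Abelian intersections into $\mathbb{Q}(\mu_\ell)$ and quoting Corollary \ref{thm:Corollary8}; (e)--(g) by exhibiting explicit matrices inside $\Gal(L/(L\cap K^\prime))$. The two places where your execution differs are worth recording. For (c), the paper forms the compositum $K''=(L\cap K^\prime)\cdot\mathbb{Q}(\mu_\ell)$ and applies Suprunenko's classification of normal subgroups of ${\bf SL}_2(\mathbb{F}_\ell)$ to $\Gal(L/K'')$, ruling out the small cases because $[K'':\mathbb{Q}(\mu_\ell)]$ divides a power of $6$; you instead work directly with $\Gal(L/(L\cap K^\prime))$ as a normal subgroup of ${\bf GL}_2(\mathbb{F}_\ell)$ of index prime to $\ell$, so it contains a Sylow $\ell$-subgroup and hence its normal closure, which is ${\bf SL}_2(\mathbb{F}_\ell)$ for $\ell\geq 5$ (transvections generate, or simplicity of ${\bf PSL}_2$ plus perfectness to exclude index $2$). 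This is marginally more self-contained, at the cost of a standard generation fact. For (e)--(g), the paper certifies membership in $\Gal(L/(L\cap K^\prime))$ element by element, raising a generator to a power killed by the quotient of order dividing $12$ (and, for $\tau$, using $\gcd(\ell,12)=1$); your observation that $\Gal(L/(L\cap K^\prime))=\det^{-1}(H)$ for the unique subgroup $H\leq\mathbb{F}_\ell^*$ of index $d=[L\cap K^\prime:\mathbb{Q}]\mid\gcd(\ell-1,12)$ handles $\sigma_0$, $\tau$, and $\beta$ uniformly by a one-line determinant check, which is a cleaner bookkeeping of the same facts. Both routes use the exclusion of $\ell\in\{2,3,5,7,13\}$ exactly where you say: to guarantee $(\ell-1)/\gcd(\ell-1,12)>1$, hence the existence of $d_0$ in (e) and the nontriviality of $\beta$ in (g).
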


\begin{proof} {\em Step 1.} Firstly, we claim that the maximal Abelian sub-extension of $L/\mathbb{Q}$ 
is $\mathbb{Q}(\mu_\ell)$. 

Indeed, $\mathbb{Q}(\mu_\ell)/\mathbb{Q}$ is an Abelian sub-extension of $L/\mathbb{Q}$. 
Let $\widetilde \rho_\ell$ be the Galois representation of $\Gal(L/\mathbb{Q})$ on $E[\ell]$.
Having assumed that $\Gal(L/\mathbb{Q}) \approx {\bf GL}_2(\mathbb{F}_\ell)$, 
it follows that ${\bf SL}_2(\mathbb{F}_\ell) = \Ker \det (\widetilde \rho_\ell)$. But from the Weil pairing, one has 
$\det (\widetilde \rho_\ell)=\psi_\ell$, where $\psi_\ell$ is the cyclotomic character \cite[1.2, Example 2, pp. I-3-4]{serre1968}. 
Therefore, $\Gal(L/\mathbb{Q}(\mu_\ell))=\Ker \psi_\ell={\bf SL}_2(\mathbb{F}_\ell)$.

Now, let $K/\mathbb{Q}$ be an Abelian extension, with $L \supseteq K\supseteq \mathbb{Q}(\mu_\ell)$. 
Then, $\Gal(K/\mathbb{Q}(\mu_\ell))$ corresponds to an Abelian quotient of ${\bf SL}_2(\mathbb{F}_\ell)$. 
Since this special linear group is a perfect group for $\ell>3$ \cite[p. 61]{rose1994}, it follows that $\Gal(K/\mathbb{Q}(\mu_\ell))$ is trivial. 
Thus, $K$ is equal to $\mathbb{Q}(\mu_\ell)$, which completes the proof of part a).

{\em Step 2.} Let $K_1:=\mathbb{Q}(\mu_{4})$ as in equation (\ref{eq:eq145final8}). 
Also, as in equation (\ref{eq:eq146final8}), let  $K_2=K_1(p_1^{1/2},...,p_\nu^{1/2})$, where 
$p_1,...,p_{\nu}$ are the distinct prime factors other than $\ell$ that are bounded by the constant $C$ 
of Theorem \ref{thm:Theorem6}.

Then, the extension $L \cap K_1$ is an Abelian sub-extension of 
$L/\mathbb{Q}$, and hence is contained in $\mathbb{Q}(\mu_\ell)$, from step 1. 
From part a) of Corollary \ref{thm:Corollary8}, it follows that $L \cap K_1 \subseteq \mathbb{Q}(\mu_\ell) \cap K_2=\mathbb{Q}$.

{\em Step 3.} Next, the extension $(L \cap K_2)/\mathbb{Q}$ is an Abelian sub-extension of $L$, since $K_2/K_1$ is Abelian, 
and $L \cap K_1 = \mathbb{Q}$ from step 2.
From steps 1 and 2, one must have $L \cap K_2 \subseteq \mathbb{Q}(\mu_\ell) \cap K_2=\mathbb{Q}$, which proves the first statement of part b). 

{\em Step 4.} Since the Kummer extension $K_3/K_2$ has degree dividing $2$, it follows that the extension $(L \cap K_3)/(L\cap K_2)$ 
has degree dividing $2$. But $L\cap K_2 = \mathbb{Q}$ from step 3. Thus, $(L\cap K_3)/\mathbb{Q}$ has degree $1$ or $2$. 
But $L\cap K_3$ contains the unique quadratic subextension of $\mathbb{Q}(\mu_\ell)$. This proves the second statement of part b).

{\em Step 5.} The field $K^\prime$ is obtained by adjoining to $K_3$ various roots of cubic equations, 
as in equation (\ref{eq:eq148final8}). 
Since $K^\prime/\mathbb{Q}$ is a normal extension of degree dividing a power of $6$,  
it follows that the extension $(L \cap K^\prime)/\mathbb{Q}$ has degree dividing a power of $6$. 

Since both $K^\prime/\mathbb{Q}$ and $L/\mathbb{Q}$ are normal extensions, 
the extension $L \cap K^\prime$ is also normal over $\mathbb{Q}$. We consider the compositum 
$K'':=(L\cap K^\prime)\cdot \mathbb{Q}(\mu_\ell) \subseteq L$. 
Then, $K''$ is normal over $\mathbb{Q}$, and hence over $\mathbb{Q}(\mu_\ell)$. 
We set $N:=\Gal(L/K'')$, which is a normal subgroup of ${\bf SL}_2(\mathbb{F}_\ell)$, using part a). 
From \cite[Theorem 2, p. 62]{suprunenko1976}, there are only three cases if $\ell\geq 5$: $N=1$, $N=\{\pm I\}$, or $N={\bf SL}_2(\mathbb{F}_\ell)$, 
where $I$ denotes the $2\times 2$ matrix over $\mathbb{F}_\ell$. 
But the first two cases are ruled out, since the order of ${\bf SL}_2(\mathbb{F}_\ell)/N$ would then be divisible by $\ell$, 
whereas $K''/\mathbb{Q}(\mu_\ell)$ has degree dividing a power of $6$. 
It follows that $K''=\mathbb{Q}(\mu_\ell)$. 
This means that $L \cap K^\prime \subseteq \mathbb{Q}(\mu_\ell)$,  
which proves part c).

{\em Step 6.} From part c) of Corollary \ref{thm:Corollary8}, having assumed that $\ell >2$, the extension $(\mathbb{Q}(\mu_\ell) \cap K^\prime)/\mathbb{Q}$ has degree dividing $12$, which proves part d), making use of part c).  

{\em Step 7.} Part e) is a consequence of part d) and the assumption that $\Gal(L/\mathbb{Q})$ is the full linear group.  
Indeed, consider $\sigma \in \Gal(L/\mathbb{Q})$ such that $\widetilde \rho_\ell(\sigma)=\begin{pmatrix} \lambda & 0\\0 & 1\end{pmatrix}$, 
where $\lambda$ is a generator of $\mathbb{F}_\ell^*$. Let $d_0$ be a divisor of $(\ell-1)/\gcd(\ell-1,12)$. Then, one has
$\sigma_0:=\sigma^{(\ell-1)/d_0} \in \Gal(L/(L \cap K^\prime))$, using part d), since $\gcd(\ell-1,12)\mid (\ell-1)/d_0$. 
Then, the element $\sigma_0$ has order $d_0$, and $\widetilde \rho_\ell(\sigma_0)$ is of the form $\begin{pmatrix} \lambda_0 & 0\\0 & 1\end{pmatrix}$, 
where $\lambda_0$ has same order as $\sigma_0$. 

{\em Step 8.} The element $\tau$ has order $\ell$, so that $\tau^{\ell} = 1 \in \Gal(L/(L \cap K^\prime))$. 
But, from part d), one has $\tau^{12} \in \Gal(L/(L \cap K^\prime))$. As $\gcd(\ell,12)=1$, one concludes that 
$\tau \in \Gal(L/(L \cap K^\prime))$, which proves part f).

{\em Step 9.} Let $\lambda_1$ have order $\ell-1$ in $\mathbb{F}_\ell^*$, 
and set $\beta_1=\begin{pmatrix} \lambda_1 & 0\\0 & \lambda_1\end{pmatrix}$. 
Then, the element $\beta:=\beta_1^{\gcd(\ell-1,12)}$ belongs to $\Gal(L/(L \cap K^\prime))$, using part d). 
Furthermore, since $\ell \not = 2,3,5,7,13$, the element $\beta$ is non-trivial, and  
it then follows that $\beta-1$ is an automorphism of $E[\ell]$.
Lastly, $\widetilde \rho_\ell(\beta)$ is equal to $\begin{pmatrix} \lambda & 0\\0 & \lambda\end{pmatrix}$, 
where $\lambda:=\lambda_1^{\gcd(\ell-1,12)}$ has order $(\ell-1)/{\gcd(\ell-1,12)}$. 
This proves part g).
\end{proof}


Lemma \ref{thm:Lemma9} and Corollary \ref{thm:Corollary10} play an important role in our proof of the following 
intermediate result.


\begin{prop} 
\label{thm:Proposition12}
Let $E$ be an elliptic curve over $\mathbb{Q}$ without CM, of Weierstrass equation $y^2=x^3+Ax+B$, where $A,B\in \mathbb{Z}$. 
Let $\ell \not = 2,3,5,7,13$ be a prime number   
such that: i) $\rho_\ell(\mathcal{G}) = {\bf GL}_2(\mathbb{Z}_\ell)$; and ii) $\ell\nmid \Delta^\prime:=4 A^3 + 27 B^2$. 
Set $L=\mathbb{Q}(E[\ell])$, and let $K^\prime$ and $L^\prime$ be the fields defined in (\ref{eq:eq149final8}) and 
(\ref{eq:eq183final8}), respectively. 

Then, for any element of $S^{[\ell]}(E/\mathbb{Q})$ represented by $1$-cocycle $f$, 
one has a decomposition of the form: 
 \begin{equation}
Res^{\mathcal{G}}_{\mathcal{G}^\prime} (f)  = 
g + 
Inf^{\mathcal{G}^\prime}_{G^\prime}(\widetilde h),
\end{equation}
where $\mathcal{G}^\prime:=\Gal(\overline{\mathbb{Q}}/K^\prime)$ 
and $G^\prime:=\Gal(L^\prime/K^\prime) \approx \Gal(L/(L \cap K^\prime))$, 
$g \in \Homol^1(\mathcal{G}^\prime,E(\overline{\mathbb{Q}}))$ splits in $E(\overline{\mathbb{Q}})$, 
and $\widetilde h\in \Homol^1(G^\prime,E[\ell])$. 
\end{prop}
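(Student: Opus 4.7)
The plan is to restrict $f$ to $\mathcal{G}'$, apply Corollary \ref{thm:Corollary9} over $K'$ to obtain a cohomology class on a finite Galois group, reduce via inflation-restriction to the Kummer layer $\widetilde H' = \Gal(\widetilde L'/L')$, and then use Corollary \ref{thm:Corollary10} to realise the relevant components as Kummer coboundaries of points in $E(K')$. First, since $f$ splits locally at every place of $\mathbb{Q}$, its restriction splits locally at every place of $K'$ and therefore lies in $S^{[\ell]}(E/K')$; applying Corollary \ref{thm:Corollary9} with base field $K'$ yields $Res^{\mathcal{G}}_{\mathcal{G}'}(f) = Inf^{\mathcal{G}'}_{\widetilde G'}(\widetilde f')$ for some $\widetilde f' \in \Homol^1(\widetilde G', E[\ell])$, where $\widetilde G' = \Gal(\widetilde L'/K')$.

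Since $L' \supseteq L = \mathbb{Q}(E[\ell])$, the subgroup $\widetilde H'$ acts trivially on $E[\ell]$, so the inflation-restriction sequence (Lemma \ref{thm:Lemma6}) for the tower $K' \subset L' \subset \widetilde L'$ reads
\begin{equation*}
0 \to \Homol^1(G', E[\ell]) \xrightarrow{Inf} \Homol^1(\widetilde G', E[\ell]) \xrightarrow{Res} \Hom(\widetilde H', E[\ell])^{G'}.
\end{equation*}
It therefore suffices to construct $P \in E(K')$ whose Kummer coboundary $\partial P$ (which automatically splits in $E(\overline{\mathbb{Q}})$, and so qualifies as the $g$ in the statement) satisfies $Res(\partial P)|_{\widetilde H'} = Res(\widetilde f')|_{\widetilde H'}$ in $\Hom(\widetilde H', E[\ell])$; then the class of $\widetilde f' - \partial P$ lies in the image of $Inf$, giving $\widetilde h$. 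Using Lemma \ref{thm:Lemma9}(e), I pick $\sigma_0' \in G'$ of order $d_0$ acting on $E[\ell]$ as $\mathrm{diag}(\lambda_0, 1)$, and (as in Lemma \ref{thm:Lemma8}) decompose $\widetilde H' = \bigoplus_\nu C_\nu$ into $\langle \sigma_0' \rangle$-eigenlines with characters $\chi_\nu$. The $G'$-equivariance of $\phi := Res(\widetilde f')|_{\widetilde H'}$ forces $\phi(C_\nu)$ to lie in the $\chi_\nu(\sigma_0')$-eigenspace of $\sigma_0$ on $E[\ell]$, so $\phi|_{C_\nu}$ vanishes unless $\chi_\nu$ is trivial or equal to the character $\chi$ with $\chi(\sigma_0') = \lambda_0$. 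For each trivial-character $C_\nu$ with $Res^{\widetilde L'}_{\widetilde L}(C_\nu) \ne 0$, Corollary \ref{thm:Corollary10} produces $\widetilde Q_\nu \in E(\overline{\mathbb{Q}})$ with $P_\nu := [\ell]\widetilde Q_\nu \in E(K')$ and $\widetilde f'(\sigma) = [\sigma - 1]\widetilde Q_\nu$ on $C_\nu$, so $\phi|_{C_\nu}$ coincides with the restriction of $\partial P_\nu$.

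The main obstacle is the eigenspaces with non-trivial character $\chi_\nu = \chi$, since Corollary \ref{thm:Corollary10} does not apply directly. I would resolve this using the additional Galois elements from Lemma \ref{thm:Lemma9}(f,g). The element $\beta$ is central in $G'$ and acts on $E[\ell]$ as the scalar $\lambda$ with $\beta - 1$ invertible; its centrality forces conjugation by $\beta$ to preserve each $C_\nu$ and act by some scalar $c_\nu$, and $G'$-equivariance then imposes $c_\nu\, \phi(\sigma) = \lambda\, \phi(\sigma)$ for $\sigma \in C_\nu$. Combined with the non-stabilisation of $X_{(\chi)}$ by the unipotent $\tau$ (which mixes the $\sigma_0$-eigenspaces on $E[\ell]$), this either forces $\phi|_{C_\nu} = 0$ on the non-trivial character components, or—by varying the choice of $\sigma_0'$ amongst the prime divisors of $(\ell - 1)/\gcd(\ell - 1, 12)$ so that the character $\chi$ becomes trivial in a second eigendecomposition—places those components within the scope of Corollary \ref{thm:Corollary10} and yields additional points $P_\nu \in E(K')$ realising them as Kummer coboundaries. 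Setting $P := \sum_\nu P_\nu \in E(K')$, one obtains $g := Res^{\mathcal{G}}_{\mathcal{G}'}(\partial P)$ (which splits in $E(\overline{\mathbb{Q}})$) and the inflated class $Inf^{\mathcal{G}'}_{G'}(\widetilde h)$ defined by $\widetilde f' - \partial P$, completing the decomposition.
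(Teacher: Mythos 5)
Your opening moves (restricting $f$ to $\mathcal{G}^\prime$, applying Corollary \ref{thm:Corollary9} over $K^\prime$, reducing via inflation--restriction to $\Hom(\widetilde H^\prime,E[\ell])^{G^\prime}$, and the eigenvalue constraint coming from $\sigma_0^\prime$) are sound and parallel the paper, but the two steps that carry the real weight are missing or fail as stated. The first gap is the single-point matching on the trivial-character part: Corollary \ref{thm:Corollary10} produces, for each trivial-character line $C_\nu$ with nonzero image in $\widetilde H$, a point $\widetilde Q_\nu$ \emph{depending on} $C_\nu$, and setting $P=\sum_\nu P_\nu$ does not give $\partial P = \phi$ on $\widetilde H^\prime$, since $\partial P_\nu$ has no reason to vanish on the other lines $C_\mu$. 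More fundamentally, knowing that $\phi$ agrees on each line of a basis with \emph{some} member of the subgroup of split classes does not imply $\phi$ lies in that subgroup (in $\Hom(\mathbb{F}_\ell^2,\mathbb{F}_\ell)$ the functional $(1,2)$ agrees on each coordinate line with a multiple of $(1,1)$ but is not itself a multiple of it). The paper's Steps 3--6 exist precisely to close this: by Pontryagin duality the image of the split classes in $\Hom\bigl(\widetilde H^\prime/\bigl(\oplus_{i\geq 1}\widetilde H^\prime_{(\chi^i)}\bigr),E[\ell]/X_{(\chi)}\bigr)$ is the \emph{full} annihilator of a subspace $H_1$, and Corollary \ref{thm:Corollary10} is then used in the opposite direction from yours --- applied to trivial-character lines \emph{inside} $H_1$ --- to show that every global Selmer class vanishes mod $X_{(\chi)}$ on $H_1$ (the inclusion $H_1\subseteq H_2$), which is what yields a single split class $g$ with $\varrho(f)=\varrho^\prime(g)$ modulo $X_{(\chi)}$.

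The second gap is your treatment of the $\chi$-character components. The clause ``either forces $\phi|_{C_\nu}=0$ or, by varying $\sigma_0^\prime$, places them within the scope of Corollary \ref{thm:Corollary10}'' is not an argument: the $\beta$-centrality computation only yields ``$c_\nu=\lambda$ or $\phi|_{C_\nu}=0$''; Corollary \ref{thm:Corollary10} genuinely requires the trivial character, because the Chebotarev construction in Lemma \ref{thm:Lemma8} needs a Frobenius of the form $h^\prime\widetilde\sigma_0^\prime$ with $h^\prime$ centralizing $\widetilde\sigma_0^\prime$ so that its $d_0$-th power generates $C_\nu$; and replacing $\sigma_0^\prime$ changes the eigendecompositions of $\widetilde H^\prime$ and of $E[\ell]$ simultaneously, so the problematic lines do not become accessible. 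Nor can you argue component-by-component with $G^\prime$-equivariance, since the $\langle\sigma_0^\prime\rangle$-isotypic pieces of $\widetilde H^\prime$ are not $G^\prime$-stable. The paper never matches the $X_{(\chi)}$-valued part at all: having matched $f$ by a split class $g$ only modulo $X_{(\chi)}$, the difference $h=Res^{\mathcal{G}}_{\mathcal{G}^\prime}(f)-g$ restricts to a $G^\prime$-equivariant homomorphism on $\mathcal{H}^\prime$ with values in $X_{(\chi)}$, and since $X_{(\chi)}$ is not stable under the unipotent $\tau$ of Lemma \ref{thm:Lemma9}(f), this forces $h$ to vanish identically on $\mathcal{H}^\prime$, whence $h$ is inflated from $G^\prime$. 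You mention $\tau$ in passing but never run this argument; without it, and without the duality step above, the claimed decomposition is not established.
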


\begin{proof} 
{\em Step 1.} From Corollary \ref{thm:Corollary9} (assuming that $\ell > 3$),
 $f \in S^{[\ell]}(E/\mathbb{Q})$ is of the form $Inf^{\mathcal{G}}_{\widetilde G}(\widetilde f)$, for some 
$\widetilde f\in \Homol^{1}(\widetilde G,E[\ell])$, where $\widetilde L = L_{\overline{\mathcal{N}}}$ is the class field  
defined in Lemma \ref{thm:Lemma7} (with $K=\mathbb{Q}$) and $\widetilde G=\Gal(\widetilde L/\mathbb{Q})$. 

{\em Step 2.} 
Let $y^2=x^3+Ax +B$ be a Weierstrass equation for $E$, with $A,B\in \mathbb{Z}$.
We assume the non-CM case, with $\rho_\ell(\mathcal{G}) = {\bf GL}_2(\mathbb{Z}_\ell)$, 
$\ell\nmid \Delta^\prime:=4 A^3 + 27 B^2$, and  
$\ell \not = 2,3,5,7,13$. 
We consider the field $K^\prime$ defined in (\ref{eq:eq149final8}). 
From part d) of Lemma \ref{thm:Lemma9}, the extension $( L\cap K^\prime )/\mathbb{Q}$ has degree dividing $12$. 

Having assumed that $\ell \not = 2,3,5,7,13$, let $d_0$ be a prime factor of $(\ell-1)/\gcd(\ell-1,12)$. 
Applying part e) of Lemma \ref{thm:Lemma9}, consider the element $\sigma_0^\prime \in \Gal(L^\prime/K^\prime)\approx \Gal(L/(L \cap K^\prime))$ defined by  
$\widetilde \rho_\ell(\sigma_0^\prime) = \begin{pmatrix} \lambda_0 & 0 \\ 0 & 1 \end{pmatrix}$, 
where $\lambda_0$ is an element of order $d_0$ in the multiplicative group $\mathbb{F}_\ell^{*}$. 
Let $X_{(\chi)} = \langle \begin{pmatrix} 1\\ 0 \end{pmatrix} \rangle$ 
be the one-dimensional $\mathbb{F}_\ell$-subspace of $E[\ell]$ on which the automorphism 
$\sigma^\prime_0 \in \Gal(L^\prime/K^\prime)$, 
acts through the character defined by $\chi(\sigma^\prime_0)=\lambda_0$. 
Let $X_{(1)} = \langle \begin{pmatrix} 0\\ 1 \end{pmatrix} \rangle$ 
be the one-dimensional $\mathbb{F}_\ell$-subspace of $E[\ell]$ on which 
$\sigma^\prime_0$ acts through the trivial character. One has the decomposition $E[\ell] = X_{(\chi)} \oplus X_{(1)}$.  

{\em Step 3.} 
Given $\widetilde Q\in E(\overline{\mathbb{Q}})$ such that $[\ell]\widetilde Q=P\in E(K^\prime)$, 
the $1$-cocycle defined by $\widetilde g(\sigma):=[\sigma -1]\widetilde Q$ 
belongs to $S^{[\ell]}(E/K^\prime)$. Furthermore, by construction, any such 
element of this Selmer group splits in $E(\widetilde L^\prime)$. 
Let $S^{[\ell]}(E/K^\prime)_{split}$ be the subgroup of $S^{[\ell]}(E/K^\prime)$ consisting of such elements.  

We define (with $L^\prime=L K^\prime$):
\begin{equation}
\begin{cases}
\mathcal{G}^\prime = \Gal(\overline{\mathbb{Q}}/K^\prime);\qquad   
\mathcal{H}^\prime = \Gal(\overline{\mathbb{Q}}/L^\prime);\\ 
\widetilde G^\prime = \Gal(\widetilde L^\prime/K^\prime);\qquad 
\widetilde H^\prime = \Gal(\widetilde L^\prime/L^\prime);\\ 
G^\prime = \Gal(L^\prime/K^\prime).              
\end{cases}
\end{equation}
The $\mathbb{F}_\ell \langle \sigma^\prime_0 \rangle$-module $\widetilde H^\prime$ admits a decomposition:
\begin{equation}
\widetilde H^\prime = \oplus_{i=0}^{d_0-1} \widetilde H^\prime_{(\chi^i)},
\end{equation}
where $\langle \sigma^\prime_0 \rangle$ acts on $\widetilde H^\prime_{(\chi^i)}$ through the character $\chi^i$ of the cyclic group $\langle \sigma^\prime_0 \rangle$, 
for $i=0,1,...,d_0-1$.

Let then $\mathcal{I}^\prime$ be the image of the map:
\begin{equation}
\begin{CD}
\varrho^\prime: S^{[\ell]}(E/K^\prime)_{split}  @>{Res^{\mathcal{G}^\prime}_{\mathcal{H}^\prime}}>> S^{[\ell]}(E/L^\prime)^{G^\prime}\\ 
@>>>  \Hom(\widetilde H^\prime, E[\ell]/X_{(\chi)})^{\sigma_0^\prime}\\
@>{\approx}>> \Hom(\widetilde H^\prime/\bigl (\oplus_{i=1}^{d_0-1} \widetilde H^\prime_{(\chi^i)} \bigr ), E[\ell]/X_{(\chi)}),\\
\end{CD}
\end{equation}
where the first map is the one of Proposition \ref{thm:Proposition6} (applied to $K=K^\prime$), and the second map is induced by 
restriction of $G^\prime$ to $\langle \sigma_0^\prime \rangle$ and the projection $E[\ell] \rightarrow E[\ell]/X_{(\chi)}$. 
We have used the isomorphism $E[\ell]/X_{(\chi)}\approx X_{(1)}$ in the third map.

Then, from Pontryagin duality, one has an isomorphism:
\begin{equation}
\mathcal{I}^\prime \approx 
\Hom( \widetilde H^\prime/H_1, E[\ell]/X_{(\chi)}),
\end{equation}
for some closed subspace $H_1 \supseteq \oplus_{i=1}^{d_0-1} \widetilde H^\prime_{(\chi^i)}$ of $\widetilde H^\prime$. 

{\em Step 4.} 
Since $X_{(\chi)}$ is stable under $\sigma_0^\prime$, it follows that $H_1$ is
stable under conjugation by a lifting $\widetilde \sigma_0^\prime$ of $\sigma_0^\prime$ in $\Gal(\widetilde L^\prime/K^\prime)$. 
Indeed, with notation as above, 
the equality $[h - 1]\widetilde Q \in X_{(\chi)}$ for all $\widetilde Q$ 
such that $[\ell]\widetilde Q = \widetilde P \in E(K^\prime)$, 
implies that $[h   - 1] (\widetilde \sigma_0^\prime)^{-1}(\widetilde Q )
\in X_{(\chi)}$ for all such point $\widetilde Q$, because $K^\prime$ is normal over $\mathbb{Q}$.  
But then, this implies that $[\widetilde \sigma_0^\prime h (\widetilde \sigma_0^\prime)^{-1} - 1]\widetilde Q 
=  \widetilde \sigma_0^\prime [h - 1] (\widetilde \sigma_0^\prime)^{-1}(\widetilde Q ) \in 
\widetilde \sigma_0^\prime(X_{(\chi)})=X_{(\chi)}$ for all 
such point $\widetilde Q$. Therefore, $\widetilde \sigma_0^\prime h (\widetilde \sigma_0^\prime)^{-1} \in H_1$.

{\em Step 5.} We define:
\begin{equation}
\begin{cases}
\mathcal{G} = \Gal(\overline{\mathbb{Q}}/\mathbb{Q});\qquad 
\mathcal{H} = \Gal(\overline{\mathbb{Q}}/L);\\ 
\widetilde G = \Gal(\widetilde L/\mathbb{Q});\qquad 
\widetilde H = \Gal(\widetilde L/L);\\ 
G = \Gal(L/\mathbb{Q}).              
\end{cases}
\end{equation}

Let then $\mathcal{I}$ denote the image of the map 
\begin{equation}
\begin{CD}
\varrho: S^{[\ell]}(E/\mathbb{Q}) @>{Res^{\mathcal{G}}_{\mathcal{H}}}>> S^{[\ell]}(E/L)^{G}\\ 
@>>> \Hom(\widetilde H, E[\ell]/X_{(\chi)})^{\sigma_0}\\
@>{\left( Res_{\widetilde L}^{\widetilde L^\prime}\right )_*}>> \Hom(\widetilde H^\prime, E[\ell]/X_{(\chi)})^{\sigma_0^\prime}\\
@>{\approx}>> \Hom(\widetilde H^\prime/\bigl (\oplus_{i=1}^{d_0-1} \widetilde H^\prime_{(\chi^i)} \bigr ), E[\ell]/X_{(\chi)}),\\
\end{CD}
\end{equation}
where the first map is the one of Proposition \ref{thm:Proposition6} (applied to $K=\mathbb{Q}$), and $Res_{\widetilde L}^{\widetilde L^\prime}$ denotes the natural projection $\widetilde H^\prime \rightarrow \widetilde H$ of Galois groups. 
We have used the isomorphism $E[\ell]/X_{(\chi)}\approx X_{(1)}$ in the fourth map.

Then, from Pontryagin duality, one has an isomorphism:
\begin{equation}
\mathcal{I} \approx 
\Hom( \widetilde H^\prime/H_2, E[\ell]/X_{(\chi)}),
\end{equation}
for some closed subspace $H_2 \supseteq \oplus_{i=1}^{d_0-1} \widetilde H^\prime_{(\chi^i)}$ of $\widetilde H^\prime$. 

{\em Step 6.} We claim that $\mathcal{I} \subseteq \mathcal{I}^\prime$, equivalently $H_1 \subseteq H_2$. 

Based on step 4, the subgroup $H_1$ is normal in the Galois group 
$\Gal(\widetilde L^\prime/\left ( L^\prime \right )^{\sigma^\prime_0})$, where $\sigma^\prime_0$ is constructed in step 2.  
From Lemma \ref{thm:Lemma8}, one obtains a decomposition $H_1 = \oplus_{\nu=1}^{n_1} C_\nu$ of representations of 
$\langle \sigma^\prime_0 \rangle$ over $\mathbb{F}_\ell$. 

If the one-dimensional representation $C_\nu$ maps to $0$ under the Galois projection 
$Res_{\widetilde L}^{\widetilde L^\prime}: \widetilde H^\prime \rightarrow \widetilde H$, then $C_\nu$ is contained in $H_2$, and there is nothing to prove.

If the one-dimensional representation $C_\nu$ is defined by the trivial character $\chi_\nu$ of $\langle \sigma^\prime_0 \rangle$, 
and $Res_{\widetilde L}^{\widetilde L^\prime}(C_\nu)\not = 0$, then Corollary \ref{thm:Corollary10} implies that 
$\widetilde f(Res_{\widetilde L}^{\widetilde L^\prime}(\sigma)) = [\sigma -1]\widetilde Q$, for any $\widetilde f \in S^{[\ell]}(E/\mathbb{Q})$  
and any $\sigma\in C_\nu$, for some $\widetilde Q \in E(\overline{\mathbb{Q}})$ such that $[\ell]\widetilde Q \in E(K^\prime)$, depending on $\widetilde f$ and $C_\nu$. 
But then, the inclusion $C_\nu < H_1$ implies that $\widetilde f(Res_{\widetilde L}^{\widetilde L^\prime}(\sigma))=0$, for any $\widetilde f \in S^{[\ell]}(E/\mathbb{Q})$ and any $\sigma\in C_\nu$. This means that $C_\nu < H_2$.

If the one-dimensional representation $C_\nu$ is defined by a non-trivial character of $\langle \sigma^\prime_0 \rangle$, 
then $C_\nu < \oplus_{i=1}^{d_0-1} \widetilde H^\prime_{(\chi^i)}$. 
But from step 5, one has the inclusion $\oplus_{i=1}^{d_0-1} \widetilde H^\prime_{(\chi^i)} \subseteq H_2$, 
which means that $C_\nu < H_2$.

Altogether, we conclude that $H_1 \subseteq H_2$. 

{\em Step 7.} From Step 6, we have $\mathcal{I} \subseteq \mathcal{I}^\prime$. 
So, let $f$ be in the Selmer group $S^{[\ell]}(E/\mathbb{Q})$. 
Then, $\varrho(f)$ satisfies:
\begin{equation}
\varrho(f)(\sigma) = \varrho^\prime (g)(\sigma) \mod X_{(\chi)},
\end{equation}
valid for any $\sigma \in \mathcal{H}^\prime$, for some $1$-coboundary $g$ of the form:
\begin{equation}
\widetilde g(\sigma)  : = [\sigma-1]\widetilde Q,
\end{equation}
where $\widetilde Q\in E(\overline{\mathbb{Q}})$ such that $[\ell]\widetilde Q=P\in E(K^\prime)$.

Define:
\begin{equation}
h:= Res^{\mathcal{G}}_{\mathcal{G}^\prime} (f) - g.
\end{equation} 
Thus, $Res^{\mathcal{G}^\prime}_{\mathcal{H}^\prime}(h)$ maps to $0\in\Hom(\mathcal{H}^\prime,E[\ell]/X_{(\chi)})$. 
This means that $h$ maps the Galois group $\mathcal{H}^\prime$ into $X_{(\chi)}$. 

{\em Step 8.} Now, from Proposition \ref{thm:Proposition6} applied to $K^\prime$, one has:
\begin{equation}
Res^{\mathcal{G}^\prime}_{\mathcal{H}^\prime} ( h ) 
\in \Hom(\mathcal{H}^\prime,E[\ell])^{G^\prime}.
\end{equation} 
Therefore, for any $\sigma \in \mathcal{G}^\prime$ and $\tau\in \mathcal{H}^\prime$, one has:
\begin{equation}
\sigma(h(\tau))  = h(\sigma \tau \sigma^{-1}) \in X_{(\chi)},
\end{equation}
since $\mathcal{H}^\prime$ is a normal subgroup of $\mathcal{G}^\prime$.  
Thus, one must have $h(\tau)=0$ for any $\tau\in \mathcal{H}^\prime$, 
since the one-dimensional subspace $X_{(\chi)}$ is not stable under Galois action of $G^\prime$. 
Indeed, one may take the element $\tau$ corresponding to the matrix $\begin{pmatrix} 1 & 0\\ 1 & 1 \end{pmatrix}$ of order $\ell$, 
as in part f) of Lemma \ref{thm:Lemma9}.

It follows that $Res^{\mathcal{G}}_{\mathcal{G}^\prime} (f) - g = 0$ on $\mathcal{H}^\prime$, so that 
\begin{equation}
Res^{\mathcal{G}}_{\mathcal{G}^\prime} (f) - g = Inf^{\mathcal{G}^\prime}_{G^\prime}(\widetilde h),
\end{equation} 
for some $\widetilde h \in \Homol^1(G^\prime, E[\ell])$, as was to be shown.
\end{proof}



Equipped with Lemma \ref{thm:Lemma9} and Proposition \ref{thm:Proposition12}, 
we are now ready to prove Theorem \ref{thm:Theorem5}.

{\em Proof of Theorem \ref{thm:Theorem5}.} 
We show the inclusion:
\begin{equation}
\label{eq:eq207final8}
S^{[\ell]}(E/\mathbb{Q}) \subseteq \Ker \Bigl \{ \Homol^{1}(\mathcal{G},E[\ell]) \rightarrow 
 \Homol^{1}(\mathcal{G},E(\overline{\mathbb{Q}})) \Bigr \}.
\end{equation} 

Let $\ell \not = 2,3,5,7,13$ be a prime number. 
Assuming that: i) $\rho_\ell(\mathcal{G}) = {\bf GL}_2(\mathbb{Z}_\ell)$, 
as well as the condition: ii) $\ell \nmid \Delta^\prime$, Proposition \ref{thm:Proposition12} applies.  
Set $L:=\mathbb{Q}(E[\ell])$ and $G:=\Gal(L/\mathbb{Q})$. 
Thus, for any $f \in S^{[\ell]}(E/\mathbb{Q})$, $Res^{\mathcal{G}}_{\mathcal{G}^\prime}(f)$ is of the form 
$g + Inf^{\mathcal{G}^\prime}_{G^\prime}(\widetilde h)$ on $\mathcal{G}^\prime$, 
for some $g \in \Homol^1(\mathcal{G}^\prime,E(\overline{\mathbb{Q}}))$ 
that splits in $E(\overline{\mathbb{Q}})$, and $\widetilde h\in \Homol^1(G^\prime,E[\ell])$. 

Now, consider the element $\beta =\begin{pmatrix} \lambda & 0\\0 & \lambda \end{pmatrix}$, where 
$\lambda \in \mathbb{F}_\ell^*$ has order $(\ell-1)/\gcd(\ell-1,12)$. 
Then, from part g) of Lemma \ref{thm:Lemma9}, 
$\beta$ is in the center of $\Gal(L/(L \cap K^\prime)) \approx G^\prime$, 
and $\beta-1$ defines an automorphism of $E[\ell]$. 
Thus, from Sah's Theorem \cite[Theorem 5.1, p. 118]{lang1978}, one has:
\begin{equation}
\label{eq:eq208final8}
\Homol^1(G^\prime,E[\ell])=0.
\end{equation} 
See \cite[Proposition 19, p. 51]{bashmakov1972} in the context of Abelian varieties. 
Note that  \cite[Theorem 11]{lawson2016} on vanishing of Galois cohomology groups defined on torsion points does not apply here, 
since $\mathbb{Q}(\mu_\ell) \cap K^\prime$ might be larger than $\mathbb{Q}$ ({\em c.f.} part d) of Lemma \ref{thm:Lemma9}). 
On the other hand, Coates' result \cite[Lemma 10, p. 179]{coates1970} does apply, since  
$G^\prime=\Gal(L^\prime/K^\prime)$ contains ${\bf SL}_2(\mathbb{F}_\ell)$, as follows from parts a) and d) of Lemma \ref{thm:Lemma9}.   

Thus, one has for any $\sigma \in \mathcal{G}^\prime$:
\begin{equation}
f(\sigma) = [\sigma-1](\widetilde Q + R),
\end{equation} 
where $\widetilde Q\in E(\overline{\mathbb{Q}})$ and $R \in E[\ell]$. 
Lastly, one has for any $\sigma \in \mathcal{G}$:
\begin{equation}
f(\sigma) - [\sigma-1](\widetilde Q + R ) = Inf^{\mathcal{G}}_{\Gal(K^\prime/\mathbb{Q})}(h^\prime),
\end{equation}
for some $h^\prime\in \Homol^{1}(\Gal(K^\prime/\mathbb{Q}),E(K^\prime))$. 
But then, from \cite[Corollary 10.2, p. 84]{brown1982}, $\Homol^{1}(\Gal(K^\prime/\mathbb{Q}),E(K^\prime))$ 
is annihilated by $\vert \Gal(K^\prime/\mathbb{Q}) \vert = n$. 
Therefore, one has for any $\sigma \in \mathcal{G}$:
\begin{equation}
[n] \left ( f(\sigma) - [\sigma-1](\widetilde Q + R ) \right ) = [\sigma-1]R^\prime,
\end{equation}
for some $R^\prime\in E(K^\prime)$. Thus, $[n]f$ maps to $0$ in $H^1(\mathcal{G},E(\overline{\mathbb{Q}}))$. 
Since, on the other hand, $f$ maps into $H^1(\mathcal{G},E(\overline{\mathbb{Q}}))[\ell]$, 
the inclusion (\ref{eq:eq207final8}) is proved, as $\gcd(n,\ell)=1$ by Proposition \ref{thm:Proposition3}.

The other inclusion is clear since, for any place $v_0$ of $\mathbb{Q}$, the homomorphism   
$Res^{\mathcal{G}}_{\mathcal{G}_{v_0}}: \Homol^{1}(\mathcal{G},E[\ell]) \rightarrow \Homol^{1}(\mathcal{G}_{v_0},E(\overline{\mathbb{Q}}_{v_0}))$ is the composition of homomorphisms $\Homol^{1}(\mathcal{G},E[\ell]) \rightarrow \Homol^{1}(\mathcal{G},E(\overline{\mathbb{Q}})) 
\xrightarrow{Res^{\mathcal{G}}_{\mathcal{G}_{v_0}}} \Homol^{1}(\mathcal{G}_{v_0}, E(\overline{\mathbb{Q}}_{v_0}))$.
\hfill $\square$ 


\section{Examples}
\label{section:examples}

Recall that, given an elliptic curve $E$ over the rationals, there is, for each prime $\ell$, a Galois representation 
$\widetilde \rho_\ell: \mathcal{G} \rightarrow {\bf GL}_2(\mathbb{F}_\ell)$, where $\mathcal{G}$ is the absolute Galois group of $\mathbb{Q}$, through Galois action on the $\ell$-torsion points of $E$. Then, one has an isomorphism 
$\Gal(\mathbb{Q}(E[\ell])/\mathbb{Q}) \approx \widetilde \rho_\ell(\mathcal{G})$. In \cite{serre1972}, the notation is $\phi_\ell(G)$.
\\


\noindent {\bf Example 1.} From \cite[Proposition 1.4, p. 45]{silverman2009}, to each $j\in \mathbb{Q}$ corresponds a unique class of elliptic curves over $\mathbb{Q}$ up to isomorphism
 over $\overline{\mathbb{Q}}$. For each $j_0\in\mathbb{Q}$, the class of elliptic curves having $j_0$ as $j$-invariant is 
in one-to-one correspondence with $\mathbb{Q}^*/(\mathbb{Q}^*)^{n(j)}$, where $n(j)=2,4,6$ according to the cases 
$j\not=0,1738$, $j=1728$, or $j=0$, respectively \cite[Corollary 5.4.1, p. 343]{silverman2009}. 
From \cite[p. 427]{silverman2009}, there are exactly $13$ elliptic curves over $\mathbb{Q}$, up to isomorphism over 
$\overline{\mathbb{Q}}$, having CM. 
Then, for each corresponding $j$-invariant (see \cite[p. 295]{serre1967b} for a list), there are infinitely many elliptic curves over $\mathbb{Q}$ having CM  
({\em i.e.}, over a finite base field extension).
Moreover, except for these $13$ $j$-invariants, to any $j\in \mathbb{Q}$ corresponds a class of elliptic curves without CM. 
\\


\noindent {\bf Example 2.}  Consulting \cite[Table 1]{rubin2002}, we consider the elliptic curve from \cite{penney1975}:
\begin{equation}
E\::\: y^2=x^3 + a x^2 + b x,
\end{equation}
with $a=1,692,602 = 2 \cdot 37 \cdot 89 \cdot 257$ 
and $b=-3 \cdot 5 \cdot 11 \cdot 13 \cdot 17 \cdot 19 \cdot 23 \cdot 29 \cdot 31 \cdot 37$.
This equation corresponds to \cite[p. 42]{silverman2009}:
\begin{equation}
\begin{cases}
a_1=a_3=a_6=0;\\
a_2=a; \quad a_4=b;\\
b_2=4a;\quad b_4=2b; \quad b_6=0;\\
b_8=-b^2;\\
c_4=16a^2-48b;\\
c_6=- 64a^3 + 36 \cdot 8 ab. 
\end{cases}
\end{equation}
One computes the discriminant:
\begin{eqnarray}
\Delta(E)&=&-b_2^2b_8-8b_4^3-27b_6^2+9b_2b_4b_6=16a^2b^2-64b^3\nonumber\\
&=&16 b^2(a^2-4b),
\end{eqnarray} 
and the $j$-invariant:
\begin{equation}
j(E)=c_4^3/\Delta=\frac{(16)^2(a^2-3b)^3}{b^2(a^2-4b)}.
\end{equation}
Furthermore, one has:
\begin{equation}
\begin{cases}
\Delta(E)=2^8 \cdot 3^2 \cdot 5^2 \cdot 11^2 \cdot 13^2 \cdot 17^2 \cdot 19^2 \cdot 23^2 \cdot 29^2 \cdot 31^2 \cdot 37^3 \cdot p^\prime;\\
j(E) = 
\frac{2^4 \cdot 7^3 \cdot 61^3 \cdot 347^3 \cdot (p'')^3}
{3^2 \cdot 5^2 \cdot 11^2 \cdot 13^2 \cdot 17^2 \cdot 19^2 \cdot 23^2 \cdot 29^2 \cdot 31^2 \cdot p^\prime},      
\end{cases}
\end{equation}
where $p^\prime$ and $p''$ are the prime numbers $8,420,798,017$ and $812,633$, respectively. 
In particular, $E$ is not semi-stable.
 
Thus, this elliptic curve has potential multiplicative reduction at 
$p_0=3$. Therefore, $E$ has no CM. Moreover, from \cite{penney1975}, $E$ has rank at least $7$ over $\mathbb{Q}$. 
Therefore, Theorem \ref{thm:Theorem4} does not apply, whereas 
Theorem \ref{thm:Theorem5} does. 

Moreover, $p=7$ is the smallest prime number at which $E$ has good reduction. 
From \cite[Proposition 24, p. 314]{serre1972}, one concludes that 
$\widetilde \rho_\ell(\mathcal{G})={\bf GL}_2(\mathbb{F}_\ell)$, 
whenever $\ell \nmid \Delta(E)$, $\ell \nmid \ord_{p_0}(j(E))= -2$, 
and $\ell > (\sqrt{p}+1)^8$. This means that Theorem \ref{thm:Theorem5} applies to $E$ with $\ell > 31,210$   
other than $p^\prime$.  
  
Pushing further these computations, let us consider   
the corresponding Tate's curve \cite[Theorem 14.1, p. 445]{silverman2009} at $p_0=3$. One obtains an isomorphism 
$\phi: \overline{\mathbb{Q}}_{p_0}/\langle q \rangle \xrightarrow{\approx} E(\overline{\mathbb{Q}}_{p_0})$, 
as $\Gal(\overline{\mathbb{Q}}_{p_0}/K'')$-modules, for some unramified quadratic extension $K''/\mathbb{Q}_{p_0}$. 
Here, $q$ is defined in \cite[p. 444]{silverman2009}. One can easily check that $\ord_{p_0}(q)=2$, 
from the fact that $\ord_{p_0}(\Delta)=2$. 
Therefore, one obtains an isomorphism 
$\phi: (\langle q^{1/\ell}\rangle \times\mu_\ell)/\langle q \rangle \xrightarrow{\approx} E[\ell](\overline{\mathbb{Q}}_{p_0})$, 
as $\Gal(\overline{\mathbb{Q}}_{p_0}/K'')$-modules. 
This implies that $\widetilde \rho_\ell(\mathcal{G})$ has a cyclic subgroup of order $\ell$, for any prime $\ell\not = 2$. 
 
Now, from \cite[Theorem 3]{mazur1978}, if $G=\widetilde \rho_\ell(\mathcal{G})\not = {\bf GL}_2(\mathbb{F}_\ell)$, then either 
$G$ is in the normalizer $N$ of a Cartan subgroup $C$, or $\ell$ is one of the exceptional primes $2,3,5,7,11,13,17,19,37,43,67,163$.
But $C$ has index $2$ in $N$ \cite[\S 2.2, p. 279]{serre1972}, so that $N$ has order coprime with $\ell$. Therefore, one concludes that $\ell$ is one of the exceptional primes. 

Furthermore, note that $E$ has potential good reduction at $2$, 
since $\ord_{2}(\Delta(E))=8$ and $\ord_{2}(j(E))=4$.  
It follows from \cite[a3), p. 312]{serre1972}, that the group $\Phi_{2}$ defined in 
\cite[pp. 311--312]{serre1972}, has cardinality $2$, $3$, $4$, $6$, $8$, or $24$. Recall that $\Phi_2$ is a quotient group 
of the inertia group $I_2$, and that it embeds into ${\bf GL}_2(\mathbb{F}_\ell)$, if $\ell \geq 5$ \cite[pp. 311--312]{serre1972}.
Now, the Weierstrass equation $y^2=x^3+ax^2+bx$ is minimal over $\mathbb{Q}_2$ since $\ord_2(\Delta)=8<12$. 
From Section \ref{subsection:localElliptic}, the curve $E$ has good reduction over a finite extension $K^\prime/\mathbb{Q}_2$ of degree with only $2$ or $3$ as prime factors. 
Moreover, one can make a change of variable of the form $x=u^2x^\prime+r$ and $y=u^3y^\prime+u^2 s x^\prime+t$, 
where $u\in \mathcal{O}_{K^\prime}^*$, $r,s,t \in \mathcal{O}_{K^\prime}$ \cite[Proposition 1.3, part a), p. 186]{silverman2009}, 
and obtain the discriminant $u^{-12}\Delta$ \cite[Remark 1.1, p. 186]{silverman2009}. 
It follows that $v(u^{-12}\Delta)= - 12 v(u) + 8 v(2)  = 0$, where $v$ is the discrete valuation of $K^\prime$.
This in turn implies that $3 \mid v(2)$, so that the ramification index of the extension $K^\prime/\mathbb{Q}_2$ 
is divisible by $3$. 
One concludes that $\Phi_2$ has order $3$, $6$, or $24$ (see \cite[p. 312]{serre1972}). 

Thus, for $\ell\not = 2$, if $\widetilde \rho_\ell(\mathcal{G})$ is not the full linear group, then it is contained in a Borel subgroup of the linear group. Indeed, \cite[Corollaire, p. 277]{serre1972} implies that $\widetilde \rho_\ell(\mathcal{G})$ contains a split Cartan 
semi-subgroup of the form $\begin{pmatrix} * & 0\\ 0 & 1 \end{pmatrix}$. 
Then, the condition $\ell \mid \# \widetilde \rho_\ell(\mathcal{G})$ for $\ell\not =2$ implies that $\widetilde \rho_\ell(\mathcal{G})$ 
is either the full linear group, or else is contained in a Borel subgroup. See \cite[Proposition 17 and remark a), p. 282]{serre1972}. 
But then, assuming $\ell \geq 5$, \cite[Proposition 23, part b), p. 313]{serre1972} implies that the divisor $3$ of $\vert \Phi_{2} \vert$ divides the order of $\left (\mathbb{Z}/2^n\mathbb{Z}\right)^*$, for some $n\geq 1$, which is not the case. 
So, actually, $\widetilde \rho_\ell(\mathcal{G})={\bf GL}_2(\mathbb{F}_\ell)$, for all $\ell\geq 5$. 
See \cite[5.7.1, p. 315]{serre1972} for this argument. 
It follows from (\ref{eq:eq41final7}) that $\rho_\ell(\mathcal{G})={\bf GL}_2(\mathbb{Z}_\ell)$, for all $\ell\geq 5$. 

Next, we consider the Weierstrass equation of $E$:
\begin{eqnarray}
&&E : y^2 = x^3 + A x + B;\nonumber\\
&& \; A = -27 c_4 = - 27 (16a^2-48b);\nonumber\\
&& \; B  = - 54 c_6 = -54 (- 64a^3 + 36 \cdot 8 ab);\nonumber\\
&& \; \Delta^\prime(E) = - 2^{20} \cdot 3^{12} \cdot b^2 (a^2 - 4b).
\end{eqnarray}

We conclude from Theorem \ref{thm:Theorem5} that $\Sha(E/\mathbb{Q})_{\ell}$ vanishes at all primes $\ell$, other than the ones in the set:
\begin{equation}
P=\{2,3,5,7,11,13,17,19,23,29,31,37,p^\prime\}.
\end{equation}
In particular, the smallest prime number for which Corollary \ref{thm:Corollary2} applies is $\ell=41$. Thus, one has:
\begin{eqnarray}
\rank (E/\mathbb{Q}) &=& \dim_{\mathbb{F}_{41}} S^{[41]}(E/\mathbb{Q}),
\end{eqnarray} 
since $\Sha(E/\mathbb{Q})[41]=0$, from Theorem \ref{thm:Theorem5}, and $E[41](\mathbb{Q})=0$ from Mazur's Theorem on torsion points. 
\\   


\noindent {\bf Remark 5.}
\label{thm:Remark5}
The previous example solves the open problem mentioned in \cite[Problem 2.16, p. 27]{stein2007} in the non-CM case.
\\



\noindent {\bf Example 3.}  The following example was communicated to us by Professor C. Wuthrich. 
The non-CM elliptic curve of rank $0$ defined by the cubic equation \cite{wuthrich1058e1}:
\begin{eqnarray}
&&E : y^2 + xy = x^3 - x^2 - 332,311 x - 73,733,731;\nonumber\\
&& \; \Delta(E) = - 5,302,593,435,347,072 = - 2^7 \cdot 23^{10};\nonumber\\ 
&& \; c_4 = 15,950,937 = 3 \cdot 19 \cdot 23^4;\nonumber\\
&&\; j(E) = - \frac{(3 \cdot 19 \cdot 23^4)^3}{2^7 \cdot 23^{10}} 
= - 2^{-7} \cdot 3^3 \cdot 19^3 \cdot 23^2,
\end{eqnarray}
has Shafarevich-Tate group of {\em analytic} order $25$, which is denoted as $\# \Sha(E/\mathbb{Q})_{an}$ $=25$. 
From \cite{miller2011}, one concludes that $\# \Sha(E/\mathbb{Q}) = 25$, as $E$ has conductor $N=1058<5000$, and rank $r\leq 1$.

Recall that $\# \Sha(E/\mathbb{Q})_{an}$ is based on BSD-2, and is computed as follows: 
\begin{equation}
\# \Sha(E/\mathbb{Q})_{an} = \frac{\lim_{s\rightarrow 1} (s-1)^{-r}L(E,s) (\# E_{tor}(\mathbb{Q}))^{2}}{\Omega \, 2^r R  \prod_p c_p},
\end{equation}
where $L(E,s)$ denotes the $L$-series of $E$, $r$ is the rank of $E/\mathbb{Q}$, $\Omega$ is defined from the invariant differential, 
$R$ is the elliptic regulator of $E(\mathbb{Q})/E_{tor}(\mathbb{Q})$, and $c_p$ denotes $\# E(\mathbb{Q}_p)/E_0(\mathbb{Q}_p)$.
See \cite[pp. 451--452]{silverman2009}. In the example, based on the information available on the Website \cite{wuthrich1058e1}, 
this expression simplifies to:
\begin{equation}
\# \Sha(E/\mathbb{Q})_{an} = \frac{L(E,1)}{\Omega} =  25,
\end{equation}
which is consistent with Cassels' result \cite[Theorem 4.14, p. 341]{silverman2009}.

Since $E$ has multiplicative reduction at $p_0=2$, it follows that $\widetilde \rho_\ell(\mathcal{G})$ 
has a cyclic subgroup of order $\ell$, 
for any prime $\ell \not = 7$. Indeed, the Tate's curve $E_q$, with $\ord_{p_0}(q)=7$, yields an isomorphism 
$\phi: (\langle q^{1/\ell}\rangle \times\mu_\ell)/\langle q \rangle \xrightarrow{\approx} E[\ell](\overline{\mathbb{Q}}_{p_0})$, as 
$\Gal(\overline{\mathbb{Q}}_{p_0}/K'')$-modules 
(where $K''$ is the unramified extension of degree $2$ over $\mathbb{Q}_{p_0}$, since $E$ has non-split multiplicative reduction at 
$p_0=2$). 

Thus, unless $\ell$ is one of the exceptional primes $2,3,5,7,11,13,17,19,37,43,67$, or $163$, 
one has $\widetilde \rho_\ell(\mathcal{G})={\bf GL}_2(\mathbb{F}_\ell)$ \cite[Theorem 3]{mazur1978}.

Pushing further this example, observe that $E$ has potential good reduction at $23$. 
It follows from \cite[a1), p. 312]{serre1972}, that the group $\Phi_{23}$ has cardinality $6$, since $23 >3$ and $\ord_{23}(\Delta)=10$. 
Thus, for $\ell \not = 7$, if $\widetilde \rho_\ell(\mathcal{G})$ is not the full linear group, 
then it is contained in a Borel subgroup of the linear group. 
But then, assuming that $\ell \geq 5$, \cite[Proposition 23, part b), p. 313]{serre1972} implies that $6 = \vert \Phi_{23} \vert$ divides  
the order of $\left (\mathbb{Z}/23^n\mathbb{Z}\right)^*$, for some $n\geq 1$, which is not the case.
So, actually, $\widetilde \rho_\ell(\mathcal{G})={\bf GL}_2(\mathbb{F}_\ell)$, for all $\ell \not = 2,3,7$, and hence, in particular for $\ell=5$. It follows from (\ref{eq:eq41final7}) that $\rho_\ell(\mathcal{G})={\bf GL}_2(\mathbb{Z}_\ell)$, for all $\ell$  
except possibly $2$, $3$ and $7$.
Actually, it is reported that $\widetilde \rho_\ell(\mathcal{G})={\bf GL}_2(\mathbb{F}_\ell)$ holds for any prime $\ell$ 
\cite{wuthrich1058e1}. 

Next, we consider the Weierstrass equation of $E$:
\begin{eqnarray}
&&E : y^2 = x^3 + A x + B;\nonumber\\
&& \; A = -430675299 = - 3^{4} \cdot 19 \cdot 23^{4};\nonumber\\
&& \; B  = - 3,443,997,030,498 = - 2 \cdot 3^6 \cdot 23^5 \cdot 367;\nonumber\\
&& \; \Delta^\prime(E) = 2^{15} \cdot 3^{12} \cdot 23^{10}.
\end{eqnarray}
This yields the following Weierstrass equation, under the change of variable $(x,y)\mapsto (3^2 x^\prime,3^3 y^\prime)$:
\begin{eqnarray}
&&E : y^2 = x^3 + A_1 x + B_1;\nonumber\\
&& \; A_1 = -5,316,979 = - 19 \cdot 23^{4};\nonumber\\
&& \; B_1 = - 4724275762 = - 2 \cdot 23^5 \cdot 367;\nonumber\\
&& \; \Delta_1^\prime(E) = 2^{15} \cdot 23^{10}.
\end{eqnarray}
Thus, one has to discard the prime $\ell=23 \mid \Delta_1^\prime(E)$ in addition to the exceptional primes  
$2,3,5,7,13$ (to avoid the exceptional condition $(\ell-1)/\gcd(\ell-1,12)=1$). 
So, although $\rho_\ell(\mathcal{G})={\bf GL}_2(\mathbb{Z}_\ell)$ at $\ell=5$ in this example, 
Theorem \ref{thm:Theorem5} does not predict the vanishing of $\Sha(E/\mathbb{Q})_{\ell}$ 
due to the exceptional condition $(\ell-1)\mid 12$. 

Altogether, Theorem \ref{thm:Theorem5} predicts that $\Sha(E/\mathbb{Q})_{\ell}$ vanishes at any prime $\ell$ other than 
$2,3,5,7,13,23$. 
In particular, the conclusion is consistent with BSD-2 in this example ({\em i.e.}, $\# \Sha(E/\mathbb{Q})=25$).
\\

\noindent {\bf Example 4.} 
Consider the elliptic curve presented in Example 3:
\begin{eqnarray}
&&E : y^2 = x^3 + A_1 x + B_1;\nonumber\\
&& \; A_1 = -5,316,979 = - 19 \cdot 23^{4};\nonumber\\
&& \; B_1 = - 4724275762 = - 2 \cdot 23^5 \cdot 367;\nonumber\\
&& \; \Delta_1^\prime(E) = 2^{15} \cdot 23^{10}.
\end{eqnarray}
This curve has no rational points over $\mathbb{Q}$ \cite{wuthrich1058e1}. 

Let us consider the prime $p=7$. The reduced curve has Weierstrass equation:
\begin{eqnarray}
&&\widetilde E_{p} : y^2 = x^3 + 4 x + 4,
\end{eqnarray}
since $y^2 = x^3 + A_1 x + B_1$ is a minimal Weierstrass equation for $E$ at $p$. 
The non-trivial points of the reduced curve modulo $p$ are: $(0,\pm 2)$, $(1,\pm 3)$, $(3,\pm 1)$, $(-3,0)$, 
$(-2,\pm 3)$. Thus, $\widetilde E(\mathbb{F}_p)$ has order $2 \cdot 5$. 
We take $\ell=5$, so that $\ell \nmid \Delta^\prime$.  
Since $p \nmid \Delta^\prime(\ell -1)(\ell +1)\ell$, 
it follows that Theorem \ref{thm:Theorem6} applies.

The non-trivial points that have order $\ell=5$ are the ones of the form $[2]P$ with $[2]P\not = O$: 
$(1,\pm 3)$, $(-2,\pm 3)$.  
One may take the generator $(1,3)$ of $\widetilde E(\mathbb{F}_p)_{\ell}$, 
so that $\frac{a}{b} = 9$ in Theorem \ref{thm:Theorem6}. 
We may choose the radical $y=\left ( \frac{a}{b} \right )^{1/2}=3$, in this simple situation.   
Let then $x$ be a root of $X^3+A_1X+B_1=3^2$ in $\overline{\mathbb{Q}}$ that maps to 
$1$ in the residue field $\mathbb{F}_p$, under an embedding $\xi:\mathbb{Q}(x) \rightarrow \overline{\mathbb{Q}}_p$ 
followed by projection into the residue field of $\mathbb{Q}(x)$ at some prime $\mathfrak{p}\mid p$. 
By Hensel's Lemma \cite[p. 43]{lang1986}, one has $\mathbb{Q}(x)_{\mathfrak{p}} = \mathbb{Q}_p$. 
Any point of $\widetilde E[\ell](\mathbb{F}_p)$ can be lifted to a point of 
$\mathbb{Q}(x)$, which is contained in the field $K^\prime$ defined in (\ref{eq:eq149final8}). 

Then, for any point $P_0 \in E(\mathbb{Q}_p)$, the point $[m]P_0$, where $m=2$, projects to a point 
$\widetilde P \in \widetilde E(\mathbb{F}_p)$ that can be lifted to a point $P^\prime$ 
with affine coordinates in $K^\prime$ (in fact, in $\mathbb{Q}(x)$). 
Thus, $[m]P_0-\xi(P^\prime) \in E_1(\mathbb{Q}_p) \subset [\ell]E(\mathbb{Q}_p)$, so that $[m]P_0-\xi(P^\prime)=[\ell]Q''$, 
for some $Q''\in E(\mathbb{Q}_p)$. 
Since $3 m -\ell=1$, one obtains: $P_0=\xi([3]P^\prime) + [\ell]([3]Q'' - P_0)$, and we set $P=[3]P^\prime$ and $Q^\prime=[3]Q'' - P_0$.

Now, assume that $P_0 \not \in [\ell] E(\mathbb{Q}_p)$. Such a point exists since $\# \widetilde E(\mathbb{F}_p) = 2 \cdot \ell$ and 
$E(\mathbb{Q}_p)$ projects onto $\widetilde E(\mathbb{F}_p)$.   
Then, one must have $P\in E(\mathbb{Q}(x)) \setminus E(\mathbb{Q})$. 
Indeed, since $E(\mathbb{Q})=0$ in this example, the case $P\in E(\mathbb{Q})$ would imply
that $P_0 \in [\ell]E(\mathbb{Q}_p)$. 
This issue was pointed out to us by Professor K. Rubin in an early draft of this paper. 
This motivated us to develop the results of Section \ref{section:liftingsPoints}. As the field $K^\prime$ 
is a finite extension over $\mathbb{Q}$, our approach in Section \ref{section:ProofMainTheorems} was then sufficient 
to prove Theorem \ref{thm:Theorem5}. 
\\



\appendix


\section{Ramification of the extension $L_\infty/\mathbb{Q}$}
\label{section:appendixA}

The following result is a consequence of a theorem of Sen \cite{sen1972} that was conjectured by Serre \cite{serre1967c}.

\begin{prop}
\label{thm:Proposition13}
Let $E$ be an elliptic curve over the rationals. 
Let $\ell > 3$ be a prime number at which $E$ has good reduction. 
Let $L_n$ be the number field obtained by adjoining the affine coordinates of the $\ell^n$-torsion points of $E$. 
Then, the different $\mathfrak{D}_n$ of $L_n/\mathbb{Q}$ satisfies the estimate:
\begin{equation}
(\ell^{n} a) \subseteq \mathfrak{D}_n \subseteq (\ell^{n} a^{-1}) ,
\end{equation}
for all $n\geq 1$, for some integer $a$. 
\end{prop}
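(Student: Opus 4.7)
The plan is to reduce Proposition \ref{thm:Proposition13} to a local analysis at each prime of $L_n$ and to invoke Sen's theorem \cite{sen1972} for the decisive contribution at primes above $\ell$. Since the different factors as $\mathfrak{D}_n = \prod_{\mathfrak{P}} \mathfrak{P}^{v_\mathfrak{P}(\mathfrak{d}_{n,\mathfrak{P}})}$, the claimed inclusion $(\ell^n a) \subseteq \mathfrak{D}_n \subseteq (\ell^n a^{-1})$ amounts to showing that $v_\mathfrak{P}(\mathfrak{d}_{n,\mathfrak{P}})$ differs from $n \cdot v_\mathfrak{P}(\ell)$ by an amount bounded uniformly in $n$, with that bound absorbed into a single integer $a$ depending only on $E$ and $\ell$.

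First I would dispose of primes $\mathfrak{P}$ lying above rational primes $p \neq \ell$. When $E$ has good reduction at $p$, the Criterion of N\'eron--Ogg--Shafarevich (see Lemma \ref{thm:Lemma2} and \cite[Theorem 7.1, p. 201]{silverman2009}) forces $L_n/\mathbb{Q}$ to be unramified at $\mathfrak{P}$, so $v_\mathfrak{P}(\mathfrak{d}_{n,\mathfrak{P}}) = 0$ and nothing further is required. At the finitely many bad primes $p \neq \ell$, I would use the explicit description of the local Galois representation (Tate's parameterization in the multiplicative case; potential good reduction after a bounded base change in the additive case) together with the fact that $\ell \neq p$ forces the wild inertia to act through a finite quotient; the residual tame contribution at each such prime is then controlled and can be folded into the constant $a$.

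The heart of the argument is the estimate at primes $\mathfrak{P} \mid \ell$. Since $E$ has good reduction at $\ell$, the local tower $(L_\infty)_\mathfrak{P}/\mathbb{Q}_\ell$ is an $\ell$-adic analytic extension whose Galois group is an open subgroup of $\rho_\ell(\mathcal{G}_{v_0}) \subseteq {\bf GL}_2(\mathbb{Z}_\ell)$ — a compact $\ell$-adic Lie group — where $v_0$ is the place of $\mathbb{Q}$ below $\mathfrak{P}$. After absorbing a harmless finite unramified twist to reduce to the totally ramified case, Sen's theorem \cite{sen1972} provides a linear two-sided bound on the upper-numbering ramification filtration of this tower: the breaks corresponding to the subfields $(L_n)_\mathfrak{P}$ occur at indices of the form $n \cdot v_\mathfrak{P}(\ell) + O(1)$. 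Integrating this bound via the Hasse--Arf theorem and the standard formula expressing the different in terms of the ramification filtration, applied stepwise up the tower, then yields $v_\mathfrak{P}(\mathfrak{d}_{n,\mathfrak{P}}) = n \cdot v_\mathfrak{P}(\ell) + O(1)$.

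The hard part will be the precise application of Sen's theorem: one must identify the exact local $\ell$-adic Lie group (its dimension depends in particular on whether the reduction of $E$ at $\ell$ is ordinary or supersingular), verify that the hypotheses are met after the finite unramified twist, and convert the qualitative statement about filtration jumps into the quantitative estimate on the exponent of the different. Once this local bound at $\ell$ is in hand, combining it with the bookkeeping at the remaining primes produces the integer $a$ and concludes the proof of Proposition \ref{thm:Proposition13}.
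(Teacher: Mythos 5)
You reproduce the paper's argument essentially step for step: factor $\mathfrak{D}_n$ locally, dispose of the primes of good reduction away from $\ell$ by N\'eron--Ogg--Shafarevich, bound the finitely many bad primes away from $\ell$ by passing to $L_1$, where the reduction becomes good or multiplicative so the tower is unramified, respectively tame, there (the paper's Cases C and D via Lemma \ref{thm:Lemma1}), and at $\mathfrak{P}\mid\ell$ convert Sen's comparison of the Lie filtration with the upper-numbering ramification filtration into a bound on the different exponent (the paper's Case A). Two minor inaccuracies to repair in the write-up: Sen's theorem yields $v_\mathfrak{P}(\mathfrak{d}_{n,\mathfrak{P}})=n\,v_\mathfrak{P}(\ell)+O\bigl(v_\mathfrak{P}(\ell)\bigr)$ rather than $+O(1)$ (already for the cyclotomic subfield the deviation is proportional to the ramification index), which is nevertheless exactly what the ideal inclusions $(\ell^{n}a)\subseteq\mathfrak{D}_n\subseteq(\ell^{n}a^{-1})$ require; and an additive prime need not acquire good reduction after a bounded base change --- when $v(j)<0$ it becomes multiplicative over $L_1$ and is handled through the Tate curve (tame ramification), while the Hasse--Arf theorem is neither applicable (the groups are nonabelian) nor needed.
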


\begin{proof}
We consider the following four cases, in view of \cite[Proposition 10, p. 52]{serre1979}.

{\em Case A:} $p =\ell>3$ (and $p \not \in \Sigma_{E}$). 
Consider the Galois group $G$ of the infinite extension obtained by adjoining over $\mathbb{Q}$ the affine coordinates of all $\ell^n$-torsion points of $E$, where $n\geq 1$, as an $\ell$-adic Lie group.  
Let $\{ G_n \}$ be a Lie filtration on $G$. 
For instance, one may take $G_n:=\rho_\ell^{-1}(I+\ell^n{\bf Mat}_{2\times 2}(\mathbb{Z}_\ell))$. 
On the other hand, let $\{ G(n) \}$ denote the upper numbering filtration on the Galois group $G$ corresponding to 
an embedding $\overline{\mathbb{Q}} \hookrightarrow \overline{\mathbb{Q}}_\ell$. 
Sen's Theorem \cite[Theorem, p. 48]{sen1972} gives the estimate:
\begin{equation}
G(n e-c) \subseteq G_n \subseteq G(n e+c),
\end{equation} 
valid for all $n$, for some constant $c$ (depending only on $G$, and hence on $E$ and $\ell$), where $e$ is the absolute ramification index of the ground field (so, $\mathbb{Q}_\ell$ here, and $e=1$).

Now, let $\mathfrak{P}$ denote a prime ideal of $L_n$ lying above $\ell$. 
Then, using \cite[Proposition 4, p. 64]{serre1979}, one has:
 \begin{eqnarray}
\val_{\mathfrak{P}} \left ( \mathfrak{D}_n \right ) &=& 
\sum_{u=0}^{u_0} \left ( \vert G[u] \vert - 1 \right),
\end{eqnarray}
where $G[u]$ denotes the lower numbering ramification groups, and $u_0$ is the largest integer $u$ such that $G[u]$ is non-trivial.
One has:
 \begin{eqnarray}
\sum_{u=0}^{u_0} \vert G[u] \vert &=& \vert G[0] \vert \int_{u=0}^{u_0+1} \frac{1}{(G[0]:G[u])}\, du\nonumber\\
&=& \vert G[0] \vert \varphi(u_0+1),
\end{eqnarray}
where $\varphi$ denotes Herbrand's function \cite[p. 73]{serre1979}. From Sen's Theorem, one has 
$n-c \leq \varphi(u_0+1) \leq n+c$.

Moreover, from \cite[Exerc. 3 c), pp. 71--72]{serre1979}, one has $u_0\leq e_\mathfrak{P}/(\ell-1)$, and hence 
$u_0 + 1\leq e_\mathfrak{P}\ell/(\ell-1)$. Thus, we obtain:
 \begin{equation}
\prod_{\mathfrak{P}\mid \ell} \mathfrak{P}^{e_{\mathfrak{P}}(n+c)} \subseteq 
\prod_{\mathfrak{P}\mid \ell} \left ( \mathfrak{D}_n \right )_{\mathfrak{P}} \subseteq 
\prod_{\mathfrak{P}\mid \ell} \mathfrak{P}^{e_{\mathfrak{P}}(n-c-\ell/(\ell-1))},
\end{equation}
that is:
 \begin{equation}
(\ell^{n+c} ) \subseteq \prod_{\mathfrak{P}\mid \ell} \left ( \mathfrak{D}_n \right )_{\mathfrak{P}} \subseteq 
(\ell^{n-c -\ell/(\ell-1)} ).
\end{equation}

{\em Case B:} $p \not = \ell$ and $p\not \in \Sigma_{E}$. Then, from the Criterion of N\'eron-Ogg-Shafarevich, 
the extension $L_n/\mathbb{Q}$ is unramified. 
Hence, using \cite[Theorem 1, p. 53]{serre1979}, one obtains:
 \begin{equation}
\prod_{\mathfrak{P}\mid p} \left ( \mathfrak{D}_n \right )_{\mathfrak{P}} = (1), 
\end{equation}
where $\mathfrak{P}$ stands for prime ideals of $L_n$. 

{\em Case C:} $p \not = \ell$ and $p \in \Sigma_{E,p.g.}$. 
Then, from Lemma \ref{thm:Lemma1} (having assumed that $\ell > 3$), $E$ has good reduction over $L_1$. 
From the Criterion of N\'eron-Ogg-Shafarevich, the extension $L_n/L_1$ is unramified. 
Therefore, using \cite[Proposition 8, p. 51]{serre1979}, one has:
\begin{equation}
\prod_{\mathfrak{P}\mid p} \left ( \mathfrak{D}_n \right )_{\mathfrak{P}} = 
 \prod_{\mathfrak{p}\mid p} \left ( \mathfrak{D}_1 \right )_{\mathfrak{p}},
\end{equation}
where $\mathfrak{p}$ denotes prime ideals of $L_1$.

{\em Case D:} $p \not = \ell$ and $p \in \Sigma_{E,p.m.}$. 
From Lemma \ref{thm:Lemma1} (having assumed that $\ell > 3$), $E$ has multiplicative reduction over $L_1$.
Considering Tate's curves, the extension $L_n/L_1$ is at most tamely ramified. 
Therefore, one obtains:
\begin{equation}
\prod_{\mathfrak{p}\mid p} \left ( \mathfrak{D}_1 \right )_{\mathfrak{p}} p \subseteq 
\prod_{\mathfrak{p}\mid p} \left ( \mathfrak{D}_1 \right )_{\mathfrak{p}} 
\prod_{\mathfrak{P}\mid p} \mathfrak{P}^{e_{\mathfrak{P}}(L_n/L_1)-1} \subseteq
\prod_{\mathfrak{P}\mid p} \left ( \mathfrak{D}_n \right )_{\mathfrak{P}} \subseteq
(1),
\end{equation}
using \cite[Proposition 13, p. 58]{serre1979}. Here, $e_{\mathfrak{P}}(L_n/L_1)$ denotes 
the relative ramification index of $\mathfrak{P}$ in $L_n/L_1$. 
Since $e_{\mathfrak{P}}(L_n/L_1)$ is bounded by (in fact, divides)  
the ramification index $e_{\mathfrak{P}}$ of $\mathfrak{P}$ in $L_n/\mathbb{Q}$, 
the first inclusion holds, as $(p)=\prod_{\mathfrak{P}\mid p} \mathfrak{P}^{e_{\mathfrak{P}}}$. 
\end{proof}


\noindent {\bf Remark 6.}
\label{remark6}
Let $L_\infty$ be the infinite Galois extension obtained by adjoining the affine coordinates of all $\ell^n$-torsion points of $E$, 
with $n\geq 1$. Consider the cyclotomic fields $K_n=\mathbb{Q}(\mu_{\ell^n})$, for $n\geq 1$. 
From the Weil pairing, one has the inclusions $K_n \subset L_n$, for $n\geq 1$. 
From \cite[Theorem 3, p. 75]{lang1986}, one has:
\begin{equation}
\mathfrak{D}_{K_n/\mathbb{Q}} = (\ell^n \ell^{-1/(\ell-1)} ).
\end{equation}
This is consistent with the general results of Tate \cite[\S 3.1, pp. 170--172]{tate1967b}. 
Based on Sen's Theorem, one deduces that:
\begin{eqnarray}
(1) &\supseteq& \mathfrak{D}_{L_n/K_n} = \mathfrak{D}_{L_n/\mathbb{Q}} \, \mathfrak{D}_{K_n/\mathbb{Q}}^{-1} 
\supseteq (\ell^{n} a) (\ell^{-n} \ell^{1/(\ell-1)})\nonumber\\
&=& (a \ell^{1/(\ell-1)}) \supseteq (c),
\end{eqnarray}
for all $n\geq 1$, where $c=a \ell$. See also \cite[Remarque, p. 152]{serre1981} for a consequence of Sen's Theorem.

Now, consider $\alpha_n$ as in Proposition \ref{thm:Proposition2}, an element of the integer ring $\mathcal{O}_n$ of $L_n=\mathbb{Q}(E[\ell^n])$, with 
$n\geq 1$. Then, using Proposition \ref{thm:Proposition2} and Remark 2, one has:
\begin{eqnarray}
\Bigl \vert \frac{\tr_n(\alpha_n)}{[L_n:\mathbb{Q}]} \Bigr \vert_{\ell} &=& \Bigl \vert \frac{\tr_n(\alpha_n)}{b \ell^{nN}} \Bigr \vert_{\ell}\nonumber\\
&\sim& C_0 \ell^{n (N-1)} \Bigl \vert \frac{\tr_n(\alpha_n)}{\ell^{n}} \Bigr \vert_{\ell} \leq 1,
\end{eqnarray}
for some positive constant $C_0$. 
On the other hand, Proposition \ref{thm:Proposition13} implies that:
\begin{equation}
\vert a \vert_\ell \Bigl \vert \frac{\tr_n(\alpha_n)}{\ell^{n}} \Bigr \vert_{\ell}
\leq \Bigl \vert \tr_n(\alpha_n \mathfrak{D}_{L_n/\mathbb{Q}}^{-1}) \Bigr \vert_{\ell} \leq 1.
\end{equation}
But $\lim_{n\rightarrow \infty} C_0 \ell^{n (N-1)} = \infty$, as $N\geq 2$, whereas 
$\bigl \vert a \bigr \vert_{\ell} < \infty$. 
Thus, the precise form of $\alpha_n$ in Proposition \ref{thm:Proposition2}, {\em i.e.}, $\alpha_n=\Delta^\prime \ell^3/y^2(P)$,  
is at stake in this proposition concerning the $\ell$-adic norm, in addition to the strong condition 
$\rho_{\ell}(\mathcal{G}) = {\bf GL}_2(\mathbb{Z}_\ell)$. 
For the Archimedean norm, one also needs the precise form of $\alpha_n$, but not the condition on $\rho_{\ell}(\mathcal{G})$.
But for the other non-Archimedean norms, the mere fact that $\alpha_n$ is an integral element is sufficient, and this is the only trivial case. 
Thus, the result of Sen indicates that Proposition \ref{thm:Proposition2} is by no means elementary, as it seems. 
In particular, Serre's Theorems on the Galois group of $L_\infty/\mathbb{Q}$, in the case of non-CM elliptic curves over the rationals, 
played an essential role in our proof of Theorems \ref{thm:Theorem5} and \ref{thm:Theorem6}.
\\


\subsection*{Acknowledgments}
We are grateful to Professors Karl Rubin and Christian Wuthrich for their helpful comments on the first version of this article.
Moreover, we acknowledge the helpful comments of Professor Rubin on drafts of the second version. 
The elliptic curve in Example 3 that was mentioned to us by Professor Wuthrich was very helpful in developing the second version of 
this work. 



\end{document}